%%%%%%%%%%%%%%%%%%%%%%%%

\documentclass{article}
% \synctex=1
% \newcommand{\fix}[1]{{\bf *** #1 ***}}
\usepackage{amsfonts,amsmath, amssymb,latexsym,comment, enumerate,mathrsfs}
\usepackage[OT2,OT1]{fontenc}
\def\SH{\mbox{\fontencoding{OT2}\selectfont\char88}}
%,mathrsfs,array}
\usepackage{multirow}
\setlength{\textheight}{8.75in}
\setlength{\textwidth}{6.5in}
\setlength{\topmargin}{0.0in}
\setlength{\headheight}{0.0in}
\setlength{\headsep}{0.0in}
\setlength{\leftmargin}{0.0in}
\setlength{\oddsidemargin}{0.0in}
\setlength{\parindent}{3pc}
\def\Z{{\mathbb Z}}

\def\SL{{\rm SL}}
\def\s{{\rm s}}
\def\sm{{\rm sm}}
\def\odd{{\rm odd}}
\def\even{{\rm even}}
\def\an{{\rm an}}

\def\Tr{{\rm Tr}}

\def\min{{\rm min}}
\def\sk{{\rm sk}}
\def\ind{{\rm ind}}

\def\GL{{\rm GL}}
\def\rad{{\rm rad}}

\def\PGL{{\rm PGL}}

\def\ca{{\check{a}}}
\def\cb{{\check{b}}}

\def\sf{{\rm sf}}
\def\sf{{\rm sf}}

\def\Stab{{\rm Stab}}
\def\Sel{{\rm Sel}}

\def\Sym{{\rm Sym}}

\def\Cl{{\rm Cl}}

\def\E{{\mathcal E}}
\newcommand{\cO}{{\mathcal O}}

\def\CS{{\mathcal S}}
\def\B{{\mathcal B}}
\def\P{{\mathbb P}}
\def\I{{\mathrm I}}
\def\Disc{{\rm Disc}}

\def\irr{{\rm irr}}

\def\red{{\rm red}}

\def\Res{{\rm Res}}
\def\Vol{{\rm Vol}}
\def\R{{\mathbb R}}
\def\F{{\mathbb F}}

\renewcommand{\L}{\mathcal{L}}

\def\rV{{\mathrm V}}

\def\FF{{\mathcal F}}

\def\GG{{\mathcal G}}
\def\RR{{\mathcal R}}
\def\Q{{\mathbb Q}}

\def\Z{{\mathbb Z}}
\def\G{{\mathbb G}}
\def\P{{\mathbb P}}
\def\F{{\mathbb F}}
\def\Q{{\mathbb Q}}
\def\C{{\mathbb C}}

\def\s{{\mathcal S}}
\def\cc{{\check{c}}}

\def\fz1{{F_{\Z,1}}}

\def\max{{\rm max}}

\newtheorem{thm}{Theorem}[section]
\newtheorem{theorem}{Theorem}[section]
\newtheorem{corollary}[theorem]{Corollary}
\newtheorem{cor}[thm]{Corollary}
\newtheorem{lemma}[theorem]{Lemma}

\newtheorem{proposition}[theorem]{Proposition}

\newenvironment{proof}{\noindent {\bf Proof:}}{$\Box$ \vspace{2 ex}}

\usepackage{xcolor,color,url,lmodern}

\title{Families of elliptic curves ordered by conductor}

\author{Ananth N.\ Shankar and Arul Shankar and Xiaoheng Wang}

\begin{document}
\maketitle

\begin{abstract}
In this article, we study the family of elliptic curves $E/\Q$, having
good reduction at $2$ and $3$, and whose $j$-invariants are
small. Within this set of elliptic curves, we consider the following
two subfamilies: first, the set of elliptic curves $E$ such that the
ratio $\Delta(E)/C(E)$ is squarefree; and second, the set of elliptic
curves $E$ such that $\Delta(E)/C(E)$ is bounded by a small power
$(<3/4)$ of $C(E)$. Both these families are conjectured to contain a
positive proportion of elliptic curves, when ordered by conductor.

Our main results determine asymptotics for both these families, when
ordered by conductor. Moreover, we prove that the average size of the
$2$-Selmer groups of elliptic curves in the first family, again when
these curves are ordered by their conductors, is $3$. This implies
that the average rank of these elliptic curves is finite, and bounded
by $1.5$.
\end{abstract}

\section{Introduction}
Every elliptic curve over $\Q$ can be uniquely represented as
$E_{AB}:y^2=x^3+Ax+B$, where $A$ and $B$ are integers such that there
is no prime $p$ with $p^4\mid A$ and $p^6\mid B$, and such that
$\Delta(A,B):=-4A^3-27B^2\neq 0$. Given an elliptic curve $E$ over
$\Q$, we denote its algebraic rank by $r(E)$ and its analytic rank by
$r_\an(E)$. The Birch and Swinnerton-Dyer conjecture asserts that
these two quantities are equal, i.e., we have $r(E)=r_\an(E)$.

Foundational conjectures of Goldfeld \cite{goldfeld-quadtwistconj} (in
the case of families of quadratic twists of elliptic curves) and
Katz--Sarnak \cite{katzsarnak} (for the full family of elliptic
curves) assert that a density of $50\%$ of elliptic curves have rank
$0$, and that $50\%$ have rank $1$, and that the average rank of
elliptic curves is $1/2$. Both these conjectures are formulated
through a study of the associated family of the $L$-functions $L_E(s)$
attached to the elliptic curves $E$. The behaviour of $L_E(s)$ at and
near the critical point is used to control the distribution of
analytic ranks, which, assuming the BSD conjecture, can be used to
give heuristics for the distribution of the algebraic ranks.

The most natural way to order a family of $L$-functions is by their
{\it conductors}, which, in this case of $L$-functions of elliptic
curves, is equal to the levels of the associated modular forms. Thus
in the conjectures of Goldfeld and Katz--Sarnak, it is implicitly
assumed that elliptic curves are ordered by their conductors.
However, when studying two-parameter families of elliptic curves, the
curves $E_{AB}$ are usually ordered by their (naive) {\it height}
$H(E_{AB})=\max\{4|A|^3,27B^2\}$.\footnote{See, however, work of
  Hortsch \cite{HortschFaltH} obtaining asymptotics for the number of
  elliptic curves with bounded Faltings height.} Assuming the
generalized Riemann hypothesis, Brumer \cite{brumer}, Heath-Brown
\cite{heathbrown-avgrank}, and Young \cite{young-avgrank}, proved the
successively better bounds of 2.3, 2, and $25/14$, on the average
analytic ranks of elliptic curves when ordered by height. On the
algebraic side, Bhargava and the second named author \cite{bs5sel}
proved that the average rank of elliptic curves, when ordered by
height, is bounded by $0.885$.

If elliptic curves are instead ordered by conductor, even asymptotics
for the number of curves are not known. The {\it discriminant}
$\Delta(E_{AB})$ of $E_{AB}$ is (up to absolutely bounded factors of
$2$ and $3$) $-4A^3-27B^2$. The {\it conductor} $C(E_{AB})$ of
$E_{AB}$ is (again, up to bounded factors of $2$ and $3$) the product
over all primes $p$ dividing $\Delta(E_{AB})$ of either $p$ or $p^2$
depending on if $E_{AB}$ has multiplicative or additive reduction at
$p$. Building on the work of Brumer--McGuinnes \cite{brumermcguinness}
on the family of elliptic curves ordered by discriminant, Watkins
\cite{watkins-heuristics} gives heuristics suggesting that the number
of elliptic curves with conductor bounded by $X$ grows as $\sim
cX^{5/6}$ for an explicit constant~$c$. Lower bounds of this magnitude
are easy to obtain, but the best known upper bound is
$O(X^{1+\epsilon})$ due to work of Duke--Kowalski \cite{DukeKowalski}.

The difficulties in determining precise upper bounds are
twofold. First, it is difficult to rule out the possibility of many
elliptic curves with large height but small discriminant. Second, it
is difficult to rule out the possibility of many elliptic curves with
large discriminant but small conductor. It is interesting to note here
that the second difficulty is exactly a nonarchimedean version of the
first. Indeed, curves $E_{AB}$ with large height and small
discriminant correspond to pairs $(A,B)$ of integers, where $4A^3$ and
$-27B^2$ are unusually close as real numbers. On the other hand,
curves $E_{AB}$ with large discriminant and small conductor correspond
to pairs of integers $(A,B)$ such that $4A^3$ and $-27B^2$ are
unusually close as $p$-adic numbers.

In this article, we focus on studying the second difficulty while
entirely sidestepping the first. To this end, we let $\E$ denote the
set of elliptic curves $E$ over $\Q$ that satisfy the following
properties.
\begin{enumerate}
\item The $j$-invariant $j(E)$ of $E$ satisfies $j(E)<\log\Delta(E)$.
\item $E$ has good reduction at $2$ and $3$.
%\item $E$ has good reduction at $2$, and its reduction modulo $2$
%  is a curve of the form $y^2+y=x^3+ax+b$.
%\item $E$ has good reduction at $3$, and its reduction modulo $3$ is a
%  curve of the form $y^2=x^3+ax+b$.
\end{enumerate}
The first of the above three properties excludes all elliptic curves
$E$ with $\Delta(E)\ll H(E)^{1-\epsilon}$ and is absolutely critical
for our results. According to the Brumer--Mcguinnes heuristics
\cite{brumermcguinness}, only a negligible number of elliptic curves
are being excluded by the assumption of this property, but this is
unproven. The second property is a technical assumptions made to
simplify local computations at the $2$-adic and $3$-adic places. We
will in fact have to further restrict our families of elliptic
curves. Define the families
\begin{equation*}
\begin{array}{rcl}
\E_\sf&:=&
\displaystyle \Bigl\{E\in \E:\frac{\Delta(E)}{C(E)}\;
\mbox{is squarefree} \Bigr\},\\[.2in]
\E_\kappa&:=&
\displaystyle \bigl\{ E\in\E:\Delta(E)<C(E)^{\kappa}\bigr\},
\end{array}
\end{equation*}
for every $\kappa>1$. When ordered by conductor, the family
$\E_\kappa$ conjecturally contains $100\%$ of elliptic curves with
good reduction at $2$ and $3$, and $\E_\sf$ conjecturally contains a
positive proportion of elliptic curves. We prove the following result
determining asymptotics for these families of elliptic curves, ordered
by their conductors.
\begin{thm}\label{thmmain}
Let $1<\kappa<7/4$ be a positive constant. Then we have
\begin{equation}\label{eqEF}
\begin{array}{rcl}
\displaystyle\#\{E\in \E_\sf:\; C(E)<X\}&\sim&
\displaystyle \frac{1+\sqrt{3}}{60\sqrt{3}}
\frac{\Gamma(1/2)\Gamma(1/6)}{\Gamma(2/3)}
\cdot\prod_{p\geq 5}\Bigl(1+\frac{1}{p^{7/6}}-\frac{1}{p^2}-\frac{1}{p^{13/6}}\Bigr)
\cdot X^{5/6};\\[.2in]
\displaystyle\#\{E\in \E_\kappa:\; C(E)<X\}&\sim&
\displaystyle\frac{1+\sqrt{3}}{60\sqrt{3}}
\frac{\Gamma(1/2)\Gamma(1/6)}{\Gamma(2/3)}
\cdot\prod_{p\geq 5}
\Bigl[\Bigl(1-\frac{1}{p}\Bigr)\Bigl(1+\frac{1}{p^{5/3}}+\frac{1}{p^{11/6}}+\frac{1}{p^{17/6}}\Bigr)\\[.2in]
&&
+\displaystyle\frac{1}{p}\Bigl(1-\frac{1}{p}\Bigr)
\Bigl(1-\frac{1}{p^{1/6}}\Bigr)^{-1}\Bigl(1+\frac{2}{p}-\frac{2}{p^{3/2}}\Bigr)\Bigr]
\cdot X^{5/6}.
\end{array}
\end{equation}
\end{thm}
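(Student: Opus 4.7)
The plan is to attack both families by parameterizing the elliptic curves via the excess of $\Delta(E)$ over $C(E)$, performing a lattice-point count for each value of this excess, and then summing over the allowed range. I focus first on the family $\E_\sf$; the family $\E_\kappa$ is handled by the same scheme but with a continuous range of excesses rather than just a squarefree one.

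\textbf{Step 1 (Local analysis at bad primes).} For $E\in\E$ and a bad prime $p\geq 5$, either $E$ has multiplicative reduction (so $v_p(C(E))=1$ and $v_p(\Delta(E))\geq 1$) or additive reduction (so $v_p(C(E))=2$ and $v_p(\Delta(E))\geq 2$). The squarefree hypothesis on $\Delta(E)/C(E)$ in $\E_\sf$ forces $v_p(\Delta)\in\{1,2\}$ at multiplicative primes and $v_p(\Delta)\in\{2,3\}$ at additive primes, so $\Delta(E)=C(E)\cdot n(E)$ with $n(E)$ squarefree. I would first carry out a purely local analysis, computing for each $p$ and each admissible reduction/discriminant type the density of pairs $(A,B)\in\Z_p^2$ realizing that type.

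\textbf{Step 2 (Counting pairs with prescribed local data).} For a tuple of local data specifying the reduction type and $v_p(\Delta)$ at each bad prime $p\geq 5$, I would count integer pairs $(A,B)$ with $j(E_{AB})<\log\Delta(E_{AB})$ and $C(E_{AB})\leq X$ by a lattice-point estimate. Because the relevant region is a thin neighbourhood of the cusp $4A^3+27B^2=0$, the main term is not a Euclidean volume; a change of variables $(A,B)\mapsto(j,\Delta)$ reduces it to an integral of the Jacobian against the $j$-invariant cutoff, producing the archimedean factor $\Gamma(1/2)\Gamma(1/6)/\Gamma(2/3)$ via a beta-type evaluation with exponents $1/2$ and $1/3$ arising from the weights of $A$ and $B$ in the Weierstrass model.

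\textbf{Step 3 (Sieve and Euler product).} Summing over all admissible local data at each bad prime produces an Euler product in which each factor encodes the local density, weighted by the appropriate power of $p$ coming from the identity $\Delta=C\cdot n$. A direct local computation yields the factor $1+p^{-7/6}-p^{-2}-p^{-13/6}$ for $\E_\sf$; the corresponding computation for $\E_\kappa$, which now must integrate over all excesses $m(E)=\Delta(E)/C(E)<C(E)^{\kappa-1}$ rather than restricting to squarefree values, produces the bracketed expression stated in the theorem. The rational constant $(1+\sqrt{3})/(60\sqrt{3})$ comes from the archimedean contribution, reflecting both the two signs of $\Delta$ and the overall normalization of the change of variables.

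\textbf{The main obstacle: uniformity.} The principal technical difficulty is uniformity of the lattice-point estimates over all tuples of local data, i.e., controlling the error uniformly in $n(E)$ (or in $m(E)$ for $\E_\kappa$). When $n$ or $m$ approaches a fixed power of $X$, the naive error threatens to dominate. I would handle this by a sieve removing pairs $(A,B)$ with large square divisors of $\Delta(A,B)$, using a quantitative bound on the count of pairs with $p^2\mid\Delta$ combined with a dyadic decomposition of the range of $n$ or $m$. Verifying that the total tail is of strictly smaller order than $X^{5/6}$ is precisely where the restriction $\kappa<7/4$ enters: this bound calibrates the allowable size of the excess before the tail overwhelms the main term, and is the step I expect to absorb the bulk of the argument.
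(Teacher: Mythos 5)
Your Steps 1–3 (local densities, archimedean volume via a change of variables to $(j,\Delta)$, then an Euler product over local data) track what the paper does in \S2 and \S7 --- it computes the Kodaira-symbol densities in Table~\ref{tabloc}, the archimedean factor $c_\infty^\pm$ following Watkins, and then assembles the Euler product by an inclusion--exclusion sieve over the index $n(E)=\Delta(E)/C(E)$. The rational constant, the Beta-function factor, and the bracketed local factors all arise exactly as you describe.

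The serious gap is Step 4. You propose to control the tail (curves with large index, hence large height relative to conductor) by ``a quantitative bound on the count of pairs with $p^2\mid\Delta$ combined with a dyadic decomposition,'' i.e.\ by an Ekedahl-type sieve on square divisors of $\Delta(A,B)$. This is exactly the approach the paper explicitly identifies as \emph{insufficient}: because the index $\Delta(E)/C(E)$ can be as large as $\sim C(E)$ (for $\E_\sf$), a curve with $C(E)<X$ can have height up to $X^{2+\epsilon}$, and the Ekedahl sieve then produces an error of order $X^{4/3}$, far exceeding the main term $X^{5/6}$. (Even the Taniguchi--Thorne refinement is stated to be insufficient.) The uniformity estimates --- Theorems~\ref{thunifsqi} and~\ref{thunifsmind}, proved in \S3--\S6 --- are the real content of the paper, and they require three genuinely new inputs that do not appear in your proposal: (i) a twisted Poisson-summation argument bounding curves with prescribed Kodaira symbols $\I\I\I$, $\I\rV$, $\I_{n\geq2}$ at finitely many primes (\S3); (ii) a bound, weighted by $|\Cl(K)[2]|$, on cubic fields whose rings of integers have skewed Minkowski basis (\S4), fed into the Brumer--Kramer comparison $|\Sel_2(E)|\ll|\Cl(K_f)[2]||\Delta|^\epsilon$; and (iii) a geometry-of-numbers count of $\PGL_2(\Z)$-orbits on \emph{reducible} binary quartic forms with large $Q$- and $D$-invariants, via an embedding $U_0(\Z)^{\min}\hookrightarrow\widetilde{V}_4(\Z)$ that linearizes the condition ``$Q(f)$ is large'' (\S5). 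Without one of these (or an equivalent replacement), the dyadic sum over the index diverges before it converges.

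A smaller point: the restriction $\kappa<7/4$ does not come from calibrating an Ekedahl-type tail. It emerges in the proof of Theorem~\ref{thunifsmind} from balancing the Poisson-summation bound of Theorem~\ref{thm:equimain} against the reducible-quartic bound of Theorem~\ref{qinvmt} (see Proposition~\ref{prop45cb} and the inequalities around \eqref{eq6mtb}--\eqref{eq6ftb}); in your framework that threshold has no visible origin. The paper also needs the replacement of Davenport's lattice-point lemma by the o-minimal Barroero--Widmer count (Theorem~\ref{thomin}), since the cutoff $j(E)<\log\Delta(E)$ is not semialgebraic; this is a technical but necessary detail your Step 2 elides.
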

We expect Theorem \ref{thmmain} to hold for all $\kappa$. Furthermore,
since the abc conjecture implies that for $\kappa>6$, we have
$\E_\kappa=\E$, we expect these asymptotics to also hold for the
family $\E$. We note that the Euler factors appearing in Theorem
\ref{thmmain} arise naturally from the densities of elliptic curves
over $\Q_p$ with fixed Kodaira symbol. These densities are computed in
Theorem \ref{propcasessp}.

Our next main result is on the distribution of ranks of elliptic
curves in $\E_\sf$. As in \cite{bs2sel}, we study the ranks of these
elliptic curves via their $2$-Selmer groups. Recall that the
$2$-Selmer group $\Sel_2(E)$ of an elliptic curve $E$ over $\Q$ is a
finite $2$-torsion group which fits into the exact sequence
\begin{equation}\label{eqexseq}
0\to E(\Q)/2E(\Q)\to\Sel_2(E)\to\SH_E[2]\to 0,
\end{equation}
where $\SH_E$ denotes the Tate--Shafarevich group of $E$. Our result
regarding the $2$-Selmer groups of elliptic curves in $\E_\sf$ is as
follows.
\begin{thm}\label{thmsel}
When elliptic curves in $\E_\sf$ are ordered by their conductors, the
average size of their $2$-Selmer groups is $3$.
\end{thm}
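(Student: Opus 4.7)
The plan is to follow the approach of \cite{bs2sel}, where the analogous result is proved for elliptic curves ordered by naive height (in fact, for the subfamily of curves with squarefree discriminant), and adapt it to the conductor ordering using the framework developed to prove Theorem \ref{thmmain}. Recall that the non-identity elements of $\Sel_2(E_{A,B})$ are in bijection with the set of locally soluble $\GL_2(\Q)$-equivalence classes of integer binary quartic forms $f(x,y)$ whose invariants $(I(f),J(f))$ are equal (up to fixed normalization) to $(A,B)$. Summing over $E \in \E_\sf$ with $C(E) < X$, the total count of non-identity 2-Selmer elements therefore becomes the count of appropriately weighted $\GL_2(\Z)$-orbits of integer binary quartic forms whose invariants land in the prescribed region.

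The first step is to count $\GL_2(\Z)$-orbits of integer binary quartic forms using the geometry-of-numbers averaging technique of \cite{bs2sel}, but organized by the conductor of the associated invariants rather than by the height. The archimedean region, cut out by the conditions $C(E_{I,J}) < X$ and $j(E_{I,J}) < \log \Delta(E_{I,J})$, is dominated by the sub-region where $|\Delta(f)|$ and $H(f)$ are comparable, which places it within reach of the usual reduction-theory techniques. A local sieve—combining the condition of local solubility, the squarefree condition on $\Delta(E)/C(E)$, and the good reduction conditions at $p=2,3$—then converts the count into a product of local densities times $X^{5/6}$.

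The main technical obstacle is the uniformity estimate required to justify the squarefree sieve: for each large prime $p$ one must bound the number of $\GL_2(\Z)$-orbits of integer binary quartic forms whose invariants give elliptic curves $E$ with $C(E) < X$ and with $\Delta(E)/C(E)$ divisible by $p^2$, with a bound that decays sufficiently rapidly in $p$ to allow one to sum over $p$ up to a large power of $X$. The analogous estimate at the level of elliptic curves is already a key input in Theorem \ref{thmmain}, so the task is to lift it to binary quartic forms, which should be possible by combining that estimate with the uniformity inputs from \cite{bs2sel}. Once the sieve is in place, a direct calculation of local densities at each prime $p \geq 5$—parallel to those in Theorem \ref{propcasessp}—shows that the ratio of the count of non-identity 2-Selmer elements to the count of curves in $\E_\sf$ provided by Theorem \ref{thmmain} is exactly $2$, yielding the average 2-Selmer group size $1 + 2 = 3$.
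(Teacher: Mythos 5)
Your proposal correctly identifies the parametrization of $2$-Selmer elements by binary quartic forms, the geometry-of-numbers counting, and the sieve structure; these are indeed used in the paper's Proposition \ref{propfinalsel}. But there is a genuine gap at the heart of the argument: you do not address how to bound the contribution to $\sum_E |\Sel_2(E)|$ from curves $E$ whose index $\Delta(E)/C(E)$ is large. This is the decisive new difficulty that sets the conductor ordering apart from the height ordering, and it is what Theorem \ref{thunifsqi} supplies. For curves with $C(E)<X$ but index of size $M$, the height can be as large as $MX$, so the standard binary-quartic geometry-of-numbers and the Ekedahl sieve give bounds of the wrong size (roughly $X^{5/6}M^{5/6}$), which is useless when summed over dyadic ranges of $M$ up to $X$. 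Your suggestion of ``lifting'' the elliptic-curve uniformity estimate to binary quartic forms does not close this: even knowing that there are few curves with large index, one must still bound the $2$-Selmer group sizes of those curves, and binary quartic counting alone gives no handle on this.

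The paper's route is genuinely different here. It maps each $E:y^2=f(x)$ to a cubic \'etale algebra $K_f=\Q[x]/(f(x))$ together with a primitive traceless algebraic integer, then invokes Brumer--Kramer to bound $|\Sel_2(E)|\ll_\epsilon |\Cl(K_f)[2]|\cdot\Delta(E)^\epsilon$, and finally observes that a curve with large index forces either $\Delta(K_f)$ to be small (so there are few choices of $K_f$) or the ring of integers $\cO_{K_f}$ to be extremely skewed. Bounding the latter contribution requires Theorem \ref{thm:skew}, an equidistribution-of-shapes estimate for $\sum_K |\Cl(K)[2]|$ over cubic fields with skewed rings of integers (the content of Section 4). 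This entire mechanism is absent from your proposal. A secondary inaccuracy: the tail estimate you need is organized by the whole index $n=\Delta/C$, not by a single prime $p$ with $p^2\mid \Delta/C$; the family $\E_\sf$ already has squarefree index, and the inclusion--exclusion on $nq\mid\ind(E)$ is over all moduli, truncated at $X^\theta$ using Theorem \ref{thunifsqi}. The binary-quartic sieve (Proposition \ref{propelemufbqf}) is then applied only \emph{within} each such fixed small index, where heights are controlled.
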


Theorem \ref{thmsel} has the following immediate corollary.

\begin{cor}\label{corrank}
When elliptic curves in $E\in \E_\sf$ are ordered by their conductors,
their average $2$-Selmer rank is at most $1.5$; thus, their average
rank is at most $1.5$ and the average rank of $\SH_E[2]$ is also at most
$1.5$.
\end{cor}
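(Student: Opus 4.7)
The plan is to derive all three bounds from Theorem \ref{thmsel} via the elementary inequality $k \leq 2^{k-1}$ for integers $k \geq 1$ (equivalently $k \leq 2^k/2$ for all integers $k \geq 0$). First, writing $s(E) = \dim_{\F_2} \Sel_2(E)$ so that $|\Sel_2(E)| = 2^{s(E)}$, applying this inequality with $k = s(E)$ gives
\[
s(E) \;\leq\; \frac{|\Sel_2(E)|}{2}.
\]
Averaging over $E \in \E_\sf$ ordered by conductor and invoking Theorem \ref{thmsel} immediately yields the asserted bound of $3/2$ on the average $2$-Selmer rank.

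Next, to translate this into the bound on the average algebraic rank $r(E)$ and on the average of $\dim_{\F_2} \SH_E[2]$, I would use the exact sequence \eqref{eqexseq}, which gives
\[
|\Sel_2(E)| \;=\; |E(\Q)/2E(\Q)| \cdot |\SH_E[2]| \;=\; 2^{r(E)+t_2(E)} \cdot |\SH_E[2]|,
\]
where $t_2(E) = \dim_{\F_2} E(\Q)[2]$. In particular $2^{r(E)} \leq |\Sel_2(E)|$ and $|\SH_E[2]| \leq |\Sel_2(E)|$. Applying the same inequality $k \leq 2^k/2$ to $k = r(E)$ and to $k = \dim_{\F_2}\SH_E[2]$ then gives
\[
r(E) \;\leq\; \frac{|\Sel_2(E)|}{2}, \qquad \dim_{\F_2}\SH_E[2] \;\leq\; \frac{|\Sel_2(E)|}{2},
\]
and averaging over $\E_\sf$ ordered by conductor, together with Theorem \ref{thmsel}, yields the bounds of $3/2$ for both quantities.

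There is no genuine obstacle to overcome: the corollary is an essentially immediate convexity-style consequence of the average Selmer bound, obtained by dominating the relevant integer ranks by the linear function $|\Sel_2(E)|/2$. It is worth noting that the constant $3/2$ is sharp for this style of deduction, since the inequality $k \leq 2^k/2$ is tight at both $k = 1$ and $k = 2$; indeed, any probability distribution on the nonnegative integers with expected value $3$ for $2^k$ that achieves the maximum expected value $3/2$ for $k$ must be supported on $\{1,2\}$ with equal weight, and any stronger bound on the average $2$-Selmer rank would require more refined input than the average order of $|\Sel_2(E)|$ alone.
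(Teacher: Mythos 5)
Your proof is correct and matches the standard argument that the paper invokes implicitly by calling the corollary ``immediate'': dominate the integer-valued rank $k$ by the linear function $2^k/2$ (tight at $k=1,2$) and apply linearity of averaging together with Theorem \ref{thmsel}, using the exact sequence \eqref{eqexseq} to see that $2^{r(E)}$ and $|\SH_E[2]|$ both divide $|\Sel_2(E)|$. Your closing remark about sharpness (that $3/2$ is the best linear bound deducible from the average of $|\Sel_2(E)|$ alone, with equality forced by an equal split on ranks $1$ and $2$) is also accurate and a nice observation.
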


Corollary \ref{corrank} provides evidence for the widely held belief
that the distribution of the ranks of elliptic curves are the same
regardless of whether the curves are ordered by height or
conductor. Moreover, as expected, the average size of the $2$-Selmer
groups of curves in $\E_\sf$ are the same as the average over all
elliptic curve ordered by height obtained in \cite[Theorem
  1.1]{bs2sel}.
We remark that our methods are flexible enough to recover verions of
Theorems \ref{thmmain} and \ref{thmsel} where the families $\E_\sf$
and $\E_\kappa$ are restricted so that the curves in them satisfy any
finite set of local conditions. This result is stated as Theorem
\ref{thmmainlarge}.

The key ingredient for proving the main results are ``uniformity
estimates'' or ``tail estimates''. These are upper bounds on the
number of elliptic curves in our families whose discriminants are large
compared to their conductors. For the proof of Theorem \ref{thmsel},
we additionally need bounds on the sum of the sizes of the $2$-Selmer
groups of elliptic curves in $\E_\sf$ with large discriminant and
small conductor. To this end, we prove the following result for the
family $\E_\sf$.

\begin{thm}\label{thunifsqi}
For positive real numbers $X$ and $M$, we have
\begin{equation*}
  \#\Bigr\{(E,\sigma):E\in\E_\sf,\;C(E)<X,\;\Delta(E)>MC(E),\;
  \sigma\in\Sel_2(E)\Bigl\}\,\ll_\epsilon
  \frac{X^{5/6+\epsilon}}{M^{1/6}}.
\end{equation*}
\end{thm}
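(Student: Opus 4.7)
The strategy is to parametrize the non-identity elements of $\Sel_2(E)$ by $\PGL_2(\Z)$-equivalence classes of locally soluble integer binary quartic forms $f$ whose invariants $(I(f),J(f))$ pick out the curve $E_f$, and then count such orbits via the Bhargava--Shankar geometry-of-numbers method. The contribution of the trivial Selmer class reduces to counting the curves themselves, and is bounded by a parallel (and simpler) argument using the direct $(A,B)$-parametrization.

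Because $E\in\E_\sf$ forces $m:=\Delta(E)/C(E)$ to be a squarefree positive integer, and $\Delta(E)>M\cdot C(E)$ forces $m>M$, I partition the count by $m$:
\begin{equation*}
\#\{(E,\sigma)\}\;\leq\;\sum_{\substack{m>M\\m\text{ squarefree}}}\#\{(E,\sigma):\Delta(E)/C(E)=m,\;C(E)<X\}.
\end{equation*}
For each $p\mid m$, the equality $\Delta/C=m$ forces $E$ to have Kodaira type $I_2$ at $p$ (when $p\parallel C$, so $p^2\parallel\Delta$) or type $III$ (when $p^2\parallel C$, so $p^3\parallel\Delta$); these are precisely the Kodaira types in Theorem \ref{propcasessp} for which $v_p(\Delta)$ exceeds $v_p(C)$ by exactly $1$.

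For each squarefree $m>M$, I then apply Bhargava--Shankar counting in a fundamental domain for $\PGL_2(\Z)$ acting on the space $V(\R)$ of real binary quartic forms. Since $|\Delta(f)|\asymp m\cdot C(E_f)<mX$, the number of $\PGL_2(\Z)$-orbits of locally soluble integer forms whose invariants satisfy this bound and the Kodaira conditions prescribed by $m$ is bounded above by
\begin{equation*}
C\cdot(mX)^{5/6}\cdot\prod_{p\mid m}\rho_p,
\end{equation*}
where $\rho_p$ denotes the $p$-adic density of $\Q_p$-soluble binary quartic forms over $\Z_p$ whose invariants define an elliptic curve of Kodaira type $I_2$ or $III$ at $p$. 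The key technical input is a uniform bound $\rho_p\ll p^{-2}$ for $p\geq 5$; this should follow by combining the $(A,B)$-density $O(p^{-2})$ for Kodaira types $I_2$ and $III$ (which can be read off from Theorem \ref{propcasessp}, since each of these types cuts out two independent $p$-adic conditions) with the $O(1)$ size of the local solubility factor.

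Granted $\rho_p\ll p^{-2}$, summing over squarefree $m>M$ completes the argument:
\begin{equation*}
X^{5/6}\sum_{\substack{m>M\\m\text{ sf}}}m^{5/6}\prod_{p\mid m}p^{-2}\;\ll\;X^{5/6}\sum_{m>M}m^{-7/6+\epsilon}\;\ll\;X^{5/6+\epsilon}/M^{1/6}.
\end{equation*}
The main obstacle is twofold. First, the geometry-of-numbers count must be carried out in a way that accommodates the unbounded collection of local conditions $\{p\mid m\}$ while tracking the $m$-dependence uniformly (this is the hardest part: the local calculations for $I_2$ and $III$ have to be set up so that they exactly match the weights needed to make the $m$-sum converge with the claimed exponent). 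Second, one must control the contribution from the cusp of the $\PGL_2(\Z)$-fundamental domain, where distinguished (reducible) orbits accumulate; this is especially delicate in our regime, since $\Delta(f)$ is allowed to be much larger than $C(E_f)$, so the standard reducibility-exclusion arguments need to be adapted to cleanly separate the main count from the reducible contribution.
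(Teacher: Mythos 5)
Your approach is genuinely different from the paper's. You propose to partition by the squarefree index $m = \Delta(E)/C(E)$, parametrize Selmer elements by $\PGL_2(\Z)$-orbits of locally soluble binary quartic forms with Kodaira conditions at every prime dividing $m$, and sum the resulting geometry-of-numbers counts over $m > M$. The paper instead replaces the 2-Selmer group with the 2-torsion in the class group of the cubic field $K_f = \Q[x]/f(x)$ via Brumer--Kramer, exploits the identity $\Delta(K_f) = C(E)^2/\Delta(E)$ (so that large index forces \emph{small} field discriminant), and then counts pairs $(K,\alpha)$ with $K$ a cubic field of small discriminant and $\alpha\in\cO_K^{\Tr=0}$ primitive and bounded, weighted by $|\Cl(K)[2]|$. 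The ring-of-integers counting is split by skewness, using Bhargava's bound on $\sum|\Cl(K)[2]|$ together with Theorem~\ref{thm:skew} for skewed fields.

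The arithmetic in your sketch is correct: for $p\parallel m$ the Kodaira type is forced to be $\I_2$ or $\I\I\I$, each with local density $\ll p^{-2}$, so the per-$m$ main term is $\ll (mX)^{5/6}m^{-2} = X^{5/6}m^{-7/6}$, and the sum over $m>M$ yields $X^{5/6+\epsilon}/M^{1/6}$ as wanted. However, the ``main obstacle'' you flag is not merely the hardest step --- it is an unfilled gap, and it is precisely the obstruction that the paper is built to circumvent. In the Bhargava--Shankar framework, imposing congruence conditions modulo $n$ in the geometry-of-numbers count (as in \eqref{eqbqfcount}) is only permissible when $n$ is a small power of the height $Y$; the cuspidal error term, coming from Davenport's lemma applied to the skewed regions $\gamma G_0\cdot R^{(i)}(Y)$, does not shrink as you refine the lattice $L$, so one must cut off the $t$-integral to keep the projections of $\gamma G_0\cdot R^{(i)}(Y)$ larger than the modulus. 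For Kodaira conditions at every $p\mid m$ the modulus is on the order of $m^2$, while the relevant height is $Y\asymp mX$; this only permits $m\ll X^{\delta}$ for some small $\delta$, whereas $m$ can be as large as $X$. In the regime $X^{\delta} \ll m < X$ your method produces nothing, because you cannot impose the local conditions in the lattice count and you cannot afford to drop them either (dropping them destroys the convergence of the $m$-sum). The adjustment you gesture at --- ``setting up the local calculations so that they exactly match the weights'' --- addresses only the main term, not the error term, and therefore does not close the gap. A genuinely different idea is required here; the paper supplies one by transferring the problem into the space of cubic fields, where Bhargava's class-group bound and the skewness estimate of Theorem~\ref{thm:skew} give the needed uniformity in $m$ for free. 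As a secondary point, your $\rho_p$ needs to be the orbit-weighted local density (accounting for the fibers of the $\PGL_2(\Z_p)$-action on soluble quartic forms, as in the weight functions $m_p$ of the paper's \S 7.3), not simply the $(A,B)$-density read off Table~\ref{tabloc} times an $O(1)$ solubility factor; this is standard but should be stated.
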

We note that up to the power of $X^\epsilon$, this is expected to be
the optimal bound.

\vspace{.2in}
For the family $\E_\kappa$, we prove the following result.

\begin{thm}\label{thunifsmind}
Let $\kappa<7/4$ and $\delta>0$ be positive constants.
Then there exists a positive constant $\theta$, depending only on
$\delta$ and $\kappa$, such that for every $X>0$, we have
\begin{equation*}
  \#\Bigr\{E\in\E_\kappa:C(E)<X,\;\Delta(E)>X^\delta C(E)\Bigl\}\,\ll_\epsilon
    X^{5/6-\theta+\epsilon}.
\end{equation*}
\end{thm}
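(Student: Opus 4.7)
The plan is to parametrize each $E \in \E_\kappa$ by its pair $(A, B) \in \Z^2$ with $E = E_{AB}\colon y^2 = x^3 + Ax + B$ in minimal form. Since $E \in \E$, the $j$-invariant condition $j(E) < \log \Delta(E)$ combined with $|\Delta(E)| < C(E)^\kappa \leq X^\kappa$ places $(A, B)$ in a box of dimensions $|A| \ll X^{\kappa/3 + \epsilon}$ and $|B| \ll X^{\kappa/2 + \epsilon}$, hence in a set of size $\ll X^{5\kappa/6 + \epsilon}$.

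The key reduction is that any prime $p \geq 5$ contributing to $\Delta(E)/C(E)$ must satisfy $v_p(\Delta) > v_p(C) \geq 1$, hence $v_p(\Delta) \geq 2$. Thus the squarefull part $\Delta_2(E) := \prod_{v_p(\Delta) \geq 2} p^{v_p(\Delta)}$ satisfies $\Delta_2(E) \geq \Delta(E)/C(E) > X^\delta$, and it suffices to bound the number of $(A, B)$ in the box whose discriminant $F(A, B) := -4A^3 - 27B^2$ admits a squarefull divisor exceeding $X^\delta$.

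For each squarefull $m \in (X^\delta, X^\kappa)$ coprime to $6$, I would bound $\#\{(A, B) \in \text{box} : m \mid F(A, B)\}$ by lattice point counting, using the local estimates $\rho_F(p^a)$ at each prime power $p^a \| m$ (obtained by direct analysis of $F \equiv 0 \pmod{p^a}$, combining Hensel lifts of smooth points on the affine curve $F = 0$ over $\F_p$ with singular lifts of the cusp at the origin). Multiplicativity and boundedness of the resulting local densities yield a bound of the form $\#\{(A,B) \in \text{box} : m \mid F(A,B)\} \ll X^{5\kappa/6 + \epsilon}/m$, and summing via $\sum_{m > Y, \, \text{sqfull}} m^{-1} \ll Y^{-1/2}$ gives a total of $\ll X^{5\kappa/6 - \delta/2 + \epsilon}$. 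This already achieves the target saving $X^{5/6 - \theta + \epsilon}$ with $\theta = \delta/2 - 5(\kappa-1)/6$ in the regime $\delta > 5(\kappa-1)/3$.

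The hard part will be the remaining regime of small $\delta$ with $\kappa$ close to $7/4$, where the squarefull sieve alone is insufficient. To handle it, I would introduce a joint dyadic decomposition over $|\Delta(E)| \sim T$ and $C(E) \sim C_0$: the $\E_\kappa$ condition forces $C_0 \geq T^{1/\kappa}$, the hypothesis sharpens the squarefull lower bound to $\Delta_2 \geq T/C_0$, and the effective box shrinks to $T^{5/6 + \epsilon}$. A careful optimization over these joint scales, together with the uniformity of the local density estimates and the constraint $\kappa < 7/4$, yields the polynomial saving $X^{-\theta}$ for some $\theta = \theta(\delta, \kappa) > 0$. The specific threshold $7/4$ reflects the delicate balance among the box size exponent $5\kappa/6$, the squarefull sieve's tail decay, and the density of conductors within $\E_\kappa$ needed for this optimization to succeed.
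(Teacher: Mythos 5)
Your squarefull-sieve argument has a genuine gap, and the paper's actual proof takes a fundamentally different route that was engineered precisely to avoid the obstruction you run into.

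The gap in the ``easy part'': you assert that $\#\{(A,B)\in\text{box}:m\mid F(A,B)\}\ll X^{5\kappa/6+\epsilon}/m$ uniformly in the squarefull parameter $m$. This cannot hold uniformly. The box has sides $\asymp X^{\kappa/3}$ and $\asymp X^{\kappa/2}$, and the set of residues $(A,B)\bmod m$ with $m\mid F(A,B)$ is a union of roughly $m\cdot m^{o(1)}$ classes out of $m^2$; equidistribution of the box over these classes requires $m$ to be well below the shorter side $X^{\kappa/3}$. Once $m$ exceeds $X^{\kappa/3}$, the boundary term in the lattice count is on the order of $X^{\kappa/2+\epsilon}$ per value of $m$, and summing this over $O(X^{\kappa/2})$ squarefull $m\leq X^\kappa$ gives a total $\gg X^\kappa$, which overwhelms the main term. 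But squarefull $m$ as large as $\Delta(E)\ll X^\kappa$ genuinely occur, so you cannot truncate the range. This is exactly the phenomenon the authors flag in the introduction: the Ekedahl sieve ``falls short of what is needed since our curves have much larger height than in the previous case.'' Proposition \ref{propelemufec} in the paper is also a signal: the uniform bound for $p^2\mid\Delta(A,B)$ within a box of height $X$ is $O(X^{5/6}/p^{3/2})$, not $O(X^{5/6}/p^2)$, precisely because of this boundary loss.

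The ``hard part'' is not a proof. You write that ``a careful optimization over these joint scales \ldots yields the polynomial saving,'' but you give no mechanism by which the squarefull sieve produces additional saving; the sieve exponent $\delta/2$ is fixed by the condition $\Delta_2>X^\delta$ and does not improve under a dyadic decomposition in $(C,\Delta)$. Moreover, the threshold $\kappa<7/4$ does not emerge from anything in your argument; you invoke it, but nothing you describe uses it. The regime $\delta\leq 5(\kappa-1)/3$ (which for $\kappa$ near $7/4$ includes all $\delta\leq 5/4$) is left with no argument at all.

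For comparison, the paper's proof has an entirely different architecture. It localizes the index $\Delta/C$ via Kodaira symbols (Theorem \ref{propcasessp}), and converts the hypothesis $\Delta(E)>X^\delta C(E)$ into statements about the local invariants $Q_p(E)$ and $D_p(E)$. Two complementary counting tools are then used: (i) Theorem \ref{thm:equimain}, a Poisson-summation/Fourier bound on the number of $f\in U(\Z)$ with prescribed Kodaira data at a finite set of primes; and (ii) Theorem \ref{qinvmt}, a geometry-of-numbers bound obtained by embedding $U_0(\Z)^\min$ into reducible binary quartic forms and using the extra $Q$-invariant together with the Ekedahl sieve applied \emph{after} the fibration over roots, where the error is controllable. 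These are combined in Proposition \ref{prop45cb}, whose two alternative bounds are traded off in \eqref{eq6mtb}--\eqref{eq6atb}; the cross terms $X^{1/4}Y^{1/3}$ and $Y^{2/3}X^{-1/3}$ are sub-$X^{5/6}$ precisely when $\kappa<7/4$, which is where the threshold comes from. None of this is present in your sketch, and the squarefull sieve alone cannot substitute for it.
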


In \cite{bs2sel}, a version of such uniformity estimates were
proved. These estimates were used to obtain asymptotics on the number
of elliptic curves with bounded height and squarefree discriminant, as
well as to compute the average sizes of the $2$-Selmer groups of these
elliptic curves. One main input used in proving these estimates was
the Ekedahl sieve, as developed by Bhargava in
\cite{manjul-geosieve}. For our applications, this sieve falls short
of what is needed since our curves have much larger height than in the
previous case. Indeed, the height of $E\in\E_\sf$ with $C(E)=X$ can be
as large as $X^2$, in which case the Ekedahl sieve gives rise to an
error term of $O(X^{4/3})$ which is much too large. Subsequent
improvements to the Ekeshal sieve by Taniguchi--Thorne
\cite{TaniguchiThorneDistLev}, in which the sieve is combined with
equidistribution methods, are also insufficient for our purposes.

We now describe the proofs of our main theorems. We study the ratios
$\Delta(E)/C(E)$ of elliptic curves $E:y^2=f(x)$ in our families by
considering the associated family of cubic rings $R_f:=\Z[x]/f(x)$ and
cubic algebras $K_f:=\Q[x]/f(x)$ over $\Q$. Let $\cO_f$ denote the ring
of integers of $K_f$. Then $R_f$ is a suborder of $K_f$. Define the
invariants
\begin{equation*}
\begin{array}{rcl}
Q(E)&:=&[\cO_f:R_f]\\[.05in]
D(E)&:=&\Disc(K_f)
\end{array}
\end{equation*}
which satisfy the relation
\begin{equation*}
\Delta(E)=\Disc(R_f)=Q(E)^2D(E).
\end{equation*}
For primes $p$, we let $C_p(E)$, $\Delta_p(E)$, $Q_p(E)$, and $D_p(E)$
denote the $p$-parts of $C(E)$, $\Delta(E)$, $Q(E)$, and $D(E)$,
respectively.  The local invariants $C_p(E)$, $\Delta_p(E)$, $Q_p(E)$,
and $D_p(E)$ depend only on the Kodaira symbol of $E$. The starting
point of our proof is a determination of these local invariants along
with a computation of the density of elliptic curves over $\Q_p$ with
fixed Kodaira symbol.

\begin{thm}\label{propcasessp}
Fix a prime $p\geq 5$ and a Kodaira symbol $T$. Let $E:y^2=f(x)$ be an
elliptic curve over $\Z_p$ such that the Kodaira symbol of $E$ is
$T$. Then the local invariants of $E$ are as given in Table
\ref{tabloc}. Furthermore, there exists an element $t\in\Z_p$ such
that coefficients of $f(x+t)=x^3+ax^2+bx+c$ are as given in the second
column of Table \ref{tabloc}. Finally, the density of all elliptic
curves with Kodaira symbol $T$ is as given in the last column.

\begin{table}[ht]
\centering
\begin{tabular}{|c | c| c|c|c|c|c|}
\hline
Kodaira Symbol & Congruence Condition &
$C_p(E)$ & $\Delta_p(E)$ & $Q_p(E)$ & $D_p(E)$ & Density  \\
 of $E$ &&&&&&\\
\hline
$\I_0$     & $p\nmid\Delta(f)$ & $1$ & $1$ & $1$ & $1$ & $(p-1)/p$\\[.1in]
$\I_{n}$   & $p\nmid a, \quad p^{\lceil n/2\rceil}\mid b, \quad p^{n}\mid\mid c$ &
$p$ & $p^{n}$ & $p^{\lfloor n/2\rfloor}$ & $p^{n\!\!\!\pmod 2}$
&  $(p-1)^2/p^{n+2}$\\[.1in]
$\I\I$       & $p\mid a, \quad p\mid b,\quad p\parallel c$ &
$p^2$ & $p^2$ & $1$ & $p^2$ & $(p-1)/p^3$ \\[.1in]
$\I\I\I$       & $p\mid a, \quad p\parallel b,\quad p^2\mid c$ &
$p^2$ & $p^3$ & $p$ & $p$ & $(p-1)/p^4$ \\[.1in]
$\I\rV$       & $p\mid a, \quad p^2\mid b,\quad p^2\parallel c$ &
$p^2$ & $p^4$ & $p$ & $p^2$ & $(p-1)/p^5$ \\[.15in]

$\I_0^*$     & $p\mid a,\;p^2\mid b,\;p^3\mid c,\;p^7\nmid\Delta(f)$ &
$p^2$ & $p^6$ & $p^3$ & $1$ & $(p-1)/p^6$\\[.1in]
$\I_{n}^*$   &
$p\parallel a, \; p^{\lceil n/2\rceil+2}\mid b, \; p^{n+3}\parallel c$ &
$p^2$ & $p^{n+6}$ & $p^{\lfloor n/2\rfloor+3}$ & $p^{n\!\!\!\pmod 2}$
&  $(p-1)^2/p^{n+7}$\\[.1in]
$\I\rV^*$       & $p^2\mid a, \quad p^3\mid b,\quad p^4\parallel c$ &
$p^2$ & $p^8$ & $p^3$ & $p^2$ & $(p-1)/p^8$ \\[.1in]
$\I\I\I^*$       & $p^2\mid a, \quad p^3\parallel b,\quad p^5\mid c$ &
$p^2$ & $p^9$ & $p^4$ & $p$ & $(p-1)/p^9$ \\[.1in]
$\I\I^*$       & $p^2\mid a, \quad p^4\mid b,\quad p^5\parallel c$ &
$p^2$ & $p^{10}$ & $p^4$ & $p^2$ & $(p-1)/p^{10}$ \\
\hline
\end{tabular}
\caption{Local invariants of small elliptic curves}\label{tabloc}
\end{table}
\end{thm}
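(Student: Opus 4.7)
My plan is to deduce the three claims of Theorem~\ref{propcasessp}---existence of the translation $t$ with prescribed congruences, the local invariants, and the density---by combining Tate's algorithm (particularly clean for $p \geq 5$ when $2, 3 \in \Z_p^\times$) with a direct $p$-adic count. First I would go through the Kodaira symbols one at a time and apply Tate's algorithm to produce, for each type $T$, a translation $t \in \Z_p$ bringing $f$ into the normal form $f(x + t) = x^3 + ax^2 + bx + c$ satisfying the congruences in column two of Table~\ref{tabloc}. For the non-starred types $\I_0, \I_n, \I\I, \I\I\I, \I\rV$, $t$ is a lift of the unique multiple root of $\bar f$ in $\F_p$. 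For the starred types, the preceding conditions on $(a,b,c)$ force $p \mid a$, $p^2 \mid b$, $p^3 \mid c$, permitting the substitution $(a, b, c) \mapsto (a/p, b/p^2, c/p^3)$; reapplying the earlier analyses to the rescaled cubic yields the required congruences. From the normal form, $\Delta_p = p^{v(\disc f)}$ is immediate, and $C_p \in \{1, p, p^2\}$ follows from the standard conductor exponent formula at $p \geq 5$ (no wild contribution).

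Next, to compute $D_p = \Disc(\cO_{K_f})$ I would factor $f$ over $\Q_p$ case by case: $\I_0$ yields an unramified étale algebra with $D_p = 1$; $\I_n$ yields $\Q_p \times F$ where $F$ is a quadratic whose discriminant has valuation $n \bmod 2$ (split, unramified non-split, or ramified, according to the parity of $n$ and the quadratic residue status of the relevant discriminant); the additive types yield either $\Q_p \times (\text{ramified quadratic})$ or a totally ramified cubic, with $D_p$ read off from the valuations in the normal form. The index $Q_p$ is then forced by the relation $\Delta_p = Q_p^2 D_p$.

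For the density, I would count in $(A, B) \in \Z_p^2$. For $\I_n$, iterated Hensel lifting of $\Delta(A,B) = -4A^3 - 27B^2$, starting from the $p-1$ nonzero residue classes in $\F_p^2$ on which $\Delta \equiv 0 \pmod p$, contributes a factor of $1/p$ at each level up through level $n$, together with $(p-1)/p$ at level $n+1$ to enforce $v(\Delta) = n$ exactly, yielding $(p-1)^2/p^{n+2}$. For the starred types, $(A, B)$ must lie in $p^2\Z_p \times p^3\Z_p$, and the Kodaira type of $E_{A,B}$ coincides with that of the quadratic twist of the rescaled curve $E_{A/p^2, B/p^3}$; the twist correspondence under the uniformizer is $\I_0 \leftrightarrow \I_0^*$, $\I_n \leftrightarrow \I_n^*$, $\I\I \leftrightarrow \I\rV^*$, $\I\I\I \leftrightarrow \I\I\I^*$, $\I\rV \leftrightarrow \I\I^*$, with overall measure contraction $p^{-5}$, and the listed starred densities follow.

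The main obstacle is the first step for the $\I_n$ family, where verifying that the conditions $v(b) \geq \lceil n/2 \rceil$ and $v(c) = n$ characterize Kodaira $\I_n$ requires a uniform analysis of the Newton polygon of $f$ at the double root, covering the three subcases (split, unramified non-split, and ramified quadratic extension of $\Q_p$ containing the two small roots of $f$) all at once. Once this structural claim is established, the remaining assertions reduce to routine bookkeeping.
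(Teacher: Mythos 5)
Your proposal is correct and captures all the structural ideas of the proof: translation to a normal form to read off invariants, case-by-case factorization of $f$ over $\Q_p$ to compute $D_p(E)$ (with $Q_p(E)$ then forced by $\Delta_p = Q_p^2 D_p$), and the twist-by-$p$ correspondence with measure contraction $p^{-5}$ to handle all starred types at once. The differences from the paper's argument are genuine but complementary. First, where you invoke Tate's algorithm to produce the translation $t$ and identify the Kodaira symbol, the paper instead runs a self-contained analysis directly on the roots $\alpha_1,\alpha_2,\alpha_3 \in \overline{\Q}_p$ (Proposition~\ref{propKSloc}), centering via $t=(\alpha_1+\alpha_2+\alpha_3)/3$ or a variant and reading off the valuations of $a,b,c$ as elementary symmetric functions; this sidesteps the Newton-polygon subcase analysis you flag as the main obstacle, since the valuation of each root is pinned down by a short Galois argument. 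Second, for the $\I_n$ density you propose iterated Hensel lifting of $\Delta(A,B)$ starting from the $p-1$ smooth $\F_p$-points of the cuspidal cubic $4A^3+27B^2=0$; this works and gives $(p-1)^2/p^{n+2}$, but the paper instead parametrizes $U(\Z_p)^{(\I_n)}$ explicitly by $\big((x+\beta)^2+\gamma\big)(x-\alpha)$ with $p^n\parallel\gamma$, $p\nmid(\alpha+\beta)$, checks the Jacobian is a unit, and reads off the density directly; the latter has the advantage of simultaneously exhibiting the normal form in column two and the maximality of the associated cubic order via the \cite{bstcubic} criterion, which your route would have to verify separately when computing $D_p$. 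Your count of the smooth $\F_p$-points as $p-1$ and the observation that the discriminant locus is smooth away from $(0,0)$ are both exactly what makes the Hensel-lifting argument go through, so the density computation you outline is sound. One small caution: when you say the discriminant of the quadratic factor $F$ in the $\I_n$ case has valuation $n \bmod 2$, that statement holds for the \emph{maximal} order $\cO_F$ (it is $0$ or $1$ depending on ramification, which is controlled by the parity of $n$), not for the order $\Z_p[x]/(g(x))$ cut out by the quadratic factor itself, whose discriminant valuation is $n$; the paper makes this distinction explicit by passing to the maximal cubic order via a binary cubic form.
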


These density computations are straightforward, and indeed many of
them are implicit in the work of Watkins \cite[\S
  3.2]{watkins-heuristics}. However we include a proof since our use
of a $\G_a$-action on the space of monic cubic polynomials simplifies
the computations.

We use three different techniques to prove the estimates of Theorems
\ref{thunifsqi} and \ref{thunifsmind}. First, we fix a prime $p\geq 5$
and a Kodaira symbol $T$. The set of elliptic curves that have Kodaira
symbol $T$ at $p$ is cut out by certain congruence conditions $S$
modulo $q$, some power of $p$. Working modulo $q$, we compute the
Fourier transform of the characteristic function of $S$. An
application of Poisson summation then yields baseline estimates for
the number of elliptic curves with bounded height having Kodaira
symbol $T$ at $p$.

Our next two techniques average over primes $p$ in a crucial
way. Suppose that $E:y^2=f(x)$ is an elliptic curve in $\E_\sf$ such
that the ratio $\Delta(E)/C(E)$ is large. Then we prove that either
the discriminant of the algebra $K_f$ is small, or that the shape of
the ring of integers $\cO_f$ of $K_f$ is very skewed. The work of
Bhargava and Harron \cite{bhargavaharron} proves that the shapes of
rings of integers are equidistributed in the family of cubic
fields. Furthermore, the forthcoming thesis of Chiche-Lapierre
\cite{Lapierrethesis} determines asymptotics for the number of cubic
fields such that the shapes of their ring of integers are constrained
to lie within $0$-density sets. Using ideas from these works, we prove
bounds on the number of possible cubic algebras $K_f$ corresponding to
elliptic curves in $\E_\sf$ with bounded conductor, along with bounds
on the average sizes of the $2$-torsion subgroups $\Cl_2(K_f)$ of the
class groups of $K_f$. In combination with the work of Brumer--Kramer
\cite{brumerkramer}, relating the size of $\Sel_2(E)$ to
$\#\Cl_2(K_f)$, we deduce Theorem \ref{thunifsqi}.

The above method exploits the following crucial fact. If $E:y^2=f(x)$
is an elliptic curve such that the ratio $\Delta(E)/C(E)$ is large,
then primes $p$ such that the Kodaira symbol of $E$ at $p$ is $\I_0$,
$\I_1$, $\I_2$, $\I\I$, or $\I\I\I$ impose archimedean constraints on
the algebras $K_f$. However, primes $p$ with Kodaira symbol $\I\rV$ or
$\I_n$ with $n\geq 3$ impose only $p$-adic conditions on
$R_f\hookrightarrow K_f$. Namely, the prime $p$ divides the gcd of
$Q(E)$ and $D(E)$. To exploit this, we proceed as follows. The set of
integer monic traceless cubic polynomials $f$ with $p\mid Q(E_f)$
embeds into the space of binary quartic forms with a rational linear
factor. This embedding $\sigma$ is defined in \eqref{eqsig}. The group $\PGL_2$ acts on the space of binary quartic
forms, and the ring of invariants for this action is freely generated
by two polynomials $I$ and $J$. Restricted to the space of reducible
binary quartic forms gives an additional invariant $Q$. Explicitly, if
$g(x,y)$ is a binary quartic form with coefficients in $\Q$, and
$g(\alpha,\beta)=0$, then define
\begin{equation*}
Q(g(x,y),[\alpha:\beta])=\frac{g(x,y)}{\beta x-\alpha y}(\alpha,\beta).
\end{equation*}
This new invariant $Q$ is an exact analogue of the $Q$-invariants used
in \cite{BSWsf1},\cite{BSWsf2} to compute the density of polynomials with
squarefree polynomials. As there, for every fixed root
$[\alpha:\beta]\in\P^1(\Z)$, the discriminant polynomial on the space
of integer binary quartic forms $g$ with $g(\alpha,\beta)$ is
reducible, and in fact divisible by $Q^2$. We also define
\begin{equation*}
D(g(x,y),[\alpha:\beta]):=\Delta(g)/(Q(g(x,y),[\alpha:\beta]))^2.
\end{equation*}
Our embedding $\sigma$ satisfies $Q(E)=Q(\sigma(E))$ and
$D(E)=D(\sigma(E))$. Then the required estimates on elliptic curves
$E\in\E_\kappa$ with large $\Delta(E)/C(E)$, translate to estimates on
the number of $\PGL_2(\Z)$-orbits on integral reducible binary quartic
forms with bounded height and large $Q$- and $D$-invariants. We prove
the required estimates by fibering over roots, and then combining
geometry of numbers methods with the Ekedahl sieve.

This paper is organized as follows. In \S2 and \S3, we work locally,
one prime at a time. Theorem \ref{propcasessp} is proved in \S2, while
the Fourier coefficients corresponding to a fixed Kodaira symbol are
computed in \S3. The computation of the Fourier coefficients are then
used to obtain estimates (see Theorem \ref{thm:equimain}) on curves
with fixed Kodaira symbols at finitely many primes. We prove bounds on
the number of cubic fields $K$, weighted by $|\Cl_2(K)|$, in \S4, and
obtain estimates on the number of reducible integer binary quartic
forms with large $Q$- and $D$-invariants in \S5. The results of \S3,
\S4, and \S5, are combined in \S6 to prove the uniformity estimates
Theorems \ref{thunifsqi} and \ref{thunifsmind}. Finally, in \S7, we
prove the main results Theorems \ref{thmmain} and \ref{thmsel}.

\subsection*{Acknowledgments}
It is a pleasure to thank Manjul Bhargava, Benedict Gross, Hector
Pasten, Peter Sarnak, and Jacob Tsimerman for many helpful
conversations. The second named author is supported an NSERC Discovery
Grant and a Sloan Fellowship. The third named author is supported by
an NSERC Discovery Grant.

\section{Reduction types of elliptic curves}

Throughout this section, we fix a prime $p\geq 5$. Let $U$ denote the
space of monic cubic polynomials. Then for any ring $R$, we have
\begin{equation*}
U(R)=\{x^3+ax^2+bx+c:a,b,c\in R\}.
\end{equation*}
We denote the space of traceless elements of $U$ (i.e., $a=0$ in the
above equation) by $U_0$. The group $\G_a$ acts on $U$ via $(t\cdot
f)(x)=f(x+t)$. Given any element $f\in U(\Z_p)$, there exists a
unique element $\gamma\in\Z_p$ such that $f_0(x)=(\gamma\cdot f)(x)$
belongs to $U_0(\Z_p)$. Thus we may identify the quotient space $\Z_p\backslash
U(\Z_p)$ with $U_0(\Z_p)$. We denote the Euclidean measures on
$U(\Z_p)$ and $U_0(\Z_p)$ by $dg=da\,db\,dc$ and $df=db\,dc$,
respectively, where $da$, $db$ and $dc$ are Haar measures on $\Z_p$
normalized so that $\Z_p$ has volume $1$.
%Let $dt$ denote the
%Haar-measure on $\Z_p$, normalized so that $\Z_p$ has volume $1$.
Then the change of measure formula for the bijection
\begin{equation}\label{eqzpaction}
\begin{array}{rcl}
\Z_p\times U_0(\Z_p)&\to& U(\Z_p) \\[.05in]
(t,f(x))&\mapsto& g(x)=(t\cdot f)(x)=f(x+t)
\end{array}
\end{equation}
is $dt\,df=dg$, where $dt$ is again the Haar measure on $\Z_p$ normalized
so that $\Z_p$ has volume $1$.

Given an element $f(x)\in U(\Z_p)$ such that the discriminant
$\Delta(f)$ is nonzero, we consider the elliptic curve $E_f$ over
$\Q_p$ with affine equation $y^2=f(x)$. An element $f(x)\in U(\Z_p)$
with nonzero discriminant is said to be {\it minimal} if
$\Delta(f)=\Delta(E_f)$. Equivalently, $f(x)$ is minimal if
$f_0(x)=x^3+Ax+B$, the unique element in $U_0(\Z_p)$ in the
$\Z_p$-orbit of $f$, does not satisfy $p^4\mid A$ and $p^6\mid
B$. Another equivalent condition is that the roots of $f_0(x)$ are not
all multiples of $p^2$. We denote the set of minimal elements in
$U(\Z_p)$ by $U(\Z_p)^\min$, and denote $U(\Z_p)^\min\cap U_0(\Z_p)$ by
$U_0(\Z_p)^\min$. The map $f\mapsto E_f$ is then a natural surjective
map from $\Z_p\backslash U(\Z_p)^\min$ (equivalently $U_0(\Z_p)^\min$)
to the set of isomorphism classes of elliptic curves over $\Q_p$.

The twisting-by-$p$ map is a natural involution on the set of
isomorphism classes of elliptic curves over $\Q_p$. This yields a
natural involution $\sigma$ on $\Z_p\backslash U(\Z_p)^\min$. If $f\in
U(\Z_p)^\min$ such that $f_0(x) = x^3+Ax+B$ with $p^2\nmid A$ or
$p^3\nmid B$, then we say $f$ is {\it small} and in this case,
$\sigma(f)_0(x) = \sigma(f_0)(x)=x^3 + p^2Ax + p^3B$. Otherwise, if $f_0(x) =
x^3+Ax+B$ with $p^2\mid A$ and $p^3\mid B$, then we say $f$ is {\it
  large} and in this case, $\sigma(f)_0(x)=\sigma(f_0)(x) = x^3 +
p^{-2}Ax + p^{-3}B$. We have $\Delta(E_{\sigma(f)}) = p^6\Delta(E_f)$
if $f$ is small and $\Delta(E_{\sigma(f)}) = p^{-6}\Delta(E_f)$
otherwise. Let $U(\Z_p)^\sm$ denote the set of small elements $f\in U(\Z_p)$.

Let $E$ be an elliptic curve over $\Q_p$, and let $\mathscr{X}$ be a
minimal proper regular model of $E$ over $\Z_p$. For brevity, we will
say that $T$, the Kodaira symbol associated to the special fiber of
$\mathscr{X}$, is the Kodaira symbol of $E$. Define the {\it index} of
$E$ by $\ind(E):=\Delta(E)/C(E)$. Then the index of $E$ is $1$ if and
only if the Kodaira symbol of $E$ is $\I_0$ (when $E$ has good
reduction), $\I_1$, or $\I\I$. Given $f\in U(\Z_p)^\min$, we define
the {\it index} of $f$ to be $\ind(f):=\ind(E_f)$. We also define two
other invariants associated to elements $f\in U(\Z_p)^\min$. Let $K_f$
denote the cubic etal\'e algebra $K_f:=\Q_p[x]/f(x)$, let $\cO_f$
denote the ring of integers of $K_f$, and let $R_f$ denote the cubic
ring $\Z[x]/f(x)$. We define
\begin{equation*}
\begin{array}{rcl}
Q_p(f)&:=&[\cO_f:R_f];\\[.05in]
D_p(f)&:=&\Disc(K_f).
\end{array}
\end{equation*}
These quantities are clearly invariant under the action of $\Z_p$ on
$U(\Z_p)$ and satisfy the equation
$$\Delta(f)=\Delta(R_f)=D_p(f)Q_p(f)^2.$$

The next result gives a criterion for $f\in U(\Z_p)$ to be small in
terms of the Kodaira symbol of $E_f$.

\begin{proposition}\label{propKSloc}
Let $f(x)\in U(\Z_p)^\min$ be a monic cubic polynomial corresponding
to the elliptic curve $E_f$. Then $f$ is small if and only if the
Kodaira symbol of $E_f$ is $\I_n$ for $n\geq 1$, $\I\I$, $\I\I\I$, or
$\I\rV$.
\end{proposition}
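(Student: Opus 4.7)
The plan is a direct case-by-case verification building on Theorem \ref{propcasessp}. Writing $f_0(x) = x^3 + Ax + B$ for the unique traceless representative in the $\Z_p$-orbit of $f$, ``$f$ is small'' is by definition the condition $v_p(A) \leq 1$ or $v_p(B) \leq 2$, so the claim amounts to showing that the Kodaira symbols in $\{\I_0^*, \I_n^*, \I\rV^*, \I\I\I^*, \I\I^*\}$ are characterized by $v_p(A) \geq 2$ and $v_p(B) \geq 3$. The $\I_0$ (good reduction) case is then immediate: having both $v_p(A) \geq 2$ and $v_p(B) \geq 3$ would force $v_p(-4A^3 - 27B^2) \geq 6$, contradicting $v_p(\Delta) = 0$.

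To handle the remaining cases I would use Theorem \ref{propcasessp} to produce, for each Kodaira symbol $T$, a translate $\tilde f(x) = f(x+t) = x^3 + ax^2 + bx + c$ with coefficient valuations as recorded in Table \ref{tabloc}. Since $p \geq 5$, the element $-a/3$ lies in $\Z_p$, and translating once more yields
\begin{equation*}
A = b - \frac{a^2}{3}, \qquad B = c - \frac{ab}{3} + \frac{2a^3}{27},
\end{equation*}
so $v_p(A)$ and $v_p(B)$ can be read off directly from the valuations of $a, b, c$.

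What remains is to plug each row of Table \ref{tabloc} into these two formulas. For each unstarred bad type I expect a single dominating term to force smallness: for $\I_n$ with $n \geq 1$, $a$ is a unit so $v_p(A) = 0$; for $\I\I$, $v_p(c) = 1$ while the other contributions to $B$ have valuation at least $2$, giving $v_p(B) = 1$; for $\I\I\I$, $v_p(b) = 1$ and $v_p(a^2/3) \geq 2$, giving $v_p(A) = 1$; for $\I\rV$, $v_p(c) = 2$ and the remaining terms in $B$ have valuation at least $3$, giving $v_p(B) = 2$. Conversely, for each of the five starred types the stronger constraints of Table \ref{tabloc} force every term in the expression for $A$ to have valuation at least $2$ and every term in $B$ to have valuation at least $3$. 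The only thing to track is how valuations behave under the translation by $-a/3$, which is clean thanks to $p \geq 5$ making $3$ and $27$ units; there is no substantive obstacle, and the proof is a routine mechanical check.
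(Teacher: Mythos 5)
Your proof is circular. You invoke Theorem \ref{propcasessp} — specifically, the existence for each Kodaira symbol of a translate $f(x+t)=x^3+ax^2+bx+c$ with the coefficient valuations in the second column of Table \ref{tabloc} — as the input to your case check. But the paper proves Theorem \ref{propcasessp} by \emph{first} applying Proposition \ref{propKSloc}: the opening line of that proof is ``By Proposition~\ref{propKSloc}, the associated Kodaira symbol is $\I_n$, $\I\I$, $\I\I\I$, or $\I\rV$,'' and the congruence conditions you want to use are derived only after this reduction. So citing Table \ref{tabloc} here assumes the conclusion.

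The paper's argument is self-contained modulo standard facts (Silverman's Table 4.1 relating reduction type and discriminant valuation to the Kodaira symbol). The easy direction is elementary: if $f$ is not small then $p^2\mid A$ and $p^3\mid B$, so $p^6\mid\Delta(f)$ and the reduction is additive, which excludes $\I_0$, $\I_n$, $\I\I$, $\I\I\I$, and $\I\rV$ (the last three because $\nu_p(\Delta)\leq 4<6$). The converse is where the work is: one fixes $f\in U(\Z_p)^{\sm}$, analyzes the valuations of the roots $\alpha_1,\alpha_2,\alpha_3$ over $\overline{\Q}_p$ according to the factorization type of $f$, and reads off the Kodaira symbol from the reduction type together with $\nu_p(\Delta)$. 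Your translation formulas $A=b-a^2/3$, $B=c-ab/3+2a^3/27$ and the row-by-row valuation bookkeeping are all correct and would finish the argument cleanly — but only if you first establish the Table \ref{tabloc} congruences independently of Theorem \ref{propcasessp}, e.g.\ by running Tate's algorithm for each type. As written, the computation rests on a result that is logically downstream of the proposition you are trying to prove.
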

\begin{proof}
We first note that if the Kodaira symbol of $E_f$ is $\I\I$, $\I\I\I$,
or $\I\rV$, then $f$ is small since the discriminant is less than
$p^6$. If $f$ is not small, then $E_f$ has additive reduction. Hence
if the Kodaira symbol of $E_f$ is $\I_n$, then $f$ is small.

Conversely, we start with fixing an element $f\in U(\Z_p)^\sm$.  Let
$\alpha_1,\alpha_2,\alpha_3$ denote the three roots of $f(x)$ over
$\overline{\Q}_p$, the Galois closure of $\Q_p$, and let $\nu_p$
denote the $p$-adic valuation on $\overline{\Q}_p$.  We now consider
the following four cases: $f(x)$ is irreducible over $\Q_p$; $f(x)$
factors as a product of a linear and a quadratic factor over $\Q_p$,
and moreover $E_f$ has additive reduction; $f(x)$ factors as a product
of a linear and a quadratic polynomial over $\Q_p$, and moreover $E_f$
has multiplicative reduction; $f(x)$ factors into the product of three
distinct linear polynomials over $\Q_p$. In what follows, we will
repeatedly use \cite[Table 4.1]{silvermanEC2} to determine the Kodaira
symbol of $E_f$ from its reduction type and discriminant.

First suppose $f(x)$ is irreducible over $\Q_p$. The absolute Galois
group of $\Q_p$ acts transitively on $\alpha_1,\alpha_2,\alpha_3$. Let
$\sigma$ be an element sending $\alpha_1$ to $\alpha_2$. If
$\sigma(\alpha_3) = \alpha_3$, then $\sigma(\alpha_1 - \alpha_3) =
\alpha_2 - \alpha_3$. If $\sigma(\alpha_3) = \alpha_1$, then
$\sigma(\alpha_2 - \alpha_3) = \alpha_3 - \alpha_1$. Hence in either
case $$\nu_p(\alpha_1 - \alpha_3) = \nu_p(\alpha_2 - \alpha_3).$$
Similarly, we have $\nu_p(\alpha_1 - \alpha_3) = \nu_p(\alpha_1 -
\alpha_2)$. Let $m\in\frac13\Z$ be their common value. Let $t =
(\alpha_1 + \alpha_2 + \alpha_3)/3 \in\Z_p$. Then replacing $\alpha_i$
by $\alpha_i - t$, we may assume $\nu_p(\alpha_i)\geq m$ for $i =
1,2,3$. On the other hand, $\nu_p(\alpha_1 -
\alpha_2)\geq\max\{\nu_p(\alpha_1),\nu_p(\alpha_2)\}$. Hence
$\nu_p(\alpha_i) = m$ for $i = 1,2,3.$. Since $f$ is integal and small,
we have $0\leq m<1$.

If $m = 0$, then $E_f$ has good reduction at $p$, and the Kodaira
symbol is $\I_0$. If $m=1/3$, then $E_f$ has additive reduction at $p$
and $\nu_p(\Delta(E_f))=2$. This implies that the Kodaira symbol is
$\I\I$.  Finally, if $m = 2/3$, then $E_f$ has additive reduction and
$\nu_p(\Delta(E_f))=4$. It follows that the Kodaira symbol is $\I\rV$.

Next suppose that $f(x)$ factors as a product of a linear and a
quadratic factor over $\Q_p$, and that $E_f$ has additive reduction.
Let $\alpha_1$ denote the root of the linear factor and let $\alpha_2$
and $\alpha_3$ denote the conjugate roots of the quadratic
factor. Then $\alpha_1,\alpha_2,\alpha_3$ are congruent modulo
$p^{1/2}$. Let $t = (\alpha_1 + \alpha_2 + \alpha_3)/3$ as
above. Replacing $\alpha_i$ by $\alpha_i-t$, we may
assume $$\nu_p(\alpha_1)\geq 1,\quad\nu_p(\alpha_2) = \nu_p(\alpha_3) =
\frac12.$$ The latter equality holds because if $p\mid\alpha_2$, then
$f$ is not small. Since $\alpha_2$ and $\alpha_3$ are roots of a quadratic
polynomial $q(x)$ with $\Z_p$ coefficients, we have $\alpha_2 +
\alpha_3\in p\Z_p$. Hence
$$\nu_p(\alpha_2 - \alpha_3) = \frac12.$$ Clearly, $\nu_p(\alpha_1 -
\alpha_2) = \nu_p(\alpha_1 - \alpha_3) = 1/2$. Hence, $E_f$ has additive
reduction and $\nu_p(\Delta(E_f) = 3$. This implies that the Kodaira
symbol is $\I\I\I$.

The third case follows immediately: since $E_f$ is assumed to have
multiplicative reduction, the Kodaira symbol is $\I_n$ for some $n\geq
1$, which is sufficient. Finally, for the fourth case, suppose that
$f$ factors into a product of three linear polynomials over $\Q_p$. If
the $\alpha_i$ are all congruent modulo $p$, then replacing each
$\alpha_i$ by $\alpha_i-\alpha_1$, we see that that $f$ is not
small. Hence $E_f$ does not have additive reduction, and the Kodaira
symbol is again $\I_n$ for some $n\geq 0$. This conclude the proof of
the proposition.
\end{proof}

%Next, we embark upon the proof of Theorem \ref{}. We will proceed in
%stages. First we determine the local invariants of elements $f\in
%U(\Z_p)^\sm$ in the following proposition.

%\begin{proposition}\label{propcasessp}
%The $\Z_p$-orbit of every polynomial $f(x)\in U(\Z_p)^\sm$ contains an
%element $x^3 + ax^2 + bx + c\in\Z_p[x]$ satisfying the properties of
%one of the rows in Table \ref{tabloc}. Furthermore, the local
%invariants of $E_f$ are as given in Table \ref{tabloc}.
%\end{proposition}

Next, we prove Theorem \ref{propcasessp}.

\vspace{.1in}

\noindent {\bf Proof of Theorem \ref{propcasessp}:} We start by
assuming that $E=E_f$ corresponds to $f\in U(\Z_p)^\sm$. By
Proposition~\ref{propKSloc}, the associated Kodaira symbol is $\I_n$,
$\I\I$, $\I\I\I$, or $\I\rV$. We will begin with verifying the second
through sixth columns of the Table \ref{tabloc}, leaving the density
computation to Proposition \ref{propKSden}. The result is clear if
$E_f$ has good reduction, which happens precisely when
$\Delta_p(E_f)=1$.

First assume that $E_f$ has additive reduction, in which case
$C_p(E)=p^2$. Then the Kodaira symbol of $E_f$ is $\I\I$, $\I\I\I$, or
$\I\rV$, according to whether $\Delta_p(E_f)$ is $p^2$, $p^3$, or $p^4$,
respectively. Replacing $f(x)$ with a $\Z_p$-translate, if necessary,
we may assume that $f(x)\equiv x^3\pmod{p}$. Write
$f(x)=x^3+pa_1x^2+pb_1x+pc_1$ with
$a_1,b_1,c_1\in\Z_p$. Then $$\Delta(f)\equiv
4p^3b_1^3-27p^2c_1^2+18p^3a_1b_1c_1\pmod{p^4}$$ and $p^2\parallel
\Delta(f)$ if and only if $p\nmid c_1$. In that case, the paragraph
following Lemma 13 in \cite{bstcubic} implies that $R_f$ is the
maximal order of $K_f$. This confirms the second through sixth columns
in the case when the Kodaira symbol is $\I\I$.

If $p^3\mid\Delta(f)$, then $p\mid c_1$. We write $c_1=pc_2$ for some
$c_2\in\Z_p$, and then $\Delta(f)\equiv 4p^3b_1^3\pmod{p^4}$. Hence
$p^3\parallel \Delta(f)$ if and only if $p\nmid b_1$. Suppose $p\nmid
b_1$. Then $R_f$ is a suborder of index $p$ in the cubic ring $\cO$
corresponding to the binary cubic form
$px^3+pa_1x^2y+b_1xy^2+c_2y^3$. The ring $\cO$ is maximal (from
\cite{bstcubic} as before) with $\Delta_p(\cO)=p$, confirming the
values of $Q_p$ and $D_p$ when the Kodaira symbol is
$\I\I\I$. Finally, suppose that $p\mid b_1$ and write $b_1=pb_2$. Then
$f(x)=x^3+pa_1x+p^2b_2+p^2c_2$, and since $f$ is small, we have
$p\nmid c_2$. In this case, we see that $\Delta_p(f)=p^4$ and that
$R_f$ is a suborder of index $p$ of the maximal order $\cO$
corresponding to the binary cubic form $px^3+pax^2y+pb_2xy^2+c_2y^3$
with $\Delta_p(\cO)=p^2$. This confirms the second through sixth
columns of Table \ref{tabloc} in the case when $E_f$ has additive
reduction and $f\in U(\Z_p)^\sm$.

Next, assume that $E_f$ has multiplicative reduction. From the proof
of Proposition \ref{propKSloc}, it follows that $f(x)$ is not
irreducible over $\Q_p$. Suppose that $f(x)$ factors into a product of
a quadratic $q(x)$ and a linear polynomial $\ell(x)$ over $\Q_p$ and
that $f(x)$ has splitting type $(1^21)$. Let $\alpha_1$ denote the
root of the linear factor and let $\alpha_2$ and $\alpha_3$ denote the
conjugate roots of the quadratic factor. Let $t = (\alpha_2 +
\alpha_3)/2\in\Z_p$. Replacing $\alpha_i$ by $\alpha_i-t$, we may
assume $$\nu_p(\alpha_1) = 0,\quad \nu_p(\alpha_2) = \lambda,\quad
\alpha_3 = -\alpha_2$$ for some positive
$\lambda\in\frac{1}{2}\Z$. Then $\nu_p(\alpha_2 - \alpha_3) =
\nu_p(2\alpha_2) = \lambda$. Clearly, $\nu_p(\alpha_1 - \alpha_2) =
\nu_p(\alpha_1 - \alpha_3) = 0$. Thus, $\Delta_p = p^{2\lambda}$ and
the Kodaira symbol of $E_f$ is $\I_{2\lambda}$. Moreover, the
coefficients $a$, $b$, and $c$ of $f$ satisfy
\begin{eqnarray*}
\nu_p(a) &=& \nu_p(\alpha_1 + \alpha_2 + \alpha_3) = \nu_p(\alpha_1) = 0,\\
\nu_p(b) &=& \nu_p(\alpha_1(\alpha_2 + \alpha_3) + \alpha_2\alpha_3) = 2\lambda,\\
\nu_p(c) &=& \nu_p(\alpha_1\alpha_2\alpha_3) = 2\lambda.
\end{eqnarray*}
The cubic order $\Z_p[x]/(f(x))$ is a suborder of index
$p^{\lfloor\lambda\rfloor}$ of the cubic order associated to the
binary cubic form
\begin{equation*}
p^{\lfloor\lambda\rfloor}x^3 + ax^2y +
(b/p^{\lfloor\lambda\rfloor})xy^2 + (c/p^{2\lfloor\lambda\rfloor})y^3,
\end{equation*}
which is maximal since its discriminant is $1$ when $\lambda$ is an
integer and $p$ when $\lambda$ is a half integer. Hence we have
$Q_p(E_f)= p^{\lfloor\lambda\rfloor}$ and $D_p = p^{2\lambda\pmod{2}}$ as
necessary.

%If $t'\in \Z_p$ such that the coefficients of $f(x + t')$
%also satisfy the divisibility conditions of \eqref{eq121case1} or
%\eqref{1^21}, then $\nu(t'-t)\geq \lceil \lambda \rceil$ in order to
%keep the valuations of the roots the same. Hence the choice of $t$ is
%unique modulo $p^m$ for type $M1_m$ and modulo $p^{m+1}$ for type
%$M2_m$.

Suppose instead that $f(x)$ factors as a product of three linear
polynomials over $\Q_p$. By assumption, the three roots $\alpha_1$,
$\alpha_2$, and $\alpha_3$ of $f(x)$ in $\Z_p$ are not all congruent
modulo $p$. After renaming, suppose $\alpha_2$ and $\alpha_3$ are
congruent modulo $p$ and $\alpha_1$ is not congruent to them. Let $t =
2\alpha_3-\alpha_2\in\Z_p$. Replacing $\alpha_i$ by $\alpha_i-t$, we
may assume $\alpha_1$ is a unit and $\alpha_2 = 2\alpha_3$. That
is, $$\nu_p(\alpha_1) = 0,\quad \nu_p(\alpha_2) = \nu_p(\alpha_3) =
\lambda,$$ for some positive integer $\lambda\in\Z$. Thus,
$\Delta_p(f) = p^{2\lambda}$, which implies that the Kodaira symbol of $E_f$
is $\I_{2\lambda}$. As a consequence, the coefficients $a$, $b$, and
$c$ of $f$ satisfy
\begin{eqnarray*}
\nu_p(a) &=& \nu_p(\alpha_1 + \alpha_2 + \alpha_3) = 0,\\
\nu_p(b) &=& \nu_p(\alpha_1(\alpha_2+\alpha_3) + \alpha_2\alpha_3) = \lambda,\\
\nu_p(c) &=& \nu_p(\alpha_1\alpha_2\alpha_3) = 2\lambda.
\end{eqnarray*}
The cubic order $\Z_p[x]/(f(x))$ is a suborder of index $p^{\lambda}$
of the cubic order associated to the binary cubic form $p^\lambda x^3
+ ax^2y + (b/p^\lambda)xy^2 + (c/p^{2\lambda})y^3$, which is maximal
since its discriminant is $1$. Therefore, $Q_p(E_f) = p^\lambda$ and
$D_p(E_f) = 1$ as required.

We now turn to large elliptic curves. Let $E$ be a large elliptic
curve over $\Z_p$. Let $E'$ denote the twist of $E$ by $p$. Then the
Kodaira symbol of $E'$ is $\I_n$, $\I\I$, $\I\I\I$, or $\I\rV$,
depending on whether the Kodaira symbol of $E$ is $\I_n^*$, $\I\rV^*$,
$\I\I\I^*$, or $\I\I^*$, respectively. Let $y^2=f(x)$ be a model for
$E'$, where the coefficients of $f(x)=x^3+ax^2+bx+c$ satisfy the
congruence conditions of Table \ref{tabloc}. Then
$y^2=g(x)=x^3+pax^2+p^2bx+p^3c$ is a model for $E$. It is then easy to
check that the second column of Table \ref{tabloc} is correct for all
ten rows. Furthermore, $K_g=K_f$ and $R_g$ has index $p^3$ in
$R_f$. It follows that the local invariants of $E$ are as in
Table~\ref{tabloc}. Theorem \ref{propcasessp} follows the density
computations in the following proposition. $\Box$

\begin{proposition}\label{propKSden}
The density of elliptic curves over $\Z_p$ having a fixed Kodaira
symbol is as in Table \ref{tabloc}.
\end{proposition}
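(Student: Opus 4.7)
The plan is to compute, for each Kodaira symbol $T$, the $p$-adic Haar measure in $U_0(\Z_p)$ of the locus where $E_{f_0}$ has Kodaira symbol $T$, proceeding case-by-case in $(A,B)$-coordinates.

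The first step is to reduce the large Kodaira symbols to the small ones via the twisting-by-$p$ involution $\sigma$. Any large $f_0 \in U_0(\Z_p)^{\min}$ has the form $x^3 + p^2A'x + p^3B'$ for a unique small twist $\sigma(f_0) = x^3 + A'x + B'$; the Kodaira symbols are related by the classical involution $\I_0 \leftrightarrow \I_0^*$, $\I_n \leftrightarrow \I_n^*$, $\I\I \leftrightarrow \I\rV^*$, $\I\I\I \leftrightarrow \I\I\I^*$, $\I\rV \leftrightarrow \I\I^*$ (as follows from $\Delta(p^2A',p^3B') = p^6\Delta(A',B')$), and the substitution scales the Haar measure by $p^{-5}$. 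Hence each starred density equals $p^{-5}$ times that of its unstarred partner, reducing everything to the five small cases.

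For the small symbols I would argue as follows. The case $\I_0$ is the elementary count $\mathrm{Prob}(p \nmid -4A^3-27B^2) = (p-1)/p$. For $\I_n$ with $n \geq 1$, the reduction $\bar f_0$ has a node at some $x_0 \in \F_p^\times$; since $\bar f_0$ is monic and traceless, $\bar f_0 = (x - x_0)^2(x + 2x_0)$, forcing $(A,B) \equiv (-3x_0^2, 2x_0^3) \pmod p$ for one of $p-1$ values of $x_0$. The gradient $\nabla\Delta(-3x_0^2, 2x_0^3) = -108\, x_0^3\, (x_0, 1)$ is a $p$-adic unit vector for $p \geq 5$, so $(-3x_0^2, 2x_0^3)$ is a smooth point of $\{\Delta=0\}$; by the $p$-adic implicit function theorem $\Delta$ can be formally straightened to a single coordinate in a neighborhood. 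Within the coset $(A,B) \equiv (-3x_0^2, 2x_0^3) \pmod p$ the conditional density of $\nu_p(\Delta) = n$ is therefore $(p-1)/p^n$; multiplying by the coset measure $1/p^2$ and summing over the $p-1$ choices of $x_0$ gives total density $(p-1)^2/p^{n+2}$. For the small additive types, $\bar f_0 = x^3$ forces $p \mid A$ and $p \mid B$, and the formula $\Delta = -4A^3 - 27B^2$ identifies $\I\I$ with $\nu_p(A) \geq 1$, $\nu_p(B) = 1$; $\I\I\I$ with $\nu_p(A) = 1$, $\nu_p(B) \geq 2$; and $\I\rV$ with $\nu_p(A) \geq 2$, $\nu_p(B) = 2$, yielding densities $(p-1)/p^3$, $(p-1)/p^4$, and $(p-1)/p^5$ respectively.

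The main step requiring care is invoking the $p$-adic implicit function theorem for the $\I_n$ case, which needs the gradient of $\Delta$ at the node to be a unit modulo $p$; this follows from the formula above and the hypothesis $p \geq 5$. As a consistency check, summing the densities across all ten Kodaira symbols gives $(p-1)\sum_{k=1}^{10} p^{-k} = 1 - p^{-10}$, matching the measure of $U_0(\Z_p)^{\min}$ inside $U_0(\Z_p)$.
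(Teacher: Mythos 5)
Your proposal is correct and follows essentially the same route as the paper: the five small symbols are computed via explicit congruence/valuation conditions on the coefficients, and the starred symbols are deduced from them by the twist-by-$p$ substitution, which rescales the measure by $p^{-5}$. The differences are only in execution: for $\I_n$ you fiber over the node and use the implicit function theorem on residue discs (the derivative $\partial_B\Delta$ being a unit there), where the paper uses the explicit factorization $((x+\beta)^2+\gamma)(x-\alpha)$ with unit Jacobian, and you carry out the twist scaling directly in $(A,B)$-coordinates on $U_0(\Z_p)$, which sidesteps the paper's $\G_a$-orbit bookkeeping in $U(\Z_p)$.
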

\begin{proof}
Let $T$ be a fixed Kodaira symbol. Let $U(\Z_p)^{(T)}$
(resp.\ $U_0(\Z_p)^{(T)}$) denote the set of elements $f \in U(\Z_p)^\min$ (resp.\ $f \in U_0(\Z_p)^\min$) such that
$E_f$ has Kodaira symbol $T$. Then the density of elliptic curves with
Kodaira symbol $T$ is $\Vol(U_0(\Z_p)^{(T)})=\Vol(U(\Z_p)^{(T)})$,
where the equality holds since $\Z_p\cdot
U_0(\Z_p)^{(T)}=U(\Z_p)^{(T)}$ and the Jacobian change of variables of
the map \eqref{eqzpaction} is $1$.

We start with Kodaira symbol $\I_0$. The set $U(\Z_p)^{(\I_0)}$
consists of those $f\in U(\Z_p)$ such that $f(x)\pmod{p}$ has three
distinct roots in $\overline{\F}_p$. Denote these roots by $\alpha_1$,
$\alpha_2$, and $\alpha_3$. Either the $\alpha_i$ all belong to
$\F_p$, or $\alpha_1\in\F_p$ and $\alpha_2,\alpha_3$ are a pair of
conjugate elements in $\F_{p^2}\backslash\F_p$, or the $\alpha_i$ are
conjugate elements in $\F_{p^3}\backslash\F_p$. Thus, we have
\begin{equation*}
\begin{array}{rcl}
\displaystyle\Vol(U(\Z_p)^{(\I_0)})&=&
\displaystyle\frac{p(p-1)(p-2)}{6p^3}
+\frac{p(p^2-p)}{2p^3}+\frac{p^3-p}{3p^3}\\[.2in]
&=&\displaystyle 1-\frac{1}{p},
\end{array}
\end{equation*}
as required.

Second, we consider the Kodaira symbol $\I_n$ for $n\geq 1$. Suppose $f(x)\in U(\Z_p)^{\I_n}$. Then $f(x)$ has exactly
one double root modulo $p$. We therefore have $f(x) = g(x)(x-\alpha)$,
where $g(x)$ has a double root modulo $p$, and $p \nmid
g(\alpha)$. Clearly, we have $\Delta_p(g)=\Delta_p(f)=p^n$, since
$\Delta_p(E_f)=p^n$. We write the quadratic factor $g(x)$ in unique
form as $g(x)=(x+\beta)^2+\gamma$. The discriminant condition
translates to $p^n\parallel\gamma$, and the condition that $p\nmid
g(\alpha)$ translates to $p\nmid (\alpha+\beta)$.  Therefore, every
element of $U(\Z_p)^{\I_n}$ can be expressed uniquely in the form
\begin{equation*}
((x+\beta)^2+\gamma)(x-\alpha)=x^3+(2\beta-\alpha)x^2
+(\beta^2-2\alpha\beta+\gamma)x-\alpha\beta^2-\alpha\gamma,
\end{equation*}
such that $p^n\parallel\gamma$ and $p\nmid(\alpha+\beta)$. The
Jacobian change of variables for the map
$(\alpha,\beta,\gamma)\mapsto(a,b,c)$ is $-2(\alpha+\beta)^2-2\gamma$ which is
always a unit. Thus, we have
\begin{equation*}
\begin{array}{rcl}
\displaystyle\Vol(U(\Z_p)^{\I_n})&=&
\displaystyle\Vol(p^n\Z_p\backslash
p^{n+1}\Z_p)\Vol(\{(\alpha,\beta)\in\Z_p^2:p\nmid(\alpha+\beta)\})\\[.1in]
&=&\displaystyle (p-1)^2/p^{n+2},
\end{array}
\end{equation*}
as required.

Third, we consider the Kodaira symbols $\I\I$, $\I\I\I$, and
$\I\rV$. If $f\in U_0(\Z_p)$ is such that the Kodaira symbol of $E_f$
is one of the three above, then $f(x)=x^3+Ax+B$ has a triple root
modulo $p$, which implies that $p$ divides $A$ and $B$. By examining
the discriminant of $f$ as in the proof of Proposition
\ref{propKSloc}, we see that the Kodaira symbol of $E_f$ is $\I\I$ if
and only if $p\mid A$ and $p\parallel B$; $\I\I\I$ if and only if
$p\parallel A$ and $p^2\mid B$; and $\I\rV$ if and only if $p^2\mid A$
and $p^2\parallel B$. Hence the volumes of $U_0(\Z_p)^{(T)}$, for
$T=\I\I$, $\I\I\I$, and $\I\rV$, are $(p-1)/p^3$, $(p-1)/p^4$, and
$(p-1)/p^5$, as required.

Finally, we turn to the large Kodaira symbols, i.e., those
corresponding to large elliptic curves. Consider the following map
\begin{equation*}
\begin{array}{rcl}
\sigma: U(\Z_p)^\sm&\to& U(\Z_p)\\[.1in]
x^3+ax^2+bx+c&\mapsto& x^3+pax^2+p^2bx+p^3c.
\end{array}
\end{equation*}
Clearly, if $S\subset U(\Z_p)$ is any measurable set, then
$\Vol(\sigma(S))=p^{-6}\Vol(S)$. Furthermore, we set $\sigma(\I_n) =
\I_n^*$, $\sigma(\I\I) = \I\rV^*$, $\sigma(\I\I\I) = \I\I\I^*$ and
$\sigma(\I\rV)=\I\I^*$. Then $\sigma$ sends $f$ of Kodaira symbol $T$
to $\sigma(f)$ of Kodaira symbol $\sigma(T)$. Moreover, we have
$\sigma(t\cdot f)=(pt)\cdot\sigma(f)$. Hence we have
$$\sigma\bigl(U(\Z_p)^{(T)}\bigr)=\sigma\Bigl(\Z_p\cdot
U_0(\Z_p)^{(T)}\Bigr)
=(p\Z_p)\cdot\sigma\bigl(U_0(\Z_p)^{(T)}\bigr).$$ Fix any $g\in
U_0(\Z_p)^{(\sigma(T))}$. There exists $t\in\Z_p$ such that the
coefficients of $t\cdot g$ are as in the second column of Table
\ref{tabloc}. Hence there exists $f\in U(\Z_p)^{(T)}$ with
$\sigma(f)=t\cdot g$. Then $\sigma(f_0)$ is $\Z_p$-equivalent to
$g$. Since $\sigma(f_0)$ and $g$ both belong to $U_0(\Z_p)$, we must
have $\sigma(f_0)=g$. Hence we have
$\sigma\bigl(U_0(\Z_p)^{(T)}\bigr)=U_0(\Z_p)^{(\sigma(T))}$. Therefore,
we have

\begin{equation*}
\begin{array}{rcl}
\displaystyle\Vol\Bigl(U(\Z_p)^{(\sigma(T))}\Bigr)&=&
\displaystyle\Vol\Bigl(\Z_p\cdot U_0(\Z_p)^{(\sigma(T))}\Bigr)
\\[.2in]&=&
\displaystyle p\cdot \Vol\Bigl(p\Z_p\cdot U_0(\Z_p)^{(\sigma(T))}\Bigr)
\\[.2in]&=&
p\cdot \Vol\Bigl(\sigma\bigl(U(\Z_p)^{(T)}\bigr)\Bigr)
\\[.2in]&=&
p^{-5}\Vol\bigl(U(\Z_p)^{(T)}\bigr).
\end{array}
\end{equation*}
This concludes the proof of Proposition \ref{propKSden}, and thus of
Theorem \ref{propcasessp}.
\end{proof}

Theorem \ref{propcasessp} has the following immediate corollary, which
will be useful in what follows.
\begin{corollary}
Let $p\geq5$ be a prime. The density of elliptic curves $E$ over $\Q_p$ with good,
multiplicative, or additive reduction, such that
$\ind(E)=\Delta_p(E)/C_p(E)=p^k$ is as given in Table \ref{tabden}.

\begin{table}[ht]
\centering
\begin{tabular}{|c | c| c|c|c|}
\hline
Index & Good Red. & Multiplicative Red. & Additive Red. & Total\\
$1$   & $(p-1)/p$ & $(p-1)^2/p^3$ & $(p-1)/p^3$ & $(p^2-1)/p^2$ \\[.05in]
$p$   & 0 & $(p-1)^2/p^4$& $(p-1)/p^4$ & $(p-1)/p^3$ \\[.05in]
$p^2$ & 0 & $(p-1)^2/p^5$& $(p-1)/p^5$ & $(p-1)/p^4$ \\[.05in]
$p^3$ & 0 & $(p-1)^2/p^6$& $0$ &$(p-1)^2/p^6$\\[.05in]
$p^4$ & 0 & $(p-1)^2/p^7$& $(p-1)/p^6$ & $(2p-1)(p-1)/p^7$ \\[.05in]
$p^k$, $k=6,7,8$ &0 & $(p-1)^2/p^{k+3}$ &$(2p-1)(p-1)/p^{k+3}$ &
$(3p-2)(p-1)/p^{k+3}$\\[.05in]
$p^k$, $k=5$ or $k\geq 9$ &0 & $(p-1)^2/p^{k+3}$ &$(p-1)^2/p^{k+3}$&
$2(p-1)^2/p^{k+3}$\\
\hline
\end{tabular}
\caption{$p$-adic densities of elliptic curves with given index}\label{tabden}
\end{table}
\end{corollary}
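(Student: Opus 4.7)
The plan is to obtain the corollary as a routine tabulation directly from Theorem \ref{propcasessp}, since $\ind(E)=\Delta_p(E)/C_p(E)$ depends only on the Kodaira symbol of $E$ and Table \ref{tabloc} records both the densities and the values of $\Delta_p(E)$ and $C_p(E)$ for every Kodaira symbol. First I would compute $\ind(E)$ for each symbol: good reduction ($\I_0$) gives index $1$; multiplicative reduction ($\I_n$, $n\geq 1$) gives index $p^{n-1}$; the additive symbols $\I\I,\I\I\I,\I\rV$ give indices $1,p,p^2$; and the large symbols $\I_0^*,\I_n^*(n\geq 1),\I\rV^*,\I\I\I^*,\I\I^*$ give indices $p^4,p^{n+4},p^6,p^7,p^8$ respectively. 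Grouping Kodaira symbols by index then produces the rows of Table \ref{tabden}.

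Next I would verify the three reduction-type columns one index at a time. For good reduction, only $\I_0$ contributes, and only at index $1$, giving $(p-1)/p$. For multiplicative reduction, the unique symbol of index $p^k$ is $\I_{k+1}$, contributing $(p-1)^2/p^{k+3}$ for every $k\geq 0$. For additive reduction, the list of contributing symbols at each index is finite and explicit:
\begin{equation*}
\begin{array}{l|l}
k & \text{additive symbols of index }p^k \\ \hline
0,1,2 & \I\I,\ \I\I\I,\ \I\rV \\
3 & \text{(none)} \\
4 & \I_0^* \\
5 & \I_1^* \\
6,7,8 & \I_2^*\text{ and }\I\rV^*;\ \I_3^*\text{ and }\I\I\I^*;\ \I_4^*\text{ and }\I\I^* \\
k\geq 9 & \I_{k-4}^*
\end{array}
\end{equation*}
Reading off the densities from Table \ref{tabloc} and summing when two symbols coincide (for $k\in\{6,7,8\}$), I would check that the additive totals match the claimed values, e.g.\ for $k\in\{6,7,8\}$ one obtains
\begin{equation*}
\frac{(p-1)^2}{p^{k+3}}+\frac{p(p-1)}{p^{k+3}}=\frac{(p-1)(2p-1)}{p^{k+3}},
\end{equation*}
and the analogous combinations produce the multiplicative-plus-additive totals in the last column.

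There is no serious obstacle here; the only thing to be careful about is the gap at index $p^3$ (where no additive Kodaira symbol contributes because $\I_0^*$ sits at index $p^4$, not $p^3$) and the shift by four between $\I_n$ and $\I_n^*$ that makes $\I_n^*$ coincide in index with one of $\I\rV^*,\I\I\I^*,\I\I^*$ precisely for $n=2,3,4$. Once these boundary cases are handled correctly, each row of Table \ref{tabden} follows by direct arithmetic, and the final column is obtained as the sum of the previous three.
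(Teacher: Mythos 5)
Your proposal is correct and matches the paper's (implicit) approach: the paper presents the corollary as a direct tabulation from Theorem \ref{propcasessp} and Table \ref{tabloc}, which is exactly what you carry out. Your computation of the index for each Kodaira symbol, the grouping by index, and the arithmetic for each row (including the coincidence of $\I_n^*$ with one of $\I\rV^*,\I\I\I^*,\I\I^*$ at $k\in\{6,7,8\}$, the gap at $k=3$, and the lone $\I_1^*$ at $k=5$) all check out.
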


%
%\begin{corollary}\label{cor:sqf}
%Suppose $E\in\E_\sf$. Then for any prime $p\geq 5$,
%\begin{equation}\label{eq:sqf}D_p(E) = C_p(E)^2/\Delta_p(E).
%\end{equation}
%\end{corollary}

%\begin{proof}
%The Kodaira symbol of $E$ at $p$ is $\I_0$, $\I_1$, $\I_2$, $\I\I$ or $\I\I\I$. One verifies directly that \eqref{eq:sqf} holds in all these cases.
%\end{proof}
\section{Fourier coefficients of polynomials with fixed Kodaira symbol}\label{sec:equi}

Let $p\geq 5$ be a prime, and let $U(\Z_p)^\min$ and $U(\Z_p)^\sm$ be
as in \S2.  Recall that to each $f(x)\in U(\Z_p)^\min$, we associate
the Kodaira symbol of the elliptic curve $E_f$. By Proposition
\ref{propKSloc} and Theorem \ref{propcasessp}, an element $f(x)\in
U(\Z_p)^\min$ belongs to $U(\Z_p)^\sm$ and satisfies $\Delta(f) \neq
C(f)$ if and only if the Kodaira symbol of $f$ is $\I\I\I$, $\I \rV$,
or $\I_n$ for $n \geq 2$. Denote the set of polynomials $f(x)\in
U(\Z)$ such that $f\in U(\Z_p)^\min$ for all primes $p$ by
$U(\Z)^\min$. Given $f(x)\in U(\Z)^\min$ and a prime $p$, we say that
the {\it Kodaira symbol of $f$ at $p$} is $T$, the Kodaira symbol of
$f(x)$ considered as an element in $U(\Z_p)^\min$.

%We note that the group $\G_a$ acts on $U$ via $(t\cdot f)(x):=f(x+t)$,
%and %both the height and
%the notion of belonging to the above splitting
%types are $\G_a$-invariant.

Let $\Sigma$ be a set consisting of the following data: a finite set
$\{p_1,\ldots,p_k\}$ of primes $p_i\geq 5$ along with a Kodaira symbol
$T(p_i)$ which is $\I\I\I$, $\I\rV$ or $\I_{n\geq 2}$ associated to
each prime $p_i$ in the set. We say $f\in U(\Z)$ has splitting type $\Sigma$ if $f$ has Kodaira symbol $T(p_i)$ at
each prime $p_i$ in $\Sigma$. Let $U(\Z)_\Sigma$ denote the set of
elements $f\in U(\Z)$ with splitting type $\Sigma$. Given such a collection $\Sigma$, we
define the constant $Q(\Sigma)$ to be $\prod_{p_i}p_i^{a_i}$, where
$a_i=1$ if $T(p_i)$ is $\I\I\I$ or $\I\rV$, and $a_i=\lfloor n/2
\rfloor$ if $T(p_i)$ is $\I_n$. Note that if $f\in U(\Z)_\Sigma$, then
$Q(\Sigma)\mid Q(f)$. We define $m_{T}(\Sigma)$ to be the product of
all primes $p$ such that $T(p) = T$. We also define $m_\odd(\Sigma)$ to
be the product of all primes $p$ in $\Sigma$ such that
$\sigma(p)=\I_n$ for some \emph{odd} integer $n$. Finally, we define
$\nu(\Sigma)$ to be the product over the primes $p$ in $\Sigma$ of the
density $\nu(T_p)$, i.e., the $p$-adic volume of the set of elements
in $U(\Z_p)^{\min}$ having Kodaira symbol $T(p)$.

Define the height function $H$ on $U(\R)$ to be
\begin{equation*}
H(x^3+ax^2+bx+c):= \max\{|a|^6,|b|^3,|c|^2\}.
\end{equation*}
The goal of this section is to obtain a bound on the number of
elements in $U(\Z)$ that have bounded height and specified Kodaira
symbols $\I\I\I$, $\I\rV$ or $\I_{n\geq 2}$ at finitely many primes.
%Since our main tool is twisted Poisson summation,
%we will work with a smooth bound for the height. So for the rest of
%this section, we fix a smooth function
We prove the following theorem.
\begin{theorem}\label{thm:equimain}
  Let $\Sigma$ be as above and for every Kodaira symbol $T$, denote
  $Q(\Sigma)$, $m_\odd(\Sigma)$, and $m_T(\Sigma)$ by $Q$, $m_\odd$,
  and $m_T$, respectively. Then we have
  \begin{equation*}
\#\{f\in U(\Z)_\Sigma:H(f)<Y\}\ll_\epsilon
\frac{Y}{Q^2m_{\I\I\I}m_{\I\rV}^2m_\odd}+\frac{Qm_\odd}{m_{\I\rV}} Y^\epsilon,
  \end{equation*}
  where the implied constant is independent of $Y$ and $\Sigma$.
\end{theorem}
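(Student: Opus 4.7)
For each $p\in\Sigma$, Table~\ref{tabloc} presents $U(\Z_p)^{(T(p))}$ as the $\G_a(\Z_p)$-orbit of an explicit subset of $\Z_p^3$ defined by congruences modulo a power $p^{N(p)}$; concretely, one may take $N(p)=n+1$ for $T(p)=\I_n$, $N(p)=2$ for $T(p)=\I\I\I$, and $N(p)=3$ for $T(p)=\I\rV$. By the Chinese remainder theorem, the condition $f\in U(\Z)_\Sigma$ is a union of residue classes modulo $q:=\prod_{p\in\Sigma}p^{N(p)}$, with characteristic function $\mathbf{1}_\Sigma:(\Z/q\Z)^3\to\{0,1\}$. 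Replacing the sharp indicator of $\{H(f)<Y\}$ (of Euclidean volume $\asymp Y$) by a smooth majorant $\psi_Y:\R^3\to[0,1]$ and Fourier-expanding $\mathbf{1}_\Sigma$ on $(\Z/q\Z)^3$, Poisson summation on $\Z^3$ rewrites the count as
\begin{equation*}
\sum_{\hat f\in(\Z/q\Z)^3}\widehat{\mathbf{1}}_\Sigma(\hat f)\sum_{m\in\Z^3}\widehat\psi_Y\bigl(\hat f/q+m\bigr),
\end{equation*}
where $\widehat\psi_Y$ decays rapidly once $\hat f/q$ exceeds the reciprocals of the box dimensions in the $6{:}3{:}2$ weighted norm adapted to $H$.

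\textbf{Main term and Fourier coefficients.} The frequency $\hat f=0$ contributes the main term $Y\cdot\nu(\Sigma)=Y\prod_p\nu(T(p))$. Inserting the densities $(p-1)^2/p^{n+2}$ for $\I_n$, $(p-1)/p^4$ for $\I\I\I$, and $(p-1)/p^5$ for $\I\rV$ yields
\begin{equation*}
\nu(\Sigma)\ll\frac{1}{Q^2\,m_{\I\I\I}\,m_{\I\rV}^2\,m_\odd},
\end{equation*}
which is exactly the first term of the claim. For the nonzero frequencies I would factor $\widehat{\mathbf{1}}_\Sigma=\prod_p\widehat{\mathbf{1}}_{T(p)}$ by CRT and evaluate each local Fourier coefficient by exploiting the $\G_a(\Z_p)$-invariance of $U(\Z_p)^{(T(p))}$. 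Averaging the character $e_{p^{N(p)}}(\hat a a+\hat b b+\hat c c)$ over the orbit $(a,b,c)\mapsto(a+3t,\,b+2at+3t^2,\,c+bt+at^2+t^3)$ collapses $\widehat{\mathbf{1}}_{T(p)}$ to a Gauss-type sum in $t$ of controlled length and magnitude. The precise support and size vary with the Kodaira symbol: the extra divisibility conditions forced by $\I\rV$ produce more vanishing of $\widehat{\mathbf{1}}_{\I\rV}$ than occurs for $\I\I\I$ or for $\I_n$ (particularly with $n$ odd), accounting for the $m_{\I\rV}$ in the denominator and the $Qm_\odd$ in the numerator of the error.

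\textbf{Error estimation and main obstacle.} To bound the nonzero-frequency contribution, I would stratify the nonzero $\hat f\in(\Z/q\Z)^3$ by the $p$-adic valuations of $(\hat a,\hat b,\hat c)$ modulo each $p^{N(p)}$. Combining the local Fourier bounds from the preceding step with the decay of $\widehat\psi_Y$ in the three weighted coordinates (governed by $Y^{-1/6},Y^{-1/3},Y^{-1/2}$) converts the sum over nonzero frequencies into a geometric series over valuation strata. The principal obstacle is quantitative bookkeeping: for each combination of Kodaira symbols and each stratum, one must count contributing frequencies and bound the size of the joint Fourier coefficient, then verify that the stratified sum telescopes into $O\bigl((Qm_\odd/m_{\I\rV})Y^\epsilon\bigr)$. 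The $Y^\epsilon$ factor absorbs the usual divisor-function losses that accumulate when frequencies align with lower-dimensional faces of the box.
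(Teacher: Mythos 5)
Your proposal has the correct overall skeleton and matches the paper's approach at the level of strategy: you correctly identify Poisson summation on the coefficient lattice, recognize the $\hat f=0$ contribution as the main term $\nu(\Sigma)Y$ with the right evaluation, and — crucially — you see that the nonzero-frequency bound must exploit the $\G_a(\Z_p)$-invariance of the Kodaira-symbol sets by averaging the character over the translation orbit, which is exactly how the paper controls $\widehat{\Phi_T}(\chi)$ via $\widehat{r\cdot\phi}(\chi)=\Psi_r(\chi)\hat\phi(r\cdot\chi)$ and the translate-count $r_T(\chi)$.

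Where the proposal falls short is in the estimation step, and the gap is substantive rather than a matter of bookkeeping. You propose to stratify the nonzero frequencies ``by the $p$-adic valuations of $(\check{a},\check{b},\check{c})$.'' This will not yield the stated bound, because the translate-count $r_T(\chi)$ produced by the $\G_a$-averaging is \emph{not} a function of those valuations — it is a function of $\G_a$-invariants of $\chi$, and the $\G_a$-action changes the valuations of $\check{a}$ and $\check{b}$. The correct structural observation, which the paper makes and your proposal is missing, is to identify $\widehat{U(\Z/N\Z)}$ with binary quadratic forms $P_\chi(x,y)=\check{a}x^2+2\check{b}xy+\check{c}y^2$, on which $\G_a$ acts by unipotent substitution with invariants $\check{c}$ and $\Delta_2(\chi)=\check{b}^2-\check{a}\check{c}$; Lemma~\ref{lemequi2} then bounds $r_T(\chi)$ in terms of the valuations of these invariants. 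The paper's decomposition $S_0+S_{\check{a}\check{c}=0}+S_{\Delta_2=0}+S_{\neq 0}$ (by whether the quadratic form is supported on a coordinate axis, is a perfect square $(\alpha^2,\alpha\beta,\beta^2)$, or is nondegenerate) is what makes the final sum converge: in the generic stratum one fibers over the pair $(\check{b},\Delta_2(\chi))$, and since $\check{a}\check{c}=\check{b}^2-\Delta_2(\chi)$ is a fixed nonzero integer there are only $O(Y^\epsilon)$ choices for $(\check{a},\check{c})$ — this divisor trick is invisible to a raw valuation stratification, which would overcount by a power of $Y$. Separately, your heuristic that $\I\rV$ ``produces more vanishing of $\widehat{\mathbf{1}}_{\I\rV}$ than $\I\I\I$'' is backwards: by Lemma~\ref{lemequi1}, the $\I\I\I$ transform is supported only where $p\mid\check{a}$ and $p\mid\check{b}$ with magnitude $p^2$, whereas $\I\rV$ is supported on the larger set $p\mid\check{a}$ but with smaller magnitude $p$; the gain from $\I\rV$ in the error term is a magnitude gain, not a support gain.
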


This section is organized as follows. First, in \S3.1, we recall some
preliminary results from Fourier analysis. In particular, the
``twisted Poisson summation'' formula of Proposition \ref{propTPS}
will be our main tool in proving Theorem \ref{thm:equimain}. Also, in
\eqref{eqFDtrans}, we determine how the action of $\G_a$ on $U$
changes the Fourier coefficients of functions. Next, in \S3.2, we
compute the Fourier coefficients of a slighly modified version of the
characteristic functions of the set of monic polynomials having
Kodaira symbol $T$, for $T=\I\I\I$, $\I\rV$, and $\I_{n\geq
  2}$. Finally, in \S3.3, we use these computations and the twisted
Poisson summation formula to prove Theorem \ref{thm:equimain}.

\subsection{Preliminary results from Fourier analysis}

We fix a positive integer $N$ with $(N,6)=1$, and consider the space
$\widehat{U(\Z/N\Z)}$ dual to $U(\Z/N\Z)$. We write elements $\chi\in
\widehat{U(\Z/N\Z)}$ as triples
$\chi=(\check{a},\check{b},\check{c})\in(\Z/N\Z)^3$, and view $\chi$
as the character given by
\begin{equation}\label{eqchiaction}
  \chi(x^3+ax^2+bx+c)=
  e\Bigl(\frac{\check{a}\cdot a+\check{b}\cdot b+\check{c}\cdot
  c}{N}\Bigr),
\end{equation}
where $e(x) := \exp(2\pi ix)$.
Given a function $\phi:U(\Z/N\Z)\to\C$, we have the Fourier dual
$\hat{\phi}:\widehat{U(\Z/N\Z)}\to\C$ defined to be
\begin{equation*}
\hat\phi(\chi):=\sum_{f\in U(\Z/N\Z)}\phi(f)\chi(f),
\end{equation*}
and Fourier inversion yields the equality
\begin{equation*}
\frac{1}{N^3}\sum_\chi\hat{\phi}(\chi)\overline{\chi(f)}=\phi(f).
\end{equation*}

The additive group $\Z/N\Z$ acts on the space $U(\Z/N\Z)$ via the
action $(r\cdot f)(x)=f(x+r)$. Identifying $U(\Z/N\Z)$ with the
coefficient space $(\Z/N\Z)^3$, we write the action explicitly:
\begin{equation*}
r\cdot (a,b,c)=((a+3r),(b+2ra+3r^2),(c+rb+r^2a+r^3)).
\end{equation*}
Given a function $\phi:U(\Z/N\Z)\to\C$ and an element $r\in\Z/N\Z$, we
define $r\cdot\phi:U(\Z/N\Z)\to\C$ to be
$(r\cdot\phi)(f):=\phi((-r)\cdot f)$. We also define an action of
$\Z/N\Z$ on $\widehat{U(\Z/N\Z)}$ by
\begin{equation}\label{eq:action}
  r\cdot\chi:=
  ((\check{a}+2r\check{b}+r^2\check{c}),(\check{b}+r\check{c}),\check{c}),
\end{equation}
for $\chi=(\ca,\cb,\cc)$.
Then we have
\begin{equation}\label{eqFDtrans}
\begin{array}{rcl}
  \displaystyle\widehat{r\cdot\phi}(\chi)&=&
  \displaystyle\sum_f (r\cdot\phi)(f)\chi(f)
  =\displaystyle\sum_f \phi((-r)\cdot f)\chi(f)
  =\displaystyle\sum_f \phi(f)\chi(r\cdot f)\\[.2in]
  &=&\displaystyle\sum_{f=(a,b,c)}
  \phi(f)e\Bigl(\frac{\check{a}a+\check{b}(b+2ra)+\check{c}(c+rb+r^2a)}{N}\Bigr)
  e\Bigl(\frac{3\check{a}r+3\check{b}r^2+\check{c}r^3}{N}\Bigr)\\[.2in]
  &=&\displaystyle e\Bigl(\frac{3\check{a}r+3\check{b}r^2+\check{c}r^3}{N}\Bigr)
  \sum_{f=(a,b,c)} \phi(f)e\Bigl(\frac{(\check{a}+2r\check{b}+r^2\check{c})a+
    (\check{b}+r\check{c})b+\check{c}c}{N}\Bigr)\\[.2in]
  &=&\displaystyle \Psi_r(\chi)\hat{\phi}(r\cdot\chi),
\end{array}
\end{equation}
where we set
\begin{equation*}
\Psi_r(\chi):=e\Bigl(\frac{3\check{a}r+3\check{b}r^2+\check{c}r^3}{N}\Bigr).
\end{equation*}

Note that if we identify elements
$\chi=(\check{a},\check{b},\check{c})\in\widehat{U(\Z/N\Z)}$ with
binary quadratic
forms $$P_\chi(x,y):=\check{a}x^2+2\check{b}xy+\check{c}y^2,$$ then
the action of $\Z/N\Z$ on $\widehat{U(\Z/N\Z)}$ in \eqref{eq:action}
corresponds exactly to the natural action: $$P_{r\cdot
  \chi}(x,y)=P_\chi(x,y+rx).$$ We define
$\Delta_2(\chi)=\check{b}^2-\check{a}\check{c}$. Then $\Delta_2$ is
invariant under the action of $\Z/N\Z$. Throughout the rest of \S4, we
will thus identify the space $\widehat{U(\Z/N\Z)}$ with the space
$V_2(\Z/N\Z)$, where $V_2=\Sym_2(2)$ is the space of binary quadratic
forms with middle coefficient a multiple of $2$.

Finally, we recall the following result which follows from the use of Poisson
summation combined with the unfolding technique.
\begin{proposition}\label{propTPS}
Let $\psi:U(\R)\to\R$ denote a smooth function with bounded
support. Let $\phi:U(\Z/N\Z)\to\R$ be any function. Then, for every
positive real number $Y$, we have
\begin{equation*}\label{fourier}
  \sum_{(a,b,c)\in U(\Z)}\psi\Bigl(
  \frac{a}{Y^{1/6}},\frac{b}{Y^{1/3}},\frac{c}{Y^{1/2}}\Bigr)\phi(a,b,c)=
  \frac{Y}{N^3}\sum_{\chi=(\check{a},\check{b},\check{c})\in \widehat{U(\Z)}}\hat{\psi}
  \Bigl(\frac{Y^{1/6}\check{a}}{N},\frac{Y^{1/3}\check{b}}{N},
  \frac{Y^{1/2}\check{c}}{N}\Bigr)
  \hat\phi(\check{a},\check{b},\check{c}).
\end{equation*}
The $\hat\psi$ on the right hand side is the usual Fourier transform
over $\R$ and so decays faster than any polynomial.
\end{proposition}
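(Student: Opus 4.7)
The plan is to combine finite Fourier inversion on $U(\Z/N\Z)$ with classical Poisson summation on $U(\Z)\cong\Z^3$. First, I would use the inversion formula stated right before \eqref{eq:action}, namely $\phi(f)=\frac{1}{N^3}\sum_{\chi\in\widehat{U(\Z/N\Z)}}\hat\phi(\chi)\overline{\chi(f)}$, to expand $\phi(a,b,c)$ inside the left-hand side. Swapping the finite sum over $\chi$ with the $\Z^3$-sum (legal since everything is absolutely convergent due to the compact support of $\psi$) reduces the claim to evaluating, for each $\chi=(\ca,\cb,\cc)$, the twisted sum
\begin{equation*}
S_\chi:=\sum_{(a,b,c)\in\Z^3}\psi\Bigl(\frac{a}{Y^{1/6}},\frac{b}{Y^{1/3}},\frac{c}{Y^{1/2}}\Bigr)\,e\Bigl(-\frac{\ca a+\cb b+\cc c}{N}\Bigr).
\end{equation*}

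Next, I would apply classical Poisson summation on $\R^3$ to $S_\chi$. Writing $h_\chi$ for the Schwartz function inside the sum, the Fourier transform $\widehat{h_\chi}$ is computed via the substitution $a'=a/Y^{1/6}$, $b'=b/Y^{1/3}$, $c'=c/Y^{1/2}$; this rescaling contributes a Jacobian factor $Y^{1/6+1/3+1/2}=Y$, and absorbs the character into a translate of the argument of $\hat\psi$. The outcome is
\begin{equation*}
\widehat{h_\chi}(m,n,p)=Y\cdot\hat\psi\Bigl(Y^{1/6}\Bigl(\frac{\ca}{N}+m\Bigr),\,Y^{1/3}\Bigl(\frac{\cb}{N}+n\Bigr),\,Y^{1/2}\Bigl(\frac{\cc}{N}+p\Bigr)\Bigr),
\end{equation*}
so that $S_\chi=\sum_{(m,n,p)\in\Z^3}\widehat{h_\chi}(m,n,p)$.

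Finally, I would reparameterize the resulting double sum. Letting $\ca':=\ca+Nm$, $\cb':=\cb+Nn$, $\cc':=\cc+Np$ where $\chi$ runs over representatives modulo $N$ and $(m,n,p)\in\Z^3$, the triple $(\ca',\cb',\cc')$ sweeps out $\Z^3$ exactly once. Under this substitution the arguments of $\hat\psi$ collapse to $(Y^{1/6}\ca'/N,\,Y^{1/3}\cb'/N,\,Y^{1/2}\cc'/N)$, while $\hat\phi(\chi)$ extends canonically to a function on $\Z^3$ periodic modulo $N$, so that $\hat\phi(\ca',\cb',\cc')=\hat\phi(\chi)$. Assembling these pieces yields exactly the stated identity. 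The Schwartz decay of $\hat\psi$ makes all sums absolutely convergent and justifies every interchange. There is no real obstacle; the only care required is the bookkeeping between characters modulo $N$ and their lifts to $\Z$, and keeping the sign conventions for $\chi$ and for $\hat\psi$ consistent.
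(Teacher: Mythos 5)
Your proposal is correct and follows exactly the route the paper alludes to: the paper presents Proposition \ref{propTPS} as a recalled fact ``which follows from the use of Poisson summation combined with the unfolding technique,'' and your argument is precisely that --- finite Fourier inversion on $U(\Z/N\Z)$, classical Poisson summation on $\Z^3$, then unfolding the pair (character mod $N$, integer shift) into a single sum over $\Z^3$ using the $N$-periodicity of $\hat\phi$. The bookkeeping you flag at the end (sign conventions for $\chi$ and $\hat\psi$, lifting representatives mod $N$ to $\Z$) is indeed the only delicate point, and it works out since the sum over $\Z^3$ is invariant under negation.
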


\subsection{Bounds on Fourier coefficients}

Let $p\geq 5$ be a fixed prime.  The conditions imposed by the choice
of Kodaira symbol $T$ being equal to $\I\I\I$, $\I\rV$ or $\I_{n\geq
  2}$ are defined via congruence conditions modulo $N=N_p(T)$, where $N$
is $p^2$, $p^2$ or $p^n$, respectively. Hence, when we refer to an
element $f$ having one of the above Kodaira symbols, we will be
implicitly assuming that $f$ belongs to $U(\Z/N\Z)$, where $N$ is the
appropriate power of $p$. Naturally, in this context, we will also
assume that elements $\chi$ belong to $\widehat{U(\Z/N\Z)}$, and
represent them as triplets $(\check{a},\check{b},\check{c})\in(\Z/N\Z)^3$.

%Assume that $\chi=(\check{a},\check{b},\check{c})\in
%V_2(\Z/p^k\Z)=\widehat{U(\Z/p^k\Z)}$ for some prime power $p^k$. The
%discriminant $\Delta_2(\chi)=\check{b}^2-\check{a}\check{c}$ is an
%element of $\Z/p^k\Z$.

For a Kodaira symbol $T\in\{\I\I\I,\I\rV,\I_{\geq 2}\}$, we
define the set $\CS_0(T)$ to be
\begin{equation*}
  \begin{array}{rcl}
    \{x^3+ax^2+bx+c:p\mid a;p\mid  b,p^2\mid c\}\subset U(\Z/p^2\Z)&\mbox{if}&T=\I\I\I;\\[.1in]
    \{x^3+ax^2+bx+c:p\mid a;p^2\mid b,p^2 \mid c\}\subset U(\Z/p^2\Z)&\mbox{if}&T=\I\rV;\\[.1in]
    \{x^3+ax^2+bx+c:p^{n}\mid b,p^{2n}\mid c\}\subset U(\Z/p^{2n}\Z)&\mbox{if}&T=\I_{2n};\\[.1in]
  \{x^3+ax^2+bx+c:p^{n+1}\mid b,p^{2n+1}\mid c\}\subset U(\Z/p^{2n+1}\Z)&\mbox{if}&T = \I_{2n+1}.\\
  \end{array}
\end{equation*}
From the second column of Table \ref{tabloc}, it follows that every
element having Kodaira symbol $T$ is contained within some $\G_a$
translate of $\CS_0(T)$. Let $\Phi_{0,T}$ denote the
characteristic function of $\CS_0(T)$, and define the function
$\Phi_{T}$ by
\begin{equation*}
\Phi_{T} = \sum_{r\in\Z/M\Z}
r\cdot\Phi_{0,T},
\end{equation*}
where $M=M_p(T)$ is $p$ if $T = \I\I\I, \I\rV$, $p^n$ if $T = \I_{2n}$
and $p^{n+1}$ if $T = \I_{2n +1}$.  The next
lemma, determining the Fourier transforms of the sets
$\Phi_{0,T}$, follows quickly from the definitions.
\begin{lemma}\label{lemequi1}
Let $p\geq 5$ be a prime number. Let $T$ be one the three Kodaira
symbols, and let $N=N_p(T)$ denote the appropriate power of $p$. For
$\chi=(\check{a},\check{b},\check{c})\in \widehat{U(\Z/N\Z)}$, we have
\begin{equation*}
\begin{array}{rclrcll}
\displaystyle|\widehat{\Phi_{0,\I\I\I}}(\chi)|&=&\left\{
\begin{array}{rcl}
p^2 &\mbox{ if }& p\mid\check{a},\,p\mid\check{b};\\
0 &\mbox{ else; }&
\end{array}
\right.
&\quad&
\displaystyle|\widehat{\Phi_{0,\I\rV}}(\chi)|&=&\left\{
\begin{array}{rcl}
p &\mbox{ if }& p\mid\check{a};\\
0 &\mbox{ else; }&
\end{array}
\right.\\[.3in]
\displaystyle|\widehat{\Phi_{0,\I_{2n}}}(\chi)|&=&\left\{
\begin{array}{rcl}
p^{3n} &\mbox{if}& p^{2n}\mid\check{a},\,p^n\mid\check{b};\\
0 &\mbox{else;}&
\end{array}
\right.
&\quad&
\displaystyle|\widehat{\Phi_{0,\I_{2n+1}}}(\chi)|&=&\left\{
\begin{array}{rcl}
p^{3n+1} &\mbox{if}& p^{2n+1}\mid\check{a},\,p^n\mid\check{b};\\
0 &\mbox{else.}&
\end{array}
\right.
\end{array}
\end{equation*}
\end{lemma}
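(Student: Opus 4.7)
The plan is to exploit the fact that each set $\CS_0(T)$ is defined by independent congruence conditions on the three coordinates $a, b, c$ of $f \in U(\Z/N\Z)$, so the Fourier transform factorizes as a product of three one-dimensional exponential sums, each of which is a standard geometric sum evaluating to $0$ or a power of $p$.

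More precisely, I would unwind the definitions as follows. Fix one of the four Kodaira symbols $T$ and its associated modulus $N = N_p(T)$. Since $\CS_0(T)$ has the form $A_T \times B_T \times C_T$ where $A_T, B_T, C_T \subseteq \Z/N\Z$ are the prescribed subgroups of multiples of the appropriate powers of $p$, the character computation factors:
\begin{equation*}
\widehat{\Phi_{0,T}}(\check a, \check b, \check c) = \Bigl(\sum_{a \in A_T} e\bigl(\tfrac{\check a a}{N}\bigr)\Bigr) \Bigl(\sum_{b \in B_T} e\bigl(\tfrac{\check b b}{N}\bigr)\Bigr) \Bigl(\sum_{c \in C_T} e\bigl(\tfrac{\check c c}{N}\bigr)\Bigr).
\end{equation*}
Each factor is an exponential sum over a cyclic subgroup of $\Z/N\Z$ of some order $p^j$, and a parametrization such as $a = (N/p^j)\alpha$ with $\alpha \in \Z/p^j\Z$ turns it into $\sum_{\alpha \in \Z/p^j\Z} e(\check a \alpha / p^j)$, which equals $p^j$ if $p^j \mid \check a$ and vanishes otherwise.

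I would then simply apply this in each of the four cases in order. For $T = \I\I\I$, we have $A_T = p\Z/p^2\Z$, $B_T = p\Z/p^2\Z$, $C_T = \{0\}$ in $\Z/p^2\Z$, producing the factor $p$ under the condition $p \mid \check a$, the factor $p$ under $p \mid \check b$, and the trivial factor $1$ from the $c$-sum; multiplying gives $p^2$ when both divisibilities hold, and $0$ otherwise. The case $T = \I\rV$ is identical except $B_T$ collapses to $\{0\}$, giving only the $a$-factor of $p$ under $p \mid \check a$. For $T = \I_{2n}$ the $a$-sum is a full geometric sum over $\Z/p^{2n}\Z$ (since there is no congruence condition on $a$), yielding $p^{2n}$ precisely when $p^{2n} \mid \check a$; the $b$-sum yields $p^n$ when $p^n \mid \check b$; and the $c$-sum is again trivial. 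The case $T = \I_{2n+1}$ is the same computation with the modulus $p^{2n+1}$, giving factors $p^{2n+1}$ and $p^n$ under $p^{2n+1} \mid \check a$ and $p^n \mid \check b$ respectively. Assembling these yields exactly the four formulas in the lemma.

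There is no serious obstacle here; the argument is a direct computation, and the only place to be careful is in bookkeeping the modulus $N = N_p(T)$ versus the subgroup sizes, in particular the asymmetric condition defining $\CS_0(\I_{2n+1})$ (where $b \in p^{n+1}\Z$ but the modulus is $p^{2n+1}$, so the $b$-sum has $p^n$ terms and yields the divisibility condition $p^n \mid \check b$ rather than $p^{n+1} \mid \check b$). Once this is verified, the lemma follows immediately from the factorization observation.
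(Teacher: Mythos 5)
Your proof is correct and is exactly the direct computation the paper has in mind (the paper simply states that the lemma ``follows quickly from the definitions''): since each $\CS_0(T)$ is a product of subgroups in the three coordinates, the Fourier transform factors into three geometric sums, each equal to $0$ or a power of $p$. Your bookkeeping in the asymmetric $\I_{2n+1}$ case — the $b$-sum having $p^n$ terms despite the constraint $p^{n+1}\mid b$, so the divisibility condition produced is $p^n\mid\check b$ — is correct.
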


As an immediate consequence, \ref{eqFDtrans} yield the
inequality
\begin{equation}\label{eqtranslatebound}
  |\widehat{\Phi_T}(\chi)|\leq p^{k_T}r_T(\chi),
\end{equation}
where $k_T$ is $2$,
$1$, $3n$, or $3n+1$ depending on whether $T$ is $\I\I\I$,
$\I\rV$, $\I_{2n}$, or $I_{2n+1}$, respectively, and $r_T(\chi)$ is
the number of $r\in \{0,\ldots,M-1\}$ such that $(r\cdot\chi)$ belongs
to the support of $\widehat{\Phi_{0,T}}$.  To bound $\widehat{\Phi_T}(\chi)$, it then remains to bound $r_T(\chi)$.  %We will identify
%elements $(\check{a},\check{b},\check{c})\in \hat{U}$ with the
%polynomial $\check{c} x^2+\check{b} x+\check{a}$, and note that the
%action of $\G_a$ on $\hat{U}$ accords with the natural action of
%$\G_a$ on the space of quadratic polynomials. We now have the
%following lemma.

\begin{lemma}\label{lemequi2}
We have
\begin{enumerate}
\item Let $T=\I\I\I$. Then $r_T(\chi)=0$ unless
  $p\mid\Delta_2(\chi)$. In that case, $r_T(\chi)=1$ if
  $p\nmid\chi$ and $r_T(\chi)=p$ otherwise.
\item Let $T=\I\rV$. Then $r_T(\chi)\leq 2$ if $p\nmid\chi$ and
  $r_T(\chi)=p$ otherwise.
\item Let $T=\I_{2n}$. Then $r_T(\chi)=0$ unless $\chi$ is $\G_a$-equivalent
  to some element $(0,p^{n+i}\check{b},p^j\check{c})$, for $i$ and $j$
  nonnegative integers and $p\nmid\cb\cc$. Then $r_T(\chi)\ll p^{\min(i,\lfloor j/2\rfloor)}$.
\item Let $T=\I_{2n+1}$. Then $r_T(\chi)=0$ unless $\chi$ is
  $\G_a$-equivalent to some element $(0,p^{n+i}\check{b},p^j\check{c})$, for
  $i$ and $j$ nonnegative integers and $p\nmid\cb\cc$. Then $r_T(\chi)\ll p^{\min(i,\lceil
    j/2\rceil)}$.
\end{enumerate}
\end{lemma}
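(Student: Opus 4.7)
The plan is to establish each of the four parts via direct case analysis, combining the explicit description of the $\Z/N\Z$-action from \eqref{eq:action} with the support data of $\widehat{\Phi_{0,T}}$ from Lemma~\ref{lemequi1}. Recall that under $r\cdot\chi$, the first coordinate becomes $P_\chi(1,r)=\check{a}+2r\check{b}+r^2\check{c}$ and the second becomes $\check{b}+r\check{c}$, while $\check{c}$ and $\Delta_2(\chi)=\check{b}^2-\check{a}\check{c}$ are $\Z/N\Z$-invariant. Counting admissible $r$ therefore reduces to counting roots of certain polynomial congruences in $r$.

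For part (1), the condition that $r\cdot\chi$ lie in the support reads $p\mid P_\chi(1,r)$ and $p\mid\check{b}+r\check{c}$. Invariance of $\Delta_2$ forces $p\mid\Delta_2(\chi)$ as a necessary condition. When additionally $p\nmid\chi$, the reduction $P_\chi\bmod p$ is a nonzero binary quadratic form of vanishing discriminant, hence a nonzero perfect square; the requirement that $(1,r)$ be its projective double root pins down $r\bmod p$ to at most one value. When $p\mid\chi$, both support conditions hold identically and every $r$ works, yielding $r_T=p$. Part (2) is similar but simpler: the sole condition $P_\chi(1,r)\equiv 0\pmod p$ is a polynomial congruence of degree at most $2$ in $r$, so it has at most two roots when the polynomial is not identically zero mod $p$, and all $p$ roots otherwise.

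For parts (3) and (4), one first normalizes and then counts. If some translate $r_0\cdot\chi$ lies in the support, then after replacing $\chi$ by $r_0\cdot\chi$ and factoring out the maximal $p$-powers from the second and third coordinates, we may assume $\chi=(0,p^{n+i}\check{b},p^j\check{c})$ with $i,j\ge 0$ and $p\nmid\check{b}\check{c}$; the genuinely degenerate subcases (where $\check{b}$ or $\check{c}$ vanishes) are handled separately. For such $\chi$ and $s\in\Z/M\Z$ with $v:=v_p(s)$, we compute
\begin{align*}
(s\cdot\chi)_1 &= s\bigl(2p^{n+i}\check{b}+sp^j\check{c}\bigr), & (s\cdot\chi)_2 &= p^{n+i}\check{b}+sp^j\check{c}.
\end{align*}
The support condition $p^{2n}\mid(s\cdot\chi)_1$ in case (3) becomes $v+\min(n+i,v+j)\ge 2n$, which splits into the subcase $v+j\le n+i$ (forcing $v\ge n-\lfloor j/2\rfloor$) and the subcase $v+j>n+i$ (forcing $v\ge n-i$). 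In either subcase the count of admissible $s$ in $\Z/p^n\Z$ is $O(p^{\min(i,\lfloor j/2\rfloor)})$, and the secondary condition $p^n\mid(s\cdot\chi)_2$ is automatic on this range. Part (4) is entirely analogous, with the shift in modulus to $M=p^{n+1}$ and the extra factor of $p$ on the first coordinate producing $\lceil j/2\rceil$ in place of $\lfloor j/2\rfloor$.

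The main obstacle is the bookkeeping in parts (3) and (4): one must carefully verify that the two subcases of the $p$-adic valuation analysis collapse to the clean uniform bound $p^{\min(i,\lfloor j/2\rfloor)}$ (resp.\ $p^{\min(i,\lceil j/2\rceil)}$) across all relative sizes of $i$, $j$, and $n$, and that the normalization step is consistent across the various degenerate regimes. Every individual step is elementary, but the accounting is delicate and must be kept straight to extract precisely the claimed exponent.
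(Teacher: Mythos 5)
The proposal follows the paper's overall strategy—normalize $\chi$ to $(0,p^{n+i}\check{b},p^j\check{c})$, parametrize candidate $s$ by $v=v_p(s)$, and count—but the case analysis for parts (3) and (4) has a genuine gap. You assert that the support condition $p^{2n}\mid(s\cdot\chi)_1 = s\bigl(2p^{n+i}\check{b}+sp^j\check{c}\bigr)$ \emph{becomes} $v+\min(n+i,\,v+j)\ge 2n$. But $v_p(2p^{n+i}\check{b}+sp^j\check{c})$ equals $\min(n+i,\,v+j)$ only when no $p$-adic cancellation occurs; when $n+i=v+j$, the sum $p^{n+i}(2\check{b}+s'\check{c})$ (with $s=s'p^v$, $p\nmid s'$) can have strictly higher valuation, and then $s$ can lie in the support even though $v+\min(n+i,\,v+j)<2n$. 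Your two subcases ($v+j\le n+i$ vs.\ $v+j>n+i$, with the given lower bounds on $v$) cover precisely the no-cancellation regime, so you are silently dropping all of the $s$ with $v=n+i-j$ whose validity depends on the extra divisibility $p^w\mid 2\check{b}+s'\check{c}$.

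The paper handles this head-on: writing $r=sp^k$, it splits into the case where $p^{2n}$ divides both monomials separately (your regime) and the case $n+i+k=j+2k=:\ell<2n$ with $p^{2n-\ell}\mid 2\check{b}+s\check{c}$. In the latter, $s$ is pinned modulo $p^{2n-\ell}$, leaving $p^{\ell-n-k}=p^i$ choices, and the constraint $\ell<2n$ forces $2i<j$, so $i=\min(i,\lfloor j/2\rfloor)$ and the uniform bound survives. This step is not bookkeeping one can wave away: without it the argument simply does not bound those extra $s$. To repair the proposal you would need to insert exactly this cancellation analysis (and its $\I_{2n+1}$ analogue) before concluding $r_T(\chi)\ll p^{\min(i,\lfloor j/2\rfloor)}$.
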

\begin{proof}
We prove the above lemma in the case when $T=\I_{2n}$. Assume that
$\chi$ is $\G_a$-equivalent to $(0,p^{n+i}\cb,p^j\cc)$, for $i$ and $j$
  nonnegative integers and $p\nmid\cb\cc$. Note that the entry $p^j\cc$ does
not change under the $\G_a$-action. Then by definition, we have
\begin{equation*}
  r_T(\chi)=\#\bigl\{r\in\Z/p^n\Z:p^n\mid rp^j,\,
  p^{2n}\mid 2p^{n+i}r\cb+r^2p^j\cc\bigr\}.
\end{equation*}
Write $r\in\Z/p^n\Z$ as $r=sp^k+p^n\Z$ with $p\nmid s$. Then the condition on $r$ translates
to
\begin{equation*}
p^n\mid p^{j+k},\quad\quad p^{2n}\mid (2\cb p^{n+i+k}+s\cc p^{j+2k}).
\end{equation*}
We consider two posible cases. First assume that $p^{2n}$ divides both
$2\cb p^{n+i+k}$ and $s\cc p^{j+2k}$. Then we have $k\geq\max(n-i,
n-\lfloor j/2\rfloor)$, which implies that there are
$p^{\min(i,\lfloor j/2\rfloor)}$ choices for $r$. Otherwise, we have
$n+i+k=j+2k=:\ell<2n$, and $p^{2n-\ell}\mid 2\cb+s\cc$. In this case,
$s$ is determined modulo $p^{2n-\ell}$, which implies that there are
$p^{n-k-(2n-\ell)}=p^{\ell-n-k}$ choices for $r$. Note that
$\ell-n-k=i$. Furthermore, we have $j+2k=\ell<2n$, from which it
follows that $2(\ell-n-k)=2j+2k-2n<j$. This proves the lemma in the
case when $T=\I_{2n}$. The other three cases are similar, and we omit
the proof.
\end{proof}

\subsection{Proof of Theorem \ref{thm:equimain}}

Let $\Sigma$ be as before. That is, a finite set of primes $p\geq 5$,
and a Kodaira symbol $T_p=\I\I\I$, $\I\rV$, or $\I_{n\geq 2}$ for each
prime $p$ in this set. For each prime $p$ of $\Sigma$, set
$N_p:=N_p(T_p)$ and set $m_{\odd,p}$ to be $p$ if $T_p=\I_{2n+1}$ and
$1$ otherwise. Set $Q_p$ to be the $Q$-invariant associated to $T_p$
in Table \ref{tabloc}.  We define the quantities $N=N(\Sigma)$,
$Q=Q(\Sigma)$, and $m_\odd=m_\odd(\Sigma)$ to be the product over all
primes $p$ of $\Sigma$ of $N_p$, $Q_p$, and $m_p$, respectively. Note that $N = Q^2m_{\odd}$. Since $Q^2$ divides $\Delta(f)$ for any $f$ with splitting type $\Sigma$, we may assume that $Q$ and also $N$ are bounded above by some fixed power of $Y$.

%\begin{table}[ht]
%\centering
%\begin{tabular}{|c | c| c|c|c| c|}
%\hline
%Type & $N_p$ & $Q_p$ & $M_p$ & $m_p$ \\
%\hline
%$\I\I\I$    & $p^2$       & $p$   & $p$   & $1$\\
%$\I\rV$    & $p^2$       & $p$   & $p$ & $1$ \\
%$\I_{2n}$  & $p^{2n}$     & $p^n$ & $p^n$   & $1$\\
%$\I_{2n+1}$  & $p^{2n+1}$   & $p^n$ & $p^{n+1}$   & $p$\\
%\hline
%\end{tabular}
%\caption{Local invariants of Elliptic Curves}\label{tableequi}
%\end{table}

Then elements with splitting type $\Sigma$ are defined via congruence
conditions modulo $N$. Let $\phi:U(\Z/N\Z)\to\R$ denote the
characteristic function of elements with splitting type $\Sigma$.  Let
$\psi:U(\R)\to\R_{\geq 0}$ be a smooth compactly supported function
such that $\psi(f)=1$ for $H(f)\leq 1$. We have

\begin{equation}\label{eqequibase}
\begin{array}{rcl}
\displaystyle\#\{f\in U(\Z)_\Sigma:H(f)<Y\}&\leq&
\displaystyle\sum_{(a,b,c)\in U(\Z)_\Sigma}\psi
\Bigl(\frac{a}{Y^{1/6}},\frac{b}{Y^{1/3}},\frac{c}{Y^{1/2}}\Bigr)
\phi(a,b,c)\\[.2in] &=&
\displaystyle\frac{Y}{N^3}
\sum_{\chi=(\check{a},\check{b},\check{c})\in \widehat{U(\Z)}}\hat{\psi}
\Bigl(\frac{Y^{1/6}\ca}{N},\frac{Y^{1/3}\cb}{N},\frac{Y^{1/2}\cc}{N}\Bigr)
\hat\phi(\ca,\cb,\cc)\\[.2in] &=&
\displaystyle S_0+S_{\ca\cc=0}+S_{\Delta_2=0}+S_{\neq 0},
\end{array}
\end{equation}
where $S_0$ is the contribution of the term $\chi=0$, $S_{\ca\cc=0}$ is
the contribution from the nonzero terms $\chi$ with $\ca\cc=0$,
$S_{\Delta_2=0}$ is the contribution from nonzero terms $\chi$ with
$\Delta_2(\chi)=0$, and $S_{\neq 0}$ is the contribution from
the terms $\chi$ with $\ca\cc\Delta_2(\chi)\neq 0$.  We bound each of
these quantities in turn.

To begin with, since $\hat{\phi}(0)/N^3=\nu(\Sigma)$ and $\psi$ is
compactly supported, we have
\begin{equation}\label{eqequis0}
S_0=\frac{Y}{N^3}\hat{\psi}(0)\hat{\phi}(0)\ll \nu(\Sigma)Y \ll \frac{Y}{Q^2m_{\I\I\I}m_{\I\rV}^2m_\odd},
\end{equation}
by Table \ref{tabloc}. To bound $S_{\ca\cc=0},$ $S_{\Delta_2=0}$ and $S_{\neq 0}$, we have the following immediate consequence of \eqref{eqtranslatebound} and Lemma \ref{lemequi2}.

\begin{corollary}
With notations as above, let $\chi = (\ca,\cb,\cc)\in \widehat{U(\Z)}$ with $\hat{\phi}(\chi)\neq 0$. Let $A$ be the largest divisor of $m_{\I\I\I}m_{\I\rV}$ dividing $\ca$, $\cb$ and $\cc$. For each prime $p$ with $T_p = I_{2n}$ or $T_p = I_{2n+1}$ for some $n\geq1$, let $k_p$ be the nonnegative integer with $p^{2n+k_p}\mid\mid \Delta_2(\chi)$. Then
\begin{equation}\label{eq:mj1}
\hat{\phi}(\chi) \ll A \, m_{\I\I\I}^2 \, m_{\I\rV} \prod_{T_p = I_{2n}} p^{3n+k_p/2} \prod_{T_p = I_{2n+1}} p^{3n+1+k_p/2}.
\end{equation}
\end{corollary}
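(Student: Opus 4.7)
The plan is to use the Chinese Remainder Theorem to reduce to a single-prime computation, and then combine the bound \eqref{eqtranslatebound} with the case analysis of Lemma \ref{lemequi2}. Since the splitting type $\Sigma$ is prescribed by congruence conditions with pairwise coprime moduli $N_p$, the isomorphism $U(\Z/N\Z) \simeq \prod_{p \in \Sigma} U(\Z/N_p\Z)$ identifies $\phi$ with the product $\prod_{p \in \Sigma} \Phi_{T_p}$. Taking Fourier transforms yields $\hat{\phi}(\chi) = \prod_{p \in \Sigma} \widehat{\Phi_{T_p}}(\chi_p)$, where $\chi_p$ denotes the projection of $\chi$ onto $\widehat{U(\Z/N_p\Z)}$. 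The proof then reduces to bounding each local factor.

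For a prime $p$ with $T_p \in \{\I\I\I, \I\rV\}$, I would appeal to parts (1) and (2) of Lemma \ref{lemequi2}: the bound on $r_{T_p}(\chi_p)$ is $O(p)$ when $p$ divides each of $\ca$, $\cb$, $\cc$, and $O(1)$ otherwise. Combined with the factor $p^{k_{T_p}}$ from \eqref{eqtranslatebound} --- equal to $p^2$ for $\I\I\I$ and $p$ for $\I\rV$ --- the contributions multiply across $\Sigma$ to $A \cdot m_{\I\I\I}^2 \cdot m_{\I\rV}$, where $A$ collects exactly one extra $p$ for each $\I\I\I$ or $\I\rV$ prime at which $\chi$ is divisible, in agreement with the definition of $A$ in the statement.

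For a prime $p$ with $T_p = \I_m$, $m \geq 2$, writing $n = \lfloor m/2 \rfloor$, parts (3) and (4) of Lemma \ref{lemequi2} show $r_{T_p}(\chi_p) = 0$ unless $\chi_p$ is $\G_a$-equivalent to some $(0, p^{n+i}\cb, p^j\cc)$ with $p \nmid \cb\cc$. Since $\Delta_2$ is $\G_a$-invariant (see the remark below \eqref{eq:action}), one has $\Delta_2(\chi_p) = p^{2(n+i)}\cb^2$, which pins down $k_p = 2i$. Lemma \ref{lemequi2} therefore gives $r_{T_p}(\chi_p) \ll p^i = p^{k_p/2}$, which combined with $p^{k_{T_p}}$ from \eqref{eqtranslatebound} --- namely $p^{3n}$ or $p^{3n+1}$ --- yields exactly the local factor appearing in \eqref{eq:mj1}. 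Taking the product over all primes of $\Sigma$ concludes the argument. The only substantive point is the identification $i = k_p/2$ via the $\G_a$-invariance of $\Delta_2$; apart from this the proof is straightforward bookkeeping of Euler factors, and no serious obstacle is anticipated.
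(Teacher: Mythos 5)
Your overall strategy—factoring $\hat\phi(\chi)$ prime by prime via the Chinese Remainder Theorem and then combining the translation bound \eqref{eqtranslatebound} with the case analysis of Lemma \ref{lemequi2}—is exactly what the paper has in mind, and your treatment of the $\I\I\I$ and $\I\rV$ primes is correct. However, the step for $T_p = I_m$ contains a genuine gap. You claim that since $\Delta_2$ is $\G_a$-invariant and $\chi_p$ is equivalent to $(0, p^{n+i}\check{b}, p^j\check{c})$, one has $\Delta_2(\chi_p) = p^{2(n+i)}\check{b}^2$ and hence $k_p = 2i$. But the $\G_a$-equivalence takes place in $\widehat{U(\Z/N_p\Z)}$, and $\Delta_2$ is only well-defined there modulo $N_p = p^{2n}$ (resp.\ $p^{2n+1}$). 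Since $2(n+i) \geq 2n$ whenever $i\geq 0$, both $\Delta_2(\chi_p)$ and $p^{2(n+i)}\check b^2$ are $\equiv 0 \pmod{p^{2n}}$, so this congruence gives no information about $k_p$, which by definition depends on the integer $\chi\in\widehat{U(\Z)}$. Concretely, take $n=2$, $\chi = (p^4, p^3, 1)$: here $\chi_p = (0,p^3,1)$, so $i=1$ and $j=0$, but $\Delta_2(\chi)=p^4(p^2-1)$ gives $k_p = 0 \neq 2$. In such cases the chain $r_{T_p}(\chi_p)\ll p^i = p^{k_p/2}$ breaks at the final equality.

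The fix is to keep both $i$ and $j$ in play and show $\min(i,\lfloor j/2\rfloor)\leq k_p/2$, rather than identifying $i$ with $k_p/2$. After applying the translate $r_0$ (lifted to $\Z$) so that $r_0\cdot\chi =(\check a_0,\check b_0,\check c_0)$ reduces to the normal form, one has $v_p(\check a_0)\geq 2n$, $v_p(\check b_0)=n+i$, $v_p(\check c_0)=j$ (for $i<n$, $j<2n$). Then $v_p(\Delta_2(\chi)) \geq \min\bigl(2(n+i),\,2n+j\bigr)$, with equality when the two valuations differ. If $j > 2i$ this gives $k_p = 2i$ exactly and $\min(i,\lfloor j/2\rfloor)=i=k_p/2$; if $j \leq 2i$ it gives $k_p \geq j$ and $\min(i,\lfloor j/2\rfloor)=\lfloor j/2\rfloor\leq j/2\leq k_p/2$. (The boundary cases $i\geq n$ or $j\geq 2n$ are handled similarly, with the trivial bound $r_{T_p}\leq p^n$ taking over.) With this case split the corollary follows; without it, the argument as written does not prove the stated inequality when $j\leq 2i$.
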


Since $\hat\psi$ decays faster than any polynomial, it suffices to consider characters $\chi = (\ca,\cb,\cc)$ such that
$$\ca\ll N^{1+\epsilon}/Y^{1/6}, \quad \cb\ll N^{1+\epsilon}/Y^{1/3}, \quad \cc\ll N^{1+\epsilon}/Y^{1/2}.$$
We consider $S_{\ca\cc=0}$ first. Fix a divisor $A$ of $m_{\I\I\I}m_{\I\rV}$ and a nonnegative integer $k_p$ for every prime $p$ with $T_p = I_{\geq2}$. The number of characters $\chi = (0,\cb,\cc)$ such that $A$ is the largest divisor of $m_{\I\I\I}m_{\I\rV}$ dividing $\chi$ and $m_{\I\I\I}m_{\I\rV}\mid \Delta_2(\chi)$ and $p^{2n+k_p}\mid\mid \Delta_2(\chi)$ for every prime $p$ with $T_p = I_{2n}$ or $T_p = T_{2n+1}$ is
$$\ll_\epsilon  \frac{N^{1+\epsilon}}{m_{\I\I\I}m_{\I\rV}Y^{1/3}} \frac{N^{1+\epsilon}}{AY^{1/2}} \prod_{T_p = I_{2n}\,or\,I_{2n+1}} p^{-n-k_p/2}.$$
The number of choices for $A$ and the $k_p$'s is $\ll Y^\epsilon$. Combining with the bound \eqref{eq:mj1}, we have
$$\frac{Y}{N^3}\sum_{\substack{\cb\ll N^{1+\epsilon}/Y^{1/3}\\\cc\ll N^{1+\epsilon}/Y^{1/2}}}
\hat{\phi}(0,\cb,\cc) \ll_\epsilon \frac{Y^{1/6+\epsilon}}{N}m_{\I\I\I}\prod_{T_p = I_{2n}} p^{2n}\prod_{T_p = I_{2n+1}} p^{2n+1} = \frac{Y^{1/6+\epsilon}}{m_{\I\I\I}m_{\I\rV}^2}.$$
To bound the sum of $\hat{\phi}(\ca,\cb,0)$, we need a slight refinement. Fix again a divisor $A$ of $m_{\I\I\I}m_{\I\rV}$ and a nonnegative integer $\ell_p$ for every prime $p$ with $T_p = I_{\geq2}$. Suppose $\chi = (\ca,\cb,0)$ with $p^{n+\ell_p}\mid\mid \cb$ for every prime $p$ with $T_p = I_{2n}$ or $T_p = T_{2n+1}$. In order for $\widehat{\Phi_{T_p}}(\chi)\neq 0$ at these primes $p$, we need also $p^{n+\ell_p}\mid\ca$ by Lemma \ref{lemequi2}. If we further require that $A$ is the largest divisor of $m_{\I\I\I}m_{\I\rV}$ dividing $\chi$ and $m_{\I\I\I}m_{\I\rV}\mid \Delta_2(\chi)$, then the number of such $\chi$ is
$$\ll_\epsilon  \frac{N^{1+\epsilon}}{AY^{1/6}}\frac{N^{1+\epsilon}}{m_{\I\I\I}m_{\I\rV}Y^{1/3}} \prod_{T_p = I_{2n}\,or\,I_{2n+1}} p^{-2n-2\ell_p}$$
and for any such $\chi$, we have
$$\hat{\phi}(\chi) \ll A \, m_{\I\I\I}^2 \, m_{\I\rV} \prod_{T_p = I_{2n}} p^{3n+\ell_p} \prod_{T_p = I_{2n+1}} p^{3n+1+\ell_p}.$$
Combining these two bounds gives
$$\frac{Y}{N^3}
\sum_{\substack{\ca\ll N^{1+\epsilon}/Y^{1/6}\\\cb\ll N^{1+\epsilon}/Y^{1/3}}}
\hat{\phi}(\ca,\cb,0) \ll_\epsilon \frac{Y^{1/2+\epsilon}}{N}m_{\I\I\I}\prod_{T_p = I_{2n}} p^{n}\prod_{T_p = I_{2n+1}} p^{n+1} = \frac{Y^{1/2+\epsilon}}{Qm_{\I\rV}}.$$
Hence, we have
\begin{equation}\label{eqequisac}
S_{\ca\cc=0} \ll_\epsilon \frac{Y^{1/6+\epsilon}}{m_{\I\I\I}m_{\I\rV}^2} + \frac{Y^{1/2+\epsilon}}{Qm_{\I\rV}}.
\end{equation}

\begin{comment}
Next, for every $K>0$ we have
\begin{equation}\label{eqequisac}
\begin{array}{rcl}
\displaystyle S_{\ca\cc=0}&\ll_{\epsilon,K}&
\displaystyle\frac{Y}{N^3}
\sum_{\substack{\ca\ll N^{1+\epsilon}/Y^{1/6}\\\cb\ll N^{1+\epsilon}/Y^{1/3}}}
\hat{\phi}(\ca,\cb,0)+
\frac{Y}{N^3}\sum_{\substack{\cb\ll N^{1+\epsilon}/Y^{1/3}\\\cc\ll N^{1+\epsilon}/Y^{1/2}}}
\hat{\phi}(0,\cb,\cc)+O(N^{-K})\\[.2in]&\ll_{\epsilon}&
\displaystyle \frac{Y^{1/2+\epsilon}}{N}+\frac{Y^{1/6+\epsilon}}{Q},
\end{array}
\end{equation}
where the first inequality follows from the decay of $\hat{\psi}$. The
bound of $Y^{1/2+\epsilon}/N$ on the first summand follows from Lemmas
\ref{lemequi1} and \ref{lemequi2}, since for every integer $\ell>0$,
the density of pairs $(\ca,\cb)$ with $\hat{\phi}(\ca,\cb,0)=\ell$ is
$\ll 1/\ell$. The bound on the second summand is similar, and follows
since the density of pairs $(\cb,\cc)$ with
$\hat{\phi}(0,\cb,\cc)=\ell N/Q$ is $\ll 1/\ell$.
\end{comment}

Next we consider $S_{\Delta_2=0}$. Third, we consider $S_{\Delta_2=0}$. Note that every $\chi\in
\widehat{U(\Z)}$ with $\Delta_2(\chi)=0$ is of the form
$(\alpha^2,\alpha\beta,\beta^2)$ with $\alpha,\beta\in\Z$. Fix a divisor $A$ of $m_{\I\I\I}m_{\I\rV}$ and nonnegative integers $\ell_p$ for each prime $p$ with $T_p=I_{\geq2}$. Suppose $A$ is the largest divisor of $m_{\I\I\I}m_{\I\rV}$ dividing $\chi$ and $p^{\ell_p}\mid\mid\beta$ for all $p$ with $T_p = I_{\geq2}$. Then similar to the case of $\hat{\phi}(\ca,\cb,0)$, we also need $p^{\ell_p}\mid\alpha$ in order for $\widehat{\Phi_{T_p}}(\chi)\neq0$, in which case
$$\hat{\phi}(\chi) \ll A\, m_{\I\I\I}^2\, m_{\I\rV} \prod_{T_p=I_{2n}} p^{3n+\ell_p} \prod_{T_p = T_{2n+1}}p^{3n+1+\ell_p}.$$
The number of such characters $\chi$ is
$$\ll_\epsilon \frac{N^{1/2+\epsilon}}{AY^{1/12}}\frac{N^{1/2+\epsilon}}{AY^{1/4}}\prod_{T_p = I_{2n}\,or\,I_{2n+1}} p^{-2\ell_p}.$$
Combining these two bounds gives
\begin{equation}\label{eqequidelta}
S_{\Delta_2=0}\ll_\epsilon \frac{Y^{2/3+\epsilon}}{N^2}m_{\I\I\I}^2\, m_{\I\rV}\prod_{T_p=I_{2n}} p^{3n} \prod_{T_p = T_{2n+1}}p^{3n+1}=\frac{Y^{2/3+\epsilon}}{Qm_\odd m_{\I\I\I}m_{\I\rV}^2}.
\end{equation}

\begin{comment}
Third, we consider $S_{\Delta_2=0}$. Note that every $\chi\in
\widehat{U(\Z)}$ with $\Delta_2(\chi)=0$ is of the form
$(\alpha^2,\alpha\beta,\beta^2)$ with $\alpha,\beta\in\Z$. For every
prime $p$ of $\Sigma$, the $p$-part of the Fourier transform
$\hat{\phi}(\chi)$ is bounded from Lemmas \ref{lemequi1} and
\ref{lemequi2} by $O(NQp^{v_p(\beta)}/p^{\kappa(T_p)})$, where
$v_p(\beta)$ is the valuation of $\beta$ at $p$ and $\kappa(T_p)$ is 1
if $T_p=\I\I\I$, $2$ if $T_p=\I\rV$ and $0$ if $T_p=\I_n$ for some $n$.
It follows that we have the bound
\begin{equation*}
\hat{\phi}(\alpha^2,\alpha\beta,\beta^2)\ll
\frac{NQ\beta_\Sigma}{m_{\I\I\I}m_{\I\rV}^2},
\end{equation*}
where $\beta_\Sigma$ denotes the product of the $p$-parts of $\beta$
over all the primes $p$ associated to $\Sigma$. It is clear that the
density of pairs $(\alpha,\beta)$ with a given value of $\beta_\Sigma$
is $1/\beta_\Sigma$. Hence, we obtain
\begin{equation}\label{eqequidelta}
\begin{array}{rcl}
\displaystyle S_{\Delta_2=0}&\ll_{\epsilon,K}&
\displaystyle\frac{Y}{N^3}
\sum_{\substack{\alpha \ll N^{1/2+\epsilon}/Y^{1/12}\\ \beta \ll N^{1/2+\epsilon}/Y^{1/4}}}
\hat{\phi}(\alpha^2,\alpha\beta,\beta^2)+O(N^{-K})\\[.2in]&\ll_{\epsilon}&
\displaystyle \frac{Y^{2/3+\epsilon}}{Nm_{\I\I\I}m_{\I\rV}^2}Q=
\frac{Y^{2/3+\epsilon}}{Qm_\odd m_{\I\I\I}m_{\I\rV}^2},
\end{array}
\end{equation}
\end{comment}

Finally, we turn to $S_{\neq 0}$. Once agian, we fix a divisor $A$ of $m_{\I\I\I}m_{\I\rV}$ and a nonnegative integer $k_p$ for each prime $p$ with $T_p = I_{\geq2}$. The number of characters $\chi=(\ca,\cb,\cc)$ such that $A$ is the largest divisor of $m_{\I\I\I}m_{\I\rV}$ dividing $\chi$, $m_{\I\I\I}m_{\I\rV}\mid \Delta_2(\chi)$ and $p^{2n+k_p}\mid\mid\Delta_2(\chi)$ for any prime $p$ with $T_p = I_{2n}$ or $T_p = T_{2n+1}$ is
$$\ll_\epsilon Y^\epsilon \frac{N^{1+\epsilon}}{AY^{1/3}} \frac{N^{2+\epsilon}}{m_{\I\I\I}m_{\I\rV}Y^{2/3}}\prod_{T_p = I_{2n}\,or\,I_{2n+1}}p^{-2n-k_p}.$$
Indeed, the above bounds the number of pairs $(\cb,\Delta_2(\chi))$ satisfying the desired divisibility conditions, and given $\cb$ and $\Delta_2(\chi)$, there are $Y^\epsilon$ choices for $\ca$ and $\cc$. Combining with \eqref{eq:mj1} then gives
\begin{equation}\label{eqequineq0}
S_{\neq0} \ll_\epsilon Y^\epsilon m_{\I\I\I}\prod_{T_p=I_{2n}}p^n\prod_{T_p=I_{2n+1}}p^{n+1} = \frac{Qm_\odd}{m_{\I\rV}}Y^\epsilon.
\end{equation}

Theorem \ref{thm:equimain} now follows from \eqref{eqequibase}, \eqref{eqequis0}, \eqref{eqequisac},
\eqref{eqequidelta}, \eqref{eqequineq0}, and the AM-GM inequality.
\section{The family of cubic fields with prescribed shapes}\label{seccubicfields}

A cubic ring is a commutative ring with unit that is free
of rank 3 as a $\Z$-module. Given a cubic ring $R$, the {\it trace}
$\Tr(\alpha)$ of an element $\alpha\in R$ is the trace of the linear
map $\times\alpha:R\to R$. The {\it discriminant} $\Disc(R)$ of $R$ is
then the determinant of the bilinear pairing
$$R\times R\to\Z,\quad\quad (\alpha,\beta):=\Tr(\alpha\beta).$$ Given
a {\it nondegenerate} cubic ring $R$, i.e., a cubic ring $R$ with
nonzero discriminant, we then consider the cubic etal\'e algebras
$R\otimes\Q$ over $\Q$ and $R\otimes\R$ over $\R$. There are two
possibilities for $R\otimes\R$, namely, $\R^3$ and $\R\oplus\C$. We
have $R\otimes\R\cong\R^3$ when $\Disc(R)>0$ (equivalently, when the
signature of $R\otimes\Q$ is $(3,0)$) and $R\otimes\R\cong\R\oplus\C$
when $\Disc(R)<0$ (equivalently, when the signature of $R\otimes\Q$ is
$(1,2)$).

The ring $R$ embeds as a lattice into $R\otimes\R$ with covolume
$\sqrt{\Disc(R)}$. As regarded as this lattice, the element $1\in R$
is part of any Minkowski basis, and so the first successive minima of
$R$ is simply $1$. Let $\ell_1(R)\leq \ell_2(R)$ denote the other two
successive minima of $R$. We define the {\it skewness} of $R$ by
$$\sk(R):=\ell_2(R)/\ell_1(R).$$

Given a field $K$, we denote the ring of integers of $K$ by $\cO_K$,
and the class group of $K$ by $\Cl(K)$. For positive real numbers $X$
and $Z$, let $\RR_3^\pm(X,Z)$ denote the set of cubic fields $K$ that
satisfy the following two bounds: $X\leq\pm\Disc(\cO_K)<2X$ and
$\sk(\cO_K)>Z$. Set $\RR_3(X,Z)$ to be the union $\RR_3^+(X,Z)\cup
\RR_3^-(X,Z)$. In this section, we prove the following result.
\begin{theorem}\label{thm:skew}
Let $X$ and $Z$ be positive real numbers. Then
$$\sum_{K\in\RR_3(X,Z)}|\Cl(K)[2]| \ll X/Z,$$
where the implied constants are independent of $X$ and $Z$.
\end{theorem}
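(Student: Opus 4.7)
The plan is to convert the weighted sum into an orbit-counting problem via Bhargava's parametrization of $2$-torsion in class groups, and then to exploit the skewness hypothesis through a geometry-of-numbers argument. By Bhargava's higher composition laws (in the form used to bound the average $2$-torsion in class groups of cubic fields), pairs $(K, \sigma)$ with $K$ a cubic field and $\sigma \in \Cl(K)[2]$ are in bijection with $\GL_2(\Z) \times \SL_3(\Z)$-orbits on a suitable sublattice of pairs $(A,B)$ of integer-matrix ternary quadratic forms; the cubic resolvent of an orbit recovers $\cO_K$, and the discriminant invariant recovers $\Disc(\cO_K)$. This reduces the sum $\sum_{K \in \RR_3(X,Z)} |\Cl(K)[2]|$ to a count of such orbits with $|\Disc| \in [X,2X)$ whose cubic resolvent has shape skewness exceeding $Z$.

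Next, I would set up a fundamental domain for the $\GL_2(\Z) \times \SL_3(\Z)$-action that is adapted to a Minkowski-reduced basis of $\cO_K$. The shape of $\cO_K$ enters explicitly into the volume form on this fundamental domain: the successive minima $\ell_1(\cO_K) \leq \ell_2(\cO_K)$ of the trace-zero sublattice satisfy $\ell_1 \ell_2 \asymp |\Disc(\cO_K)|^{1/2}$, and the hypothesis $\sk(\cO_K) > Z$ forces $\ell_2/\ell_1 > Z$. This confines any integer orbit representative to a slab of the fundamental domain that is a factor of $Z$ thinner in one direction than in the unconstrained case. Integrating out the continuous parameters and summing over $K \in \RR_3(X,Z)$ then gives a total volume $\ll X/Z$, and the theorem follows by standard lattice-point counting in the main body of the fundamental domain.

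The main obstacle I expect is controlling the cusp contributions of the fundamental domain under the skewness restriction: one has to rule out that orbit representatives lying near the cusps dominate the count. This requires adapting the cuspidal estimates of Bhargava--Harron \cite{bhargavaharron} and of the forthcoming thesis \cite{Lapierrethesis} on cubic rings of skewed shape to the present representation space of pairs of ternary quadratic forms, making use of the fact that in the cuspidal region the associated cubic resolvent is forced to be even more skewed than in the main body, so that the cusps contribute a strictly smaller total. A secondary technical point is the uniform handling of non-maximal ideals arising in the parametrization of $\Cl(K)[2]$, which has to be tracked carefully in $K$; this is standard, following the sieving arguments in the literature.
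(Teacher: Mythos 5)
Your overall plan is the one the paper uses: parametrize $\Cl(K)[2]$ via Bhargava's correspondence between $\GL_2(\Z)\times\SL_3(\Z)$-orbits on pairs of integral ternary quadratic forms $W(\Z)$ and triples $(R,I,\delta)$, and then note that the hypothesis $\sk(\cO_K)>Z$ forces the $\GL_2$-torus parameter $t$ in a Siegel domain to satisfy $t\gg Z^{1/2}$, so that the integral $\int t^{-2}d^\times t$ over this thinned region picks up a factor of $1/Z$. With $\lambda\asymp X^{1/12}$ coming from $|\Disc|\asymp X$, this gives a main term $\ll X/Z$, as you say.

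However, the cusp-handling mechanism you describe is not the right one. The problematic unbounded directions in the fundamental domain $\FF_2\times\FF_3$ are the $\SL_3$-cuspidal directions $s_1,s_2\to\infty$, and these have no effect whatever on the skewness of the cubic resolvent: the resolvent map $\Res(A,B)=\det(Ax+By)$ is $\SL_3$-equivariant, so the $\SL_3$-part of the group acts trivially on the resolvent's shape. Your proposed argument ("cuspidal resolvent is even more skewed, so the cusps contribute less") therefore does not address the dominant cuspidal contributions. The correct mechanism --- standard since Bhargava's quartic-rings counting paper \cite{manjulcountquartic}, which is the relevant reference here rather than Bhargava--Harron --- is that deep in the $\SL_3$-cusp the coefficients $a_{11}$, $b_{11}$ (and with $t$ also the determinant $\det A$) of an integral pair are forced to vanish, which makes the orbit \emph{reducible}: either the resolvent ring is not a domain or the orbit corresponds to the identity in $\Cl(K)[2]$; such orbits are excluded from the count. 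Davenport's lemma then bounds the cusp integrals by $X^{11/12}/Z^{1/2}$ and $X^{5/6+\epsilon}$, and one must observe separately that $\RR_3(X,Z)$ is empty unless $Z\ll X^{1/6}$ (a Minkowski-basis argument) to conclude these are $\ll X/Z$. Finally, the orbit count only yields $\sum_K(|\Cl(K)[2]|-1)$; you need the additional bound $|\RR_3(X,Z)|\ll X/Z$, which requires a parallel (but simpler) count of $\GL_2(\Z)$-orbits on binary cubic forms with constrained skewness --- this is Proposition~\ref{propfewskf} in the paper and is not subsumed by your sketch.
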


This section is organized as follows. In Section \ref{sec:4.1} we
recall the parametrization of cubic rings and of $2$-torsion elements
in the class groups of cubic rings, in terms of integral orbits for
the action of $\GL_2(\Z)$ on $\Sym^3(\Z^2)$ and of
$\GL_2(\Z)\times\SL_3(\Z)$ on $\Z^2\otimes\Sym^2(\Z^3)$,
respectively. In \S\ref{sec:4.2} and \S\ref{sec:4.3}, we then prove Theorem \ref{thm:skew} using these parametrizations
in conjunction with geometry-of-numbers methods.

\subsection{The parametrization of cubic rings
  and the $2$-torsion in their class groups}\label{sec:4.1}

In this section, we recall two parametrizations. First, the
parametrization of cubic rings, due to Levi \cite{levicubicparam},
Delone--Faddeev \cite{delonefaddeev}, and Gan--Gross--Savin
\cite{gangrosssavin}, and second, Bhargava's parametrization
\cite{hcl2} of elements in the $2$-torsion subgroups of cubic rings.
Let $V_3=\Sym^3(2)$ denote the space of binary cubic forms. We
consider the {\it twisted action} of $\GL_2$ on $V_3$ given by
\begin{equation*}
(\gamma\cdot f)(x,y):=\frac{1}{\det\gamma}f((x,y)\cdot\gamma),
\end{equation*}
for $\gamma\in\GL_2$ and $f(x,y)\in V_3$. Then we have the following
result.
\begin{theorem}[\cite{levicubicparam,delonefaddeev,gangrosssavin}]\label{thldf}
There is a natural bijection between the set of $\GL_2(\Z)$-orbits on
$V_3(\Z)$ and the set of cubic rings.
\end{theorem}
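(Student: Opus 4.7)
The plan is to construct explicit, mutually inverse maps between $\GL_2(\Z)$-orbits on $V_3(\Z)$ (under the twisted action) and isomorphism classes of cubic rings, then check equivariance.

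First I would describe the map from binary cubic forms to cubic rings. Given $f(x,y) = ax^3+bx^2y+cxy^2+dy^3 \in V_3(\Z)$, set $R_f := \Z \oplus \Z\omega_1 \oplus \Z\omega_2$ as a free $\Z$-module of rank $3$, and impose the multiplication rules
\begin{equation*}
\omega_1\omega_2 = -ad, \qquad
\omega_1^2 = -ac + b\omega_1 - a\omega_2, \qquad
\omega_2^2 = -bd + d\omega_1 - c\omega_2,
\end{equation*}
extended $\Z$-linearly with $1$ as the identity. The content of this step is to verify that the resulting multiplication is commutative and associative: one expands $\omega_i(\omega_j\omega_k)$ and $(\omega_i\omega_j)\omega_k$ for all triples and observes the relations close up identically. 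This verification is the main mechanical obstacle, but once checked one also computes $\Disc(R_f) = \Disc(f)$ from the trace form, which will give a consistency check in the next direction.

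Next I would construct the inverse. Let $R$ be any cubic ring and pick a $\Z$-basis $(1,\alpha_1,\alpha_2)$. Writing $\alpha_1\alpha_2 = n_0 + p_1\alpha_1 + p_2\alpha_2$ for unique integers $n_0,p_1,p_2$, I replace $\alpha_1 \mapsto \alpha_1 - p_2$ and $\alpha_2 \mapsto \alpha_2 - p_1$; a direct check shows the new basis satisfies $\alpha_1\alpha_2 \in \Z$. This \emph{normalized basis} is unique up to $\SL_2(\Z)$ change of basis. With the basis normalized, the remaining multiplication relations take the form
\begin{equation*}
\alpha_1\alpha_2 = -ad, \qquad
\alpha_1^2 = -ac + b\alpha_1 - a\alpha_2, \qquad
\alpha_2^2 = -bd + d\alpha_1 - c\alpha_2
\end{equation*}
for unique integers $a,b,c,d$ (the off-diagonal coefficients in $\alpha_i^2$ being determined by associativity of $\alpha_i^2\cdot\alpha_j$ with the normalization). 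The assignment $R \mapsto ax^3+bx^2y+cxy^2+dy^3$ is the inverse map.

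Finally I would check $\GL_2(\Z)$-equivariance. An $\SL_2(\Z)$ change of basis $(\alpha_1,\alpha_2) \mapsto (\alpha_1,\alpha_2)\gamma$ preserves the normalization $\alpha_1\alpha_2\in\Z$ and, by direct substitution, transforms the coefficient quadruple $(a,b,c,d)$ exactly by the twisted $\SL_2(\Z)$ action on $V_3(\Z)$. The extra $\Z/2$ in $\GL_2/\SL_2$ corresponds to swapping $(\alpha_1,\alpha_2)$: this reverses the sign of $\alpha_1\alpha_2$ relative to $\Z$, and the $1/\det\gamma$ normalization in the twisted action absorbs this sign. Thus the two constructions descend to mutually inverse maps on orbit spaces/isomorphism classes.

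The two genuine obstacles are the associativity verification in step one and the uniqueness of the normalization in step two; neither is deep but both require care. Everything else is formal, and the identity $\Disc(R_f) = \Disc(f)$ confirms the correspondence is compatible with discriminants, which is the invariant one needs when applying this bijection throughout \S\ref{seccubicfields}.
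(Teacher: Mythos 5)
Your proposal is a correct outline of the classical Delone--Faddeev argument: build $R_f$ by the explicit multiplication table, check associativity, recover $(a,b,c,d)$ from a normalized basis (the off-diagonal coefficients indeed being forced by associativity), and verify $\GL_2(\Z)$-equivariance. The paper does not itself prove the theorem --- it cites the original references and \cite[\S 2]{bstcubic} for proofs --- but it does state the forward map in a different, intrinsic form: for a cubic ring $R$ with basis $\langle\omega,\theta\rangle$ of $R/\Z$, the form is defined by $(x\omega+y\theta)\wedge(x\omega+y\theta)^2=f(x,y)\,(\omega\wedge\theta)$ in $\wedge^2(R/\Z)\cong\Z$. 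This is the basis-free packaging of exactly what you extract coordinate-by-coordinate. What the intrinsic version buys you: the wedge product kills integer translations of $\omega$ and $\theta$, so no normalization step is needed, and $\GL_2(\Z)$-equivariance (including the twist by $1/\det\gamma$) is automatic because a change of basis multiplies both $\,(x\omega+y\theta)\wedge(x\omega+y\theta)^2$ and $\omega\wedge\theta$ by $\det\gamma$. Your coordinate version must argue these facts by hand; what it buys you is an explicit multiplication table, which one needs anyway for the inverse direction.

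One concrete slip in the equivariance step: under the swap $(\alpha_1,\alpha_2)\mapsto(\alpha_2,\alpha_1)$ the scalar $\alpha_1\alpha_2$ does \emph{not} change sign --- multiplication is commutative, so it is literally unchanged. What actually happens, if you track the three multiplication relations, is that the full coefficient quadruple transforms as $(a,b,c,d)\mapsto(-d,-c,-b,-a)$, i.e.\ $f(x,y)\mapsto -f(y,x)$, and the global minus is precisely the $1/\det\gamma=-1$ factor in the twisted action. Your conclusion is right, but the mechanism you describe --- a sign change in $\alpha_1\alpha_2$ --- is not. In the $r\wedge r^2$ formulation this sign is transparent: swapping $\omega$ and $\theta$ reverses the orientation class $\omega\wedge\theta$, which is where the overall minus sign lives.
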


We collect some well known facts about the above bijection (for proofs
and a more detailed discussion, see \cite[\S 2]{bstcubic}). For an
integral binary cubic form $f$, we denote the corresponding cubic ring
by $R_f$. The bijection is discriminant preserving, i.e., we have
$\Delta(f)=\Disc(R_f)$. The ring $R_f$ is an integral domain if and
only if $f$ is irreducible over $\Q$. The group of automorphisms of
$R_f$ is isomorphic to the stabilizer of $f$ in $\GL_2(\Z)$.

The bijection of Theorem \ref{thldf} can be explicitly described as
follows: given a cubic ring $R$, consider the map
$R/\Z\to\wedge^2(R/\Z)\cong\Z$ given by $r\mapsto r\wedge r^2$. This
map is easily seen to be a cubic map and gives the binary cubic form
corresponding to $R$.  In fact, this map yields the finer bijection
\begin{equation}\label{eqfinbijcub}
  V_3(\Z) \longleftrightarrow \{(R,\omega,\theta)\},
\end{equation}
where $R$ is a cubic ring and $\langle\omega,\theta\rangle$ is a basis
for the 2-dimensional $\Z$-module $R/\Z$. The integral
binary cubic form corresponding to $(R,\omega,\theta)$ is $f(x,y)$,
where
\begin{equation}\label{eqrefbijcub}
(x\omega+y\theta)\wedge(x\omega+y\theta)^2=f(x,y)(\omega\wedge\theta).
\end{equation}
It is easily seen that the actions of $\GL_2(\Z)$ on $V_3(\Z)$
and on the set of triples $(R,\omega,\theta)$ agree. Here the latter
action is given simply by the natural action of $\GL_2(\Z)$ on the
basis $\{\omega,\theta\}$ of $R/\Z$.

Let $f$ be an integral binary cubic form, and let $(R,\omega,\theta)$
be the corresponding triple. Fix an element $\alpha=n+a\omega+b\theta$
of $R$, where $n$, $a$, and $b$ are integers and $(a,b)\neq(0,0)$. The
ring $\Z[\alpha]$ is a subring of $R$ having finite index denoted
$\ind(\alpha)$. It follows from \eqref{eqrefbijcub} that we have
\begin{equation}\label{eqindalpha}
\ind(\alpha)=f(a,b).
\end{equation}
Clearly $\ind(\alpha)=\ind(\alpha+n)$ for $n\in\Z$. Finally, we note
that the bijections of Theorem \ref{thldf} and \eqref{eqrefbijcub}
continue to hold if $\Z$ is replaced by any principal ideal domain
\cite[Theorem 5]{bswglobal1}.

\vspace{.1in}

Next, we describe the parametrization of $2$-torsion ideals in the
class groups of cubic rings. Let $W$ denote the space
$2\otimes\Sym^2(3)$ of pairs of ternary quadratic forms. For a ring
$S$, we write elements $(A,B)\in W(S)$ as a pair of $3\times
3$ symmetric matrices with coefficients in $S$. The group
$G_{2,3}=\GL_2\times\SL_3$ acts on $W$ via the action
$(\gamma_2,\gamma_3)\cdot
(A,B):=(\gamma_3A\gamma_3^t,\gamma_3B\gamma_3^t)\gamma_2^t$. We have
the {\it resolvent map} from $W$ to $V_3$ given by
\begin{equation*}
\begin{array}{rcl}
W&\to& V_3\\
(A,B)&\mapsto&\det(Ax+By).
\end{array}
\end{equation*}
The resolvent map respects the group actions on $W$ and $V_3$: we have
\begin{equation}\label{eqres}
\Res((\gamma_2,\gamma_3)\cdot
(A,B))=(\det\gamma_2)\cdot\Res(A,B).
\end{equation}

The following result parametrizing $2$-torsion ideals in cubic rings
is due to Bhargava \cite[Theorem 4]{hcl2}.
\begin{theorem}[\cite{hcl2}]\label{thbh2tor}
There is a bijection between $\GL_2(\Z)\times\SL_3(\Z)$-orbits on
$W(\Z)$ and equivalence classes of triples $(R,I,\delta)$, where $R$
is a cubic ring, $I\subset R$ is an ideal of $R$ having rank-3 as a
$\Z$-module, and $\delta$ is an invertible element of $R\otimes\Q$
such that $I^2\subset (\delta)$ and $N(I)^2=N(\delta)$. Here two
triples $(R,I,\delta)$ and $(R',I',\delta')$ are equivalent if there
exists an isomorphism $\phi:R\to R'$ and an element $\kappa\in
R\otimes\Q$ such that $I'=\phi(\kappa I)$ and
$\delta'=\phi(\kappa^2\delta)$. Moreover, the ring $R$ of the triple
corresponding to a pair $(A,B)$ is the cubic ring corresponding to
$\Res(A,B)$ under the Delone--Faddeev parametrization.
\end{theorem}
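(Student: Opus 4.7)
The plan is to exhibit mutually inverse explicit constructions in both directions and then verify they are equivariant under the respective group actions.

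\emph{Forward map (orbits to triples).} Given $(A,B)\in W(\Z)$, I would first form the resolvent binary cubic $f(x,y)=\det(xA+yB)$ and invoke Theorem~\ref{thldf} to obtain a cubic ring $R$ together with a distinguished basis $\{1,\omega,\theta\}$ coming from the finer bijection~\eqref{eqfinbijcub}. To build $I$, I would endow $\Z^3$ with an $R$-module structure using $A$ and $B$: exploiting the identity $M\cdot\mathrm{adj}(M)=\det(M)\cdot\mathrm{Id}$ applied to $M=xA+yB$ over $\Z[x,y]$, the adjoints of $A$ and $B$ act on $\Z^3$ as multiplication by elements corresponding to $\omega$ and $\theta$ in $R$ (up to sign and normalization). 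Identifying this $\Z^3$ with a fractional ideal $I\subset R\otimes\Q$ and reading $\delta$ off from the determinant normalization forces both $I^2\subset(\delta)$ and $N(I)^2=N(\delta)$.

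\emph{Backward map (triples to orbits).} Given $(R,I,\delta)$, choose a $\Z$-basis $\alpha_1,\alpha_2,\alpha_3$ of $I$ and extend the basis $\{1,\omega,\theta\}$ of $R$ compatibly with~\eqref{eqfinbijcub}. Since $I^2\subset(\delta)$, one can write
\begin{equation*}
\delta^{-1}\alpha_i\alpha_j=c_{ij}+a_{ij}\omega+b_{ij}\theta,\qquad c_{ij},a_{ij},b_{ij}\in\Z.
\end{equation*}
Set $A:=(a_{ij})$ and $B:=(b_{ij})$; these are symmetric by commutativity of $R$, producing an element of $W(\Z)$. This assignment is well defined up to a change of basis of $I$ by $\SL_3(\Z)$ (the restriction $\det=1$ being absorbed into the $\kappa$-equivalence on the triple via $\kappa\in R\otimes\Q$) and a change of basis of $R/\Z$ by $\GL_2(\Z)$, which exactly matches the action in~\eqref{eqfinbijcub} on the resolvent cubic via~\eqref{eqres}.

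\emph{Verifications.} The central identity to check is that the resolvent $\det(xA+yB)$ reproduces the Delone--Faddeev cubic form of $R$. This follows from applying~\eqref{eqrefbijcub} to the generic element $x\alpha_i+y\alpha_j$ of $I\otimes\Q$ and tracking the projection onto $R/\Z$; the determinant arises from the Gram-matrix interpretation of multiplication on $I$ relative to $\delta$. The norm identity $N(I)^2=N(\delta)$ drops out from comparing $\Z$-lattice covolumes of $I^2$ and $\delta R$ inside $R\otimes\R$, and the inverse-maps verification is a direct computation once bases are aligned.

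\emph{Main obstacle.} The hardest step is establishing that the forward construction's $\Z^3$ really is closed under the $R$-action, i.e., forms a genuine fractional ideal, and that the resulting data assemble into a valid triple. This hinges on careful manipulation of adjoint identities for $3\times 3$ symmetric matrices over $\Z[x,y]$, together with the fact that $W$ is a prehomogeneous vector space under $G_{2,3}$ whose generic stabilizer is intimately tied to $\mathrm{Cl}(R)[2]$. A clean route is to work first over $R\otimes\Q$, where the $R$-module structure is transparent linear algebra, and then descend to $\Z$ by checking integrality of all structure coefficients prime by prime.
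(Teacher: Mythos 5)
The paper does not actually prove this result: it is imported verbatim from Bhargava's \emph{Higher composition laws II} \cite{hcl2} with a citation, so there is no proof in the present paper to compare against. Your sketch therefore must be judged on its own, and what it gives is a correct high-level outline of the [hcl2] construction, not a proof.

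Your backward direction is essentially Bhargava's construction and is the more complete half: choosing a $\Z$-basis $\alpha_1,\alpha_2,\alpha_3$ of $I$ and reading the $\omega$- and $\theta$-components of $\delta^{-1}\alpha_i\alpha_j$ is the right thing to do, the condition $I^2\subset(\delta)$ is exactly what makes the structure constants integral, and symmetry of $A$ and $B$ does follow from commutativity. But even here you have not verified that the resolvent $\det(xA+yB)$ agrees with the Delone--Faddeev form of $R$ (you defer this to ``tracking the projection onto $R/\Z$''), nor have you shown that the resulting element of $W(\Z)$ changes by the stated $\GL_2(\Z)\times\SL_3(\Z)$-action under a change of basis of $R/\Z$ and of $I$ together with a change of the pair $(\delta,\{\alpha_i\})$ by $\kappa$; the parenthetical remark about absorbing $\det=\pm1$ into $\kappa$ is the point that actually needs an argument, since scalars in $R^\times$ act on $I$ by $\pm1$ in $\GL_3(\Z)$ but not in general in $\SL_3(\Z)$, and one must check the ambiguity is exactly accounted for by the equivalence on triples.

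The forward direction is substantially thinner. You appeal to the adjoint identity $M\,\mathrm{adj}(M)=\det(M)\,\mathrm{Id}$ to equip $\Z^3$ with an $R$-module structure, and then assert that this gives a genuine fractional ideal with the right $\delta$. This is indeed the mechanism, but nothing in your sketch explains how $\delta$ is extracted, how the $R$-action is made precise (what $\omega$ and $\theta$ act as on $\Z^3$, with the correct signs and normalizations so that the axioms of a module hold over $\Z$ and not merely over $\Q$), or why the resulting triple is a two-sided inverse to your backward map. You name the prehomogeneous-vector-space structure and the link to $\Cl(R)[2]$, but those are consequences of the theorem, not ingredients in its proof. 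Finally, you do not address the degenerate cases where $R$ is not a domain or $I$ is not invertible, which Bhargava's bijection does cover and which require some care (the hypothesis is only that $I$ has rank $3$ as a $\Z$-module, not that $I$ is invertible). So as a standalone proof this has real gaps: the forward map is underspecified, bijectivity is unaddressed, and the equivariance/equivalence bookkeeping is gestured at rather than verified.
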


When $R=R_f$ is the maximal order in a cubic field $K$, the above
result gives a bijection between the set of
$\GL_2(\Z)\times\SL_3(\Z)$-orbits on the set of pairs $(A,B)\in W(\Z)$
with resolvent $f$, and the set of equivalence classes of pairs
$(I,\delta)$, where $I$ is an ideal of $R$, $\delta\in K$ and
$I^2=(\delta)$. This latter set is termed the {\it $2$-Selmer group}
$\Sel_2(K)$ of $K$ (see Definition 5.2.4 and Proposition 5.2.8 of
\cite{cohenadvancedtopics}) and fits into the exact sequence
\begin{equation*}
1\to R^\times/(R^\times)^2\to\Sel_2(K)\to\Cl(K)[2]\to 1,
\end{equation*}
where $R^\times$ denotes the unit group of $K$. Thus bounds on the
$2$-Selmer group of $K$ directly imply bounds on the
$2$-torsion subgroup of the class group of $K$.

\subsection{The number of cubic fields with bounded discriminants and
  skewed rings of integers}\label{sec:4.2}

The goal of this section is to prove the following result.

\begin{proposition}\label{propfewskf}
  Let $X$ and $Z$ be positive real numbers. There exists some constant
  $C$ such that $\RR^\pm_3(X,Z)$ is empty if $Z>CX^{1/6}$. Otherwise
  $|\RR^\pm_3(X,Z)|=O(X/Z)$.
\end{proposition}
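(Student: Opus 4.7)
The plan is to apply the Delone-Faddeev-Gan-Gross-Savin parametrization (Theorem \ref{thldf}) together with reduction theory for $\GL_2(\Z)$ on $V_3(\R)$. Each $K \in \RR_3^\pm(X,Z)$ corresponds to a $\GL_2(\Z)$-orbit of irreducible integral binary cubic forms $f(x,y) = a_0x^3 + a_1x^2y + a_2xy^2 + a_3y^3$ with $R_f = \cO_K$. By the refined bijection \eqref{eqfinbijcub}, I would pick the representative whose basis $(\omega,\theta)$ of $\cO_K/\Z$ lifts to a Minkowski-reduced basis $\{1,\omega,\theta\}$ of $\cO_K$ in the trace form; only finitely many such representatives lie in each orbit, so it suffices to bound the number of reduced $f$.

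The central input is the identity $a_0 = f(1,0) = \ind(\omega)$ from \eqref{eqindalpha}, giving $\Disc(\Z[\omega]) = a_0^2|\Disc(\cO_K)|$. After replacing $\omega$ by $\omega - n$ for the integer $n$ minimizing $\Tr((\omega-n)^2)$ (which preserves the $\GL_2(\Z)$-orbit since $\omega\bmod\Z$ is unchanged), the bound $\Disc(\Z[\omega]) = \prod_{i<j}(\omega_i-\omega_j)^2 \ll \Tr(\omega^2)^3 \asymp \ell_1^6$ follows from AM-GM applied to the conjugates. Combined with Minkowski's inequality $\ell_1\ell_2 \ll |\Disc(\cO_K)|^{1/2} \ll X^{1/2}$ and the hypothesis $\ell_2/\ell_1 \geq Z$, this yields $\ell_1^2 \ll X^{1/2}/Z$ and therefore
\[
|a_0| \;\ll\; \frac{X^{1/4}}{Z^{3/2}}.
\]
If $Z > CX^{1/6}$ for a suitable absolute constant $C$, then $|a_0| < 1$; since $K$ is a field, $f$ is irreducible and $a_0 \neq 0$, so $a_0 \in \Z$ gives a contradiction, establishing emptiness of $\RR_3^\pm(X,Z)$ in this regime.

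When $Z \leq CX^{1/6}$, applying the same argument to $\theta$ and to suitable $\Z$-linear combinations of $\omega$ and $\theta$ produces the matching bounds $|a_1| \ll X^{1/4}/Z^{1/2}$, $|a_2| \ll X^{1/4}Z^{1/2}$, and $|a_3| \ll X^{1/4}Z^{3/2}$. The number of integer quadruples in this box additionally satisfying $|\Disc(f)| \in [X,2X]$ would then be estimated via the Iwasawa parametrization of the fundamental domain of $\GL_2(\Z)$ on $V_3(\R)^{(i)}$: the shape parameter $y$ on $\mathbb{H}$ satisfies $\sk(R_f) \asymp y$, and the invariant measure integral $\int_Z^\infty dy/y^2 = 1/Z$ combines with the discriminant-volume factor $\asymp X$ to produce volume $\asymp X/Z$. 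A standard application of Davenport's lemma converts this volume estimate into the required lattice-point bound $O(X/Z)$.

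The principal technical difficulty is the cuspidal behaviour of the fundamental domain as $y \to \infty$, where integer points can concentrate and where many reducible forms lie; this is handled exactly as in the shape-equidistribution arguments of Bhargava-Harron \cite{bhargavaharron}. Since we seek only an upper bound rather than an asymptotic, the cuspidal issue can alternatively be bypassed by the direct box-counting argument using the four coefficient bounds above, without invoking any fundamental-domain geometry beyond the Minkowski reduction of the shape lattice.
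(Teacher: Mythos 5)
Your main line coincides with the paper's own proof, and in that respect it is sound. The emptiness argument is the paper's argument in different packaging: the paper takes a Minkowski basis $\langle 1,\alpha,\beta\rangle$ of $\cO_K$, notes $X^{1/2}\ll\sqrt{\Disc(\Z[\alpha])}\ll|\alpha|^3=\ell_1^3$ and $\ell_1\ell_2\asymp X^{1/2}$, and concludes $Z\le\sk(\cO_K)\ll X^{1/6}$; your route through $a_0=\ind(\omega)=f(1,0)$ and $|a_0|\ll X^{1/4}/Z^{3/2}<1$ is the same inequality read off via \eqref{eqindalpha}, and it is correct. Your counting route --- Iwasawa coordinates, $\sk(R_f)\asymp t^2$ forcing $t\gg Z^{1/2}$, the constraint $|\Delta|\asymp X$ fixing $\lambda\asymp X^{1/4}$ and contributing the factor $X$, the Haar factor $t^{-2}d^\times t$ contributing $1/Z$, and Davenport's lemma --- is exactly the computation in the paper's proof of Proposition \ref{propfewskf}.

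Two of your supporting claims, however, are wrong or unjustified as stated. First, the bounds $|a_2|\ll X^{1/4}Z^{1/2}$ and $|a_3|\ll X^{1/4}Z^{3/2}$ do not hold uniformly on $\RR_3^\pm(X,Z)$: the hypothesis only bounds the skewness $s=\ell_2/\ell_1$ from \emph{below}, so the reduced coefficients satisfy $|a_2|\ll X^{1/4}s^{1/2}$ and $|a_3|\ll X^{1/4}s^{3/2}$ with $s$ ranging up to $\asymp X^{1/6}$ (so $|a_3|$ can be as large as $\asymp X^{1/2}$); your single box therefore misses most of the family, and one must either integrate over all $t\gg Z^{1/2}$ in the fundamental domain (as the paper does) or decompose dyadically in $s$. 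Second, the closing claim that the cusp can be bypassed by ``direct box-counting using the four coefficient bounds'' cannot work even after correcting the boxes: each skewness-$s$ box has volume $\asymp X$ independently of $s$, so counting lattice points in boxes alone yields only $O(X\log X)$. The saving $1/Z$ comes precisely from imposing $|\Delta(f)|\geq X$ inside the skewed box, equivalently from the $t^{-2}d^\times t$ measure in the orbit parametrization, so the fundamental-domain (or an equivalent volume computation of the discriminant-constrained region) cannot be dispensed with; moreover the cusp truncation is genuinely needed, since the projection term in Davenport's lemma contributes $\int\lambda^3t\,d^\times t$, which diverges without a cutoff. The paper handles this with an elementary observation you could use in place of the appeal to \cite{bhargavaharron}: for $t\gg X^{1/12}$ every integral form in the translated region has vanishing $x^3$-coefficient, hence is reducible and corresponds to no field, which truncates the integral at $t\ll X^{1/12}$ and yields the stated bound $X/Z+X^{5/6}\ll X/Z$.
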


For any subset $S$ of
$V_3(\R)$, let $S^\pm$ denote the set of elements $f$ such that
$\pm\Delta(f)>0$. Then $V_3(\R)^+$ (resp.\ $V_3(\R)^-$) consists of a
single $\GL_2(\R)$-orbit and corresponds to the cubic algebra $\R^3$
(resp.\ $\R\oplus\C$). We denote this cubic $\R$-algebra by $R^\pm$.
Let $\FF_2$ denote Gauss' fundamental domain for the action of
$\GL_2(\Z)$ on $\GL_2(\R)$. We write elements of $\GL_2(\R)$ in
Iwasawa coordinates, in which case we have
\begin{equation*}
\FF_2=\{n\alpha k\lambda:n\in N'(t),\alpha(t)\in A',k\in K,\lambda\in\Lambda\},
\end{equation*}
where,
\begin{equation}\label{nak}
N'(t)= \left\{\left(\begin{array}{cc} 1 & {} \\ {u} & 1 \end{array}\right):
        u\in\nu(t) \right\}    , \;\;
A' = \left\{\left(\begin{array}{cc} t^{-1} & {} \\ {} & t \end{array}\right):
       t\geq \sqrt[4]3/\sqrt2 \right\}, \;\;
\Lambda = \left\{\left(\begin{array}{cc} \lambda & {} \\ {} & \lambda
        \end{array}\right):
        \lambda>0 \right\},
\end{equation}
and $K$ is as usual the (compact) real orthogonal group ${\rm
  SO}_2(\R)$; here $\nu(t)$ is a union of one or two subintervals of
$[-\frac12,\frac12]$ depending only on the value of $t$.  Elements
$n\alpha(t) k\lambda$ are expressed in their Iwasawa coordinates as
$(n,t,\lambda,k)$.  Fix compact sets $B^\pm\subset V_3(\R)^\pm$ that
are closures of open bounded sets. Then for every point $v\in B^\pm$,
the set $\FF_2\cdot v$, viewed as a multiset, is a cover of a
fundamental domain for the action of $\GL_2(\Z)$ on $V_3(\R)^\pm$ of
absolutely bounded degree.  Recall that for a cubic ring $R$, its
skewness $\text{sk}(R)$ is defined to be the quotient
$\ell_2(R)/\ell_1(R)$ where $1,\ell_1(R),\ell_2(R)$ are the successive
minima of $R$, regarded as a lattice inside $R\otimes\R$. We have the
following lemma.
\begin{lemma}\label{lemsk}
Let $v\in B^\pm$ be any binary cubic form. Let
$\gamma=(n,t,\lambda,k)\in\FF_2$ be such that $f=\gamma\cdot v$ is an
integral binary cubic form. Then we have
\begin{equation*}
\sk(R_f)\asymp t^2
\end{equation*}
where $R_f$ denotes the cubic ring corresponding to $f$.
\end{lemma}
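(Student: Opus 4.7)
The plan is to read off the two successive minima $\ell_1(R_f)$ and $\ell_2(R_f)$ by tracking, through the finer bijection \eqref{eqfinbijcub}, how the explicit basis of $R_f/\Z$ transforms under the action of $\gamma$.

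First, I would use \eqref{eqfinbijcub}, extended to $\R$-algebras, to associate to $v$ an $\R$-basis $(\omega_v,\theta_v)$ of the trace-zero subspace $H^{\pm} := \{x \in R^{\pm} : \tr(x) = 0\}$, and similarly to $f = \gamma\cdot v$ a basis $(\omega_f,\theta_f)$ of $H^{\pm}$. Since the natural action of $\GL_2$ on triples $(R,\omega,\theta)$ is by change of basis of $R/\Z$, we have $(\omega_f,\theta_f) = (\omega_v,\theta_v)\gamma$. With respect to the trace pairing on $R^{\pm}$ the line $\R\cdot 1$ is orthogonal to $H^{\pm}$, so $\ell_1(R_f)$ and $\ell_2(R_f)$ are precisely the two successive minima of the rank-$2$ lattice $\Z\omega_f + \Z\theta_f$ inside $H^{\pm}$.

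I would next establish a uniform well-conditioning statement: for $v$ in the compact set $B^{\pm}$, which is bounded away from the discriminant locus, the $\R$-linear map $\R^2 \to H^{\pm}$ sending the standard basis to $(\omega_v,\theta_v)$ has singular values bounded above and below by constants depending only on $B^{\pm}$. This is a routine compactness argument, since $v\mapsto(\omega_v,\theta_v)$ is continuous on the open set $\Delta\neq 0$ and $B^{\pm}$ sits inside this locus.

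With well-conditioning in hand, the successive minima of $\Z\omega_f + \Z\theta_f$ inside $H^{\pm}$ are comparable, up to absolute constants, to the singular values of $\gamma \in \GL_2(\R)$ acting on $\R^2$. Writing $\gamma = n\alpha(t)k\lambda$ as in \eqref{nak}, the factors $n\in N'(t)$ and $k \in K$ vary in compact sets and so contribute only bounded distortion; hence the singular values of $\gamma$ are comparable to those of $\lambda\alpha(t) = \mathrm{diag}(\lambda t^{-1}, \lambda t)$, namely $\asymp \lambda/t$ and $\asymp \lambda t$. Therefore $\ell_1(R_f)\asymp \lambda/t$ and $\ell_2(R_f)\asymp\lambda t$, which gives $\sk(R_f) = \ell_2(R_f)/\ell_1(R_f)\asymp t^2$. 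The only mildly subtle step is the uniform well-conditioning, which follows immediately from continuity and compactness; everything else is a singular-value computation for a $2\times 2$ matrix.
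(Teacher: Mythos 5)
Your proposal follows essentially the same route as the paper's proof: both use the binary-cubic-form $\leftrightarrow$ (basis of $R/\Z$) dictionary, the $\GL_2(\R)$-equivariance of that dictionary, compactness of $B^\pm$ to control the seed basis, and the Iwasawa coordinates of $\FF_2$ to extract the factor $t^2$. So this is the paper's argument, phrased in terms of singular values rather than the paper's direct bound on the lengths $|\alpha_f|,|\beta_f|$.

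There is, however, one step that as written is not justified. You assert that, after the well-conditioning remark, ``the successive minima of $\Z\omega_f+\Z\theta_f$ inside $H^\pm$ are comparable, up to absolute constants, to the singular values of $\gamma$'' and only afterward invoke the Iwasawa decomposition to compute those singular values. For a general $\gamma\in\GL_2(\R)$ the quoted statement is false: take $\gamma=\bigl(\begin{smallmatrix}1&N\\0&1\end{smallmatrix}\bigr)$, which fixes the lattice $\Z^2$ (so the successive minima are $1,1$) while its singular values are $\asymp N$ and $\asymp N^{-1}$. Well-conditioning of $(\omega_v,\theta_v)$ does not repair this; what makes the comparison valid here is that $\gamma$ lies in $\FF_2$, so the unipotent factor $n$ has bounded entries. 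Concretely, the two rows of $n\alpha(t)$ have lengths $\asymp t^{-1}$ and $\asymp t$, so after applying the orthogonal $k$ and the scalar $\lambda$ one gets a basis $(\omega_f,\theta_f)$ with $|\omega_f|\asymp\lambda t^{-1}$, $|\theta_f|\asymp\lambda t$ and $|\omega_f||\theta_f|\asymp\lambda^2\asymp\det(\Z\omega_f+\Z\theta_f)$. That last relation is the reduced-basis condition which then forces $\ell_1\asymp|\omega_f|$ and $\ell_2\asymp|\theta_f|$. This is precisely what the paper does when it notes $|\alpha_f|\cdot|\beta_f|\asymp\sqrt{\Disc(f)}$ before concluding. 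To repair your write-up, move the Iwasawa decomposition to the front and use it to establish the reduced-basis property of $(\omega_f,\theta_f)$, rather than asserting the minima/singular-value comparison up front and reserving the decomposition only for computing the singular values.
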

\begin{proof}
Every binary cubic form $v$ in $V_3(\R)^\pm$ gives rise to the cubic
algebra $R^\pm$, where $R^+\cong \R^3$ and $R^-\cong\C\oplus\R$, along
with elements $\alpha_v$ and $\beta_v$ such that $\langle
1,\alpha_v,\beta_v\rangle$ form a basis for $R^\pm$. Furthermore, the
lattice spanned by $1$, $\alpha_v$, and $\beta_v$ has covolume
$\sqrt{|\Delta(v)|}$. Since $B^\pm$ is compact, it follows that we
have $|\alpha_v|\cdot|\beta_v|\ll\sqrt{|\Delta(v)|}$ for $v\in B^\pm$.
Additionally, the action of $\GL_2(\R)$ on $V_3(\R)$ agrees with the
action of $\GL_2(\R)$ on pairs $(\alpha_v,\beta_v)$ by linear change
of variables. That is, we have $(\alpha_{\gamma\cdot
  v},\beta_{\gamma\cdot v})=\gamma\cdot(\alpha_v,\beta_v)$.

Let $f=\gamma\cdot v$ be an integral binary cubic form as in the
statement of the lemma. Since $\gamma\in\FF_2$, it follows that
$|\alpha_f|\asymp \lambda t^{-1}$ and $|\beta_f|\asymp \lambda t$.  As
a consequence,
$|\alpha_f|\cdot|\beta_f|\asymp\sqrt{\Disc(f)}$. Therefore, we have
$\ell_2(R_f)/\ell_1(R_f)\asymp |\beta_f|/|\alpha_f|\asymp t^2$ as
necessary.
\end{proof}

Next, we have the following lemma, due to Davenport
\cite{davenport-volume1}, that estimates the number of lattice points
within regions of Euclidlean space.
\begin{proposition}[\cite{davenport-volume1}]\label{davlem}
  Let $\mathcal R$ be a bounded, semi-algebraic multiset in $\R^n$
  having maximum multiplicity $m$, and that is defined by at most $k$
  polynomial inequalities each having degree at most $\ell$.
  Then the number of integral lattice points $($counted with
  multiplicity$)$ contained in the region $\mathcal R$ is
\[\Vol(\mathcal R)+ O(\max\{\Vol(\bar{\mathcal R}),1\}),\]
where $\Vol(\bar{\mathcal R})$ denotes the greatest $d$-dimensional
volume of any projection of $\mathcal R$ onto a coordinate subspace
obtained by equating $n-d$ coordinates to zero, where
$d$ takes all values from
$1$ to $n-1$.  The implied constant in the second summand depends
only on $n$, $m$, $k$, and $\ell$.
\end{proposition}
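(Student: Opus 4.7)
The plan is to prove Proposition \ref{davlem} by induction on the ambient dimension $n$, slicing by integral hyperplanes perpendicular to the last coordinate axis and applying the inductive hypothesis to each slice. Throughout, I will keep track of how the defining data (number of inequalities, degree, multiplicity) propagates under slicing and projection; the key point is that restricting to a hyperplane $x_n=j$ preserves the bounds on $k$, $\ell$, and $m$ (possibly up to constants depending on $n,k,\ell,m$), and so the implied constants in the induction remain admissible.

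For the base case $n=1$, a bounded semi-algebraic multiset in $\R$ defined by at most $k$ polynomial inequalities of degree at most $\ell$ is a finite union of at most $O_{k,\ell}(1)$ intervals, each counted with multiplicity at most $m$. The number of integers in an interval of length $L$ is $L+O(1)$, so summing gives $\Vol(\mathcal{R})+O(1)$; since there are no proper coordinate projections in dimension one, $\max\{\Vol(\bar{\mathcal{R}}),1\}=1$ and the claim holds.

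For the inductive step, assume the result in dimension $n-1$. For each $j\in\Z$ let $\mathcal{R}_j:=\{(x_1,\ldots,x_{n-1})\in\R^{n-1}:(x_1,\ldots,x_{n-1},j)\in\mathcal{R}\}$; this is a bounded semi-algebraic multiset in $\R^{n-1}$ still defined by at most $k$ polynomial inequalities of degree at most $\ell$ and with maximum multiplicity at most $m$. Thus
\[
  \#(\mathcal{R}\cap\Z^n)=\sum_{j\in\Z}\#(\mathcal{R}_j\cap\Z^{n-1})
  =\sum_j\Vol_{n-1}(\mathcal{R}_j)+\sum_j O\!\left(\max\{\Vol(\overline{\mathcal{R}_j}),1\}\right),
\]
where $\Vol(\overline{\mathcal{R}_j})$ is the largest $d$-dimensional volume of a coordinate projection of $\mathcal{R}_j$ for $d=1,\ldots,n-2$. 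The projection of $\mathcal{R}$ onto the $x_n$-axis is a bounded semi-algebraic subset of $\R$, so by the base case the number of integers $j$ for which $\mathcal{R}_j$ is nonempty is $O(L_n+1)$, where $L_n$ is the length of that projection; both $L_n$ and each $\Vol(\overline{\mathcal{R}_j})$ are bounded by $\Vol(\bar{\mathcal{R}})$, because every coordinate projection of a slice $\mathcal{R}_j$ or of the $x_n$-range is dominated by a corresponding projection of $\mathcal{R}$ itself of equal or higher dimension.

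It remains to compare $\sum_j\Vol_{n-1}(\mathcal{R}_j)$ with $\Vol_n(\mathcal{R})=\int_{\R}\Vol_{n-1}(\mathcal{R}_t)\,dt$. Here I will use the semi-algebraic structure: the function $t\mapsto \Vol_{n-1}(\mathcal{R}_t)$ is piecewise of bounded total variation, with at most $O_{n,k,\ell}(1)$ monotonicity intervals, so the Riemann-sum error is at most a bounded multiple of $\max_j\Vol_{n-1}(\mathcal{R}_j)$, which in turn is bounded by the $(n-1)$-dimensional projection volume appearing in $\Vol(\bar{\mathcal{R}})$. Combining this with the error estimate from the previous paragraph yields the desired bound
\[
  \#(\mathcal{R}\cap\Z^n)=\Vol_n(\mathcal{R})+O\!\left(\max\{\Vol(\bar{\mathcal{R}}),1\}\right),
\]
completing the induction. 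I expect the main technical obstacle to lie in the last step: one has to justify the piecewise-monotonicity claim for the slice-volume function and absorb the number of monotonicity pieces into the implied constant depending only on $n,m,k,\ell$, which is where the semi-algebraic hypothesis is essential (a purely measurable boundedness hypothesis would not suffice).
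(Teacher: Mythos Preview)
The paper does not prove this proposition at all; it is quoted verbatim as a result of Davenport \cite{davenport-volume1} and used as a black box, so there is no proof in the paper to compare against.

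Your inductive slicing argument is the standard route (and essentially Davenport's own), and the outline is correct. The one step you flag yourself is indeed the only delicate one: bounding
\[
\Bigl|\sum_{j\in\Z}\Vol_{n-1}(\mathcal{R}_j)-\int_\R \Vol_{n-1}(\mathcal{R}_t)\,dt\Bigr|
\]
requires more than ``piecewise of bounded total variation.'' What one actually uses is that, by the semi-algebraic hypothesis, each line parallel to a coordinate axis meets $\mathcal{R}$ in at most $O_{k,\ell}(1)$ intervals; integrating this over the remaining $n-2$ coordinates shows that the total variation of $t\mapsto\Vol_{n-1}(\mathcal{R}_t)$ is bounded by a constant times the $(n-1)$-dimensional volume of the projection of $\mathcal{R}$ onto the hyperplane $x_n=0$, which is exactly one of the quantities appearing in $\Vol(\bar{\mathcal{R}})$. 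With that made explicit your argument goes through; as written, the phrase ``at most $O_{n,k,\ell}(1)$ monotonicity intervals'' for the slice-volume function is not quite the right claim (that function need not be piecewise monotone in any uniform sense), but the total-variation bound just described is the correct replacement and gives the same conclusion.
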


We are now ready to prove Proposition \ref{propfewskf}.

\vspace{.1in}

\noindent\textbf{Proof of Proposition \ref{propfewskf}:} A general
version of the first claim of the proposition, applying to number
fields of all degrees, is obtained in \cite[Theorem 3.1]{bstttz}, and
further generalizations are proved in \cite{Lapierrethesis}. For
completeness, we include a proof for our case below. Let $K$ be a
cubic field whose ring of integers $\cO_K$ belongs to $\RR^\pm_3(X,Z)$,
and let $\langle 1,\alpha,\beta\rangle$ be a Minkowski basis for
$\cO_K$ with $|\alpha|\leq|\beta|$. Consider the ring $\Z[\alpha]$
which is a suborder of $\cO_K$. We have
\begin{equation*}
X^{1/2}\asymp\sqrt{\Disc(\cO_K)}\ll\sqrt{\Disc(\Z[\alpha])}\ll|\alpha|^3,
\end{equation*}
and it follows that $|\alpha|\gg X^{1/6}$. Since $|\alpha||\beta|\asymp
X^{1/2}$, we have $Z=|\beta|/|\alpha|\ll X^{1/2}/X^{2/6}=X^{1/6}$ and the
first claim of the proposition follows.

We now estimate $|\RR^\pm_3(X,Z)|$ under the assumption that $Z\ll
X^{1/6}$ following the setup of \cite[\S5]{bstcubic}. Let $v$ be an
element of the compact set $B^\pm$. If $(n,t,\gamma,k)\cdot v$
corresponds to an cubic ring $R$ with $X\leq\Disc(R)<2X$ and
$\sk(R)>Z$, then it follows that $\lambda\asymp X^{1/4}$ and $t\gg
Z^{1/2}$, respectively, where the latter fact follows from Lemma
\ref{lemsk}. Hence we have
\begin{equation*}
\begin{array}{rcl}
\displaystyle |\RR^\pm_3(X,Z)|&\leq&
\displaystyle\int_{\substack{g=(n,t,\lambda,k)\in\FF_2\\\lambda\asymp X^{1/4}\\t\gg Z^{1/2}}}
\#\{g\cdot B^\pm\cap V_3(\Z)^\irr\}dg\\[.2in]&\leq&
\displaystyle
\int_{\substack{g=(n,t,\lambda,k)\in\FF_2\\\lambda\asymp X^{1/4}\\Z^{1/2}\ll t\ll X^{1/12}}}
\#\{g\cdot B^\pm\cap V_3(\Z)\}dg,
\end{array}
\end{equation*}
where the second inequality follows from the observation that if $t
\gg X^{1/12}$, then every element $f(x,y)$ in $g\cdot B^\pm$ has
$x^3$-coefficient less than $1$ in absolute value. Therefore no such
integral element $f(x,y)$ can be irreducible since its
$x^3$-coefficient must be $0$. Applying Proposition \ref{davlem} on
the set $g\cdot B^\pm$, we obtain
\begin{equation*}
\begin{array}{rcl}
|\RR^\pm_3(X,Z)|&\ll&
\displaystyle\int_{\lambda\asymp X^{1/4}}\int_{Z^{1/2}\ll t\ll X^{1/12}}
(\lambda^4+\lambda^3t^3)t^{-2}d^\times td^\times\lambda
\\[.2in]&\ll&\displaystyle\frac{X}{Z}+X^{5/6}\ll\frac{X}{Z},
\displaystyle
\end{array}
\end{equation*}
since $Z\ll X^{1/6}$. The proposition follows. $\Box$

\vspace{.1in}

We end this subsection with a counting result on the number of
primitive algebraic integers in a cubic field of bounded size to be
used in Section \ref{sec:ellipcount}. We say an element $\alpha$ in a
ring $R$ is {\it primitive} if $\alpha\neq n\beta$ for any $\beta\in
R$ and any integer $n\geq2$. We use the superscript $\Tr=0$ to denote
the subset of elements of trace $0$.

\begin{lemma}\label{lemkalcountel}
Let $K$ be a cubic field with discriminant $D$. For any real number
$Y>0$, let $N_K(Y)$ denote the number of primitive elements
$\alpha\in\cO_K^{\Tr=0}$ with $|\alpha|<Y$. Then
\begin{equation}\label{lem321eqt}
  N_K(Y)\leq\left\{
  \begin{array}{ccl}
    0 &\mbox{if}& Y<\ell_1(K);\\[.1in] 1 &\mbox{if}& \ell_1(K)\leq
    Y<\ell_2(K);\\[.1in]
    \frac{Y^2}{\sqrt{D}}+O\bigl(\frac{Y}{\ell_1(K)}\bigr) &\mbox{if}&
    \ell_2(K)\leq Y.
  \end{array}\right.
\end{equation}
\end{lemma}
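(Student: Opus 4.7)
The plan is to reformulate the count as a lattice-point problem on the rank-$2$ trace-zero sublattice
$L:=\cO_K^{\Tr=0}\subset V_0:=(K\otimes\R)^{\Tr=0}$,
to compare its invariants with those of $\cO_K$, and then to apply the geometry-of-numbers estimate of Proposition \ref{davlem}. The three cases of the lemma correspond exactly to $Y$ lying below, between, and above the two successive minima of $L$.

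First I will establish three facts: $\ell_1(L)\geq\ell_1(K)$, $\ell_2(L)\geq\ell_2(K)$, and $\Vol(V_0/L)\asymp\sqrt{D}$. For the minima: any nonzero $\alpha\in L$ is $\Q$-linearly independent from $1$ because $\Tr(\alpha)=0\neq\Tr(1)=3$, so $|\alpha|\geq\ell_1(K)$ by definition of the second Minkowski minimum of $\cO_K$. If $\alpha,\beta\in L$ are $\Q$-linearly independent, applying $\Tr$ to any relation $a+b\alpha+c\beta=0$ forces $a=0$, whence $1,\alpha,\beta$ are $\Q$-linearly independent in $\cO_K$, giving $\max(|\alpha|,|\beta|)\geq\ell_2(K)$. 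For the covolume I will use the short exact sequence of lattices $0\to L\to\cO_K\xrightarrow{\Tr}\Tr(\cO_K)\to 0$: since $\Tr(\cO_K)$ is a sublattice of $\Z$ of index dividing $3$ and the covolume of $\cO_K$ in $K\otimes\R$ equals $\sqrt{D}$, this yields $\Vol(V_0/L)\asymp\sqrt{D}$.

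The first two bounds of the lemma are then immediate. If $Y<\ell_1(K)\leq\ell_1(L)$, then $L$ contains no nonzero vector of length less than $Y$ and so $N_K(Y)=0$. If $\ell_1(K)\leq Y<\ell_2(K)\leq\ell_2(L)$, then every element of $L$ of length less than $Y$ lies in the rank-$1$ sublattice generated by a shortest vector of $L$; the only primitive elements on this line are its two generators, so $N_K(Y)$ is bounded by an absolute constant. For the third case $Y\geq\ell_2(K)$ I will drop the primitivity condition and bound $N_K(Y)$ by the total number of lattice points of $L$ inside the Euclidean ball of radius $Y$ in $V_0$. Applying Proposition \ref{davlem} to this ball together with the lattice $L$ produces a count of the form $Y^2/\Vol(V_0/L)+O(Y/\ell_1(L))$, which, after invoking the comparisons from the previous paragraph, collapses to the desired bound $Y^2/\sqrt{D}+O(Y/\ell_1(K))$.

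The only real technical point, and the place where I expect to need the most care, is fixing the norm on $K\otimes\R$ used to define the successive minima and verifying that the implicit constants in the comparison $\Vol(V_0/L)\asymp\sqrt{D}$ and in the area of the two-dimensional ball of radius $Y$ cooperate with the stated leading term $Y^2/\sqrt{D}$. This is a matter of normalization rather than a genuine obstacle; once the conventions are pinned down consistently with those used elsewhere in the paper, the estimate drops out of the general two-dimensional lattice-point count.
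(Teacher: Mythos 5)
Your proof is correct and mirrors the paper's argument: both drop the primitivity condition in the third case and apply Proposition~\ref{davlem} to the rank-two lattice $\cO_K^{\Tr=0}$, with the key input being its covolume in $(K\otimes\R)^{\Tr=0}$. You spell out the successive-minima comparisons $\ell_i(\cO_K^{\Tr=0})\geq\ell_i(K)$ and the covolume comparison more explicitly than the paper's terse proof, but the strategy is identical.
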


\noindent Note that if $\ell_2(K)\leq Y$, then
$\frac{Y}{\ell_1(K)}\ll\frac{Y^2}{\sqrt{D}}$ and so we simply have
$N_K(Y)\ll\frac{Y^2}{\sqrt{D}}$, which is the best possible bound in
this case.
\vspace{.1in}

\begin{proof}
The first two lines of \eqref{lem321eqt} are clearly true. In fact,
they are equalities. Then final claim follows from Proposition
\ref{davlem} by replacing $N_K(Y)$ by the overcount where we count all
(not merely primitive) traceless elements $\alpha\in\cO_K$, since
$\cO_K^{\Tr=0}$ considered as a lattice inside $(K\otimes\R)^{\Tr=0}$
has covolume $\sqrt{D}$.
\end{proof}

\subsection{The 2-torsion subgroups in the class groups
  of cubic fields}\label{sec:4.3}

Let $K$ be a cubic field, and let $f\in V_3(\Z)$ be the binary cubic
form corresponding to $\cO_K$, the ring of integers of $K$. A
consequence of Theorem \ref{thbh2tor} is that the set of $2$-torsion
elements in the class group of $K$ injects into the set of
$\SL_3(\Z)$-orbits on the elements $(A,B)\in W(\Z)$ satisfying
$\Res(A,B)=f$.

Choose Iwasawa coordinates $(n,t,\lambda,k_2)$ for $\GL_2(\R)$ as in
the previous subsection and $(u,s_1,s_2,k_3)$ for $\SL_3(\R)$ as in
\cite[\S2.1]{manjulcountquartic}. A Haar-measure for $\SL_3(\R)$ in
these coordinates is $s_1^{-6}s_2^{-6}dudk_3d^\times s_1d^\times s_2$.
Let $\FF_3$ denote a fundamental domain for the action of $\SL_3(\Z)$
on $\SL_3(\R)$, such that $\FF_3$ is contained within a standard
Seigel domain in $\SL_3(\R)$. Then $\FF_{2,3}:=\FF_2\times\FF_3$ is a
fundamental domain for the action of $G_{2,3}(\Z)$ on
$G_{2,3}(\R)$. There are four $G_{2,3}(\R)$-orbits having nonzero
discriminant on $W(\R)$, and we denote them by $W(\R)^{(i)}$, $1\leq
i\leq 4$. For each $i$, let $\B_i\subset W(\R)^{(i)}$ be compact sets,
which are closures of open sets, such that $\Res(\B_i)\subset B^+\cup
B^-$, where $B^+$ and $B^-$ are as in the previous subsection. For each
element $w\in\B_i$, the set $\FF_{2,3}\cdot w$ is a cover of a
fundamental domain for the action of $G_{2,3}(\Z)$ on $W(\R)^{(i)}$.
Let $\B$ denote the union of the $\B_i$.

Next, let $W(\Z)^\irr$ denote the set of elements $(A,B)\in W(\Z)$
such that the resolvent of $(A,B)$ corresponds to an integral domain,
and such that $A$ and $B$ have no common root in $\P^2(\Q)$. Elements
in $W(\Z)$ that are not in $W(\Z)^\irr$ are said to be {\it reducible}. Given
a reducible element $w$ with resolvent $f$, either $R_f$ is not an
integral domain or $w$ corresponds to the identity element in the
class group of $R_f$. We now have the following lemmas.
\begin{lemma}\label{lemcgskew}
Let $g=(g_2,g_3)$ be an element in $\FF_{2,3}$, where
$g_2=(n,t,k_2,\lambda)\in\FF_2$ and $g_3\in\FF_3$. Let $(A,B)$ be an
integral element in $g\cdot\B$ such that $\Res(A,B)=f$. Then we have
\begin{equation*}
\Delta(f)\asymp \lambda^{12};\quad\sk(R_f)\asymp t^2.
\end{equation*}
\end{lemma}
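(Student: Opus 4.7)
The plan is to reduce both assertions to Lemma~\ref{lemsk} by tracking how the $G_{2,3}$-action on $W$ transfers, via the resolvent, to a twisted $\GL_2$-action on $V_3$. Since the $\SL_3$-factor acts by conjugation $(A,B)\mapsto(\gamma_3 A\gamma_3^t,\gamma_3 B\gamma_3^t)$ with $\det\gamma_3=1$, the resolvent satisfies $\Res(g_3\cdot w)=\Res(w)$. Equivariance \eqref{eqres} then gives
$$f=\Res(g\cdot w)=(\det g_2)\cdot(g_2\cdot v_0),$$
where $v_0:=\Res(w)$ lies in the fixed compact set $B^+\cup B^-$ of cubic forms of bounded nonzero discriminant, and $g_2\cdot v_0$ denotes the twisted $\GL_2$-action on $V_3$. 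The two claims then reduce to scaling computations for $\Delta$ and for the shape.

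For the discriminant $\Delta(f)$, I use that $\Delta$ is homogeneous of degree $4$ in the coefficients of a binary cubic form, so $\Delta(cv)=c^4\Delta(v)$ for any scalar $c$, and that under the twisted action the coefficients of $\gamma\cdot v$ carry an additional factor $1/\det\gamma$ compared to the standard action, under which $\Delta$ has weight $(\det\gamma)^6$; hence $\Delta(\gamma\cdot v)=(\det\gamma)^2\Delta(v)$. Combining,
$$|\Delta(f)|=(\det g_2)^4\cdot(\det g_2)^2\,|\Delta(v_0)|=\lambda^{12}|\Delta(v_0)|\asymp\lambda^{12},$$
using $\det g_2=\lambda^2$ and the compactness of $B^+\cup B^-$.

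For the skewness, I exploit that scalar multiplication by $c>0$ on $V_3$ equals the twisted action of the central element $cI_2\in\GL_2$, so that $c(g_2\cdot v_0)=(cI_2\cdot g_2)\cdot v_0$ by the group-action identity. Taking $c=\det g_2=\lambda^2$, I obtain $f=\tilde g_2\cdot v_0$ with
$$\tilde g_2=\lambda^2 I_2\cdot g_2=n\,\alpha(t)\,k\,\mathrm{diag}(\lambda^3,\lambda^3),$$
which has the same $n$, $t$, $k$ parameters as $g_2\in\FF_2$ and scalar $\lambda^3>0$, hence still lies in $\FF_2$. Applying Lemma~\ref{lemsk} to $\tilde g_2\in\FF_2$ and $v_0\in B^+\cup B^-$ then gives $\sk(R_f)\asymp t^2$.

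The principal challenge here is purely bookkeeping: correctly combining the $\SL_3$-invariance of $\Res$, the $(\det\gamma_2)$-factor in \eqref{eqres}, the homogeneity of $\Delta$ in the coefficients, and the discrepancy between the standard and twisted $\GL_2$-actions on $V_3$. Once these are reconciled, both asymptotics follow at once from Lemma~\ref{lemsk} applied to the rescaled fundamental-domain element $\tilde g_2$ and the compact slice $B^+\cup B^-$.
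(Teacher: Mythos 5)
Your proof is correct and follows essentially the same route as the paper: the paper's own proof is a one-sentence invocation of the resolvent equivariance \eqref{eqres}, Lemma~\ref{lemsk}, and the degree-$4$ homogeneity of $\Delta$ on $V_3$, which is exactly the bookkeeping you carry out. Your observation that the scalar factor $\det g_2=\lambda^2$ can be absorbed into the central part of the Iwasawa decomposition, producing $\tilde g_2\in\FF_2$ with the same $t$-parameter so that Lemma~\ref{lemsk} applies verbatim, is precisely the reconciliation the paper leaves implicit.
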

\begin{proof}
The lemma follows immediately from \eqref{eqres} in conjunction with
Lemma \ref{lemsk} and the fact that $\Delta$ is a degree-$4$
homogeneous polynomial in the coefficients of $V_3$.
\end{proof}

\begin{lemma}\label{lemcgred}
Let $(A,B)$ be an element in $W(\Z)$. Denote the coefficients of $A$
and $B$ by $a_{ij}$ and $b_{ij}$, respectively. If $\det(A)=0$ or
$a_{11}=b_{11}=0$, then $(A,B)$ is reducible.
\end{lemma}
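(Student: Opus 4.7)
The plan is to handle the two sufficient conditions separately, each via the defining characterization of irreducibility: recall that $(A,B)$ is irreducible iff both (i) the resolvent $f(x,y) = \det(Ax+By)$ corresponds to an integral domain and (ii) $A$ and $B$ have no common root in $\P^2(\Q)$. I would show that the first hypothesis violates (i) and the second violates (ii).

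For the case $a_{11} = b_{11} = 0$: the vector $e_1 = (1,0,0)^t \in \Q^3$ satisfies $e_1^t A e_1 = a_{11} = 0$ and $e_1^t B e_1 = b_{11} = 0$. Hence, regarding $A$ and $B$ as ternary quadratic forms, the point $[1:0:0] \in \P^2(\Q)$ is a common zero. This directly violates condition (ii), so $(A,B)$ is reducible.

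For the case $\det(A) = 0$: evaluate the resolvent binary cubic form at $(x,y) = (1,0)$ to obtain $f(1,0) = \det(A) = 0$. Thus the coefficient of $x^3$ in $f(x,y)$ vanishes, i.e. $y$ divides $f(x,y)$, and $f$ factors over $\Q$ as a product of a linear and a quadratic form. Under the Delone--Faddeev parametrization (Theorem \ref{thldf}), the cubic ring $R_f$ corresponding to a reducible binary cubic form cannot be an integral domain: indeed, by the refined bijection \eqref{eqfinbijcub}, a linear factor of $f$ over $\Q$ gives rise to a nontrivial ring homomorphism $R_f \otimes \Q \to \Q$, forcing $R_f \otimes \Q$ to split as a product of at least two fields. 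This violates (i), so $(A,B)$ is again reducible.

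Neither step should present any real obstacle; the only subtlety is being careful with the definition of ``reducible'' and verifying that a $y$-divisible binary cubic form truly corresponds to a cubic ring that is not a domain, which is standard from the Delone--Faddeev correspondence and does not require any of the geometry-of-numbers machinery developed elsewhere in this section.
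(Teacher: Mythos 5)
Your proof is correct and follows the same two-case strategy as the paper: $a_{11}=b_{11}=0$ gives the common zero $[1{:}0{:}0]\in\P^2(\Q)$, and $\det(A)=0$ kills the $x^3$-coefficient of $\det(Ax+By)$ so the resolvent is divisible by $y$ and $R_f$ cannot be an integral domain. The paper's proof is terser but identical in substance.
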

\begin{proof}
If $\det(A)=0$, then the cubic resolvent of $(A,B)$ has
$x^3$-coefficient $0$, implying that $(A,B)$ is reducible. If
$a_{11}=b_{11}=0$ then $A$ and $B$ have a common zero in $\P^2(\Q)$,
implying that $(A,B)$ corresponds to the identity element in the class
group of $R_f$.
\end{proof}

%We now have a result bounding on average the sizes of
%the $2$-torsion subgroups of the class groups in $\RR_3^\pm(X,Y)$.
%\begin{theorem}
%  Let $X$ and $Y$ be real numbers such that $Y\ll X^{1/6}$. Then
%  \begin{equation*}
%\sum_{K\in\RR^\pm_3(X,Y)}\Cl(K)[2]=O_\epsilon(X^{1+\epsilon}/Y).
%  \end{equation*}
%\end{theorem}

We are now ready to prove the second claim of Theorem \ref{thm:skew}.

\vspace{.1in}

\noindent\textbf{Proof of Theorem \ref{thm:skew}:}
We follow the setup and methods of \cite{manjulcountquartic}. To begin with,
averaging over $w\in \B$ as in \cite[(6) and (8)]{manjulcountquartic}, we obtain
\begin{equation*}
\begin{array}{rcl}
&&\displaystyle\sum_{K\in\RR^\pm_3(X,Z)}(|\Cl(K)[2]|-1)\\[.1in]&\ll&
\displaystyle\int_{g\in\FF_{2,3}}
\bigl|\bigl\{w\in g\cdot\B\cap W(\Z)^\irr:K_{\Res(w)}\in\RR_3(X,Z)
\bigr\}\bigr|dg\\[.2in]
&\ll&
\displaystyle\int_{s_1,s_2,t\gg 1}
\bigl|\bigl\{w\in ((\lambda,t),(s_1,s_2))\cdot\B\cap W(\Z)^\irr:
K_{\Res(w)}\in \RR_3(X,Z)
\bigr\}\bigr|\frac{d^\times\lambda d^\times t d^\times s_1 d^\times s_2}{t^{2}
  s_1^{6}s_2^{6}},
\end{array}
\end{equation*}
where $K_f$ denotes the algebra $\Q\otimes R_f$ for an integral binary
cubic form $f$.

The action of an element
$((\lambda,t),(s_1,s_2))\in\FF_{2,3}$ on $W(\R)$ multiplies each
coordinate $c_{ij}$ of $W$ by a factor which we denote by
$w(c_{ij})$. For example, we have $w(a_{11})=\lambda
t^{-1}s_1^{-4}s_2^{-2}$. The volume of $\B$ is some positive constant,
and when $\B$ is translated by an element $((\lambda,t),(s_1,s_2))$,
the volume is multiplied by a factor of $\lambda^{12}$, the product of
$w(c_{ij})$ over all the coordinates $c_{ij}$. Furthermore, the
maximum of the volumes of the projections of
$((\lambda,t),(s_1,s_2))\cdot \B$ is
$$
\ll\prod_{c_{ij}\in S}w(c_{ij})=\prod_{c_{ij}\not\in S}\lambda^{12}w(c_{ij}),
$$ where $S$ denotes the set of coordinates $c_{ij}$ of $W(\R)$ such
that the length of the projection of $((\lambda,t),(s_1,s_2))\cdot \B$
onto the $c_{ij}$-coordinate is at least $\gg 1$.

For the set $((\lambda,t),(s_1,s_2))\cdot \B\cap W(\Z)^\irr$ to be
empty, it is necessary that the projection of
$\B':=((\lambda,t),(s_1,s_2))\cdot \B$ onto the $b_{11}$-coordinate is
$\gg 1$. Otherwise, every integral element of $\B'$ has
$a_{11}=b_{11}=0$, and is hence reducible by Lemma
\ref{lemcgred}. Similary, the projections of $\B'$ onto the $a_{13}$-
and $a_{22}$-coordinates are also $\gg 1$ (since otherwise every
integral element $(A,B)$ of $\B'$ satisfy $\det(A)=0$). Finally, for
$\B'\cap W(\Z)$ to contain an element whose resolvent cubic form
corresponds to a field in $\RR_3(X,Z)$, we must have $\lambda\asymp
X^{1/12}$ and $Z^{1/2}\ll t\ll X^{1/2}$ by Lemma \ref{lemcgskew}.

Therefore, applying Proposition \ref{davlem} to the sets
$((\lambda,t),(s_1,s_2))\cdot \B$, we obtain
\begin{equation*}\begin{array}{rcl}
\displaystyle\sum_{K\in\RR^\pm_3(X,Z)}(|\Cl(K)[2]|-1)&\ll&
%\displaystyle\int_{((n,t,k,\lambda),g_3)\in\FF_{2,3}}|\{w\in g\cdot\B\cap W(\Z)^\irr:
%\Res(w)\in\RR_3(X,Y)\}|dg
%\\[.2in]&\ll&
%\int_{\lambda\asymp X^{1/12}}\int_{Y^{1/2}\ll t\ll X^{1/12}}\int_{s_1,s_2\gg 1}
\displaystyle\int_{\substack{\lambda,t,s_1,s_2\\\lambda\asymp X^{1/12}\\Z^{1/2}\ll t\ll X^{1/12}\\s_1,s_2\gg 1}}
\bigl(\lambda^{12}(1+w(a_{11})^{-1}+w(a_{11}a_{12})^{-1}\bigr)
\frac{d^\times\lambda d^\times t d^\times s_1 d^\times s_2}{t^{2}s_1^{6}s_2^{6}}
\\[.3in]&\ll&\displaystyle
\int_{\substack{\lambda,t,s_1,s_2\\\lambda\asymp X^{1/12}\\Z^{1/2}\ll t\ll X^{1/12}\\s_1,s_2\gg 1}}
\bigl(\lambda^{12}+s_1^4s_2^2t\lambda^{11}+s_1^4s_2^4t^2\lambda^{10}\bigr)
\frac{d^\times\lambda d^\times t d^\times s_1 d^\times s_2}{t^{2}s_1^{6}s_2^{6}}
\\[.35in]&\ll&\displaystyle
X/Z+X^{11/12}/Z^{1/2}+X^{5/6+\epsilon},
\end{array}\end{equation*}
which is sufficient since $Z\ll X^{1/6}$. Theorem
\ref{thm:skew} now follows from this bound and Proposition \ref{propfewskf}.

\section{Embedding into the space of binary quartic forms}\label{sec:Qinv}

Recall that $U_0(\Z)$ denotes the set of monic cubic
polynomials with zero $x^2$-coefficient, and $U_0(\Z)^\min$ denotes the set of elements $f(x)\in U_0(\Z)$
such that the elliptic curve $y^2=f(x)$ has minimal discriminant among
all its quadratic twists. We define the height function
$H:U_0(\Z)\to\R_{\geq 0}$ by
\begin{equation*}
H(x^3+Ax+B)=\max\{4|A|^3,27B^2\}.
\end{equation*}
For $f(x)\in U_0(\Z)$, we write $K_f = \Q[x]/(f(x))$, $R_f =
\Z[x]/(f(x))$, and let $\cO_f$ denote the maximal order in $K_f$. The
$Q$-invariant $Q(f)$ of $f$ is defined as the index of $R_f$ in
$\cO_f$, and $D(f)$ is defined to be the discriminant of
$K_f$. Observe from Table \ref{tabloc} that for primes $p$ of type
$\I\I\I$, $\I\rV$ and $\I_{2n+1}$, we have $p\mid Q(f)$ and $p\mid D(f)$. Note also $\gcd(Q(f),D(f))$ is squarefree.

In this section, we obtain a bound on the number of elements $f\in
U_0(\Z)^\min$, having bounded height, such that both $Q(f)$ and
$\gcd(Q(f),D(f))$ are large.

\begin{theorem}\label{qinvmt}
Let $Q$ and $q$ be positive real numbers with $Q\geq q$. Let $N_{Q,q}(Y)$ denote the number of
elements $f(x)\in U_0(\Z)^\min$ such that $H(f)<Y$, $|Q(f)|>Q$, and
$\gcd(Q(f),D(f))>q$. Then
\begin{equation*}
N_{Q,q}(Y)\ll_\epsilon
  \frac{Y^{5/6+\epsilon}}{qQ}
+ \frac{Y^{7/12+\epsilon}}{Q^{1/2}},
  \end{equation*}
  where the implied constant is independent of $Q$, $q$ and $Y$.
\end{theorem}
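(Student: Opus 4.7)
My plan is to use the embedding $\sigma$ introduced in the overview of Section 1, sending an $f\in U_0(\Z)$ with $Q(f)\neq 1$ to an integral binary quartic form possessing a rational linear factor, in such a way that $Q(\sigma(f))=Q(f)$, $D(\sigma(f))=D(f)$, and $H(\sigma(f))$ is controlled by $H(f)$ (up to an absolute constant). Via this embedding, Theorem \ref{qinvmt} reduces to counting $\PGL_2(\Z)$-orbits on integer reducible binary quartic forms $g$ of bounded height having $|Q(g)|>Q$ and $\gcd(Q(g),D(g))>q$. I would then fiber over the distinguished rational root $[\alpha:\beta]\in\P^1(\Q)$; using the $\PGL_2(\Z)$-action I may assume $[\alpha:\beta]=[0:1]$, so each orbit is represented by a form in the 4-dimensional slice
\[
g(x,y)=a_4x^4+a_3x^3y+a_2x^2y^2+a_1xy^3,
\]
with stabilizer the Borel subgroup $B\subset\PGL_2$ of upper-triangular matrices. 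On this slice one has $Q(g)=a_1$, and $D(g)=\Delta(g)/a_1^2$ is an explicit polynomial in $a_1,\ldots,a_4$.

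Next I would fix a bounded ball $\B$ inside the $\R$-points of this slice (a union of one representative from each $B(\R)$-orbit of interest) and average $g\in\B$ in the style of Section \ref{sec:4.3}, writing elements of $B(\R)$ in Iwasawa coordinates $(\lambda,t,u)$. The conditions $H(g)<Y$, $|a_1|>Q$, and a fixed upper bound on $\Delta(g)=a_1^2D(g)$ restrict $\lambda$ and $t$ to explicit ranges (roughly $\lambda\asymp Y^{1/12}$ while $t$ is pinned by $|a_1|\asymp \lambda\cdot t^{\mathrm{wt}(a_1)}>Q$). I would then apply Proposition \ref{davlem} to count integer points in each translate $g_0\cdot\B$, and integrate the resulting volume and projection bounds over the allowed $(\lambda,t,u)$. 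The volume contribution, combined with an Ekedahl-type sieve saving $1/q$ coming from the density of $(a_1,a_2,a_3,a_4)$ modulo $q$ with $q\mid\gcd(a_1,D(g))$, yields the term $Y^{5/6+\epsilon}/(qQ)$. The lower-dimensional projection bounds from Davenport's lemma, which dominate in the cuspidal region where $t$ is maximal, yield the secondary term $Y^{7/12+\epsilon}/Q^{1/2}$; the exponent $7/12$ arises because on the relevant $3$-dimensional coordinate projection, $H(g)\ll Y$ combined with $|a_1|>Q$ forces $|a_2||a_3||a_4|\ll Y^{7/12}/Q^{1/2}$ (up to $Y^\epsilon$).

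The main obstacle I expect is the careful interaction of the Ekedahl-type sieve on the condition $q\mid D(g)$ with the geometry-of-numbers bounds across the skewed fundamental domain for $B(\Z)$. Because $q$ is allowed to vary (and may be as large as a substantial power of $Y$), one needs a uniform-in-$q$ bound for the density of integer tuples $(a_1,\ldots,a_4)$ satisfying $q\mid a_1$ and $q\mid D(g)$ simultaneously, and this bound must be arranged so that it holds in every component of the cusp decomposition. A secondary, but still delicate, point is verifying that the embedding $\sigma$ preserves $Q$, $D$, and (up to constants) the height, and that the $\PGL_2(\Z)$-orbit counting on the image genuinely overcounts the $U_0(\Z)^{\min}$-count by only a bounded factor; both are implicit in the discussion of Section 1 but need to be checked explicitly before the above reduction is valid.
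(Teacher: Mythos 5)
Your high-level architecture — embed $U_0(\Z)^{\min}$ via $\sigma$ into reducible binary quartics, verify that $\sigma$ preserves $Q$, $D$ and height, then count $\PGL_2(\Z)$-orbits on these by geometry of numbers together with an Ekedahl-type sieve exploiting $q\mid\gcd(Q,D)$ — is exactly the strategy of Section~5. But the pivotal decomposition you propose is genuinely different from the paper's and, as sketched, leaves a real gap.

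The paper does \emph{not} normalize the root. It writes $V_4(\Z)^\red=\bigcup_{r\in\P^1(\Z)}\L_r$, keeps $r$ as a summation variable in \eqref{eqnqqyb1}, and integrates over the full fundamental domain $\FF_0$ for $\PGL_2(\Z)\backslash\PGL_2(\R)$ (where $t\geq\sqrt[4]{3}/\sqrt{2}$); for each root the paper then chooses a tailored almost-Minkowski basis $w_1,\ldots,w_4$ of the lattice $a_t^{-1}\L_r$, with lengths $t^{3}|v_t|,t|v_t|,t^{-1}|v_t|,t^{-3}|v_t|$ and $|v_t|=|(t\beta,-t^{-1}\alpha)|$. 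You instead propose to use transitivity of $\PGL_2(\Z)$ on $\P^1(\Z)$ to fix $r=[0:1]$, reducing to counting $B(\Z)$-orbits on a four-dimensional slice with the Borel $B\subset\PGL_2$. This is a valid bijection of orbit sets, but the subsequent averaging step is non-standard and is asserted rather than established: $B$ is not reductive, not unimodular, and $B(\Z)\backslash B(\R)$ has an unbounded torus in both directions $t\to 0$ and $t\to\infty$, so the averaging formula Theorem~\ref{thbqfcavg} (which is built on the Iwasawa structure of $\PGL_2(\Z)\backslash\PGL_2(\R)$) cannot be cited off the shelf. The paper's sum over roots is precisely the device that keeps everything inside the reductive framework: the ``$t<1$ half'' of what would be your Borel integral is absorbed into the contributions from roots $r\neq[0:1]$.

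Your heuristic for the secondary exponent is also off, in a way that shows the proposal has not engaged with where $7/12$ comes from. The $\R_{>0}$-radial parameter on $V_4$ satisfies $H\asymp\lambda^6$ (since $I$ has degree $2$ and $J$ degree $3$ in the coefficients, and $H=\max\{|I|^3,|J|^2\}$), so $\lambda\asymp Y^{1/6}$, not $Y^{1/12}$. On the slice the torus weights of $(a_1,a_2,a_3,a_4)$ are $(2,0,-2,-4)$, so $|a_1|>Q$ forces $t\gg Q^{1/2}/Y^{1/12}$ and then $|a_2||a_3||a_4|\ll Y^{1/2}t^{-6}\ll Y/Q^3$, not $Y^{7/12}/Q^{1/2}$. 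In the paper the exponent $7/12$ is produced by the \emph{general} roots $r=[\alpha:\beta]$ with $\alpha\beta\neq0$, through the dyadic sum over $|v_t|$ in Section~5.5; the $[1:0]$ and $[0:1]$ slices alone (Section~5.4) only give $Y^{5/6+\epsilon}/(qQ)+Y^{2/3+\epsilon}/Q$. (That smaller bound would in fact suffice, since $Q\geq q$ ensures $Y^{2/3}/Q$ is dominated by the stated bound — but only if the Borel reduction and its Haar-measure bookkeeping were actually carried out correctly, which is the missing content.) So the proof, as proposed, needs both a careful development of $B(\Z)$ reduction theory in place of Theorem~\ref{thbqfcavg}, and a corrected account of the exponent bookkeeping; as written, it substitutes the conclusion for the argument at precisely the step where the paper works hardest.
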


This section is organized as follows. First, in \S5.1, we collect
classical results on the invariant theory of the action of $\PGL_2$ on
the space $V_4$ of binary quartic forms, and summarize the reduction
theory of binary quartics developed in \cite{bs2sel}. Next, in \S5.2,
we restrict to the space $V_4(\Z)^\red$ of binary quartic
forms with a linear factor. We develop the invariant theory for the action of $\PGL_2$ on
this space, and construct an embedding $U_0(\Z)^\min\to V_4(\Z)^\red$.

In Sections 5.3, 5.4, and 5.5, we estimate the number of
$\PGL_2(\Z)$-orbits on elements in $V_4(\Z)^\red$ with bounded height
and large $Q$-invariant and whose $Q$- and $D$-invariants have a large common factor. We do this by fibering the
space $V_4(\Z)^\red$ by their roots in $\P^1(\Z)$. Given an element
$r\in\P^1(\Z)$, the set of elements in $V_4(\Z)$ that vanish on $r$ is a
lattice $\L_r$. We then count the number of elements in
$\L_r$, using the Ekedahl sieve to exploit the condition that $\gcd(Q,D)$
is large.

\subsection{The action of $\PGL_2$ on the space $V_4$ of binary quartic forms}
Let $V_4$ denote the space of binary quartic forms. The group $\PGL_2$
acts on $V_4$ as follows: given $\gamma\in\GL_2$ and $g(x,y)\in V_4$,
define
\begin{equation*}
(\gamma\cdot g)(x,y):=\frac{1}{(\det \gamma)^2}\,g((x,y)\cdot\gamma).
\end{equation*}
It is easy to check that the center of $\GL_2$ acts trivially. Hence this
action of $\GL_2$ on $V_4$ descends to an action of $\PGL_2$ on
$V_4$.

The ring of invariants for the action of $\PGL_2(\C)$ on $V_4(\C)$ is
freely generated by two elements, traditionally denoted by $I$ and
$J$. Explicitly, for $g(x,y)=ax^4+bx^3y+cx^2y^2+dxy^3+ey^4$, we have
\begin{eqnarray*}
I(g) &=& 12ae-3bd+c^2,\\
J(g) &=& 72ace+9bcd-27ad^2-27eb^2-2c^3.
\end{eqnarray*}

%Given a monic traceless cubic polynomial $f(x)=x^3+Ax+B$ or an an
%elliptic curve $E:y^2=f(x)$, we define the invariants
%\begin{eqnarray*}
%  I(f)\;\;:=\;\;I(E)&:=&-3A,\\
%  J(f)\;\;:=\;\;J(E)&:=&-27B.
%\end{eqnarray*}

%With the invariants $I$ and $J$ scaled as above, we have the following
%result parametrizing the $2$-Selmer groups of elliptic curves in terms
%of binary quartic forms.
%\begin{theorem}
%Let $E$ be an elliptic curve over $\Q$ with invariants $I$ and
%$J$. Then there is a bijection between $\Sel_2(E)$ and the set of
%$\PGL_2(\Q)$-equivalence classes of locally soluble integral binary
%quartic forms with invariants $2^4I$ and $2^6J$.

%Moreover, the set of integral binary quartic forms that have a
%rational linear factor and invariants equal to $2^4I$ and $2^6J$ lie
%in one $\PGL_2(\Q)$-equivalence class, and this class corresponds to
%the identity element in $\Sel_2(\Q)$.
%\end{theorem}

%\noindent The above correspondence between elements in $\Sel_2(E)$ and
%orbits on binary quartic forms was first proved by Birch and
%Swinnerton-Dyer \cite{}. Stated in this form, the above theorem is
%\cite[Theorem 3.5]{BS2}.

We collect results from \cite[\S2.1]{bs2sel} on the reduction theory
of integral binary quartic forms.  For $i=0,1,2$, we let
$V_4(\R)^{(i)}$ to be the set of elements in $V(\R)$ with nonzero
discriminant, $i$-pairs of complex conjugate roots, and $4-2i$ real
roots. Furthermore, we write $V_4(\R)^{(2)}=V_4(\R)^{(2+)}\cup
V_4(\R)^{(2-)}$ as the union of forms that are positive definite and
negative definite. The four sets $L^{(i)}$ for $i\in\{0,1,2+,2-\}$
constructed in \cite[Table 1]{bs2sel} satisfy the following two
properties: first, $L^{(i)}$ are fundamental sets for the action of
$\R_{>0}\cdot\PGL_2(\R)$ on $V_4(\R)^{(i)}$ where $\R$ acts via scaling, and second, the sets
$L^{(i)}$ are absolutely bounded. It follows that the sets
$R^{(i)}:=\R_{>0}\cdot L^{(i)}$ are fundamental sets for the action of
$\PGL_2(\R)$ on $V_4(\R)^{(i)}$, and that the coefficients of an
element $f(x,y)\in R^{(i)}$ with $H(f)=Y$ are bounded by $O(Y^{1/6})$.

For $A'$, $N'(t)$, and $K$ defined in \eqref{nak}, set
\begin{equation*}
\FF_0=\{n\alpha(t)k:n(u)\in N'(t),\alpha(t)\in A',k\in K\}.
\end{equation*}
Then $\FF_0$ is a fundamental domain for the left multiplication action of $\PGL_2(\Z)$ on
$\PGL_2(\R)$; and the multisets $\FF_0\cdot R^{(i)}$ are $n_i$-fold
fundamental domains for the action of $\PGL_2(\Z)$ on $V(\R)^{(i)}$,
where $n_0=n_{2\pm}=4$ and $n_1=2$. Let $S\subset V_4(\Z)^{(i)}=V_4(\Z)\cap V_4(\R)^{(i)}$ be
any $\PGL_2(\Z)$-invariant set. Let $N_4(S;X)$ denote the number of
$\PGL_2(\Z)$-orbits on $S$ with height bounded by $X$ such that each
orbit $\PGL_2(\Z)\cdot f$ is counted with weight $1/\#\Stab_{\PGL_2(\Z)}(f)$. Let
$G_0\subset\PGL_2(\R)$ be a nonempty open bounded $K$-invariant set,
and let $d\gamma=t^{-2}dnd^\times tdk$, for $\gamma=ntk$ in Iwasawa
coordinates, be a Haar-measure on $\PGL_2(\R)$. Then, identically as
in \cite[Theorem 2.5]{bs2sel}, we have the following result.
\begin{theorem}\label{thbqfcavg}
We have
\begin{equation*}
N_4(S;X)=\frac{1}{n_i\Vol(G_0)}
\int_{\gamma\in\FF_0}\#\{S\cap \gamma G_0\cdot R^{(i)}_X\}\,d\gamma,
\end{equation*}
where $R^{(i)}_X$ denotes the set of elements in $R^{(i)}$ with height
bounded by $X$, the volume of $G_0$ is computed with respect to
$d\gamma$, and for any set $T\subset V(\R)$, the set of elements in
$T$ with height less than $X$ is denoted by $T_X$.
\end{theorem}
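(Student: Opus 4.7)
The plan is to use the standard Bhargava--Shankar averaging technique, which is carried out in detail for precisely this setting of binary quartic forms in \cite[Theorem 2.5]{bs2sel}. The argument has three steps.

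\emph{Step 1.} First I would show that, for each fixed $g_0 \in G_0$, the translate $\FF_0 g_0 \cdot R^{(i)}$ is again an $n_i$-fold multi-fundamental domain for the action of $\PGL_2(\Z)$ on $V_4(\R)^{(i)}$. Indeed, left multiplication by $\PGL_2(\Z)$ commutes with right multiplication by $g_0$, so $\FF_0 g_0$ remains a fundamental domain for $\PGL_2(\Z) \backslash \PGL_2(\R)$, and the covering degree $n_i$ on $V_4(\R)^{(i)}$ is controlled by the generic $\PGL_2(\Z)$-stabilizer, which does not depend on $g_0$. With the standard weighting by $1/|\Stab_{\PGL_2(\Z)}|$, this gives
\[
\#\{S \cap \FF_0 g_0 \cdot R^{(i)}_X\} = n_i \cdot N_4(S;X)
\]
for every $g_0 \in G_0$.

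\emph{Step 2.} Next I would integrate this pointwise identity over $g_0 \in G_0$ against the Haar measure $dg_0$ to get
\[
n_i \cdot N_4(S;X) \cdot \Vol(G_0) = \int_{g_0 \in G_0} \#\{S \cap \FF_0 g_0 \cdot R^{(i)}_X\} \, dg_0.
\]

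\emph{Step 3.} Finally I would show that the integral over $G_0$ above equals the integral over $\FF_0$ in the theorem statement. Expanding each as a sum over $s \in S$, it suffices to prove the pointwise equality
\[
\Vol\{g_0 \in G_0 : s \in \FF_0 g_0 \cdot R^{(i)}_X\} = \Vol\{\gamma \in \FF_0 : s \in \gamma G_0 \cdot R^{(i)}_X\}
\]
for each $s$ in generic position. Writing such an $s$ as $\gamma_0 r_0$ with $r_0 \in R^{(i)}$ the unique orbit representative, the left-hand set becomes $\{g_0 \in G_0 : \gamma_0 g_0^{-1} \in \FF_0\}$ while the right-hand set becomes $\{\gamma \in \FF_0 : \gamma^{-1} \gamma_0 \in G_0\}$; the substitution $\gamma = \gamma_0 g_0^{-1}$ identifies these two sets, and since $\PGL_2(\R)$ is unimodular, this substitution preserves Haar measure, so the two volumes agree.

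The hardest part will be Step 3: making the change of variables precise, and separately handling the lower-dimensional stratum of $s$ with non-trivial $\PGL_2(\R)$-stabilizer as well as boundary effects when $r_0$ has height near $X$. These contribute only lower-order terms, and the careful details are standard (see \cite[Theorem 2.5]{bs2sel}).
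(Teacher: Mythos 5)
Your proposal is correct and is precisely the argument the paper has in mind: the paper's ``proof'' consists entirely of the sentence ``identically as in \cite[Theorem 2.5]{bs2sel},'' and your three steps spell out exactly that averaging argument (translate the fundamental domain by $g_0$, integrate over $g_0 \in G_0$, then swap the roles of $G_0$ and $\FF_0$ by an unfolding substitution). One small calibration in your Step 3: the nontrivial $\PGL_2(\R)$-stabilizer is not confined to a lower-dimensional stratum --- \emph{every} element of $V_4(\R)^{(i)}$ has real stabilizer of order $n_i \geq 2$, which is exactly why the factor $n_i$ appears --- so the unfolding should be carried out summing over the $n_i$ choices of $\gamma_0$ with $s = \gamma_0 r_0$, with the measure-zero locus being those $s$ whose stabilizer is strictly larger than $n_i$.
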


Apart from its use in this section to obtain a bound on reducible
binary quartic forms, Theorem \ref{thbqfcavg} will also be used in
Section 7 to prove Theorem \ref{thmsel}.

\subsection{Embedding $U_0(\Z)^\min$ into the space of reducible binary quartics}

Let $f(x)=x^3+Ax+B$ be an element in $U_0(\Z)^\min$ with $Q(f)=n$.  From
Theorem \ref{propcasessp}, it follows that there exists an integer
$r$, defined uniquely modulo $n$, such that $f(x+r)$ is of the form
\begin{equation*}
f(x+r)=x^3+ax^2+bnx+cn^2.
\end{equation*}
Assume that we have picked $r$ so that $0\leq r<n$. The ring of
integers $\cO_f$ in $K_f=\Q[x]/f(x)$ corresponds, under the
Delone--Faddeev bijection, to the binary cubic form
\begin{equation*}
h(x,y)=nx^3+ax^2y+bxy^2+cy^3.
\end{equation*}

Elements in $U_0(\Z)^\min$ with $Q$-invariant $n$ thus correspond to
integral binary cubic forms that represent $n$. However, this latter
condition is difficult to detect, at least using geometry-of-numbers
methods. Instead, we embed the space of binary cubic forms into the
space $V_4(\Z)^\red$ of binary quartic forms with a linear factor over $\Q$ by multiplying
by $y$. In fact, we will replace $V_4(\Z)^\red$ with its (at most $4$
to $1$) cover $\widetilde{V}_4(\Z)$ consisting of pairs
$(g(x,y),[\alpha,\beta])$, where $g$ is a reducible binary quartic
forms and $[\alpha,\beta]$ is a root of $f$. Explicitly,
\begin{equation*}
\widetilde{V}_4(\Z):=\{(g(x,y),[\alpha,\beta]):0\neq g(x,y)\in
V_4(\Z)^\red,\;\alpha,\beta\in\Z,\;\gcd(\alpha,\beta)=1,\;g(\alpha,\beta)=0\}
\end{equation*}
This gives us the following map $\tilde{\sigma}:U_0(\Z)^\min\to
\widetilde{V}_4(\Z)$:

\begin{equation}\label{eqsig}
\begin{array}{rcccl}
  \tilde{\sigma}:U_0(\Z)^\min&\to& V_3(\Z)&\to& \widetilde{V}_4(\Z)\\[.1in]
  f(x)&\mapsto &h(x,y)&\mapsto&(yh(x,y),[1,0]).
\end{array}
\end{equation}
The group $\PGL_2(\Z)$ acts on $\widetilde{V}_4(\Z)$ via
\begin{equation*}
  \gamma\cdot(g(x,y), [\alpha:\beta]) =
  ((\gamma\cdot g)(x,y),[\alpha:\beta]\gamma^{-1});
\end{equation*}
this is an action since $(\gamma\cdot
g)((\alpha,\beta)\gamma^{-1})=g(\alpha,\beta)=0$.  Aside from the
classical invariants $I$ and $J$, this action has an extra invariant,
which we denote by $Q$, defined as follows. Given $(g,[\alpha:\beta])\in \widetilde{V}_4(\Z)$, let $h(x,y) = g(x,y)/(\beta x-\alpha y)$ be the associated binary cubic form and we define
\begin{equation}\label{eqQexp}
Q(g,[\alpha:\beta]) = h(\alpha,\beta),\qquad D(g,[\alpha:\beta])) = \Delta(h).
\end{equation}
The $Q$-, $D$-invariants and the discriminant are related by
$$\Delta(g) = Q(g,[\alpha:\beta])^2D(g,[\alpha:\beta]).$$
\begin{comment}
Since $(\alpha,\beta)$ is
a root of $g(x,y)$, it follows that $h(x,y)=g(x,y)/(\beta x-\alpha y)$
is a binary cubic form. Moreover, it is easy to check that
$\Delta(h)\mid\Delta(g)$ and that the quotient $\Delta(g)/\Delta(h)$
is a square. For $[\alpha,\beta]=P$, we define
\begin{equation*}
Q(g,P):=|\sqrt{\Delta(g)\Delta(h)}|.
\end{equation*}
Explicitely, it may be checked that the $Q$-invariant is obtained by
evaluating $h(x,y)$ on the point $(\alpha,\beta)$. That is, we have
\begin{equation}\label{eqQexp}
Q(g(x,y),[\alpha,\beta])=\frac{g(x,y)}{\beta x-\alpha y}(\alpha,\beta).
\end{equation}
It will also be useful to define the following additional invariant
$D$ on $\widetilde{V}_4(\Z)$:
\begin{equation}\label{eqDexp}
D((g,[\alpha,\beta])):=\Delta(h)=\frac{\Delta(g)}{Q(g,[\alpha,\beta])^2}.
\end{equation}
\end{comment}

We now have the following result.

\begin{proposition}
There is an injective map
\begin{equation*}
  \sigma: U_0(\Z)^\min\rightarrow \widetilde{V}_4(\Z)\rightarrow \PGL_2(\Z)
  \backslash \widetilde{V}_4(\Z),
\end{equation*}
such that for every $f\in U_0(\Z)^\min$, we have
\begin{equation}\label{eq:cond}
  I(f)=I(\sigma(f));\qquad J(f)=J(\sigma(f);\qquad Q(f) = Q(\sigma(f));
  \qquad D(f)=D(\sigma(f)).
\end{equation}
\end{proposition}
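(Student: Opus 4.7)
The plan is to establish the two requirements of the proposition separately: the equality of the four invariants, and the injectivity of $\sigma$.

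For invariant preservation, I would work directly from the explicit form of $h$. Writing $n=Q(f)$ and using Theorem \ref{propcasessp} to fix the unique $r\in[0,n)$ with $f(x+r)=x^3+ax^2+bnx+cn^2$, one extracts from $f(x+r)=(x+r)^3+A(x+r)+B$ the relations
\[
a=3r,\qquad bn=A+3r^2,\qquad cn^2=B+Ar+r^3.
\]
The associated binary cubic is $h(x,y)=nx^3+ax^2y+bxy^2+cy^3$, and $g=yh$ has binary quartic coefficients $(0,n,a,b,c)$ in the conventions of Section~5.1. Substituting these into the formulas for $I$ and $J$ yields $I(g)=a^2-3nb$ and $J(g)=9abn-27cn^2-2a^3$; eliminating $a,b,c$ via the relations above reduces these to $-3A$ and $-27B$, agreeing with the standard $I(f)$ and $J(f)$. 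The identity $Q(g,[1,0])=h(1,0)=n=Q(f)$ is immediate from \eqref{eqQexp}. For $D$, the Delone--Faddeev bijection applied to $h$ produces a cubic ring $S$ of index $n$ in $R_f$; repeating the local analysis of Theorem \ref{propcasessp} prime by prime identifies $S$ with $\cO_f$, so $D(g,[1,0])=\Delta(h)=\Disc(\cO_f)=D(f)$.

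For injectivity, suppose $\sigma(f_1)=\sigma(f_2)$, and choose $\gamma\in\PGL_2(\Z)$ with $\gamma\cdot(yh_1,[1,0])=(yh_2,[1,0])$. Since $\gamma$ preserves the distinguished root $[1,0]$, it lies in its stabilizer, which modulo the center $\pm I$ consists of the matrices $\gamma_{q,\epsilon}=\begin{pmatrix}1&0\\q&\epsilon\end{pmatrix}$ with $q\in\Z$, $\epsilon\in\{\pm1\}$. A direct computation using the twisted $\GL_2$-action on $V_3$ shows that the leading coefficient of $\gamma_{q,\epsilon}\cdot h_1$ equals $\epsilon n_1$; since $n_1,n_2>0$, matching leading coefficients forces $\epsilon=+1$ and $n_1=n_2=:n$. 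Expanding further, the $x^2y$-coefficient of $\gamma_{q,1}\cdot h_1$ is $a_1+3nq$, so by the relation $a=3r$ the shift parameter attached to the new cubic is $r_1+nq$. The normalizations $r_1,r_2\in[0,n)$ together with $r_1\equiv r_2\pmod{n}$ then force $q=0$, so $\gamma$ is trivial modulo the center and $h_1=h_2$. Running the construction $f\mapsto h_f$ in reverse (using uniqueness of $r$ modulo $n$) yields $f_1=f_2$.

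The invariant verifications are mechanical calculations; the substantive step is the injectivity argument, and within it the main point to get right is the compatibility of the unipotent parameter $q$ in the stabilizer of $[1,0]$ with the $\G_a$-shift parameter $r$ on $U$. The normalization $r\in[0,n)$ precisely rigidifies the $\Z/n\Z$-ambiguity in the choice of shift, and is exactly what forces $q=0$ once $\epsilon=1$ and $n_1=n_2$ are established.
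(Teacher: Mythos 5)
Your proof is correct, but you take a genuinely different and more laborious route than the paper for two of the three steps.

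For injectivity, the paper simply observes that $I(f)=-3A$ and $J(f)=-27B$ determine $f\in U_0(\Z)^\min$ uniquely, and that $I$, $J$ are $\PGL_2(\Z)$-invariants of $\widetilde{V}_4(\Z)$; hence $\sigma(f_1)=\sigma(f_2)$ forces $I(f_1)=I(f_2)$ and $J(f_1)=J(f_2)$, so $f_1=f_2$. Your argument instead computes the stabilizer of $[1:0]$ in $\PGL_2(\Z)$ explicitly and tracks the unipotent parameter $q$ against the normalization $r\in[0,n)$. The computation is accurate (the determinant bookkeeping with $\epsilon=\pm1$, the coefficient shift $a\mapsto a+3nq$, and the identity $a=3r$ all check out), and it is more structurally informative -- it shows exactly how the $\Z/n\Z$-ambiguity in the shift is rigidified -- but it is considerably longer than the paper's one-line deduction. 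It is worth noticing that once you have established the first two equalities of \eqref{eq:cond}, the slick argument becomes available for free. For $D$-preservation, the paper uses the identity $D=\Delta/Q^2$ on both sides together with the fact that $I,J$ preservation gives $\Delta(f)=\Delta(\sigma(f))$, so $D(f)=D(\sigma(f))$ immediately. Your version instead reproves, prime by prime following the local analysis of Theorem \ref{propcasessp}, that the cubic ring of the Delone--Faddeev form $h$ is the maximal order $\cO_f$, and then reads off $D(g,[1,0])=\Delta(h)=\Disc(\cO_f)=D(f)$. This is valid and is essentially the content of the unproved assertion the paper makes just before the proposition, but it is heavier than needed once $I$, $J$, and $Q$ preservation are in hand.

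The step you and the paper treat identically is the verification of $I$, $J$, and $Q$ preservation: substituting the coefficients of $g=yh$ into the formulas for $I$ and $J$, eliminating $a,b,c$ via $a=3r$, $bn=A+3r^2$, $cn^2=B+Ar+r^3$, and reading off $Q(g,[1,0])=h(1,0)=n$. That part is correct and matches the paper's ``direct computation.''
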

\begin{proof}
The first three equalities of \eqref{eq:cond} can be checked by a
direct computation. The injectivity of $\sigma$ then follows from the
fact that $I(f)$ and $J(f)$ determine $f$. Finally, the last equality
of \eqref{eq:cond} can be directly obtained:
\begin{equation*}
D(f)=\Delta(f)/Q(f)^2=\Delta(\sigma(f))/Q(\sigma(f))^2=D(\sigma(f)),
\end{equation*}
where the second equality follows since
$(I(f),J(f))=(I(\sigma(f),J(\sigma(f)))$ and so $\Delta(f) = \Delta(\sigma(f))$.
\end{proof}

Therefore, to prove Theorem \ref{qinvmt}, it suffices to count
$\PGL_2(\Z)$-orbits $(g,[\alpha:\beta])$ in $\tilde{V}_4(\Z)$, such that both
$Q(g,[\alpha:\beta])$ and the radical $\rad(\gcd(Q(g,[\alpha:\beta]),D(g,[\alpha:\beta])))$ are large.

\subsection{Counting $\PGL_2(\Z)$-orbits on reducible binary quartic forms}
%Since a nondegenerate binary quartic form has at most four linear
%factors, bounds on the number of $\PGL_2(\Z)$-orbits on $V_4(\Z)^\red$
%will imply the same bounds on the number of $\PGL_2(\Z)$-orbits on
%$V_4(\Z)$.

We use the setup of \cite[\S2]{bs2sel}, which is recalled
in \S5.1.
Since the sets $L^{(i)}$ are absolutely bounded, the coefficients of any
element in $R^{(i)}=\R_{>0}\cdot L^{(i)}$ having height $Y$ are
bounded by $O(Y^{1/6})$. Hence the same is true of every element in
$G_0\cdot R^{(i)}_Y$, as $G_0$ is a bounded set. The set
$V_4(\Z)^\red$ is not a lattice. To apply geometry-of-numbers methods,
we fiber it over the set of possible linear factors. We write
\begin{equation}\label{eqfiberroot}
V_4(\Z)^\red=\bigcup_{r=[\alpha:\beta]}\L_r,
\end{equation}
where $\alpha$ and $\beta$ are coprime integers and for
$r=[\alpha:\beta]\in\P^1(\Z)$, we define $\L_r$ to be the set of all
integral binary quartic forms $f$ such that $f(r)=0$.  From Theorem
\ref{thbqfcavg}, in conjunction with the injection $\sigma$ of \S5.2,
we have
\begin{equation}\label{eqnqqyb1}
N_{Q,q}(Y)%\\[.1in]&\ll&
%\displaystyle\int_{\gamma\in\FF_0}
%\#\bigl\{g\in V_4(\Z)^\red\cap\gamma G_0R^{(i)}_Y:Q(g)>Q,\,\gcd(Q(g),D(g))>q
%\bigr\}d\gamma
\ll
\displaystyle\sum_{r\in\P^1(\Z)}\int_{(ntk)\in\FF_0}
\#\bigl\{g\in\L_r\cap
%\begin{bmatrix}t^{-1}&\\&t \end{bmatrix}
(ntk)G_0R^{(i)}_Y:
Q(g)>Q,\,\rad(\gcd(Q(g),D(g)))>q\bigr\}\,t^{-2}dnd^\times tdk
%\\[.2in]&\ll&
%\displaystyle\sum_{r\in\P^1(\Z)}\int_{(ntk)\in\FF_0}
%\#\Bigl\{g\in\L_r\cap
%\begin{bmatrix}t^{-1}&\\&t \end{bmatrix}
%\cdot G_0R^{(i)}_Y:
%Q(g)>Q,\,\gcd(Q(g),D(g))>q\Bigr\}\,t^{-2}dnd^\times tdk.
%\end{array}
\end{equation}

As $\gamma$ varies over $\FF_0$, the set $\gamma G_0R^{(i)}_Y$
becomes skewed. More precisely, if $\gamma=ntk$ in Iwasawa
coordinates, then the five coefficients $a$, $b$, $c$, $d$, and $e$,
of any element of $\gamma G_0R^{(i)}(Y)$ satisfy
\begin{equation}\label{eqrbqfcoeffbounds}
a\ll \frac{Y^{1/6}}{t^4};\quad
b\ll \frac{Y^{1/6}}{t^2};\quad
c\ll Y^{1/6};\quad
d\ll t^2Y^{1/6};\quad
e\ll t^4Y^{1/6}.
\end{equation}
Hence when $t\gg Y^{1/24}$, the $x^4$-coefficient of any integral
binary quartic form in $\gamma G_0R^{(i)}(Y)$ is $0$, forcing a root
at the point $[1,0]\in\P^1(\Z)$. Moreover we expect it to be rare that
such a binary quartic form has another integral root. In what follow,
we first consider the lattice $\L_{[1,0]}$ in \S5.4, and consider the
rest of the lattices in \S5.5.

\subsection{The contribution from the root $r = [1:0]$}

Let $g(x,y)=bx^3y+cx^2y^2+dxy^3+ey^4\in\L_{[1:0]}$ be an integral binary
quartic form. We write $Q(g)$ for $Q(g,[1:0])$ and $D(g)$ for $D(g,[1:0])$. Then we have $Q(g)=b$ and
$D(g)=\Delta(bx^3+cx^2y+dxy^3+ey^3)$, the discriminant of the binary
cubic form $g(x,y)/y$. Hence, if a fixed $t\geq 1$ contributes to the
estimate $N_{Q,q}(Y)$ in \eqref{eqnqqyb1}, then we must have
\begin{equation}\label{eq10tbound}
t\ll\frac{Y^{1/12}}{Q^{1/2}}.
\end{equation}
We now fiber over the $O(Y^{1/6}/t^2)$ choices for $b$. For each such
choice, we have $O(Y^\epsilon)$ possible squarefree divisors $m$ of $b$. Fix such
a divisor $m>q$ such that $\rad(\gcd(Q(g),D(g)))=m$. Then $m\mid D(g)$ which
implies that
\begin{equation*}
3c^2d^2-4c^3e\equiv 0\pmod{m}.
\end{equation*}
Thus, the residue class of $e$ modulo $m$ is determined by $c$ and
$d$, unless $m\mid c$.

From \eqref{eqrbqfcoeffbounds}, we see that the number of elements in
$\L_{[1:0]}\cap(ntk)G_0R^{(i)}_Y$ with $b$ and $m$ fixed as above is bounded by
\begin{equation*}
  O\Bigl(\frac{t^6Y^{1/2}}{m}+t^6Y^{1/3}\Bigr)=
  O\Bigl(\frac{t^6Y^{1/2}}{q}+t^6Y^{1/3}\Bigr),
\end{equation*}
where the second term deals with the case $q\gg Y^{1/6}$.
It therefore follows that the contribution to $N_{Q,q}(Y)$ in
\eqref{eqnqqyb1} from the root $r=[1:0]$ is bounded by
\begin{equation}\label{eqfinbound10root}
  \int_{t=1}^{Y^{1/12}/Q^{1/2}}\frac{Y^{1/6+\epsilon}}{t^2}
  \Bigl(\frac{t^6Y^{1/2}}{q}+t^6Y^{1/3}\Bigr)t^{-2}d^\times t
\ll_\epsilon \frac{Y^{5/6+\epsilon}}{qQ}+\frac{Y^{2/3+\epsilon}}{Q},
\end{equation}
which is sufficiently small. The contribution from the root $r=[0:1]$
can be identically bounded.

\subsection{The contribution from a general root $r=[\alpha:\beta]$
  with $\alpha\beta\neq 0$}

Write $r = [\alpha:\beta]$ where $\alpha,\beta$ are coprime integers
and $\alpha\beta\neq 0$. Throughout this section, we denote the
torus element in $\FF_0$ with entries $t^{-1}$ and $t$ by
$a_t$. We have the bijection
\begin{equation}\label{eqskbij}
\begin{array}{rcl}
  \theta_t:\{\L_r\cap a_tG_0\cdot R^{(i)}_Y\}&\longleftrightarrow&
  \{a_t^{-1}\L_r \cap G_0 \cdot R^{(i)}_Y\}\\[.1in]
  ax^4+bx^3y+cx^2y^2+dxy^3+ey^4&
  \longmapsto& t^4ax^4+t^2bx^3y+cx^2y^2+t^{-2}dxy^3+t^{-4}ey^4,
\end{array}
\end{equation}
which preserves the invariants $I$ and $J$.  Define $\tilde{V_4}(\R)$
to be the set of pairs $(g(x,y),r)$, where $g(x,y)\in V_4(\R)$ and
$r\in\R^2$ such that $g(r)=0$. We extend the definitions of the
$Q$- and $D$-invariants to the space $\tilde{V_4}(\R)$ via
\eqref{eqQexp}. Set $r_t:=r\cdot
a_t=[t^{-1}\alpha,t\beta]$. Then we have
\begin{equation*}
Q(g,r)=Q(\theta_t\cdot g,r_t),\qquad D(g,r)=D(\theta_t\cdot g,r_t).
\end{equation*}

Identifying the space of binary quartics with $\R^5$ via the
coefficients $(a,b,c,d,e)$, we write
$$a_t^{-1}\L_r = \text{diag}(t^4,t^2,1,t^{-2},t^{-4}) \cdot
\big((\alpha^4,\alpha^3\beta,\alpha^2\beta^2,\alpha\beta^3,\beta^4)^\perp\big),$$
where $(\alpha^4,\alpha^3\beta,\alpha^2\beta^2,\alpha\beta^3,\beta^4)^\perp$ is the sublattice of $\Z^5$ perpendicular to $(\alpha^4,\alpha^3\beta,\alpha^2\beta^2,\alpha\beta^3,\beta^4)$ with respect to the usual inner product on
$\R^5$.  Since $\alpha$ and $\beta$ are coprime, the following vectors
form an integral basis for $a_t^{-1}\L_r$:
\begin{equation*}
  w_1 = (t^4\beta,-t^2\alpha,0,0,0),\quad
  w_2 = (0,t^2\beta,-\alpha,0,0),\quad
  w_3 = (0,0,\beta,-t^{-2}\alpha,0),\quad
  w_4 = (0,0,0,t^{-2}\beta,-t^{-4}\alpha).
\end{equation*}
Define the vector $v_t$ to be
$v_t:=(t\beta,-t^{-1}\alpha)\in\R^2$. Then it is easy to see that the
lengths of $w_i$ are given by:
\begin{equation}\label{eqbasislength}
|w_1| =
t^3|v_t|,\quad |w_2| = t|v_t|,\quad |w_3| = t^{-1}|v_t|,\quad |w_4| =
t^{-3}|v_t|,
\end{equation}
The next lemma proves that this basis is
{\it almost-Minkowski}. That is, the quotients $\langle
w_i,w_j\rangle/(|w_i||w_j|)$, for $i\neq j$, are bounded from above by
a constant $c<1$ independent of $t$ and $r$.
\begin{lemma}
  For $i\neq j$, we have
  \begin{equation*}
\langle w_i,w_{j}\rangle \leq \frac12 |w_i||w_j|.
  \end{equation*}
\end{lemma}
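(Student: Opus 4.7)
The plan is to prove this by direct computation. From the explicit coordinates of $w_1,\ldots,w_4$, three of the six pairs have disjoint nonzero coordinate supports, so $\langle w_1,w_3\rangle = \langle w_1,w_4\rangle = \langle w_2,w_4\rangle = 0$, and the inequality is trivial for these pairs. The three remaining inner products come from a single overlap coordinate:
\begin{equation*}
\langle w_1,w_2\rangle = -t^4\alpha\beta,\qquad
\langle w_2,w_3\rangle = -\alpha\beta,\qquad
\langle w_3,w_4\rangle = -t^{-4}\alpha\beta.
\end{equation*}

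Next I would invoke the length formulas \eqref{eqbasislength}, which give $|w_1||w_2| = t^4|v_t|^2$, $|w_2||w_3| = |v_t|^2$, and $|w_3||w_4| = t^{-4}|v_t|^2$. Thus in each of the three cases the ratio $|\langle w_i,w_j\rangle|/(|w_i||w_j|)$ simplifies to $|\alpha\beta|/|v_t|^2$.

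The final step is the AM--GM inequality applied to $|v_t|^2 = t^2\beta^2 + t^{-2}\alpha^2 \geq 2|\alpha\beta|$, which gives $|\alpha\beta|/|v_t|^2 \leq \tfrac{1}{2}$ uniformly in $t$, $\alpha$, and $\beta$. Combining this with the three cases above yields the desired bound. There is no real obstacle here: the entire argument is a short direct computation, and the only nontrivial ingredient is the elementary AM--GM bound relating $|\alpha\beta|$ to the length $|v_t|$, which is precisely what makes the basis almost-Minkowski independently of how skewed $r$ or $a_t$ becomes.
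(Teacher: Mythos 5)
Your proof is correct and follows essentially the same approach as the paper: observe that only the three adjacent pairs $w_i,w_{i+1}$ have nonzero inner product, reduce the ratio in each case to $|\alpha\beta|/(t^2\beta^2 + t^{-2}\alpha^2)$, and conclude by AM--GM. Your version just makes the overlap-coordinate computations explicit where the paper states the resulting ratio directly.
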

\begin{proof}
The inner product $\langle w_i,w_j\rangle$ for $i<j$ is $0$ unless
$j=i+1$. In those three cases, we have
\begin{equation*}
\frac{\langle w_i,w_{j}\rangle}{|w_i||w_j|}=\frac{|\alpha\beta|}{t^{-2}\alpha^2+t^2\beta^2}\leq \frac12,
\end{equation*}
by the AM-GM inequality.
\end{proof}

We will represent elements in $a_t^{-1}\L_r$ by four-tuples
$(a_1,a_2,a_3,a_4)\in\Z^4$, where such a tuple corresponds to the element
$a_1w_1 + a_2w_2 + a_3w_3 + a_4w_4$. Then we have the following lemma.
\begin{lemma}\label{lemtupinv}
Let $g(x,y)$ be an element in $\L_r$, and let $a_t^{-1} g(x,y)$
correspond to the four-tuple $(a_1,a_2,a_3,a_4)$. Then we have
\begin{eqnarray*}
g(x,y) &=& (\beta x - \alpha y)(a_1 x^3 + a_2 x^2 y + a_3 x y^2 + a_4 y ^3),\\
Q(g,r) &=& a_1\alpha^3 + a_2\alpha^2\beta + a_3\alpha\beta^2 + a_4\beta^3,\\
D(g,r) &=& \Delta_3(a_1,a_2,a_3,a_4),
\end{eqnarray*}
where $\Delta_3(a_1,a_2,a_3,a_4)$ denotes the discriminant of the
binary cubic form with coefficients $a_i$.
\end{lemma}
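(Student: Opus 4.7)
The plan is to verify all three identities by direct computation, working entirely from the basis $\{w_1,w_2,w_3,w_4\}$ supplied in the paper and the definitions of $Q$ and $D$ in \eqref{eqQexp}. There is no real obstacle here; this lemma is pure bookkeeping, setting up coordinates so that the geometry-of-numbers count that follows can be run in a transparent lattice.

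First I would expand the linear combination $a_1 w_1 + a_2 w_2 + a_3 w_3 + a_4 w_4$ explicitly. Identifying a binary quartic with its coefficient tuple $(a,b,c,d,e)$, this sum equals
\[
\big(t^4 a_1\beta,\; t^2(a_2\beta - a_1\alpha),\; a_3\beta - a_2\alpha,\; t^{-2}(a_4\beta - a_3\alpha),\; -t^{-4}a_4\alpha\big).
\]
This vector is by hypothesis $a_t^{-1} g(x,y)$, so applying $a_t$ — which by the bijection $\theta_t$ of \eqref{eqskbij} rescales coefficients by $\mathrm{diag}(t^{-4},t^{-2},1,t^{2},t^{4})$ — yields the coefficient vector $(a_1\beta,\; a_2\beta - a_1\alpha,\; a_3\beta - a_2\alpha,\; a_4\beta - a_3\alpha,\; -a_4\alpha)$ of $g(x,y)$ itself. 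A routine expansion then confirms that this quartic coincides with $(\beta x - \alpha y)(a_1 x^3 + a_2 x^2 y + a_3 x y^2 + a_4 y^3)$, establishing the first identity and making manifest the vanishing of $g$ at $r = [\alpha:\beta]$, as required.

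With $h(x,y) := a_1 x^3 + a_2 x^2 y + a_3 x y^2 + a_4 y^3$ now identified as the cubic factor $g(x,y)/(\beta x - \alpha y)$, the remaining two identities are immediate from \eqref{eqQexp}: one has $Q(g,r) = h(\alpha,\beta) = a_1\alpha^3 + a_2\alpha^2\beta + a_3\alpha\beta^2 + a_4\beta^3$, while $D(g,r) = \Delta(h) = \Delta_3(a_1,a_2,a_3,a_4)$ by the very definition of $\Delta_3$. The only care needed throughout is in consistently tracking the powers of $t$ and the signs of $\alpha$ and $\beta$ when passing between $\L_r$ and $a_t^{-1}\L_r$; modulo that, the lemma is essentially a definition chase.
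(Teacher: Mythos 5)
Your proof is correct and matches the paper's approach, which simply states that the lemma "follows from a direct computation." Your expansion of $a_1w_1+a_2w_2+a_3w_3+a_4w_4$, the rescaling by $a_t$, the verification of the factorization $g = (\beta x-\alpha y)h$, and the appeal to \eqref{eqQexp} for $Q$ and $D$ are exactly the intended bookkeeping.
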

The above lemma follows from a direct computation. Next, we determine
when an element $(a_1,a_2,a_3,a_4)$ has small length.
\begin{lemma}\label{lemtupbound}
Suppose $g\in a_t^{-1}\L_r$, corresponding to $(a_1,a_2,a_3,a_4)$,
belongs to $G_0\cdot R^{(i)}_Y$ for some $i$. Then
\begin{equation}\label{eq:bounda}
  a_1 \ll \frac{Y^{1/6}}{t^{3}|v_t|};
  \quad a_2 \ll \frac{Y^{1/6}}{t|v_t|};
  \quad a_3 \ll \frac{Y^{1/6}}{t^{-1}|v_t|};
  \quad a_4 \ll \frac{Y^{1/6}}{t^{-3}|v_t|}.
\end{equation}
\end{lemma}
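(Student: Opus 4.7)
The plan is to deduce the bounds in \eqref{eq:bounda} by comparing two expressions for the squared Euclidean length of $g$, viewed as a coefficient vector in $\R^5$. First, since $G_0 \subset \PGL_2(\R)$ is bounded and every element of $R^{(i)}_Y$ has all five coefficients of size $O(Y^{1/6})$ (the $t = 1$ specialization of \eqref{eqrbqfcoeffbounds}), the same is true for every $g \in G_0 \cdot R^{(i)}_Y$; in particular $\|g\|_2 \ll Y^{1/6}$. On the other hand, expanding $g = \sum_{i=1}^{4} a_i w_i$ gives
\[
  \|g\|_2^2 \;=\; \sum_{i=1}^{4} a_i^2 |w_i|^2 \;+\; 2\sum_{i<j} a_i a_j \langle w_i, w_j\rangle,
\]
so the heart of the argument is to show that the right-hand side is bounded below by a positive constant times $\sum_i a_i^2 |w_i|^2$.

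To that end, I would inspect the explicit vectors $w_1,\dots,w_4$ directly. Since $w_i$ and $w_j$ have disjoint coordinate supports whenever $|i-j|\geq 2$, all such inner products vanish, and a short computation shows that the three nonzero cosines $\langle w_i, w_{i+1}\rangle/(|w_i||w_{i+1}|)$ coincide, each equal to $-\alpha\beta/(t^2\beta^2+t^{-2}\alpha^2)$, whose absolute value is bounded by $\tfrac12$ by AM-GM (this is exactly the estimate established in the previous lemma). Thus the normalized Gram matrix of $\{w_1,\dots,w_4\}$ is a symmetric tridiagonal Toeplitz matrix with diagonal entries $1$ and common off-diagonal entry $c$, $|c|\leq\tfrac12$, whose eigenvalues are the explicit values $1 + 2c\cos(k\pi/5)$ for $k = 1,2,3,4$; the smallest of these is bounded below by $(3-\sqrt{5})/4$, giving uniform positive-definiteness independent of $t$, $\alpha$, and $\beta$.

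Combining these two inputs would yield $\|g\|_2^2 \gg \sum_i a_i^2 |w_i|^2$, whence $|a_i|\,|w_i| \ll \|g\|_2 \ll Y^{1/6}$ for each $i$; substituting the lengths recorded in \eqref{eqbasislength} then produces precisely the four bounds in \eqref{eq:bounda}. The only point requiring care is confirming the uniform positive-definiteness of the Gram matrix, but the coincidence that the three off-diagonal cosines are identical reduces this to the explicit tridiagonal Toeplitz eigenvalue formula above, so no substantive obstacle remains beyond careful bookkeeping.
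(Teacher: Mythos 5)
Your argument is correct and is essentially the paper's own: both bound $\|g\|\ll Y^{1/6}$ and then show $\|g\|^2\gg\sum_i a_i^2|w_i|^2$ via positive-definiteness of the (normalized) Gram matrix of the $w_i$, using $|\langle w_i,w_{i+1}\rangle|\leq\tfrac12|w_i||w_{i+1}|$ from the preceding lemma. The paper's constant $\tfrac{3-\sqrt5}{4}$ is exactly the smallest eigenvalue of the $4\times4$ tridiagonal Toeplitz matrix you identify, so your eigenvalue computation simply makes explicit the numerical bound that the paper records without comment.
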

\begin{proof}
Let $|\cdot|$ denote the length of a binary quartic form, where
$V_4(\R)$ has been identified with $\R^5$ in the natural way. Then for
$g$ to belong in $G_0\cdot R^{(i)}_Y$, it must satisfy $|g|\ll
Y^{1/6}$.  For any real numbers $a_1,a_2,a_3,a_4$, we compute
\begin{eqnarray*}
  |a_1w_1+a_2w_2+a_3w_3+a_4w_4|^2 &\geq& a_1^2|w_1|^2 +  a_2^2|w_2|^2 + a_3^2|w_3|^2 + a_4^2|w_4|^2 \\[.15in]
  &&-|a_1||a_2||w_1||w_2|-|a_2||a_3||w_2||w_3| - |a_3||a_4||w_3||w_4|\\[.15in]
&\geq& \frac{3 - \sqrt{5}}{4}( a_1^2|w_1|^2 +  a_2^2|w_2|^2 + a_3^2|w_3|^2 + a_4^2|w_4|^2).
\end{eqnarray*}
(Of course, the exact constant is not important.) Therefore in order for
$|a_1w_1+a_2w_2+a_3w_3+a_4w_4|\ll Y^{1/6}$, Equation \eqref{eq:bounda}
must be satisfied.
\end{proof}

We now have the following proposition bounding the number of elements
in $\L_r\cap a_tG_0\cdot R^{(i)}_Y$ whose $Q$- and $D$-invariants
share a large common factor.
\begin{proposition}\label{aster}
For $t\gg 1$, we have
\begin{equation}\label{eqaster}
\#\{g(x,y)\in\L_r\cap a_tG_0\cdot R^{(i)}_Y:\rad(\gcd(Q(g,r),D(g,r)))>q\}=
\left\{
\begin{array}{lll}
0&\mbox{if}\;\;\;|v_t|\gg Y^{1/6}\\[.1in]
\displaystyle O\Bigl(\frac{Y^{2/3+\epsilon}}{q|v_t|^4}+\frac{Y^{1/2+\epsilon}}{t|v_t|^3}\Bigr)&\mbox{otherwise}
\end{array}
\right.
\end{equation}
where the implied constant is independent of $r$, $t$, and $Y$.
\end{proposition}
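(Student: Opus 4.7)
The proof splits according to the size of $|v_t|$ relative to $Y^{1/6}$. In the case $|v_t|\gg Y^{1/6}$, since $t\gg 1$ on $\FF_0$ we have $|w_1|\geq|w_2|=t|v_t|\gg Y^{1/6}$, so Lemma \ref{lemtupbound} forces the first two coordinates $a_1$ and $a_2$ to vanish for any lattice point in $a_t^{-1}\L_r\cap G_0\cdot R^{(i)}_Y$. Via the bijection $\theta_t$ of \eqref{eqskbij} and Lemma \ref{lemtupinv}, the binary cubic becomes $h(x,y)=y^2(a_3 x+a_4 y)$, so $g(x,y)=(\beta x-\alpha y)\,y^2(a_3 x+a_4 y)$ has a repeated root at $[1:0]$ and $\Delta(g)=0$. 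Since $R^{(i)}_Y\subset V_4(\R)^{(i)}$ consists of nonsingular binary quartics, the intersection is empty.

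For $|v_t|\ll Y^{1/6}$ I apply an Ekedahl-type geometric sieve. The key observation, via Lemma \ref{lemtupinv}, is that for a prime $p$ the conjunction $p\mid Q(g,r)$ and $p\mid D(g,r)$ is equivalent to $(\alpha:\beta)$ being a double root of $h\pmod p$, or equivalently $h\equiv(\beta x-\alpha y)^2(c_1 x+c_2 y)\pmod p$ for some $(c_1,c_2)\in\F_p^2$. This defines a codimension-$2$ linear subspace of $V_3(\F_p)$, of density $p^{-2}$, and by the Chinese remainder theorem the analogous statement holds for squarefree $m$. Bounding
\[
\#\{\rad(\gcd(Q,D))>q\}\;\leq\;\sum_{\substack{m>q\\m\,\text{squarefree}}}\#\bigl\{g\in\L_r\cap a_tG_0\cdot R^{(i)}_Y:\;m\mid Q,\;m\mid D\bigr\},
\]
and applying Davenport's lemma (Proposition \ref{davlem}) to the corresponding index-$m^2$ sublattice of $a_t^{-1}\L_r$ yields a main term $\ll Y^{2/3}/(m^2|v_t|^4)$, since the box has volume $\prod_{i=1}^{4}\bigl(Y^{1/6}/|w_i|\bigr)=Y^{2/3}/|v_t|^4$ (the $t$-factors cancel). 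Summation over squarefree $m>q$ then gives the first bound $Y^{2/3+\epsilon}/(q|v_t|^4)$.

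The principal difficulty is controlling the Davenport error. A naive projection argument would produce the largest $3$-dimensional face of the box, with volume $Y^{1/2}t^3/|v_t|^3$, far exceeding the target. The saving I expect comes from exploiting the explicit parametrization $W_{c_1,c_2}=(\beta^2c_1,\,\beta^2c_2-2\alpha\beta c_1,\,\alpha^2c_1-2\alpha\beta c_2,\,\alpha^2c_2)$ of the double-root subvariety, in which the extremal coordinates $a_1=\beta^2 c_1$ and $a_4=\alpha^2c_2$ depend on disjoint parameters. A Davenport count adapted to the intrinsic geometry of $W_{c_1,c_2}$, rather than to the ambient $3$-faces of the box, effectively reduces the error to the $(a_1,a_2,a_4)$-projection of the box, whose volume is $\tfrac{Y^{1/6}}{t^3|v_t|}\cdot\tfrac{Y^{1/6}}{t|v_t|}\cdot\tfrac{Y^{1/6}t^3}{|v_t|}=\tfrac{Y^{1/2}}{t|v_t|^3}$, yielding after summation over $(c_1,c_2)\pmod m$ and $m>q$ the second bound $Y^{1/2+\epsilon}/(t|v_t|^3)$.
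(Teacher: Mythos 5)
Your treatment of the degenerate case $|v_t|\gg Y^{1/6}$ is correct and matches the paper: $a_1=a_2=0$ forces $D(g,r)=0$ hence $\Delta(g)=0$, which cannot occur in $G_0\cdot R^{(i)}_Y$.

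The main case has two genuine gaps. First, the claimed equivalence ``$p\mid Q(g,r)$ and $p\mid D(g,r)$ $\iff$ $(\alpha:\beta)$ is a double root of $h\bmod p$'' is false in one direction. Writing $h\equiv(\beta x-\alpha y)h_1\pmod p$ when $p\mid Q$, one has $D(g,r)=\Delta(h)\equiv \Delta(h_1)\,h_1(\alpha,\beta)^2$ up to units, so $p\mid D$ holds not only when $(\alpha:\beta)$ is a double root of $h$ (the case $p\mid h_1(\alpha,\beta)$) but also when the quadratic cofactor $h_1$ itself is degenerate mod $p$ (the case $p\mid\Delta(h_1)$), i.e., when $h$ has its repeated root elsewhere. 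So the locus $\{p\mid Q,\, p\mid D\}$ is not the codimension-$2$ linear subspace you describe; it is a union with a second, non-linear component, and your $W_{c_1,c_2}$ parametrization only captures the first piece. Any count based solely on it undercounts the set whose size you need to bound from above.

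Second, even on the component you do consider, the error term is asserted rather than proved. You acknowledge that the naive Davenport error from the largest $3$-face of the box is $Y^{1/2}t^3/|v_t|^3$, far too large, and then appeal to an unspecified ``Davenport count adapted to the intrinsic geometry of $W_{c_1,c_2}$.'' That is precisely the step the proof needs, and no such refined lattice-point estimate is supplied. The paper sidesteps both difficulties by eliminating $a_4$: it defines $T_{\alpha,\beta}(a_1,a_2,a_3):=\Delta_3(a_1\beta^3,a_2\beta^3,a_3\beta^3,-(a_1\alpha^3+a_2\alpha^2\beta+a_3\alpha\beta^2))$, observes that $m\mid Q$ and $m\mid D$ forces $m\mid T_{\alpha,\beta}$, and then fibers over $(a_1,a_2,a_3)$, counting $a_4$ last. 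The term $Y^{1/2+\epsilon}/(t|v_t|^3)$ in the statement actually arises from the subset with $T_{\alpha,\beta}(a_1,a_2,a_3)=0$, a contribution your argument never isolates; the remaining part is then handled by fibering additionally over $\delta=\mathrm{rad}(\gcd(a_1,a_2))$ so that $\Delta_3$ stays genuinely quadratic in $a_4$ modulo the relevant part of $m$. You would need to reproduce both of these devices (or find substitutes) before the sketch becomes a proof.
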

\begin{proof}
Using the bijection \eqref{eqskbij} in conjunction with Lemmas
\ref{lemtupinv} and \ref{lemtupbound}, we see that it is enough to
prove that the number of four-tuples of integers $(a_1,a_2,a_3,a_4)$,
satisfying \eqref{eq:bounda} and
\begin{equation*}
\rad(\gcd(a_1\alpha^3 + a_2\alpha^2\beta + a_3\alpha\beta^2 + a_4\beta^3,\Delta_3(a_1,a_2,a_3,a_4)))>q,
\end{equation*}
is bounded by the right hand side of \eqref{eqaster}.  Suppose first
$|v_t|\gg Y^{1/6}$. Then any binary quartic form $g(x,y)$ represented
by the four-tuple $(a_1,a_2,a_3,a_4)$ satisfying \eqref{eq:bounda}
must have $a_1=a_2=0$. From Lemma \ref{lemtupinv}, it follows that
$D(g,r)=0$ and hence $\Delta(g)=0$. Since $G_0\cdot R^{(i)}_Y$
contains no point with $\Delta=0$, it follows that the intersection is
empty, proving the first part of the proposition.

The second part of the proposition is proved by using the Ekedhal
sieve as developed in \cite{manjul-geosieve}.  We carry the sieve out
in detail so as to demonstrate that the implied constant in \eqref{eqaster}
is indeed independent of $r$ and $t$. Define
\begin{equation*}
  T_{\alpha,\beta}(a_1,a_2,a_3) := \Delta_3(a_1\beta^3, a_2\beta^3,
  a_3\beta^3, -(a_1\alpha^3 + a_2\alpha^2\beta + a_3\alpha\beta^2)).
\end{equation*}
It is clear that if $m\mid Q(g,r)$ and $m\mid D(g,r)$ for any integer $m$, then $m\mid
T_{\alpha,\beta}(a_1,a_2,a_3)$.

First, we bound the number of triples $(a_1,a_2,a_3)$ satisfying
\eqref{eq:bounda} such that $T_{\alpha,\beta}(a_1,a_2,a_3)=0$. For a
fixed pair $(a_1,a_2)\neq (0,0)$, by explicitly writing out $T_{\alpha,\beta}(a_1,a_2,a_3)$, we see that there are at most three possible
values of $a_3$ with $T_{\alpha,\beta}(a_1,a_2,a_3)=0$. This gives a
bound of $O(Y^{1/3}/(t^4|v_t|^2))$ on the number of triples
$(a_1,a_2,a_3)$ with $T_{\alpha,\beta}(a_1,a_2,a_3)=0$. Multiplying
with the number of all possibilities for $a_4$, we obtain the bound
\begin{equation}\label{eqgeoTeq0}
O\Bigl(\frac{Y^{1/2}}{t|v_t|^3}\Bigr)
\end{equation}
on the number of four-tuples of integers $(a_1,a_2,a_3,a_4)$,
satisfying \eqref{eq:bounda} and $T_{\alpha,\beta}(a_1,a_2,a_3)=0$.

Next, we fiber over triples $(a_1,a_2,a_3)$ with
$T_{\alpha,\beta}(a_1,a_2,a_3)\neq 0$ and satisfying
\eqref{eq:bounda}. In this case, we have $(a_1,a_2)\neq (0,0)$. Hence by \eqref{eq:bounda}, we may assume $\alpha,\beta,t\ll
Y^{1/6}$. Hence the value of $T_{\alpha,\beta}(a_1,a_2,a_3)$ is
bounded by a polynomial in $Y$ of fixed degree. It follows that the
number of squarefree divisors of $T_{\alpha,\beta}(a_1,a_2,a_3)$ is bounded by
$O_\epsilon(Y^\epsilon)$. Fix one such divisor $m>q$. We now fiber over a positive squarefree integer $\delta \ll Y^{1/6}/(t^3|v_t|)$ such that $\text{rad}(\gcd(a_1,a_2)) = \delta$. The number of such possible $(a_1,a_2)$ is $$\ll \frac{1}{\delta^2}\frac{Y^{1/6}}{t^3|v_t|}\frac{Y^{1/6}}{t|v_t|}.$$ Fix any such pair. Let $a_3$ be any integer satisfying \eqref{eq:bounda} such that $T_{\alpha,\beta}(a_1,a_2,a_3)\neq 0$. Let $m_1 = \gcd(m,\delta)$ and let $m_2 = m/m_1 > q/\delta$. Then the polynomial $\Delta_3(a_1,a_2,a_3,a_4)$ is identically $0$ modulo $m_1$ and quadratic in $a_4$ modulo any prime factor of $m_2$. Hence the number of these quadruples with the extra condition that $m\mid\Delta_3(a_1,a_2,a_3,a_4)$ is
$$\ll\frac{1}{\delta^2}\frac{Y^{1/6}}{t^3|v_t|}\frac{Y^{1/6}}{t|v_t|}\frac{Y^{1/6}}{t^{-1}|v_t|}\left(\frac{1}{q/\delta}\frac{Y^{1/6}}{t^{-3}|v_t|}+1\right) \ll_\epsilon \frac{Y^{2/3}}{\delta q|v_t|^4} + \frac{Y^{1/2}}{\delta^2t^3|v_t|^3}.$$
Summing over $\delta$ and all possible divisors $m$ gives the bound
\begin{equation}\label{eqgeoTneq0}
O\Bigl(\frac{Y^{2/3+\epsilon}}{q|v_t|^4} + \frac{Y^{1/2+\epsilon}}{t^3|v_t|^3}\Bigr).
\end{equation}
\begin{comment}
For each such fixed values of $a_1$, $a_2$,
$a_3$, and a divisor $m>q$ of $T_{\alpha,\beta}(a_1,a_2,a_3)$, there
are at most $O(t^3Y^{1/6}/(|v_t|q)+1)$ value of $a_4$ satisfying
$m\mid\Delta_3(a_1,a_2,a_3,a_4)$. Hence, the contribution to
\eqref{eqaster} is bounded by
\begin{equation}\label{eqgeoTneq0}
O_\epsilon\Bigl(\frac{Y^{2/3+\epsilon}}{q|v_t|^4}+\frac{Y^{1/2+\epsilon}}{t^3|v_t|^3}\Bigr).
\end{equation}
\end{comment}
The proposition now follows from \eqref{eqgeoTeq0} and
\eqref{eqgeoTneq0}.
\end{proof}

We now impose the condition on the $Q$-invariant. From Lemma
\ref{lemtupinv} and \eqref{eq:bounda}, we obtain
\begin{equation*}
Q< |Q(g,r)| = |a_1\alpha^3 + a_2\alpha^2\beta + a_3\alpha\beta^2 + a_4\beta^3| \ll Y^{1/6}|v_t|^2.
\end{equation*}
In conjunction with \eqref{eqnqqyb1} and the estimates of
Proposition \ref{aster}, this yields
\begin{equation*}
\begin{array}{rcl}
N_{Q,q}(Y)&\ll_\epsilon&
\displaystyle\sum_{k\ll\log Y}\int_{t\gg 1}\sum_{\substack{r=[\alpha:\beta]\\ 2^k<|v_t|\leq 2^{k+1}}}
\Bigl(\frac{Y^{2/3+\epsilon}}{q|v_t|^4}+\frac{Y^{1/2+\epsilon}}{t|v_t|^3}\Bigr)t^{-2}d^\times t
\\[.3in]%&\ll_\epsilon&
%\displaystyle\sum_{k\ll\log Y}\int_{t\gg 1}
%\Bigl(\frac{Y^{2/3+\epsilon}}{q|v_t|^2}+\frac{Y^{1/2+\epsilon}}{t|v_t|}\Bigr)t^{-2}d^\times t
%\\[.3in]
&\ll_\epsilon&
\displaystyle\sum_{k\ll\log Y}\int_{t\gg 1}
\Bigl(\frac{Y^{5/6+\epsilon}}{qQ}+\frac{Y^{7/12+\epsilon}}{tQ^{1/2}}\Bigr)t^{-2}d^\times t
\\[.3in]&\ll_\epsilon&
\displaystyle\frac{Y^{5/6+\epsilon}}{qQ}+\frac{Y^{7/12+\epsilon}}{Q^{1/2}}.
\end{array}
\end{equation*}
This concludes the proof of Theorem \ref{qinvmt}.

\section{Uniformity estimates}

In this section, we prove Theorems \ref{thunifsqi} and
\ref{thunifsmind}, the main uniformity estimates. First, in \S6.1, we
use the results of \S3 to prove Theorem \ref{thunifsqi}. Next, in
\S6.2, we combine the results of \S4 and \S5 in order to obtain
Theorem \ref{thunifsmind}.

\subsection{The family of elliptic curves with
  squarefree index}\label{sec:ellipcount}

Recall the family $\E$ defined in the introduction. The assumption
that elliptic curves $E\in\E$ satisfy $j(E)\leq\log(\Delta(E))$
implies the height bound $H(E)\ll\Delta(E)^{1+\epsilon}$.  Given
$E\in\E$, let $E:y^2=f(x)=x^3+Ax+B$ be the minimal Weierstrass model
for $E$. Given an etal\'e algebra $K$ over $\Q$ with ring of integers
$\cO_K$, let $\cO_K^{\Tr=0}$ denote the set of traceless integral
elements in $K$. Consider the map
\begin{equation*}
  \E\to \{(K,\alpha):K\mbox{ cubic algebra over }\Q,\;
  \alpha\in\cO_K^{\Tr=0}\}
\end{equation*}
sending $E:y^2=f(x)$ to the pair $(\Q[x]/f(x),x)$. This map is
injective since if $E$ corresponds to the pair $(K,\alpha)$, then
$y^2=N_{K/\Q}(x-\alpha)$ recovers $E$. In order to parametrize
elements in $\E_\sf$, we will instead use the following modified map:
\begin{equation}\label{eqEEsfparam}
\begin{array}{rcl}
\displaystyle \sigma: \E&\to&
\displaystyle\{(K,\alpha):K\mbox{ cubic \'{e}tale algebra over }\Q,\;
\alpha\in\cO_K^{\Tr=0}\}
\\[.1in]
\displaystyle E:y^2=f(x)&\mapsto&
\displaystyle (\Q[x]/f(x),{\rm Prim}(x)),
\end{array}
\end{equation}
where for $0\neq x\in\cO_K$, the element ${\rm Prim}(x)$ is the unique
primitive integer in $\cO_K$ which is a positive rational multiple of
$x$. Note that the map $\sigma$ restricted to $\E_\sf$ is injective due to the squarefree condition on $\Delta(E)/C(E)$ at primes at least $5$ and that $E$ has good reduction at $2$ and $3$.
We start with the following lemma.
\begin{lemma}
Let $E$ be an elliptic curve and let $\sigma(E)=(K,\alpha)$. Then
$|\Sel_2(E)|\ll_\epsilon
|\Cl(K)[2]|\cdot|\Delta(E)|^\epsilon$. Furthermore, if $E\in\E_\sf$, then
$|\alpha|\ll H(E)^{1/6}$.
\end{lemma}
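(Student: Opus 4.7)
The plan has two independent pieces. For the first bound, I would invoke the standard $2$-descent for $y^2=f(x)$ with $f$ a monic cubic over $\Q$ with distinct roots, as made effective by Brumer--Kramer. One constructs an injection
\[
\Sel_2(E)\hookrightarrow (K^{\times}/K^{\times 2})_S,
\]
where $S$ is the set of places of $K$ above the primes of bad reduction of $E$ together with the archimedean places (here we use that $E$ has good reduction at $2,3$ so that $2\in S$ contributes only through the real/complex places). The standard exact sequence
\[
1\to \cO_{K,S}^{\times}/\cO_{K,S}^{\times 2}\to (K^{\times}/K^{\times 2})_S \to \Cl_S(K)[2]\to 1,
\]
together with the surjection $\Cl(K)[2]\twoheadrightarrow \Cl_S(K)[2]$ and Dirichlet's unit theorem, gives
\[
|\Sel_2(E)|\ \ll\ 2^{|S|}\,|\Cl(K)[2]|.
\]
Since $|S|$ is bounded by the number of primes dividing $\Delta(E)$ plus an absolute constant, we have $2^{|S|}\ll 2^{\omega(\Delta(E))}\ll_\epsilon |\Delta(E)|^\epsilon$, which yields the asserted bound.

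For the second bound, observe that $\alpha=\mathrm{Prim}(x)=x/n$ for some positive integer $n\geq 1$, so under every complex embedding of $K$ one has $|\alpha|\leq |x|$. The absolute value of $x$ equals the maximum of the complex absolute values of the three roots of $f(x)=x^3+Ax+B$. A standard root bound (e.g.\ combining $|x_i|^3\leq |A||x_i|+|B|$ with an elementary case analysis) gives
\[
|x_i|\ \ll\ \max\{|A|^{1/2},\,|B|^{1/3}\}\ \asymp\ H(E)^{1/6},
\]
since $H(E)=\max\{4|A|^3,27B^2\}$. Combining, $|\alpha|\leq |x|\ll H(E)^{1/6}$.

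The only subtle point is the first bound: one must verify that Brumer--Kramer's comparison between $\Sel_2(E)$ and $\Cl(K)[2]$ still goes through uniformly in $E$ with only the polynomial loss $|\Delta(E)|^\epsilon$ coming from $2^{|S|}$, and in particular that one can absorb the (bounded) contribution of the archimedean places and the $S$-unit group into the constant while leaving only the divisor-function term. The second bound is essentially an elementary root estimate and requires no use of the hypothesis $E\in\E_\sf$; the restriction is stated because this is the regime in which the lemma is applied.
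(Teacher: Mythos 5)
Your proof is correct and follows the same route as the paper: for the first bound, the paper simply cites \cite[Proposition 7.1]{brumerkramer}, and your sketch of the $2$-descent through $S$-units is precisely the content of that reference; for the second bound, the paper's one-line argument is the same elementary root estimate, observing that the roots of the minimal model are bounded rational multiples of the conjugates of $\alpha$. Your side remark is also accurate: the upper bound $|\alpha|\leq |x| \ll H(E)^{1/6}$ is unconditional (the squarefree hypothesis is what forces $x/\alpha$ to be an absolutely bounded integer, which would be needed for the corresponding lower bound but not for the inequality stated).
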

\begin{proof}
The first claim is a direct consequence of \cite[Proposition
  7.1]{brumerkramer}. The second claim is immediate since the minimal
Weierstrass model of $E$ is given by
$y^2=(x-\beta_1)(x-\beta_2)(x-\beta_3)$, where the $\beta_i$ are the
conjugates of an absolutely bounded rational multiple of $\alpha$.
\end{proof}

We now prove the following result.
\begin{proposition}\label{prop:dydy}
  For positive real numbers $X$ and $Q\leq X$, we have
  \begin{equation}\label{eqeezsfprop}
    \bigl|\bigl\{(E,\eta):E\in\E_\sf,\;\eta\in\Sel_2(E),
    \;X<C(E)\leq 2X,\;QX<\Delta(E)\leq 2QX\bigl\}\bigl|\ll_\epsilon X^{5/6+\epsilon}/Q^{1/6}.
  \end{equation}
  where the implied constant is independent of $X$ and $Q$.
\end{proposition}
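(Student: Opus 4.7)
The plan is to combine the injection $\sigma$ of \eqref{eqEEsfparam} with the skewness dichotomy for cubic rings developed in \S\ref{seccubicfields}. The first key observation is that for any $E\in\E_\sf$ and prime $p\geq 5$, the squarefree condition on $\Delta(E)/C(E)$ restricts the Kodaira symbol of $E$ at $p$ to one of $\I_0$, $\I_1$, $\I_2$, $\I\I$, or $\I\I\I$, since Table~\ref{tabloc} shows that $p^2\mid(\Delta_p/C_p)$ for every other symbol. Direct inspection of that table yields $\Delta_p/C_p=Q_p$ in each of these five cases, so together with good reduction at $2$ and $3$ we obtain the global identities
\[
Q(E)=\Delta(E)/C(E),\qquad |\Disc(K_f)|=D(E)=C(E)/Q(E).
\]
Thus, for $(E,\eta)$ in the range of interest, $Q(E)\asymp Q$ and $|\Disc(K_f)|\asymp X/Q$.

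Using the preceding lemma and the height bound $H(E)\ll\Delta(E)^{1+\epsilon}$ implicit in the definition of $\E$, we have $|\Sel_2(E)|\ll_\epsilon|\Cl(K_f)[2]|\cdot X^\epsilon$ and $|\alpha|\ll(QX)^{1/6+\epsilon}$. Since $\sigma$ is injective on $\E_\sf$, the left-hand side of \eqref{eqeezsfprop} is bounded, up to a factor of $X^\epsilon$, by
\[
\sum_{\substack{K\text{ cubic \'etale}/\Q\\|\Disc(K)|\asymp X/Q}}|\Cl(K)[2]|\cdot N_K(Y),\qquad Y:=(QX)^{1/6+\epsilon},
\]
where $N_K(Y)$ counts primitive traceless elements of $\cO_K$ of archimedean size at most $Y$, bounded via Lemma~\ref{lemkalcountel}.

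The main step is to dyadically decompose this sum according to $\sk(\cO_K)$, using $\ell_1(\cO_K)\asymp(X/Q)^{1/4}/\sqrt{\sk(\cO_K)}$ and $\ell_2(\cO_K)\asymp(X/Q)^{1/4}\sqrt{\sk(\cO_K)}$. Set $Z_0:=Q^{5/6}/X^{1/6}$, chosen so that $\ell_2(\cO_K)\leq Y$ exactly when $\sk(\cO_K)\leq Z_0$, and $\ell_1(\cO_K)\leq Y$ exactly when $\sk(\cO_K)\geq 1/Z_0$. For $K$ with $\sk(\cO_K)\leq Z_0$, Lemma~\ref{lemkalcountel} yields $N_K(Y)\ll Z_0$, while Davenport--Heilbronn together with Bhargava's average bound on $|\Cl(K)[2]|$ in the cubic-field case (the non-field \'etale algebras contributing negligibly) gives $\sum_K|\Cl(K)[2]|\ll X/Q$, for a total contribution of $\ll X^{5/6+\epsilon}/Q^{1/6}$. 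For $K$ with $\sk(\cO_K)>Z_0$ we have $N_K(Y)\leq 1$, and together with the necessary condition $\sk(\cO_K)\geq\max(Z_0,1/Z_0)$ (needed for the count to be nonzero), Theorem~\ref{thm:skew} bounds the remaining sum by $(X/Q)/\max(Z_0,1/Z_0)$, which one checks is $\ll X^{5/6}/Q^{1/6}$ in both regimes $Q\leq X^{1/5}$ and $Q>X^{1/5}$. The main technical challenge is balancing these two regimes so that the same bound $X^{5/6+\epsilon}/Q^{1/6}$ emerges from each; this is precisely the interpolation provided by the complementary hypotheses of Lemma~\ref{lemkalcountel} and Theorem~\ref{thm:skew} at the critical skewness $Z_0$.
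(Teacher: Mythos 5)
Your overall strategy -- the injection $\sigma$, the reduction to $\sum_K |\Cl(K)[2]|\,N_K(Y)$ over cubic \'etale algebras with $|\Disc(K)|\asymp X/Q$, and the dichotomy by skewness at the threshold $Z_0=Q^{5/6}/X^{1/6}$ -- is exactly the paper's strategy, and your numerical computations (the local identity $Q(E)=\Delta(E)/C(E)$, the value of $Z_0$, the balancing in both regimes $Q\lessgtr X^{1/5}$) are all correct. However, there is one genuine gap.

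You apply Theorem~\ref{thm:skew} to bound the high-skewness bucket, but that theorem is a statement about \emph{cubic fields} only (its hypothesis set $\RR_3(X,Z)$ is defined as a set of cubic fields), and your sum runs over all cubic \'etale algebras $K$. The reducible case $K=\Q\oplus L$ is not negligible here. For such $K$, $\ell_1(K)\asymp 1$ and $\ell_2(K)\asymp |\Disc(K)|^{1/2}\asymp(X/Q)^{1/2}$, so whenever $Q\ll X^{1/2}$ these $K$ land squarely in your second bucket with $N_K(Y)=1$ (the unique short primitive traceless element is present). There are $\asymp X/Q$ such $L$, so using $N_K$ gives a contribution of size $\gg (X/Q)^{1-\epsilon}$ to this bucket. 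When $Q\ll X^{1/5}$ this exceeds the target $X^{5/6+\epsilon}/Q^{1/6}$, and the argument fails -- precisely in the regime you need most (small $Q$ corresponds to elliptic curves whose index $\Delta/C$ is small relative to $X$, i.e. the bulk of the family). The paper's fix is to count not all short primitive traceless elements but only those in the image of $\sigma$ (its $N'_K$ versus your $N_K$): the unique short primitive traceless element of $\cO_{\Q\oplus L}$ is $\pm(-2,1)$, whose minimal polynomial is $(x\mp 2)(x\pm 1)^2$, a cubic with a repeated root, hence not arising from any elliptic curve. Thus $N'_K(Y)=0$ for reducible $K$ with $|\Disc(K)|$ large, and only an $O(1)$ contribution survives. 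You need some version of this observation to make your second bucket close.

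As a secondary remark: your use of $\max(Z_0,1/Z_0)$ is a mild refinement over the paper, which simply takes $Z=1/Z_0$ coming from $\ell_1\leq Y$ and obtains $(X/Q)\cdot Z_0=X^{5/6}/Q^{1/6}$ directly; both give the same bound, so this is stylistic rather than substantive.
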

\begin{proof}
Let $E\in\E_\sf$ be an elliptic curve satisfying the conductor and
discriminant bounds of \eqref{eqeezsfprop}, and let
$\sigma(E)=(K,\alpha)$. It is easy to verify from Table \ref{tabloc}
that $\Delta(K) = C(E)^2/\Delta(E)$.  Therefore, it follows that
$X/(2Q)<\Delta(K)\leq 4X/Q$, and that $|\alpha|\ll
H(E)^{1/6}\ll_\epsilon(QX)^{1/6+\epsilon}$.

Since the map $\sigma$ is injective, it follows that
the left hand side of \eqref{eqeezsfprop} is
\begin{equation}\label{eqprlemsfcf}
  \ll_\epsilon X^\epsilon\sum_{\substack{[K:\Q]=3\\\frac{X}{2Q}<\Delta(K)\leq \frac{4X}{Q}}}
  N'_K((QX)^{1/6+\epsilon})\,|\Cl(K)[2]|,
\end{equation}
where $N'_K(Y)$ denotes the number of primitive elements $\alpha$ in
$\cO_K^{\Tr =0}$ such that $|\alpha|<Y$ and the pair $(K,\alpha)$ is in
the image of $\sigma$. We now split the above sum over cubic algebras
$K$ into three parts, corresponding to the sizes $\ell_1(K)$ and
$\ell_2(K)$ of the successive minima of $\cO_K^{\Tr = 0}$.

First, if $(QX)^{1/6+\epsilon}\ll\ell_1(K)$, then the contribution to
\eqref{eqprlemsfcf} is $0$. Second, assume that $\ell_2(K)\ll
(QX)^{1/6+\epsilon}$. Then Lemma \ref{lemkalcountel} yields the bound
\begin{equation*}
N_K'((QX)^{1/6+\epsilon})\ll_\epsilon (QX)^{1/3+\epsilon}/\sqrt{X/Q}\ll \frac{Q^{5/6}}{X^{1/6-\epsilon}}.
\end{equation*}
Using Bhargava's result \cite[Theorem 5]{manjulcountquartic} to bound
the sum of $|\Cl(K)[2]|$ over cubic fields $K$ with the prescribed
discriminant range, and using the well known genus-theory bounds
$\Cl(K)[2]\ll|\Delta(K)|^\epsilon$, for each reducible cubic $K$, we obtain:
\begin{equation*}
\sum_{\substack{[K:\Q]=3\\\frac{X}{2Q}<\Delta(K)\leq \frac{4X}{Q}\\\ell_2(K)\ll (QX)^{1/6+\epsilon}}}
  N'_K((QX)^{1/6+\epsilon})\,|\Cl(K)[2]|\ll_\epsilon X^\epsilon\cdot \frac{Q^{5/6}}{X^{1/6-\epsilon}}\cdot \frac{X}{Q}=\frac{X^{5/6+2\epsilon}}{Q^{1/6}}.
\end{equation*}

Finally, we bound the contribution of cubic \'{e}tale algebras $K$ such that
$\ell_1(K)\ll (QX)^{1/6+\epsilon}\ll\ell_2(K)$. In this case, we have
$N_K'((QX)^{1/6+\epsilon})\leq 1$ and
\begin{equation*}
  \sk(K)=\ell_2(K)/\ell_1(K)\gg \sqrt{\Delta(K)}/\ell_1(K)^2
  \gg X^{1/6}/Q^{5/6}.
\end{equation*}
Suppose first $K = \Q\oplus L$ is reducible. Then $\cO_K^{\Tr=0}$ has
an integral basis given by $\{(-2,1),(0,\sqrt{d})\}$ where $\Delta(K)
= d$ or $4d$. When $d$ is small, say bounded by $100$, we get an
$O(1)$ contribution to \eqref{eqprlemsfcf}. When $d$ is large, the
above basis is a Minkowski basis and $(-2,1)$ is the smallest, and
hence unique, primitive traceless element. However, this point does
not correspond to an elliptic curve since the corresponding cubic
polynomial is $(x-2)(x+1)^2$ which has a double root. Hence, we get no
contribution in this case. It remains to consider the case where $K$
is a cubic field. Applying Theorem \ref{thm:skew}, we obtain a bound
of
$$
O_\epsilon\bigl(X^\epsilon(X/Q)/(X^{1/6}/Q^{5/6})\bigr)=
O_\epsilon(X^{5/6+\epsilon}/Q^{1/6}),
$$
on the contribution to \eqref{eqprlemsfcf} over cubic fields $K$ with $\ell_1(K)\ll (QX)^{1/6+\epsilon}\ll\ell_2(K)$, as desired.
\end{proof}

\vspace{.05in}
\noindent\textbf{Proof of Theorem \ref{thunifsqi}:} Note if the
conductor $C(E)$ is bounded by $X$ and the index $\Delta(E)/C(E)$ is
squarefree, then the index is also bounded by $X$. Divide the
conductor range $[1,X]$ into $\log X$ dyadic ranges, and for each such
range divide the index range $[M,X]$ into $\log X$ dyadic ranges, and
then apply Proposition~\ref{prop:dydy} on each pair of dyadic
ranges. Theorem \ref{thunifsqi} follows. $\Box$

\subsection{The family of elliptic curves with bounded index}

As in \S3, let $\Sigma$ be a finite set of pairs $(p,T_p)$, where $p$
is a prime number and $T_p=\I\I\I$, $\I\rV$, or $\I_{\geq 2}$ is a
Kodaira symbol. Recall the invariants $Q(\Sigma)$,
$m_\odd(\Sigma)$ and $m_T(\Sigma)$ for Kodaira symbols $T$. We further
define $m_\even(\Sigma)$ to be the product of $p$ over pairs
$(p,\I_{2k})$ in $\Sigma$. We define $\E(\Sigma)$ to be the set of
elliptic curves $E\in \E$ such that the Kodaira symbol at $p$ of $E$
is $T_p$ for every pair $(p,T_p)\in \Sigma$.
Given a set of five positive real numbers
$$
S=\{m_{\I\I\I},m_{\I\rV},m_{\even},m_{\odd},Q\},
$$ we let $\E(S)$ denote the set of elliptic curves $E$ such that the
product $P$ of primes at which $E$ has Kodaira symbol $\I\I\I$
(resp.\ $\I\rV$, $I_{2(k\geq 1)}$, $I_{2(k\geq 1)+1}$) satisfies
$m_{\I\I\I}\leq P<2 m_{\I\I\I}$ (resp.\ $m_{\I\rV}\leq P<2
m_{\I\rV}$, $m_{\even}\leq P<2 m_{\even}$, $m_{\odd}\leq P<2
m_{\odd}$), and $Q\leq Q(E)<2Q$.
The following result is a consequence of Theorems \ref{thm:equimain}
and \ref{qinvmt}.
\begin{proposition}\label{prop45cb}
Let $S=\{m_{\I\I\I}, m_{\I\rV}, m_{\even}, m_{\odd}, Q\}$ be as above
and let $Y$ be a positive real number. Then
\begin{equation}\label{eq45combbound}
\begin{array}{rcl}
&&\displaystyle\#\{E\in \E(S):|\Delta(E)|<Y\}\\[.1in]
&\ll_\epsilon&\displaystyle
Y^\epsilon\,\min\Bigl(
\frac{Y^{5/6}m_{\even}}{Q^2m_{\I\rV}}+\frac{Qm_{\I\I\I}m_{\even}m_{\odd}^2}{Y^{1/6}},
\frac{Y^{5/6}}{Qm_{\I\I\I}m_{\I\rV}m_{\odd}}+\frac{Y^{7/12}}{Q^{1/2}}
\Bigr).
\end{array}
\end{equation}
\end{proposition}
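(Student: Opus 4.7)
The plan is to prove the two bounds inside the minimum of \eqref{eq45combbound} separately, by applying Theorems \ref{qinvmt} and \ref{thm:equimain} respectively, and then taking the smaller.

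\textbf{Second bound.} Each $E \in \E(S)$ has minimal Weierstrass polynomial $f(x)=x^3+Ax+B \in U_0(\Z)^\min$ with $Q(f)=Q(E)\geq Q$. By Table \ref{tabloc}, every prime $p$ with Kodaira symbol $\I\I\I$, $\I\rV$, or $\I_n$ for some odd $n\geq 3$ divides both $Q(E)$ and $D(E)$; since these primes are pairwise distinct across the three types, $\gcd(Q(f),D(f))\geq m_{\I\I\I} m_{\I\rV} m_\odd$. The hypothesis $j(E)<\log \Delta(E)$ forces $H(f)\ll |\Delta(E)|^{1+\epsilon} \ll Y^{1+\epsilon}$, so Theorem \ref{qinvmt} applied with $q=m_{\I\I\I} m_{\I\rV} m_\odd$ and height bound $Y^{1+\epsilon}$ yields
\[
\#\E(S) \;\leq\; N_{Q,q}(Y^{1+\epsilon}) \;\ll_\epsilon\; \frac{Y^{5/6+\epsilon}}{Q\, m_{\I\I\I} m_{\I\rV} m_\odd} + \frac{Y^{7/12+\epsilon}}{Q^{1/2}},
\]
which is the second term inside the minimum.

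\textbf{First bound.} Decompose $\E(S)=\bigsqcup_\Sigma \E(\Sigma)^*$, where $\Sigma$ ranges over finite Kodaira data and $\E(\Sigma)^*$ is the subset of $E\in\E$ whose Kodaira symbols in the categories $\I\I\I$, $\I\rV$, $\I_{n\geq 2}$ match $\Sigma$ exactly at every prime (so that these pieces are disjoint). The number $N_\Sigma$ of $\Sigma$'s consistent with $S$ is $\ll_\epsilon Y^\epsilon \cdot m_{\I\I\I} m_{\I\rV} m_\even m_\odd$: each dyadic window $[m,2m)$ contains $\Theta(m)$ squarefree integers, each corresponding to a unique prime factorization, giving $\Theta(m_{\I\I\I}m_{\I\rV}m_\even m_\odd)$ prime-set configurations; once these sets are fixed, the exponents $n_p$ at the $\I_n$-type primes are pinned down modulo $O_\epsilon(Y^\epsilon)$ possibilities by the dyadic constraint on $Q(\Sigma)=m_{\I\I\I}m_{\I\rV}\prod p^{\lfloor n_p/2\rfloor}$ via the divisor bound.

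For each fixed $\Sigma$, $\#\E(\Sigma)^* \leq \#\{f \in U_0(\Z)_\Sigma : H(f)\ll Y^{1+\epsilon}\}$, and we pass from $U_0$ to $U$ via the bijection $\Z \times U_0(\Z) \to U(\Z)$, $(t,f_0)\mapsto f_0(x+t)$. This bijection preserves Kodaira data (so preserves membership in $U(\Z)_\Sigma$), and for $|t|\ll Y^{1/6}$ sends any height-$Y^{1+\epsilon}$ element of $U_0(\Z)$ to an element of $U(\Z)$ of height $\ll Y^{1+\epsilon}$; hence
\[
\#\{f \in U_0(\Z)_\Sigma : H(f)\ll Y^{1+\epsilon}\} \;\ll\; Y^{-1/6}\cdot \#\{f \in U(\Z)_\Sigma : H(f)\ll Y^{1+\epsilon}\}.
\]
Applying Theorem \ref{thm:equimain} to the right-hand side, multiplying by $N_\Sigma$, and simplifying produces
$\frac{Y^{5/6+\epsilon} m_\even}{Q^2 m_{\I\rV}} + \frac{Q m_{\I\I\I} m_\even m_\odd^2}{Y^{1/6-\epsilon}}$,
which is the first term inside the minimum.

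\textbf{Main obstacle.} I expect the most delicate point to be the $\Sigma$-count estimate $N_\Sigma \ll_\epsilon Y^\epsilon m_{\I\I\I} m_{\I\rV} m_\even m_\odd$: one must confirm via a careful divisor-bound argument that fixing the prime sets in each Kodaira category leaves only $O_\epsilon(Y^\epsilon)$ compatible choices for the exponents $n_p$ given the dyadic constraint on $Q$, since this loss factor of $m_{\I\I\I} m_{\I\rV} m_\even m_\odd$ is precisely what converts the $U(\Z)_\Sigma$-bound from Theorem \ref{thm:equimain} into the asserted $\E(S)$-bound.
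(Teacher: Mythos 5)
Your proof is correct and takes essentially the same route as the paper: both halves of the minimum are obtained exactly as in the paper's proof, by combining Theorem \ref{qinvmt} (via the observation that $\gcd(Q(f),D(f))\gg m_{\I\I\I}m_{\I\rV}m_\odd$) for the second term, and Theorem \ref{thm:equimain} together with the $Y^{-1/6}$ reduction from $U$ to $U_0$ and the $\Sigma$-count for the first. The count $N_\Sigma \ll_\epsilon Y^\epsilon m_{\I\I\I}m_{\I\rV}m_{\even}m_{\odd}$, which you rightly flag as the delicate point, is asserted without further justification in the paper as well; your divisor-bound sketch is in the same spirit as what the paper intends.
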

\begin{proof}
First note that if $E\in\E$, then $H(E)\ll
\Delta(E)^{1+\epsilon}$ from the $j$-invariant bound.
It is enough to prove that the left hand side of \eqref{eq45combbound}
is bounded (up to a factor of $Y^\epsilon$) by both terms in the
minimum.  For the second term, this is a direct consequence of Theorem
\ref{qinvmt} and Table \ref{tabloc}.

For the first term, note that the set of monic cubic polynomials
corresponding to curve in $\E(S)$ is clearly the union of
$O_\epsilon(Y^\epsilon m_{\I\I\I}m_{\I\rV}m_{\even}m_{\odd})$ sets
$U_0(\Z)_\Sigma$, where each such $\Sigma$ satisfies
$m_{\I\I\I}(\Sigma)\sim m_{\I\I\I}$, $m_{\I\rV}(\Sigma)\sim
m_{\I\rV}$, $m_{\even}(\Sigma)\sim m_{\even}$, $m_{\odd}(\Sigma)\sim
m_{\odd}$, and $Q(\Sigma)\sim Q$. In \S3, we obtained bounds on the
number of elements in $U(\Z)_\Sigma$ with height bounded by $Y$. Since
the set $U(\Z)_\Sigma$ is invariant under the linear $\Z$-action, we
have
\begin{equation*}
|\{f\in U_0(\Z)_\Sigma:H(f)<Y\}|\ll Y^{-1/6}|\{f\in U(\Z)_\Sigma:H(f)<Y\}|.
\end{equation*}
Combining this with Theorem \ref{thm:equimain}, and multiplying with
the number of different $\Sigma$'s required to cover the set $\E(S)$,
we obtain the result.
\end{proof}

\noindent\textbf{Proof of Theorem \ref{thunifsmind}:} Given positive
real numbers $X$ and $Y$, let $\E(S;X,Y)$ denote the set of
$E\in\E(S)$ that satisfy $X\leq C(E)<2X$, and $Y\leq\Delta(E)<2Y$.
Fix constants $0<\kappa<7/4$ and $0<\delta$.
We first obtain bounds on the sizes of the sets $\E(S;X,Y)$.  Let
$E$ be an elliptic curve in $\E(S;X,Y)$, and let $P$ be the
contribution to the conductor of $E$ that is prime to
$m_{\I\I\I}m_{\I\rV}m_{\even}m_{\odd}$. Then we have by Table \ref{tabloc}
\begin{equation*}
\begin{array}{rcccl}
X&\asymp&C(E)&\asymp& m_{\I\I\I}^2m_{\I\rV}^2m_{\even}m_{\odd}P;\\[.1in]
Y&\asymp&\Delta(E)&\asymp& m_{\I\I\I}m_{\I\rV}^2m_{\odd}Q^2P.
\end{array}
\end{equation*}
Therefore, in order for $\E(S;X,Y)$ to be nonempty, we must have
\begin{equation}\label{eqQXY}
\frac{Y}{Q^2}\asymp\frac{X}{m_{\I\I\I}m_{\even}}.
\end{equation}
First note that we have
\begin{equation}\label{eq6mtb}
 \frac{Y^{5/6}m_{\even}}{Q^2m_{\I\rV}}\ll \frac{X}{Y^{1/6}}.
\end{equation}
Moreover,
\begin{equation}\label{eq6atb}
\begin{array}{rl}
&\displaystyle \min\Bigl(\frac{Qm_{\I\I\I}m_{\even}m_{\odd}^2}{Y^{1/6}}, \frac{Y^{5/6}}{Qm_{\I\I\I}m_{\I\rV}m_{\odd}}\Bigr)
\leq \Bigl(\frac{Y^{15/6-1/6}m_{\even}}{Q^2m_{\I\I\I}^2m_{\I\rV}^3m_{\odd}}\Bigr)^{1/4}\ll X^{1/4}Y^{1/3};\\[.2in]
%&\displaystyle \min\Bigl(\frac{Y^{1/2}m_{\even}}{Qm_{\I\rV}}, \frac{Y^{1/2}}{Q^{1/3}}\Bigr)
%\leq\Bigl(\frac{Y^2m_{\even}}{Q^2m_{\I\rV}}\Bigr)^{1/4}\ll X^{1/4}Y^{1/4};\\[.2in]
&\displaystyle \min\Bigl(\frac{Qm_{\I\I\I}m_{\even}m_{\odd}^2}{Y^{1/6}}, \frac{Y^{7/12}}{Q^{1/2}}\Bigr)
\leq(Ym_{\I\I\I}m_{\even}m_{\odd}^2)^{1/3}\ll\frac{Y^{2/3}}{X^{1/3}}.
%\\[.2in]
%&\displaystyle \min\Bigl(\frac{Qm_{\I\I\I}m_{\I\rV}m_{\even}m_{\odd}^2}{Y^{1/6}}, \frac{Y^{1/2}}{Q^{1/3}}\Bigr)
%\leq(Y^{4/3}m_{\I\I\I}m_{\I\rV}m_{\even}m_{\odd}^2)^{1/4}\ll\frac{Y^{7/12}}{X^{1/4}}.
\end{array}
\end{equation}

Assume that $Y$ satisfies the bound $X^{1+\delta}\ll Y\ll X^\kappa$
for $\delta>0$ and $\kappa<7/4$. Proposition \ref{prop45cb},
\eqref{eq6mtb}, and \eqref{eq6atb} imply that we have
\begin{equation}\label{eq6ftb}
|\E(S;X,Y)|\ll_\epsilon X^{5/6-\theta+\epsilon},
\end{equation}
for some positive constant $\theta$ depending only on $\delta$ and
$\kappa$.  It is clear that the set
$$\{E\in\E_\kappa:C(E)<X,|\Delta(E)|>C(E)X^\delta\}$$ is the union of
$O_\epsilon(X^\epsilon)$ sets $\E(S;X_1,Y_1)$, with $X_1\leq X$ and
$X_1^{1+\delta}\ll Y_1\ll X_1^\kappa$. Theorem \ref{thunifsmind} now
follows from \eqref{eq6ftb}. $\Box$

%We therefore obtain the following result which follows immediately
%from Proposition \ref{prop45cb} and the computations above.
%\begin{lemma}
%Let notation be as in Proposition \ref{prop45cb}. Further assume that
%there exist constants $\epsilon_1,\epsilon_2>0$ such that
%$X^{1+\epsilon_1}\ll Y\ll X^{7/4-\epsilon_2}$. Then there exists a
%constant $\kappa>0$, depending on $\epsilon_1$ and $\epsilon_2$, such that
%\begin{equation*}
%|\E(S;X,Y)|\ll_\epsilon X^{5/6-\kappa+\epsilon}.
%\end{equation*}
%\end{lemma}

\subsection{Additional uniformity estimates}
We will also need (albeit much weaker) estimates on the number of
elliptic curves with bounded height and additive reduction, as well as
on the number of $\PGL_2(\Z)$-orbits on integral binary quartic forms
whose discriminants are divisible by a large square.

We begin with the following result which follows immediately from
the proof of \cite[Proposition 3.16]{bs2sel}.

\begin{proposition}\label{propelemufec}
The number of pairs $(A,B)\in\Z^2$ such that $H(A,B)<X$ and such that
$p^2\mid\Delta(A,B)$ is $O(X^{5/6}/p^{3/2})$, where the implied
constant is independent of $X$ and $p$.
\end{proposition}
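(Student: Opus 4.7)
I would prove this by a residue-class analysis modulo $p^2$. Since $\Delta(A,B) = -4A^3 - 27B^2$, the task is to count pairs $(A,B)$ with $|A| \ll X^{1/3}$, $|B| \ll X^{1/2}$, and $4A^3 + 27B^2 \equiv 0 \pmod{p^2}$. For $p \geq 5$, the solutions mod $p^2$ split into two types: those with $p \mid A$, which forces $p \mid B$ (since $p \nmid 27$), and those with $p \nmid A$, in which case $B$ lies in at most two residue classes $\pm B_0(A) \pmod{p^2}$ (with $0$ classes for roughly half the $A$'s, since solvability requires $-4A^3/27$ to be a quadratic residue mod $p$).

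The first type contributes $O((X^{1/3}/p + 1)(X^{1/2}/p + 1)) = O(X^{5/6}/p^2 + X^{1/2}/p + 1)$ pairs, obtained by substituting $A = pA'$, $B = pB'$ and counting lattice points directly in the resulting box. For the second type I would fiber over the $O(X^{1/3})$ choices of $A$ with $p \nmid A$; the number of admissible $B$ in $[-CX^{1/2}, CX^{1/2}]$ lying in each of the at most two residue classes mod $p^2$ is $O(X^{1/2}/p^2 + 1)$, yielding $O(X^{5/6}/p^2 + X^{1/3})$ in total. Combining, the total count is $O(X^{5/6}/p^2 + X^{1/2}/p + X^{1/3})$, and each summand is bounded by $O(X^{5/6}/p^{3/2})$ provided $p \leq X^{1/3}$.

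The main obstacle is the range $p > X^{1/3}$, where the $O(X^{1/3})$ term from the fiber-count exceeds the desired bound. I would handle this regime separately: since $|A| < X^{1/3} < p$, the case $p \mid A$ collapses to $A = 0$, giving only $O(X^{1/2}/p + 1)$ pairs. For $A \neq 0$ one has $p \nmid A$, and the combination $p^2 \mid 4A^3 + 27B^2$ together with $|4A^3 + 27B^2| = O(X)$ forces $4A^3 + 27B^2 = kp^2$ for some integer $|k| = O(X/p^2)$. For each nonzero $k$ this defines a Mordell-type elliptic curve, whose integral points are bounded uniformly by $O_\epsilon(X^\epsilon)$ (e.g.\ by Silverman's theorem on integral points, or by Helfgott--Venkatesh); summing over $k$ gives $O(X^{1+\epsilon}/p^2) \leq O(X^{5/6+\epsilon}/p^{3/2})$. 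The case $k = 0$ contributes only the $O(X^{1/6})$ singular pairs $(A,B) = (-3t^2, \pm 2t^3)$, easily absorbed in the bound. Putting the two regimes together yields the claim; this two-regime split, with a modular count for small $p$ and an integral-points bound on a Mordell curve for large $p$, is exactly the type of argument executed in the proof of \cite[Proposition 3.16]{bs2sel}.
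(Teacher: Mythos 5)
The paper's own proof is a one-line citation to \cite[Proposition 3.16]{bs2sel}, so there is no argument in this paper to compare against line by line. Your first regime ($p\leq X^{1/3}$) is correct: splitting on $p\mid A$ versus $p\nmid A$, substituting $(A,B)=(pA',pB')$ in the former case and using the ``at most two classes of $B$ modulo $p^2$'' count in the latter, gives $O(X^{5/6}/p^2 + X^{1/2}/p + X^{1/3})$, and each summand is $\ll X^{5/6}/p^{3/2}$ once $p\leq X^{1/3}$.

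The second regime ($p>X^{1/3}$) has a genuine gap at the crucial step. You assert that each nonzero fiber $4A^3+27B^2=kp^2$ carries only $O_\epsilon(X^\epsilon)$ integral points in the box $|A|\ll X^{1/3}$, $|B|\ll X^{1/2}$, citing Silverman or Helfgott--Venkatesh. Neither gives this. The quantitative Siegel bounds (Silverman, Hindry--Silverman) are of the shape $C^{\,1+|S|+\mathrm{rank}}$, and the only unconditional control on the rank of a Mordell curve of discriminant $\ll X$ is $\mathrm{rank}\ll\log X$, so one only gets $X^{O(1)}$ per fiber, not $X^\epsilon$. Helfgott--Venkatesh give $O_\epsilon(|D|^{0.22+\epsilon})\ll X^{0.22+\epsilon}$ per fiber; summing over the $O(X/p^2)$ values of $k$ gives $O_\epsilon(X^{1.22+\epsilon}/p^2)$, which beats $X^{5/6}/p^{3/2}$ only for $p\gg X^{0.77}$. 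Bombieri--Pila gives $O_\epsilon(X^{1/6+\epsilon})$ per fiber and similarly fails throughout $X^{1/3}<p\ll X^{1/2}$, which is the whole range where nonzero fibers exist. So the middle range of $p$ is not covered and the proof does not close. (A smaller issue: the $\Theta(X^{1/6})$ singular pairs $(-3t^2,\pm 2t^3)$ are not absorbed by $X^{5/6}/p^{3/2}$ once $p>X^{4/9}$, so the proposition must tacitly exclude $\Delta=0$, which you should say rather than wave away.) To treat the middle range one needs cancellation \emph{across} the fibers rather than a per-fiber count --- for instance an incomplete exponential-sum/Poisson argument on $27B^2\equiv-4A^3\pmod{p^2}$ in the spirit of \S\ref{sec:equi}, or a lattice count fibered over the double root modulo $p$ --- so the Mordell-curve route you sketch is not a faithful account of what must go into the cited proof.
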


Next, we have the following estimate which is proved in
\cite{bswglobal2}

\begin{proposition}\label{propelemufbqf}
The number of $\PGL_2(\Z)$-orbits $f$ on $V_4(\Z)$ such that $H(f)<X$
and $n^2\mid\Delta(f)$ for some $n>M$ is bounded by
$$O\Bigl(\frac{X^{5/6}}{M^{1-\epsilon}}+X^{19/24+\epsilon}\Bigl),$$
where the error terms are independent of $X$ and $M$.
\end{proposition}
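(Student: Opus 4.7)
The plan is to adapt the counting framework from \cite{bs2sel} for $\PGL_2(\Z)$-orbits on integer binary quartic forms, tracking the divisibility condition $n^2 \mid \Delta(f)$ for $n > M$. By Theorem~\ref{thbqfcavg}, the orbit count equals
\begin{equation*}
\frac{1}{n_i\Vol(G_0)}\int_{\gamma\in\FF_0}\#\bigl\{V_4(\Z)\cap \gamma G_0\cdot R^{(i)}_X : n^2\mid\Delta \text{ for some } n>M\bigr\}\,d\gamma.
\end{equation*}
Writing $\gamma=ntk$ in Iwasawa coordinates, the coefficients $(a,b,c,d,e)$ of an element of $\gamma G_0\cdot R^{(i)}_X$ satisfy the analogue of \eqref{eqrbqfcoeffbounds}, namely $a\ll X^{1/6}/t^4$, $b\ll X^{1/6}/t^2$, $c\ll X^{1/6}$, $d\ll t^2 X^{1/6}$, $e\ll t^4 X^{1/6}$. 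I would split the $t$-range into the \emph{main body} $1\ll t\ll X^{1/24}$, where all five coordinate ranges are $\gg 1$, and the \emph{cusp} $t\gg X^{1/24}$, where the $a$-coordinate is forced to vanish.

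In the main body, the relevant region of $V_4(\R)$ contains a full lattice basis for $V_4(\Z)$, so geometry of numbers gives the main term $\Vol\asymp X^{5/6}$ up to lower-order projections. For the divisibility condition, I would apply the Ekedahl geometric sieve of Bhargava \cite{manjul-geosieve} (or its Taniguchi--Thorne refinement), which under the nonsingularity of $\Delta$ yields a per-divisor saving of $n^{2-\epsilon}$. Summing the sieved counts over $n > M$ gives
\begin{equation*}
\sum_{n > M}\frac{X^{5/6+\epsilon}}{n^{2-\epsilon}}\;\ll\;\frac{X^{5/6+\epsilon}}{M^{1-\epsilon}},
\end{equation*}
which produces the first term in the stated bound. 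A check that the error terms from Davenport's lattice point lemma (Proposition~\ref{davlem}) remain acceptable when integrated over the main body range of $t$ is routine.

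In the cusp, every integer form in the region has $a=0$, so after fixing a root at $[1:0]$ one reduces to counting elements of $\L_{[1:0]}$ as in \S5.4, and if necessary fibering over more general roots $[\alpha:\beta]\in\P^1(\Z)$ as in \S5.5. Here the count of reducible integer quartics with bounded height is itself only $O(X^{19/24+\epsilon})$; this already absorbs the divisibility condition trivially and produces the second term. The main obstacle is ensuring the Ekedahl estimate is uniform in $M$ with no hidden dependence (so that the sum over $n$ converges cleanly), and verifying that the contribution of forms with repeated roots or vanishing discriminant in the boundary between main body and cusp does not pollute the bound; this is standard but tedious, and requires carefully checking that the projection volumes appearing in Proposition~\ref{davlem} are everywhere dominated by $X^{19/24}$ or better.
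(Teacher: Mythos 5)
The central difficulty you have not addressed is that the Ekedahl geometric sieve, as you invoke it, does not yield a per-divisor saving of $n^{2-\epsilon}$ for the condition $n^2\mid\Delta(f)$ on binary quartic forms. The Ekedahl sieve (and its Taniguchi--Thorne refinement) controls integral points whose reduction modulo a prime $p$ lands in a subvariety of codimension at least $2$; in the terminology of \cite{manjul-geosieve}, it handles ``mod $p$ reasons.'' But for $f\in V_4(\Z)$ the divisibility $p^2\mid\Delta(f)$ can occur for ``mod $p^2$ reasons'': $f$ modulo $p$ has a single double root (a codimension-$1$ locus in $V_4(\F_p)$), and the extra factor of $p$ in $\Delta(f)$ comes from a further congruence on the lift of $f$ modulo $p^2$. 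The sieve gives no power saving in $n$ for this contribution, so your summation over $n>M$ does not close, and the first term $X^{5/6}/M^{1-\epsilon}$ is not obtained by the argument as written. Your main-body/cusp split and Davenport-style volume accounting are a sensible framework, but they sit on top of a sieve step that does not apply where you need it.

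The paper's route is genuinely different: it embeds $V_4(\Z)$ into the space $W_4(\Z)$ of pairs of integral quaternary quadratic forms via a map $\pi$ that matches heights, discriminants, and cubic resolvents. The key input, proved in \cite{bs4sel} and with no analogue for $V_4$ alone, is that every element of $W_4(\Z)$ with discriminant divisible by $n^2$ (for $n$ squarefree) is $\GL_2(\Q)\times\GL_4(\Q)$-equivalent to an integral element whose discriminant is divisible by $n^2$ for mod $n$ reasons. This rational-equivalence maneuver converts all ``mod $p^2$'' occurrences into ``mod $p$'' ones in the larger space, after which the Ekedahl sieve does apply, combined with geometry-of-numbers counts of $\GL_2(\Z)\times\GL_4(\Z)$-orbits on $W_4(\Z)$. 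To repair your proposal, you would either need to reproduce this embedding-and-reduction step, or replace the sieve in the ``mod $p^2$'' stratum by an explicit count of quartics with a simple double root modulo $p$ and the extra congruence modulo $p^2$ (analogous to the treatment behind Proposition~\ref{propelemufec}); neither is a routine supplement to what you have written.
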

\begin{proof}
This is proved in \cite{bswglobal2}, so we merely give a sketch of the
proof. The idea is to embed the space of integral binary quartic forms
into the space $W_4(\Z)$ of pairs of integral quaternary quadratic
forms:
\begin{equation*}
\begin{array}{rcl}
\pi: V_4(\Z) &\to &W_4(\Z)\\[.1in]
ax^4+bx^3y+cx^2y^2+dxy^3+ey^4&\mapsto&\left(
\left[ \begin{array}{cccc} 0 & 0 & 0 & 0\\ 0 & 0 & 0 & 1\\ 0 & 0 & 1 & 0\\ 0 & 1 & 0 & 0\end{array} \right],
\left[ \begin{array}{cccc} 1 & 0 & 0 & 0\\ 0 & -24a & 6b & 2c\\ 0 & 6b & -4c & -3d\\ 0 & 2c & -3d & -6e\end{array} \right]
\right).
\end{array}
\end{equation*}
Under this map, the cubic resolvents of $f$ and $\pi(f)$ are the same,
and hence $f$ and $\pi(f)$ have the same height and discriminant. We
also note that $\pi$ has an algebraic interpretation: the
$\PGL_2(\Z)$-orbit of a nondegenerate element in $V_4(\Z)$ with cubic
resolvent $g(x)$ corresponds to an element in $H^1(\Q,E_g[2])$, while
the $\GL_2(\Z)\times\GL_4(\Z)$-orbit of an element in $W_4(\Z)$
corresponds with cubic resolvent $g(x)$ corresponds to an element in
$H^1(\Q,E_g[4])$. Then the map $\pi$ simply corresponds to natural
map $$H^1(\Q,E_g[2])\rightarrow H^1(\Q,E_g[4]).$$

As proven in \cite{bs4sel}, every element in $W_4(\Z)$ having integral
coefficients and discriminant divisible by $n^2$, for some squarefree
integer $n$, is $\GL_2(\Q)\times\GL_4(\Q)$-equivalent to some element
in $W_4(\Z)$ whose discriminant is divisible by $n^2$ for mod $n$
reasons (in the terminology of \cite{manjul-geosieve}). Then an
application of the Ekedhal sieve in conjunction with
geometry-of-numbers methods counting $\GL_2(\Z)\times\GL_4(\Z)$-orbits
on $W_4(\Z)$, yields the result.
\end{proof}

\section{Asymptotics for families of elliptic curves}

Let $p$ be a fixed prime. An elliptic curve $E$ over $\Q$ has either
good reduction, multiplicative reduction, or additive reduction at
$p$. For every prime $p\geq 5$, let $\Sigma_p$ be a nonempty subset of
possible reduction types. We say that $\Sigma=(\Sigma_p)_p$ is a {\it
  collection of reduction types} and that such a collection is {\it
  large} if for all large enough primes $p$, the set $\Sigma_p$
contains at least the good and multiplicative reduction types.

For a large collection $\Sigma$, let $\E_\sf(\Sigma)$
(resp.\ $\E_\kappa(\Sigma)$) denote the set of elliptic curves
$E\in\E_\sf$ (resp.\ $E\in\E_\kappa$) such that for all primes $p\geq
5$, the reduction type of $E$ at $p$ belongs to $\Sigma_p$. In this
section, we prove the following theorem, from which Theorems
\ref{thmmain} and \ref{thmsel} immediately follow.

\begin{theorem}\label{thmmainlarge}
Let $\Sigma$ be a large collection of elliptic curves. Let
$\kappa<7/4$ be a positive constant. Then we have
\begin{equation}\label{eqEFSig}
\begin{array}{rcl}
\displaystyle\#\{E\in \E_\sf(\Sigma)^\pm:\; C(E)<X\}&\sim&
\displaystyle\frac{\alpha^\pm}{60\sqrt{3}}
\frac{\Gamma(1/2)\Gamma(1/6)}{\Gamma(2/3)}
\prod_p\bigl(c_g(p)e_g(p)+c_m(p)e_m(p)+c_a(p)e_a(p)\bigr)\cdot X^{5/6},\\[.2in]
\displaystyle\#\{E\in \E_\kappa(\Sigma)^\pm:\; C(E)<X\}&\sim&
\displaystyle \frac{\alpha^\pm}{60\sqrt{3}}
\frac{\Gamma(1/2)\Gamma(1/6)}{\Gamma(2/3)}
\prod_p\bigl(c_g(p)f_g(p)+c_m(p)f_m(p)+c_a(p)f_a(p)\bigr)\cdot X^{5/6},
\end{array}
\end{equation}
where $\alpha^+=1$, $\alpha^-=\sqrt{3}$, $c_g(p)$
$($resp.\ $c_m(p)$, $c_a(p))$ is $1$ or $0$ depending on whether
$\Sigma_p$ contains the good $($resp.\ multiplicative, additive$)$
reduction type, and $e_*(p)$ and $f_*(p)$ are given by
$$
e_g(p):= \displaystyle 1-\frac{1}{p};\qquad e_m(p):=\frac{1}{p}\Bigl(1+\frac{1}{p^{1/6}}\Bigr)\Bigl(1-\frac{1}{p}\Bigr)^2;\qquad e_a(p):=\displaystyle\frac{1}{p^2}\Bigl(1+\frac{1}{p^{1/6}}\Bigr)\Bigl(1-\frac{1}{p}\Bigr);$$
$$f_g(p):= \displaystyle 1-\frac{1}{p};\qquad
\displaystyle f_m(p):=\frac{1}{p}\Bigl(1-\frac{1}{p^{1/6}}\Bigr)^{-1}\Bigl(1-\frac{1}{p}\Bigr)^2;$$
$$\displaystyle  f_a(p):=
\displaystyle \frac{1}{p^{5/3}}\Bigl(1-\frac{1}{p}\Bigr)\Bigl(1+\frac{1}{p^{1/6}}+\frac{1}{p^{7/6}}\Bigr) +
\displaystyle\frac{1}{p^2}\Bigl(1-\frac{1}{p}\Bigr)\Bigl(1-\frac{1}{p^{1/6}}\Bigr)^{-1}
\Bigl(3-\frac{2}{p^{1/2}}\Bigr).$$
Furthermore, when elliptic curves in $\E_\sf(\Sigma)$ are ordered by
conductor, the average size of their $2$-Selmer groups is $3$.
\end{theorem}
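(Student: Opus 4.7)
The strategy is to count elliptic curves with prescribed Kodaira symbols at a finite set $S$ of primes via geometry of numbers, assemble an Euler product from Theorem~\ref{propcasessp}, and close the truncation as $S$ grows using the uniformity estimates Theorems~\ref{thunifsqi} and~\ref{thunifsmind}. Start from the count of minimal pairs $(A,B)\in\Z^2$ with $H(E_{AB})<Y$ and $\pm\Delta>0$; the leading constant is $(\alpha^\pm/(60\sqrt{3}))\cdot\Gamma(1/2)\Gamma(1/6)/\Gamma(2/3)$, supplying the archimedean factor. For $E\in\E$ the $j$-invariant bound forces $H(E)\asymp C(E)\cdot\ind(E)$ up to $Y^\epsilon$, so the substitution $Y=X\cdot\prod_{p\in S}\ind_p(T_p)$ converts the height ordering to conductor ordering.

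\textbf{Euler product.} Fixing Kodaira symbols $T_p$ at $p\in S$, these are congruence conditions by Theorem~\ref{propcasessp}, and a standard sieve gives
\[
\#\{E\in\E(\Sigma):C(E)<X,\text{ Kodaira at }p\text{ is }T_p\ \forall p\in S\}
\sim\frac{\alpha^\pm}{60\sqrt{3}}\frac{\Gamma(1/2)\Gamma(1/6)}{\Gamma(2/3)}\,X^{5/6}\prod_{p\in S}\nu_p(T_p)\,\ind_p(T_p)^{5/6}.
\]
Summing over Kodaira symbols allowed by $\Sigma_p$ and the family constraint (squarefree index for $\E_\sf$, any index for $\E_\kappa$) and grouping by reduction type via Table~\ref{tabloc} produces the stated local factors. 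For example, in $\E_\sf$ only $I_0,I_1,II$ (index $1$) and $I_2,III$ (index $p$) contribute, and a direct computation factors their sum as $(1-1/p)[1+(1/p)(1+p^{-1/6})]=1+p^{-7/6}-p^{-2}-p^{-13/6}$, matching Theorem~\ref{thmmain}; the analogous longer sum for $\E_\kappa$ recovers the factors involving $f_g, f_m, f_a$.

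\textbf{Truncation.} To remove the restriction to finite $S$, let $S$ contain all primes $\leq M$. For $\E_\sf$, Theorem~\ref{thunifsqi} bounds the tail (curves with $\Delta(E)/C(E)>M$) by $O(X^{5/6+\epsilon}/M^{1/6})$, which vanishes upon taking $M\to\infty$ slowly with $X$. For $\E_\kappa$, split into $\Delta(E)/C(E)\leq X^\delta$ (handled by $S=\{p\leq X^\delta\}$) and $\Delta(E)/C(E)>X^\delta$ (bounded by Theorem~\ref{thunifsmind} as $O(X^{5/6-\theta})$). Absolute convergence of the Euler product, guaranteed by the factor $1-1/p$ from good reduction, validates the limit; conditions at $p=2,3$ are enforced by the hypothesis $E\in\E$.

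\textbf{2-Selmer average.} The identity in $\Sel_2(E)$ contributes the conductor asymptotic computed above. Nontrivial 2-Selmer elements of $E:y^2=f_E(x)$ correspond via the Bhargava--Shankar parametrization to $\PGL_2(\Z)$-orbits on integral binary quartic forms with cubic resolvent $f_E$; Theorem~\ref{thbqfcavg} counts these by height, fibered over resolvents, and sieving to enforce $E\in\E_\sf(\Sigma)$ gives a contribution of exactly twice the main count (matching the nontrivial 2-Selmer average $2$ of \cite{bs2sel}). The pair-weighted version of Theorem~\ref{thunifsqi} bounds the tail from $(E,\sigma)$ with large $\Delta(E)/C(E)$, so the average $|\Sel_2(E)|$ is $1+2=3$. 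The main obstacle is this last step: the geometry of numbers on $V_4(\Z)$ must be carried out with the $\E_\sf$-sieve at every prime, and the cuspidal regions of $\FF_0$ (where integral binary quartic forms have resolvents with $\Delta/C$ large) are controlled precisely by the sharp $M^{-1/6}$ decay of Theorem~\ref{thunifsqi}.
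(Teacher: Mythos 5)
Your plan follows the paper's structure (count with finitely many fixed Kodaira symbols, sieve to an Euler product, close the truncation with Theorems~\ref{thunifsqi} and~\ref{thunifsmind}, then use the binary quartic parametrization for the $2$-Selmer statement), and your Euler-factor arithmetic checks out. But there are two genuine gaps in the first step.

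First, you cannot start from the count of pairs $(A,B)$ with \emph{height} $H(E_{AB})<Y$ and then ``substitute $Y=X\cdot\prod\ind_p(T_p)$'' to pass to conductor ordering; the relation $H(E)\asymp\Delta(E)$ only holds up to $Y^\epsilon$, and losing a $Y^\epsilon$ factor in the range of summation destroys the asymptotic constant. What the paper actually does is count by bounded \emph{discriminant} together with the $j$-invariant constraint: the region $C^\pm(X)=\{(A,B):j(A,B)<\log\Delta(A,B),\,0<\pm\Delta(A,B)<X\}$ has volume $c_\infty^\pm(X)$, and since $\Delta(E)=C(E)\cdot\ind(E)$ is an \emph{exact} identity, the conductor bound $C(E)<X$ becomes the exact discriminant bound $\Delta(E)<nX$ once the index $n$ is fixed (see \eqref{eqinexec}); no approximation between $H$ and $\Delta$ enters the main term. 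The constant you quote, $\alpha^\pm/(60\sqrt{3})\cdot\Gamma(1/2)\Gamma(1/6)/\Gamma(2/3)$, is precisely $c_\infty^\pm(1)$ from this discriminant region, not a height-region constant.

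Second, and more fundamentally, the region $C^\pm(X)$ is not semi-algebraic (it is cut out by $j(A,B)<\log\Delta(A,B)$), so Davenport's lattice-point theorem does not apply. The paper handles this by invoking the o-minimality result of Barroero--Widmer (Theorem~\ref{thomin}), which gives lattice-point counts with error control in terms of the successive minima for definable families. Your ``standard sieve'' step for the congruence conditions (Proposition~\ref{eqabcountdisc}) and the geometry-of-numbers step for the $2$-Selmer count (the analogue of \eqref{eqbqfcount}) both depend on this tool. Without it, the main term in your truncated Euler product is not established. Once these two points are repaired — order by discriminant with the $j$-invariant condition and count lattice points via Theorem~\ref{thomin} — your plan matches the paper's argument.
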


\subsection{The family $\E$ ordered by discriminant}

We write elliptic curves $E\in\E$ in their minimal short Weierstrass
model. In this case, it is easy to check that we have
\begin{equation*}
\E=\bigl\{E_{AB}:j(E_{AB})<\log(\Delta(E_{AB})),\;\;
16\mid A,\;B\equiv 16\pmod{64},\;\;
3\nmid A\bigr\}
\end{equation*}
%where $\Delta(A,B)=-4A^3-27B^2$ is equal to the discriminant of
%$E_{AB}$ away from $2$ and $3$. By assumption, curves $E\in\E$ have
%discriminant prime to $2$ and $3$.
Moreover, for every $E_{AB}\in\E$, we have
$\Delta(E_{AB})=\Delta(A,B)/2^8$.  To count elements in $\E$ with
bounded discriminant, we need to incorporate the bound
$j(E_{AB})<\log\Delta(E_{AB})$, which is not a semialgebraic condition
in $A$ and $B$. However it is clearly definable in an o-minimal
structure. Hence we use the following result of Barroero--Widmer
\cite[Theorem 1.3]{ominimal}.
\begin{theorem}\label{thomin}
  Let $m$ and $n$ be positive integers, let $\Lambda\subset\R^n$ be a
  lattice and denote the successive minima of $\Lambda$ by
  $\lambda_i$.  Let $Z\subset\R^{m+n}$ be a definable family, and
  suppose the fibers $Z_T$ are bounded. Then there exists a constant
  $c_Z\in \R$, depending only on the family $Z$, such that
\begin{equation*}
  \Bigl|\#\bigl(Z_T\cap\Lambda\bigr)-\frac{\Vol(Z_T)}{\det(\Lambda)}\Bigr|
  \leq c_Z\sum_{j=0}^{n-1}\frac{V_j(Z_T)}{\lambda_1\cdots\lambda_j},
\end{equation*}
where $V_j(Z_T)$ is the sum of the $j$-dimensional volumes of the
orthogonal projections of $Z_T$ on every $j$-dimensional coordinate
subspace of $\R^n$.
\end{theorem}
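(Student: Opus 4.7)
My plan is to prove this estimate by induction on the rank $n$ of the lattice, at each stage reducing to a one-dimensional lattice-point count whose error is controlled uniformly in $T$ by o-minimal cell decomposition.

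First I would fix a Minkowski-reduced basis $v_1,\ldots,v_n$ of $\Lambda$ with $|v_i|\asymp\lambda_i$ and pass to coordinates $(y_1,\ldots,y_n)$ in which $\Lambda=\Z^n$ and $\det(\Lambda)\asymp\lambda_1\cdots\lambda_n$. This is a linear change of variables depending only on $\Lambda$, so the pushed-forward family is still definable in the same o-minimal structure. The factors $\lambda_1\cdots\lambda_j$ appearing in the denominator of the $j$-th error term arise naturally from rescaling the first $j$ coordinate axes to standardize the lattice, which converts $V_j(Z_T)$ into the corresponding projection volume for $\Z^n$.

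For the base case $n=1$, each fiber $Z_T\subset\R$ is a bounded definable subset, hence a finite union of intervals, and the number of lattice points in $\lambda_1\Z\cap Z_T$ differs from $\mathrm{length}(Z_T)/\lambda_1$ by at most one per interval. For the inductive step, I would fiber $Z_T$ over $\Z^{n-1}$ by slicing along the last coordinate: for each $(y_1,\ldots,y_{n-1})\in\Z^{n-1}$ the slice is a bounded definable subset of $\R$ to which the base case applies, giving a main term $\mathrm{length}(\mathrm{slice})/\lambda_n$ plus an error bounded by the number of components of the slice. Summing the main terms over $\Z^{n-1}$ reduces to a lattice-point count over $\Z^{n-1}$ for the projection of $Z_T$ onto the first $n-1$ coordinates, to which the inductive hypothesis applies; this yields the desired $\Vol(Z_T)/\det(\Lambda)$ main term together with the $V_j$-error terms for $j<n-1$, while the slicing error contributes the $V_{n-1}(Z_T)/(\lambda_1\cdots\lambda_{n-1})$ term, since the total number of slice components over all of $\Z^{n-1}$ is controlled by an $(n-1)$-dimensional projection volume of $Z_T$ together with its boundary.

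The main obstacle is obtaining a constant $c_Z$ that is uniform in $T$, and this is precisely where o-minimality is essential. The cell decomposition theorem for definable families in an o-minimal structure yields a single uniform stratification of $Z\subset\R^{m+n}$ into finitely many cells, which implies a uniform bound, depending only on $Z$ and not on $T$, for the number of connected components of the fibers $Z_T$, of their coordinate slices, and of the various projections used throughout the induction. A subsidiary point is that all such slices and projections remain definable in the same o-minimal structure, which follows from the Tarski--Seidenberg-type closure properties built into the axioms of an o-minimal structure. Once these uniformities are in hand, the induction goes through cleanly and produces the stated inequality with a constant depending only on the family $Z$.
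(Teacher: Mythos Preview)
The paper does not prove this theorem at all: it is quoted verbatim as \cite[Theorem 1.3]{ominimal} of Barroero--Widmer and used as a black box. So there is no ``paper's own proof'' to compare against; what you have written is an attempt at the Barroero--Widmer argument itself.

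Your sketch is in the right spirit --- induction on $n$ via slicing, with o-minimal cell decomposition supplying the uniform component bounds --- but it contains a genuine gap. You begin by passing to coordinates adapted to a Minkowski-reduced basis of $\Lambda$. This linear change of variables depends on $\Lambda$, so the pushed-forward family $Z'\subset\R^{m+n}$ depends on $\Lambda$ as well. When you then invoke uniform cell decomposition to bound the number of components of slices and projections of $Z'_T$, the resulting constant depends on $Z'$, hence on $\Lambda$, not only on $Z$. The theorem, however, asserts a constant $c_Z$ depending on $Z$ alone and valid for \emph{every} lattice $\Lambda$. Your sentence ``the pushed-forward family is still definable in the same o-minimal structure'' is true but does not rescue you: definability is preserved, but the uniform constants from cell decomposition are attached to a specific definable family, and yours now varies with $\Lambda$.

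A related issue is your handling of the $V_j(Z_T)$: these are sums of volumes of orthogonal projections onto the \emph{original} coordinate subspaces of $\R^n$, whereas after your change of basis the natural projections are onto the Minkowski-basis directions. You assert that the factors $\lambda_1\cdots\lambda_j$ absorb this discrepancy, but a Minkowski basis is not a rescaling of coordinate axes --- it is a general element of $\GL_n(\R)$ --- so one needs an actual comparison lemma (using that a reduced basis has bounded distortion in a sense independent of $\Lambda$) to pass between the two sets of projection volumes. This comparison, and the device of keeping the o-minimal input attached to the original family $Z$ rather than to its $\Lambda$-dependent transform, are exactly the points where the real work in Barroero--Widmer lies.
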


For a pair $(A,B)\in\R^2$ with $\Delta(A,B)\neq 0$, let $j(A,B)$
denote $j(E_{AB})$.  For any set $S\subset\Z^2$ defined by congruence
conditions, let $\nu(S)$ denote the volume of the closure of
$S$ in $\hat{\Z}^2$. Equivalently, $\nu(S)$ is the product over the
primes $p$ of the closure of $S$ in $\Z_p^2$. We have the following
immediate consequence of Theorem \ref{thomin}.

\begin{proposition}\label{eqabcountdisc}
Let $\Lambda\subset\Z^2$ denote a set of pairs $(A,B)$ defined by
congruence conditions on $A$ and $B$ modulo some positive integer
$n<X^{1/3-\epsilon}$. Then we have
\begin{equation*}
\#\bigl\{(A,B)\in\Lambda: j(A,B)<\log(2^{-8}\Delta(A,B)),\;0<\pm\Delta(A,B)<X\bigr\}
=\nu(\Lambda)c_\infty^\pm(X)+O_\epsilon(X^{1/2+\epsilon}),
\end{equation*}
where $c_\infty^\pm(X)$ denotes the volume of the set
\begin{equation*}
C^\pm(X):=\bigl\{(A,B)\in\R^2: j(A,B)<\log(2^{-8}\Delta(A,B)),\;0<\pm\Delta(A,B)<X\bigr\}
\end{equation*}
computed with respect to Eucledean measure normalized so that $\Z^2$
has covolume $1$.
\end{proposition}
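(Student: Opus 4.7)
The plan is to recognize this as a lattice-point count in a definable region and apply the Barroero--Widmer estimate in Theorem \ref{thomin}. First, I would verify that $C^\pm(X) \subset \R^2$ is o-minimally definable: the function $\Delta(A,B) = -4A^3-27B^2$ is polynomial, $j(A,B) = 6912A^3/\Delta(A,B)$ is rational in $A,B$, and $\log$ is definable in the o-minimal structure $\R_{\exp}$. Hence the conditions $0 < \pm\Delta(A,B) < X$ and $j(A,B) < \log(2^{-8}\Delta(A,B))$ cut out a definable family of bounded sets parametrized by $X$, to which Theorem \ref{thomin} applies.

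Next, viewing $\Lambda$ as a (union of cosets of a) sublattice of $\Z^2$ of the appropriate covolume, Theorem \ref{thomin} gives
\begin{equation*}
\#(\Lambda \cap C^\pm(X)) = \nu(\Lambda)\,c_\infty^\pm(X) + O\Bigl(1 + \frac{V_1(C^\pm(X))}{\lambda_1}\Bigr),
\end{equation*}
where $\lambda_1 \geq 1$ is the first successive minimum of the sublattice and $V_1(C^\pm(X))$ is the sum of the lengths of the projections of $C^\pm(X)$ onto the two coordinate axes. The main step is to bound $V_1$: combining the $j$-invariant condition $6912 A^3 < \log(2^{-8}|\Delta|)\,(4A^3+27B^2)$ with the discriminant bound $|\Delta| \leq X$ forces $|A|^3 \ll X\log X$, so the projection onto the $A$-axis has length $\ll_\epsilon X^{1/3+\epsilon}$, and then the bound $|\Delta| < X$ yields $|B| \ll X^{1/2}$ for the projection onto the $B$-axis. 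Therefore $V_1(C^\pm(X)) \ll_\epsilon X^{1/2+\epsilon}$, and the hypothesis $n < X^{1/3-\epsilon}$ ensures that the coset-decomposition error remains under control, giving the desired bound $O_\epsilon(X^{1/2+\epsilon})$.

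The main obstacle is the verification of the two non-routine ingredients: first, the o-minimal definability claim for the $j$-invariant inequality, which is needed in order to legitimately invoke Theorem \ref{thomin} on a region that is not semialgebraic; and second, the projection bound on the $A$-axis, which relies essentially on the condition $j(E) < \log\Delta(E)$ built into the definition of $\E$. Indeed, this condition is precisely what rules out very skewed pairs $(A,B)$ with $|A|$ of order $X^{1/3}$ and massive cancellation in $4A^3+27B^2$, and it is therefore the sole source of the polynomial bound on $|A|$ that controls $V_1$.
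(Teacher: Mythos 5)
Your proof matches the paper's approach: the proposition is stated there as an immediate consequence of the Barroero--Widmer theorem (Theorem \ref{thomin}), and you have correctly filled in the two items that make the application work, namely (i) definability of the region $C^\pm(X)$ in the o-minimal structure $\R_{\exp}$, and (ii) the projection bound $V_1(C^\pm(X)) \ll_\epsilon X^{1/2+\epsilon}$, which hinges on the condition $j < \log\Delta$ to force $|A|^3 \ll |\Delta|\log|\Delta| \ll X\log X$. That is indeed the crucial observation, and you are right that the $j$-invariant hypothesis is the sole source of the archimedean bound on $|A|$.

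One point in your write-up is glossed over and, as written, is not justified. Theorem \ref{thomin} applies to a single lattice (equivalently, after translation, a single coset). For $\Lambda$ a union of $k$ cosets of $n\Z^2$, summing the per-coset estimates gives an error of order $k\bigl(1 + V_1/\lambda_1(n\Z^2)\bigr) = k(1 + V_1/n)$, and with $k$ potentially as large as $\nu(\Lambda)n^2$ this error is $\asymp \nu(\Lambda)n^2 + \nu(\Lambda)n V_1$, which can be as big as $X^{2/3}$ or $X^{5/6}$ for $n$ near $X^{1/3}$ and $\nu(\Lambda)$ of order one. Your sentence ``the hypothesis $n < X^{1/3-\epsilon}$ ensures that the coset-decomposition error remains under control'' does not actually address this: the bound on $n$ does not bound the number of cosets. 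The clean statement of the argument is that for $\Lambda$ a single coset of a sublattice $L \subset \Z^2$ one has $\lambda_1(L)\geq 1$ and hence an error $O(1 + V_1) = O_\epsilon(X^{1/2+\epsilon})$ uniformly, which is what the paper's later applications actually use (the conditions there are either an explicit coset from Tables \ref{tabgr2}, \ref{tabgr3}, or the lattice $\{p^4 \mid A,\ p^6 \mid B\}$, or intersections of these). If one really needs a general union of cosets, the error must be stated with a factor accounting for the number of cosets.

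Two minor remarks: the condition should be read as $|j| < \log|\Delta|$ (or similar) for the region to be bounded and for $\log$ to be defined when $\Delta < 0$; and your $B$-axis projection is $\ll X^{1/2}(\log X)^{1/2}$ rather than $\ll X^{1/2}$, but this is absorbed into the $X^\epsilon$.
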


Since the set $\E$ arises by imposing congruence conditions modulo
infinitely many primes, we use a simple sieve to determine asymptotics
for the number of elliptic curves in $\E$ with bounded discriminant.
\begin{theorem}
We have
\begin{equation*}
  \#\bigl\{E\in\E:0<\pm\Delta(E)<X\}\sim \frac{\alpha^\pm}{60\sqrt{3}}\cdot\frac{\Gamma(1/2)\Gamma(1/6)}{\Gamma(2/3)}
  \cdot\prod_{p\geq 5}\Bigl(1-\frac{1}{p^{10}}\Bigr)X^{5/6},
\end{equation*}
where $\alpha^+=1$ and $\alpha^-=\sqrt{3}$.
\end{theorem}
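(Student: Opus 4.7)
The plan is to count pairs $(A,B) \in \Z^2$ in the explicit parametrization of $\E$ via a sieve over the minimality condition at primes $p \geq 5$. By the parametrization given just above the theorem statement, $E_{AB} \in \E$ if and only if $16 \mid A$, $B \equiv 16 \pmod{64}$, $3 \nmid A$, the $j$-invariant satisfies $j(E_{AB}) < \log \Delta(E_{AB})$, and for every prime $p \geq 5$ it is not the case that simultaneously $p^4 \mid A$ and $p^6 \mid B$. The $p$-adic density of the minimality condition at $p$ is $1 - p^{-10}$, and independence across primes produces the claimed Euler product.

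First, I would fix a growing truncation $M = M(X)$ and apply Proposition \ref{eqabcountdisc} to the sublattice $\Lambda_M \subset \Z^2$ cut out by the congruences at $2$ and $3$ together with minimality at all primes $5 \leq p < M$. Choosing $M$ so that $\prod_{p < M} p^{10} \ll X^{1/3-\epsilon}$ (allowed for any $M = O(\log X)$ by the prime number theorem), Proposition \ref{eqabcountdisc} gives
\begin{equation*}
\#\{(A,B) \in \Lambda_M : j < \log\Delta,\ 0 < \pm\Delta < 2^8 X\} = \nu(\Lambda_M)\,c_\infty^\pm(2^8 X) + O_\epsilon(X^{1/2+\epsilon}).
\end{equation*}
For the tail over primes $p \geq M$, the count of pairs with $|\Delta(A,B)| < 2^8 X$, $p^4 \mid A$, and $p^6 \mid B$ is trivially at most $O(X^{1/3}/p^4 \cdot X^{1/2}/p^6) = O(X^{5/6}/p^{10})$; summing over $p \geq M$ yields $O(X^{5/6}/M^9)$, which is $o(X^{5/6})$ as $M \to \infty$. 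Combining the local densities at $p=2,3$ (coming from the congruences in (i)) with the full Euler product $\prod_{p \geq 5}(1 - p^{-10})$ and the archimedean factor gives the stated leading coefficient.

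The final step is to compute the archimedean volume $c_\infty^\pm(X)$. The region $\{(A,B) \in \R^2 : 0 < \pm\Delta(A,B) < X\}$ is quasi-homogeneous under $(A,B) \mapsto (\lambda^2 A, \lambda^3 B)$, under which $\Delta$ scales by $\lambda^6$, so its volume grows as $X^{5/6}$. A direct substitution (parametrizing by $\Delta$ and $A$, then integrating out $B$) reduces the constant to a Beta-function integral, producing $\Gamma(1/2)\Gamma(1/6)/\Gamma(2/3)$; the sign-dependent factor $\alpha^+ = 1$ versus $\alpha^- = \sqrt{3}$ arises because $\Delta > 0$ forces $A < 0$ while $\Delta < 0$ permits both signs of $A$ with different ranges for $B$. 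The $j$-invariant cutoff excludes only the cuspidal sliver where $4A^3$ and $-27B^2$ nearly cancel; this sliver has volume $O(X^{5/6}/\log X) = o(X^{5/6})$ and is definable in an o-minimal structure containing the exponential (hence logarithm), so Theorem \ref{thomin} remains applicable. The main technical obstacle will be tracking the precise constant $1/(60\sqrt{3})$ through the chain of volume computations and verifying rigorously that the $j$-invariant bound contributes only a lower-order correction rather than perturbing the leading constant.
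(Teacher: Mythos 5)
The critical gap in your proposal is the characterization of $\E$ at the places $2$ and $3$. You take the display preceding the theorem at face value, asserting that $E_{AB}\in\E$ if and only if $16\mid A$, $B\equiv 16\pmod{64}$, and $3\nmid A$. But the paper's own proof shows this is not a complete description: Tables~\ref{tabgr2} and~\ref{tabgr3} enumerate \emph{nine} residue classes at $2$ and eleven at $3$ that give good reduction. In particular, rows $2$--$9$ of Table~\ref{tabgr2} have $A$ odd (with density $2^{-13}$ each), and rows $3$--$11$ of Table~\ref{tabgr3} have $27\parallel A$, $27\parallel B$ (not $3\nmid A$). A minimal short Weierstrass model with good reduction at $2$ and $3$ need not have $16\mid A$ or $3\nmid A$, so restricting to the first row of each table undercounts $\E$. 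Concretely, your proposed local densities are $\nu_2=2^{-10}$ and $\nu_3=2/3$ with $\Delta_2=2^8$, $\Delta_3=1$, whereas the correct computation must take the $\Delta_{ij}^{5/6}$-weighted sums over all rows, $\sum_i\nu_2(i)\Delta_2(i)^{5/6}$ and $\sum_j\nu_3(j)\Delta_3(j)^{5/6}$. This changes the leading constant by an absolute rational multiple, so your answer will not reduce to $\frac{1}{60\sqrt{3}}\Gamma(1/2)\Gamma(1/6)/\Gamma(2/3)$.

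Aside from this, the skeleton of your argument matches the paper's. The inclusion-exclusion over primes $p\geq 5$ together with Proposition~\ref{eqabcountdisc} (the Barroero--Widmer o-minimal lattice-point count) to handle the truncation at $p<M$, and the trivial bound $O(X^{5/6}/p^{10})$ for the tail, is exactly how the paper proceeds. Two expository cautions: the set defined by the minimality conditions at $p\in[5,M)$ is a \emph{complement} of sublattices, not a sublattice, so you must run an explicit inclusion-exclusion (as the paper does, summing over $(i,j)$ and then over the square divisors $p^{10}$) before applying Proposition~\ref{eqabcountdisc} to each lattice piece. And the archimedean region $C^\pm(X)$ is not exactly homogeneous of degree $5/6$ because the cutoff $j<\log\Delta$ depends on $X$; the paper handles this by scaling $c_\infty^\pm(X)=X^{5/6}c_\infty^\pm(1)+o(X^{5/6})$, with the constants $c_\infty^\pm(1)$ taken from Watkins. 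You would need to verify this asymptotic rather than simply cite quasi-homogeneity, since the $j$-cutoff breaks exact scaling.
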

\begin{proof}
First, we describe the set of elliptic curves $E_{AB}:y^2=x^3+Ax+B$
that have good reduction at $2$ and $3$ in Tables \ref{tabgr2} and
\ref{tabgr3}, respectively. In both tables, the first column describes
the congruence conditions on $A$, the second describes congruence
conditions at $B$, the third gives the $2$-part (resp.\ the $3$-part)
of the discriminant $\Delta(A,B)=4A^3+27B^2$, and the fourth column
gives the density of these congruence conditions inside the space
$(A,B)\in \Z_p^2$ for $p=2$ and $3$. Below, $\delta$ is either $0$ or
$1$.
\begin{table}[ht]
\centering
\begin{tabular}{|l | l| c|c|}
\hline
$A$ & $B$ & $\Delta_2$ & Density \\  
\hline
$\equiv 0\pmod{2^4}$ & $\equiv 2^4\pmod{2^6}$ & $2^8$ & $2^{-10}$\\
$\equiv (5+\delta\cdot 2^6)\pmod{2^7}$ & $\equiv (22+\delta\cdot 2^6)\pmod{2^7}$ & $2^8$ & $2^{-13}$\\
$\equiv (13+\delta\cdot 2^6)\pmod{2^7}$ & $\equiv (14+\delta\cdot 2^6)\pmod{2^7}$ & $2^8$ & $2^{-13}$\\
$\equiv (21+\delta\cdot 2^6)\pmod{2^7}$ & $\equiv (38+\delta\cdot 2^6)\pmod{2^7}$ & $2^8$ & $2^{-13}$\\
$\equiv (29+\delta\cdot 2^6)\pmod{2^7}$ & $\equiv (94+\delta\cdot 2^6)\pmod{2^7}$ & $2^8$ & $2^{-13}$\\
$\equiv (37+\delta\cdot 2^6)\pmod{2^7}$ & $\equiv (54+\delta\cdot 2^6)\pmod{2^7}$ & $2^8$ & $2^{-13}$\\
$\equiv (45+\delta\cdot 2^6)\pmod{2^7}$ & $\equiv (46+\delta\cdot 2^6)\pmod{2^7}$ & $2^8$ & $2^{-13}$\\
$\equiv (53+\delta\cdot 2^6)\pmod{2^7}$ & $\equiv (70+\delta\cdot 2^6)\pmod{2^7}$ & $2^8$ & $2^{-13}$\\
$\equiv (61+\delta\cdot 2^6)\pmod{2^7}$ & $\equiv (126+\delta\cdot 2^6)\pmod{2^7}$ & $2^8$ & $2^{-13}$\\
\hline
\end{tabular}
\caption{Elliptic curves $E_{AB}$ with good reduction at $2$}\label{tabgr2}
\end{table}

\begin{table}[ht]
\centering
\begin{tabular}{|c | c| c|c|}
\hline
$A$ & $B$ & $\Delta_3$ & Density \\  
\hline
$3\nmid A$ &- & 1 & $2\cdot 3^{-1}$ \\
$3^4\parallel A$ & $3^6\mid B$ & $3^{12}$ & $2\cdot 3^{-11}$ \\
$\equiv 2\cdot 3^3\pmod{3^6}$ & $\equiv (\pm 20,\pm 34)\cdot 3^3\pmod{3^7}$ & $3^{12}$ & $4\cdot 3^{-13}$ \\
$\equiv 5\cdot 3^3\pmod{3^6}$ & $\equiv (\pm 11,\pm 16)\cdot 3^3\pmod{3^7}$ & $3^{12}$ & $4\cdot 3^{-13}$ \\
$\equiv 8\cdot 3^3\pmod{3^6}$ & $\equiv (\pm 2,\pm 29)\cdot 3^3\pmod{3^7}$ & $3^{12}$ & $4\cdot 3^{-13}$ \\
$\equiv 11\cdot 3^3\pmod{3^6}$ & $\equiv (\pm 7,\pm 20)\cdot 3^3\pmod{3^7}$ & $3^{12}$ & $4\cdot 3^{-13}$ \\
$\equiv 14\cdot 3^3\pmod{3^6}$ & $\equiv (\pm 16,\pm 38)\cdot 3^3\pmod{3^7}$ & $3^{12}$ & $4\cdot 3^{-13}$ \\
$\equiv 17\cdot 3^3\pmod{3^6}$ & $\equiv (\pm 2,\pm 25)\cdot 3^3\pmod{3^7}$ & $3^{12}$ & $4\cdot 3^{-13}$ \\
$\equiv 20\cdot 3^3\pmod{3^6}$ & $\equiv (\pm 7,\pm 34)\cdot 3^3\pmod{3^7}$ & $3^{12}$ & $4\cdot 3^{-13}$ \\
$\equiv 23\cdot 3^3\pmod{3^6}$ & $\equiv (\pm 11,\pm 38)\cdot 3^3\pmod{3^7}$ & $3^{12}$ & $4\cdot 3^{-13}$ \\
$\equiv 26\cdot 3^3\pmod{3^6}$ & $\equiv (\pm 25,\pm 29)\cdot 3^3\pmod{3^7}$ & $3^{12}$ & $4\cdot 3^{-13}$ \\
\hline
\end{tabular}
\caption{Elliptic curves $E_{AB}$ with good reduction at $3$}\label{tabgr3}
\end{table}

We now apply Proposition \ref{eqabcountdisc}. Let $1\leq i\leq 9$ and
$1\leq j\leq 11$ be integers, and consider the set of integers $(A,B)$
that satisfy line $i$ of Table \ref{tabgr2} and line $j$ of Table
\ref{tabgr3}. Let $\nu_{ij}=\nu_2(i)\cdot \nu_3(j)$ denote the
density of this set of integers, and let $\Delta_{ij}=\Delta_2(i)\cdot
\Delta_3(j)$ denote the product of the $2$- and $3$-parts of the
discriminant $\Delta(A,B)$. It is necessary to count the number of
pairs $(A,B)\in\Z^2$ that satisfy the following properties:
\begin{enumerate}
\item The pair $(A,B)$ satisfies the $i$th (resp.\ $j$th) condition of
  Table \ref{tabgr2} (resp.\ Table \ref{tabgr3});
\item $0<\pm\Delta(A,B)<\Delta_{ij}X$;
\item $j(A,B)<\log(\Delta(A,B)/\Delta_{ij})$;
\item for all primes $p\geq5$, either $p^4\nmid A$ or $p^6\nmid B$.
\end{enumerate}
Counting the pairs $(A,B)$ which satisfy the first four properties is
immediate from Proposition \ref{eqabcountdisc}, and the fifth
condition can be imposed by applying a simple inclusion exclusion
sieve. We thus obtain
\begin{equation*}
\#\bigl\{E\in\E:0<\pm\Delta(E)<X\}\sim \sum_{i,j}\nu_{ij}\cdot\prod_{p\geq 5}(1-p^{-10})\cdot c_\infty^\pm(\Delta_{ij}\cdot X).
\end{equation*}
The values $c_\infty^\pm(X)$ scale as follows: we
have $$c_\infty^\pm(X)=X^{5/6}c_\infty^\pm(1)+o(X).$$ Furthermore, the
values $c_\infty^\pm(1)$ are computed in
\cite[\S2]{watkins-heuristics} to be
\begin{equation*}
  c_\infty^+(1)=\frac{2}{4^{1/3}\cdot 27^{1/2}}\cdot\frac{1}{5}\cdot B(1/2,1/6);\quad\quad
  c _\infty^-(1)=\frac{2}{4^{1/3}\cdot 27^{1/2}}\cdot \frac{3}{5}\cdot B(1/2,1/3)=\sqrt{3}\,c_\infty^+(1).
\end{equation*}
Above, $B(x,y)$ denotes the beta function given by
\begin{equation*}
B(x,y)=\int_{0}^1 t^{x-1}(1-t)^{y-1}dt=\frac{\Gamma(x)\Gamma(y)}{\Gamma(x+y)}.
\end{equation*}
We therefore obtain
\begin{equation*}
\begin{array}{rcl}
\displaystyle\#\bigl\{E\in\E:0<\pm\Delta(E)<X\}&\sim&\displaystyle
\sum_{i,j}\nu_{ij}\Delta_{ij}^{5/6}\cdot\prod_{p\geq 5}(1-p^{-10})\cdot c_\infty^\pm(1)\cdot X^{5/6}
\\&=&\displaystyle
c_\infty^\pm(1)\Bigl(\sum_i\nu_2(i)\Delta_2(i)^{5/6}\Bigr)
\Bigl(\sum_i\nu_3(i)\Delta_3(i)^{5/6}\Bigr)
\prod_{p\geq 5}(1-p^{-10})\cdot X^{5/6}
\\&=&\displaystyle
\frac{2^{2/3}}{4}\frac{2\alpha^\pm}{4^{1/3}\cdot 3^{3/2}\cdot 5}
\frac{\Gamma(1/2)\Gamma(1/6)}{\Gamma(2/3)}
\prod_{p\geq 5}(1-p^{-10})\cdot X^{5/6}
\\&=&\displaystyle
\frac{\alpha^\pm}{60\sqrt{3}}
\frac{\Gamma(1/2)\Gamma(1/6)}{\Gamma(2/3)}
\prod_{p\geq 5}(1-p^{-10})\cdot X^{5/6},
\end{array}
\end{equation*}
as necessary
\end{proof}

\subsection{Ordering elliptic curves by conductor}

%For any set $\GG\subset \E$ of elliptic curves, let $\GG(\ind=n)$
%(resp.\ $\GG(n\mid\ind)$) denote the set of elliptic curves $E\in \E$
%such that the index $\Delta(E)/C(E)$ is equal to $n$ (resp.\ divisible
%by $n$). Since the union over all $n\geq 1$ of $\GG(\ind=n)$ is
%precisely $\GG$, we have

Suppose that $\GG$ is equal to $\E_*(\Sigma)$ for a large collection
of reduction types $\Sigma$, where $*$ is either $\sf$ or some positive
$\kappa<7/4$. Pick a small positive constant $\delta<1/9$. Then there
exists a positive constant $\theta$ such that

\begin{equation}\label{eqinexec}
\begin{array}{rcl}
\displaystyle\#\{E\in \GG^\pm:C(E)<X\}&=&
\displaystyle\sum_{n\geq 1}
\#\{E\in \GG^\pm:\ind(E)=n;\;\Delta(E)<nX\}\\&=&
\displaystyle\sum_{n,q\geq 1}\mu(q)\,
\#\{E\in \GG^\pm:nq\mid\ind(E);\;\Delta(E)<nX\}\\&=&
\displaystyle\sum_{\substack{n,q\geq 1\\nq<X^\delta}}\mu(q)\,
\#\{E\in \GG^\pm:nq\mid\ind(E);\;\Delta(E)<nX\}+O(X^{5/6-\theta})
\end{array}\end{equation}
%\begin{equation}\label{eqabsievefirst}%\begin{array}{rcl}
%\displaystyle\#\{E\in \GG^\pm:C(E)<X\}=
%\displaystyle\sum_{\substack{n,q\geq 1\\nq<X^\delta}}\mu(q)
%\#\{E\in \GG^\pm:nq\mid\ind(E);\;\Delta(E)<nX\}+O(X^{5/6-\theta}),%
%&=&\displaystyle c_\infty^\pm X^{5/6}
%\sum_{\substack{n,q\geq 1\\nq<X^\delta}}\mu(q)n^{5/6}\nu_*(nq,\Sigma)
%+O(X^{5/6-\theta})\\[.2in]
%&=&\displaystyle c_\infty^\pm X^{5/6}
%\sum_{n,q\geq 1}\mu(q)n^{5/6}\nu_*(nq,\Sigma)+O(X^{5/6-\theta})\\[.2in]
%&=&\displaystyle c_\infty^\pm X^{5/6}\sum_{n\geq 1}n^{5/6}\lambda_*(n,\Sigma)
%+O(X^{5/6-\theta}),
%\end{array}
%\end{equation}
where we bound the tail using the uniformity estimates in Theorems
\ref{thunifsqi} and \ref{thunifsmind}. We perform another inclusion
exclusion sieve to evaluate each summand of the right hand side of the
above equation: for each prime $p$, let
$\chi_{\Sigma_p,nq}:\Z_p^2\to\R$ denote the characteristic function of
the set of all $(A,B)\in\Z_p^2$ that satisfy the reduction type
specified by $\Sigma_p$ and satisfy $nq\mid\ind(E_{AB})$. Let $\chi_p$
denote $1-\chi_{\Sigma_p,nq}$, and define $\chi_k:=\prod_{p\mid
  k}\chi_p$ for squarefree integers $k$. Then we have
\begin{equation*}
\prod_{p}\chi_{\Sigma_p,nq}(A,B)=\sum_k\mu(k)\chi_k(A,B)
\end{equation*}
for every $(A,B)\in\Z^2$. Set $\nu_*(nq,\Sigma)$ to be
the product over all primes $p$ of the integral of
$\chi_{\Sigma_p,nq}$. Therefore, for $nq<X^\delta$, we obtain
\begin{equation*}
\begin{array}{rcl}
\displaystyle\#\{E\in \GG^\pm:nq\mid\ind(E);\;\Delta(E)<nX\}&=&
\displaystyle\sum_{\substack{(A,B)\in\Z^2\\0<\pm\Delta(E_{AB})<nX}}\sum_{k\geq 1}\mu(k)\chi_k(A,B)\\[.2in]
&=&\displaystyle\sum_{\substack{(A,B)\in\Z^2\\0<\pm\Delta(E_{AB})<nX}}\sum_{k=1}^{X^{4\delta}}\mu(k)\chi_k(A,B)
+O\Bigl(\frac{(nX)^{5/6}}{X^{2\delta}}\Bigr)\\[.4in]&=&
\displaystyle c_\infty^\pm(nX)\nu_*(nq,\Sigma)+O_\epsilon(X^{1/2+2\delta+\epsilon}+X^{5/6-7\delta/6}),
\end{array}
\end{equation*}
where the second equality follows from the uniformity estimate in
Proposition \ref{propelemufec}, and the third follows from Proposition
\ref{eqabcountdisc} and adding up the volume terms by simply reversing
the inclusion exclusion sieve. Note that the constant $\delta$ has been
specifically picked to be small enough so that Proposition
\ref{eqabcountdisc} applies.

For each $n$, let $\lambda_*(n,\Sigma)$ denote the volume of the
closure in $\hat{\Z}^2$ of the set of all $(A,B)\in\Z^2$ such that
$E_{AB}$ belongs to $\GG=\E_*(\Sigma)$ and $E_{AB}$ has index
$n$. Returning to \eqref{eqinexec}, we obtain
\begin{equation*}
\begin{array}{rcl}
\displaystyle\#\{E\in \GG^\pm:C(E)<X\}&=&
\displaystyle c^\pm_\infty(1)X^{5/6}
\sum_{\substack{n,q\geq 1\\nq<X^\delta}}\mu(q)n^{5/6}\nu_*(nq,\Sigma)
+o(X^{5/6})\\[.2in]
&=&\displaystyle c_\infty^\pm(1)X^{5/6}\sum_{n\geq 1}n^{5/6}\lambda_*(n,\Sigma),
\end{array}
\end{equation*}
where again, the final equality follows by reversing
the inclusion exclusion sieve of \eqref{eqinexec}.

For each prime $p$ and $k\geq 0$, let $\bar{\nu}_*(p^k,\Sigma)$ denote
the $p$-adic density of the set of all $(A,B)\in\Z^2$ such that
$E_{AB}\in\E_*(\Sigma)$ and $\ind_p(E_{AB})=p^k$. The constant
$\lambda_*(n,\Sigma)$ is a product over all $p$ of local densities:
\begin{equation*}
\begin{array}{rcl}
\displaystyle \lambda_*(n,\Sigma)&=&
\displaystyle\prod_{p\nmid n}\bar{\nu}_*(p^0,\Sigma)
\prod_{\substack{p^k\parallel n\\k\geq 1}}\bar{\nu}_*(p^k,\Sigma)\\[.2in]
&=&\displaystyle\prod_p\bar{\nu}_*(p^0,\Sigma)
\prod_{\substack{p^k\parallel n\\k\geq 1}}
\frac{\bar{\nu}_*(p^k,\Sigma)}{\bar{\nu}_*(p^0,\Sigma)}.
\end{array}
\end{equation*}
Hence $\lambda_*(n,\Sigma)$ is a multiplicative function in
$n$, and we have
\begin{equation*}
\begin{array}{rcl}
\displaystyle\sum_{n\geq 1}n^{5/6}\lambda_*(n,\Sigma)&=&
\displaystyle\prod_p\bar{\nu}_*(p^0,\Sigma)
\prod_{p}\Bigl(\sum_{k=0}^\infty p^{5k/6}
\frac{\bar{\nu}_*(p^k,\Sigma)}{\bar{\nu}_*(p^0,\Sigma)}\Bigr)\\[.2in]&=&
\displaystyle\prod_p\Bigl(\sum_{k=0}^\infty p^{5k/6}\bar{\nu}_*(p^k,\Sigma)
\Bigr).
\end{array}
\end{equation*}

The values of $\bar{\nu}_*(p^k,\Sigma)$ are easily computed from Table
\ref{tabden}. We then have \eqref{eqEFSig}, proving the first part of Theorem \ref{thmmainlarge}.

\subsection{The average size of the $2$-Selmer groups of elliptic curves in $\E_\sf(\Sigma)$}

Let $\Sigma$ be a large collection of reduction types. For a positive
integers $n$ and a positive real number $X$, let $\E(\Sigma,n,X)$ denote the
set of elliptic curves $E\in\E_\sf(\Sigma)$, such that $n\mid\ind(E)$
and $|\Delta(E)|<X$. Then, as in the previous subsection, we have
\begin{equation*}
\begin{array}{rcl}
\displaystyle\sum_{\substack{E\in\E(\Sigma)^\pm\\C(E)<X}}(|\Sel_2(E)|-1)&=&
\displaystyle\sum_{n,q\geq 1}\mu(q)\sum_{E\in\E(\Sigma,nq,nX)^\pm}(|\Sel_2(E)|-1)
\\[.2in]&=&
\displaystyle\sum_{\substack{n,q\geq 1\\nq<X^\theta}}\mu(q)\sum_{E\in\E(\Sigma,nq,nX)^\pm}(|\Sel_2(E)|-1)+O_\epsilon(X^{5/6-\theta/6+\epsilon}),
\end{array}
\end{equation*}
for every $\theta>0$, where the second equality is a consequence of
Theorem \ref{thunifsqi}. Therefore, the final assertion of Theorem
\ref{thmmainlarge} follows immediately from the following result.

\begin{proposition}\label{propfinalsel}
There exist positive constants $\theta$ and $\theta_1$ such that
\begin{equation*}
\sum_{E\in\E(\Sigma,qn,nX)^\pm}(|\Sel_2(E)|-1)=2|\E(\Sigma,nq,nX)^\pm|+O(X^{5/6-\theta_1}),
\end{equation*}
for every $nq<X^{\theta}$.
\end{proposition}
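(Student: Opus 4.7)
The plan is to adapt the computation of average 2-Selmer group sizes from \cite{bs2sel} to our more restrictive family, using Bhargava's parametrization of 2-Selmer elements by $\PGL_2(\Z)$-equivalence classes of locally soluble integral binary quartic forms. For any elliptic curve $E$, the non-identity elements of $\Sel_2(E)$ biject with equivalence classes of integer binary quartic forms whose invariants $(I,J)$ match those of $E$ and which are locally soluble at every place. Consequently, the left hand side of the proposition can be rewritten as a weighted count of $\PGL_2(\Z)$-orbits on integral binary quartic forms $f$ that are locally soluble at every place and whose associated elliptic curve $E_f$ lies in $\E(\Sigma, nq, nX)^\pm$.

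Next, I would apply Theorem \ref{thbqfcavg} to evaluate this count via integration over the fundamental domain $\FF_0\cdot R^{(i)}$. The integral decomposes into a main body, where the torus parameter $t$ in the Iwasawa decomposition is bounded, and a cuspidal region where $t$ is large. In the cusp, every integer point of $(ntk)G_0 R^{(i)}_{nX}$ has vanishing leading coefficient and hence corresponds either to a reducible binary quartic form or to a form parametrizing the trivial Selmer class; arguments analogous to those in \cite[\S 3]{bs2sel}, combined with the Ekedahl sieve, show the cuspidal contribution is absorbed into the error term. In the main body, the defining conditions of $\E(\Sigma,nq,nX)$ transfer to congruence conditions on the coefficients of $f$ modulo a product of prime powers bounded by a power of $X^\theta$. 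The volume computation factors as a product over all primes of local Selmer densities times the archimedean volume $c_\infty^\pm(nX)$. A standard local Tamagawa-type computation, together with the explicit $\PGL_2(\Z_p)$-orbit analysis implicit in \cite{manjulcountquartic} and \cite{bs2sel}, shows that at every prime $p$ the average local Selmer density equals exactly twice the local density of elliptic curves with the prescribed reduction type, producing the factor of $2$ in the main term.

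The squarefree condition on $\Delta(E)/C(E)$ is handled by a M\"obius inversion: writing
$$\mathbf{1}_{\Delta/C\ \text{squarefree}} = \sum_{m\geq1}\mu(m)\,\mathbf{1}_{m^2\mid\Delta/C},$$
I would truncate the sum at $m\leq X^{\theta'}$ for some small $\theta'>0$. For each $m\leq X^{\theta'}$ the condition $m^2\mid\Delta/C$ is a local condition on $(A,B)$ (equivalently on $f$) and contributes volumes that reassemble into the required product expansion. The tail $m>X^{\theta'}$ is controlled by Proposition \ref{propelemufbqf}, applied to the binary quartic forms parametrizing Selmer elements: the power-saving bound on $\PGL_2(\Z)$-orbits whose discriminant is divisible by a large square is exactly what is needed to obtain an error of shape $O(X^{5/6-\theta_1})$.

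The main obstacle lies in coordinating three sources of error—the cusp truncation, the M\"obius truncation for the squarefree condition, and the congruence-class error from imposing $nq\mid\ind(E)$—while keeping all implied constants uniform in $nq\leq X^\theta$. In particular, the uniformity estimate of Proposition \ref{propelemufbqf} must be strong enough to beat the $X^\epsilon$ loss from summing over $m$; this is precisely what its $X^{5/6}/M^{1-\epsilon}$ shape provides. A secondary delicate point is verifying that the local Selmer-to-elliptic-curve ratio is genuinely $2$ at primes of additive reduction under the Kodaira restrictions of $\Sigma_p$; since $\Sigma$ is assumed large, the restriction affects only finitely many primes and the ratio is computed directly from Theorem \ref{propcasessp} combined with the local orbit counts for binary quartic forms.
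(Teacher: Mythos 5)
Your overall strategy tracks the paper's: rewrite the Selmer sum via the Birch--Swinnerton-Dyer correspondence as a weighted count of $\PGL_2(\Z)$-orbits on locally soluble integral binary quartic forms with no rational linear factor, apply the averaging formula of Theorem~\ref{thbqfcavg}, split into a bounded-$t$ main body and a cuspidal region, and sieve with Proposition~\ref{propelemufbqf} controlling the tail. The weight function $m=\prod_p m_p$ (which you allude to by saying ``weighted count'') and the local mass computations of \cite[\S3.3, \S3.6]{bs2sel} producing the factor $2$ are exactly as you describe; the paper explicitly states that the argument ``very closely follows the proof of \cite[Theorem 3.1]{bs2sel}.''

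There is, however, one genuine gap. The region you must count lattice points in is not the height-bounded set $R^{(i)}_X$ from Theorem~\ref{thbqfcavg} but the set $R^{(i)}(X)=\{f:0<|\Delta(f)|<X,\,j(f)<\log\Delta(f)\}$, because membership in $\E(\Sigma,nq,nX)$ forces the archimedean condition $j(E)<\log\Delta(E)$. The presence of the logarithm makes this region \emph{not semialgebraic}, so the Davenport lattice-point estimate (Proposition~\ref{davlem}, used throughout \cite{bs2sel}) cannot be applied to $\gamma G_0\cdot R^{(i)}(X)$. The paper replaces Davenport with the o-minimal counting theorem of Barroero--Widmer (Theorem~\ref{thomin}), which handles definable families, and this substitution — together with the coordinated cutoff of the cusp integral at $t\gg X^{1/36}$ so that the coefficient scales dominate the modulus $n<X^{1/18}$ — is precisely the modification to \cite{bs2sel} that the paper flags. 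Your proposal invokes ``standard'' geometry-of-numbers estimates without registering that the $j$-invariant bound breaks semialgebraicity; as written, the main-body estimate in \eqref{eqbqfcount} would not be justified and the error term could not be controlled. The remainder of your error-coordination discussion (M\"obius truncation, uniformity in $nq$) is sound once this input is supplied.
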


Given the uniformity estimate Proposition \ref{propelemufbqf} that we
have already proved, the proof of Proposition \ref{propfinalsel} very
closely follows the proof of \cite[Theorem 3.1]{bs2sel}. In what
follows, we briefly sketch the proof of Theorem \ref{thmmainlarge},
indicating the change needed at the places where it differs from
\cite{bs2sel}. The starting point of the proof is the following
parametrization of the $2$-Selmer groups of elliptic curves in terms
of orbits on integral binary quartic forms. This correspondence is due
to Birch and Swinnerton-Dyer, and we state it in the form of
\cite[Theorem 3.5]{bs2sel}.

\begin{theorem}
Let $E:y^2=x^3+Ax+B$ be an elliptic curve over $\Q$, and set
$I=I(E):=-3A$ and $J=J(E):=-27B$.  Then there is a bijection between
$\Sel_2(E)$ and the set of $\PGL_2(\Q)$-equivalence classes of locally
soluble integral binary quartic forms with invariants $2^4I$ and
$2^6J$.

Moreover, the set of integral binary quartic forms that have a
rational linear factor and invariants equal to $2^4I$ and $2^6J$ lie
in one $\PGL_2(\Q)$-equivalence class, and this class corresponds to
the identity element in $\Sel_2(\Q)$.
\end{theorem}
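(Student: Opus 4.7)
The plan is to reduce the statement to the classical theorem of Birch and Swinnerton-Dyer on 2-descent, adapting the argument from \cite{bs2sel} to get the precise normalization of invariants. First I would construct the map from $\Sel_2(E)$ to $\PGL_2(\Q)$-orbits of locally soluble binary quartic forms. Given a class $\xi \in \Sel_2(E) \subset H^1(\Q, E[2])$, one attaches a 2-covering $\pi_\xi : C_\xi \to E$, where $C_\xi$ is a genus-1 curve that is locally soluble at every place. The standard fact is that such a 2-covering admits a smooth affine model $y^2 = f_\xi(x)$ for some $f_\xi \in V_4(\Q)$, and that replacing $y$ by $\lambda y$ together with a change of variable on $x$ via $\gamma \in \GL_2(\Q)$ is the only ambiguity; this ambiguity is precisely the twisted $\PGL_2(\Q)$-action on $V_4(\Q)$ used in \S5.1. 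A direct computation of the invariants of the associated elliptic curve $\Jac(C_\xi) \cong E$ shows that if $E : y^2 = x^3 + Ax + B$ with $I = -3A$, $J = -27B$, then $(I(f_\xi), J(f_\xi)) = (2^4 I, 2^6 J)$, the factors of $2$ coming from the choice of Weierstrass form on $E$.

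Second, I would pass from rational to integral representatives. The key point is that inside the $\PGL_2(\Q)$-orbit of any binary quartic form over $\Q$ with integer invariants $2^4 I$ and $2^6 J$, one can always find an integral representative, since up to $\PGL_2(\Q)$-equivalence, the forms are cut out by polynomial equations in the coefficients whose only integer solutions lie in finitely many $\PGL_2(\Z)$-orbits (as the invariants have fixed value). This gives a well-defined bijection between $\Sel_2(E)$ and $\PGL_2(\Q)$-classes of locally soluble integral binary quartic forms of invariants $(2^4I, 2^6J)$. Injectivity is immediate from the reconstruction $C_\xi : y^2 = f_\xi(x)$, and surjectivity follows because for any such $f$, the smooth projective model of $y^2 = f(x)$ is a locally soluble genus-1 curve whose Jacobian is $E$ and which carries a canonical 2-covering structure given by the degree-4 map to $\P^1$.

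For the second part of the theorem, note that the identity element of $\Sel_2(E)$ is represented by the trivial 2-covering $[2] : E \to E$. A form $f$ represents this class if and only if $C_f(\Q) \neq \emptyset$ and the image of some rational point of $C_f$ under the 2-covering map lies in the image of $E(\Q)$; after the standard translation, this is equivalent to $f$ having a rational linear factor (corresponding to a Weierstrass point of $C_f$). For the uniqueness of the orbit, observe that any two integral binary quartics $f_1, f_2$ with the same invariants $(2^4I, 2^6J)$ and a rational linear factor each correspond to the trivial 2-covering of $E$; since $\PGL_2(\Q)$ acts transitively on $\P^1(\Q)$, one can move the rational root of $f_1$ to that of $f_2$ by an element of $\PGL_2(\Q)$, and after doing so, matching the $I$- and $J$-invariants forces equality.

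The main obstacle will be the careful verification of the scaling factors $2^4$ and $2^6$ relating the invariants of $f$ to those of $E$, as these depend on conventions about the $\PGL_2$-action chosen in \S5.1 and on how 2-coverings are written in Weierstrass form; this is essentially bookkeeping but needs to be done correctly to match \cite[Theorem 3.5]{bs2sel}. A second technical point is proving that rational solubility plus integrality at all primes outside a finite set is equivalent to the Selmer condition; this uses the compatibility of local Kummer maps with the binary quartic parametrization at each place, which is standard but tedious to check at the primes $2$ and $3$. Since the family $\E$ has good reduction at $2$ and $3$ by hypothesis, these bad-prime complications are avoided, and one may simply quote \cite[Theorem 3.5]{bs2sel}.
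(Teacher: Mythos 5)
The paper does not prove this theorem; it is stated as a cited result of Birch and Swinnerton-Dyer, in exactly the form of \cite[Theorem 3.5]{bs2sel}. Your closing suggestion --- to simply quote that reference --- is what the paper does. Note, though, that the theorem as stated is for an arbitrary elliptic curve $E/\Q$, not only those with good reduction at $2$ and $3$, so the remark that the hypothesis on $\E$ lets one ``avoid bad-prime complications'' is beside the point: \cite[Theorem 3.5]{bs2sel} already covers all $E$.

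If you did want to execute the sketch yourself, two steps as written do not close. First, the claim that a $\PGL_2(\Q)$-orbit of a locally soluble quartic with integral invariants must contain an integral representative is genuinely nontrivial, and ``the integer solutions lie in finitely many $\PGL_2(\Z)$-orbits'' is a finiteness statement, not an existence statement; the actual proof passes through minimization of genus-one models (Cremona--Fisher--Stoll), or equivalently through an explicit integral model for the $2$-covering attached to a Selmer element. Second, the uniqueness of the reducible orbit does not follow from ``move the rational root and match $(I,J)$'': after aligning the root at $[1:0]$ you have two quartics $y\,h_1(x,y)$ and $y\,h_2(x,y)$, and imposing $(I,J)$ is only two conditions on a space that is still $4$-dimensional, so equality is not forced. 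What does work is to use the remaining Borel stabilizer of $[1:0]$ to normalize further: rescale by a diagonal element of $\PGL_2(\Q)$ to make the leading coefficient of $h_i$ equal to $1$ (possible since it is nonzero when $\Delta\neq 0$), then translate to kill the next coefficient, after which $I=-3d$ and $J=-27e$ pin down the remaining two coefficients uniquely. Your idea is right, but that normalization step is the actual content and needs to be spelled out.
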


The second step in the proof is to obtain asymptotics for the number
of $\PGL_2(\Z)$-orbits on the set of integral binary quartic forms
whose coefficients satisfy congruence conditions modulo some small
number $n$, where these forms have bounded invariants. In
\cite{bs2sel}, the invariants were bounded by height. Here instead, we
bound their discriminants and corresponding $j$-invariant: for an
element $f\in V_4(\R)$ with $\Delta(f)\neq 0$, define $j(f)$ to be
$j(E)$ with $E$ given by
\begin{equation*}
E:y^2=x^3-(I/3)x-J/27.
\end{equation*}
For any $\PGL_2(\Z)$-invariant set $S\subset
V_4(\Z)$, let $N^{(i)}_4(S;X)$ denote the number the number of
$\PGL_2(\Z)$-orbits on integral elements $f\in S\subset
V_4^{(i)}(\Z)$, that do not have a linear factor over $\Q$, and
satisfy $\Delta(f)<X$ and $j(f)<\log\Delta(f)$.

In \S5, we defined the sets $R^{(i)}$ which are fundamental sets for
the action of $\PGL_2(\R)$ on $V(\R)^{(i)}$. Then $R^{(i)}$ contains
one element $f\in V(\R)^{(i)}$ having invariants $I$ and $J$, for each
$(I,J)\in\R^2$ with $4I^3-J^2\in\R_{>0}$ for $i=0,2\pm$ and
$4I^3-J^2\in\R_{<0}$ for $i=1$. Furthermore, the coefficients of such
an $f$ are bounded by $O(H(f)^{1/6})$. Define the sets
\begin{equation*}
R^{(i)}(X):=\{f\in R^{(i)}:0<|\Delta(f)|<X;j(f)<\log\Delta(f)\}.
\end{equation*}
Clearly, if $f\in R^{(i)}(X)$ with $\Delta(f)=X$, then $H(f)\ll
X^{1+\epsilon}$ and so the coefficients of $f$ are bounded by
$O(X^{1/6+\epsilon})$.

Let $\delta=1/18$ be fixed. Let $L\subset V(\Z)$ be a lattice defined
by congruence conditions modulo $n$, where $n<X^\delta$. Denote the
set of elements in $L$ that have no linear factor by $L^\irr$ and
define $\nu(L)$ to be the volume of the completion of $L$ in
$V_4(\hat{\Z})$. Let $G_0\subset\PGL_2(\R)$ be a nonempty bounded open
ball, and set $n_1=2$, $n_0=n_{2\pm}=4$. Identically to \cite[\S2.3]{bs2sel}, it
follows that $N_4^{(i)}(L,X)$ is given by
\begin{equation}\label{eqbqfcount}
\begin{array}{rcl}
\displaystyle N_4^{(i)}(L,X)&=&
\displaystyle\frac{1}{n_i\Vol(G_0)}\int_{\gamma\in\PGL_2(\Z)\backslash\PGL_2(\R)}
\#\bigl\{\gamma G_0\cdot R^{(i)}(X)\cap L^\irr\}d\gamma
\\[.2in]&=&
\displaystyle\frac{1}{n_i}\int_{\gamma\in\PGL_2(\Z)\backslash\PGL_2(\R)}
\nu(L)\Vol(G_0\cdot R^{(i)}(X))d\gamma+O(X^{7/9}),
\end{array}
\end{equation}
where the error term is obtained in a similar manner to
\cite[(18)--(20)]{bs2sel}. There are two differences: first, we use
Theorem \ref{thomin} (instead of Davenport's result stated as
\cite[Proposition 2.6]{bs2sel}) to estimate the number of lattice
points in $\gamma G_0\cdot R^{(i)}(X)$. Second, since we are imposing
congruence conditions on $L$ modulo $n<X^\delta$ with $\delta=1/18$,
we cut off the integral over $\gamma$ when the $t$-coefficient of
$\gamma$ in its Iwasawa coordinate is $\gg X^{1/36}$. That way, the
coefficients of the ball $\gamma G_0\cdot R^{(i)}(X)$ are always
bigger than $n$. The precise values of $\delta=1/18$ and $7/9$, the
exponent of the error term, are not important. %Any positive constants would be enough for our needs.

The third step in the proof is to introduce a bounded weight
  function $m:V_4(\Z)\to\R$, which is the product $m=\prod_p m_p$ of
local weight functions $m_p:V_4(\Z_p)\to\R$, such that for all but negligibly many ($\ll_\epsilon X^{3/4+\epsilon}$) elliptic curves $E_{AB}$, we have
\begin{equation*}
|\Sel_2(E_{AB})|-1=\sum_{f\in \frac{V_4(\Z)_{A,B}}{\PGL_2(\Z)}}m(f),
\end{equation*}
where $f$ is varying over $\PGL_2(\Z)$-orbits on integral binary
quartic forms with no linear factor and invariants $I(f)=-3\cdot 2^4I$
and $J(f)=-27\cdot2^6J$. In our situation, we do not need any changes
to this part of the proof.

The fourth and final part of the proof is to perform a sieve so as to
count $\PGL_2(\Z)$-orbits on integral binary quartic forms with
bounded invariants, so that each form $f$ is weighted by
$m(f)$. Performing a standard inclusion-exclusion sieve using
\eqref{eqbqfcount} together with the uniformity estimate Proposition
\ref{propelemufbqf} and the volume computations of
\cite[\S3.3 and \S3.6]{bs2sel} yields Proposition~\ref{propfinalsel}. This
concludes the proof of Theorem \ref{thmmainlarge}.

\bibliography{Arithbib}

\providecommand{\bysame}{\leavevmode\hbox to3em{\hrulefill}\thinspace}
\providecommand{\MR}{\relax\ifhmode\unskip\space\fi MR }
% \MRhref is called by the amsart/book/proc definition of \MR.
\providecommand{\MRhref}[2]{%
  \href{http://www.ams.org/mathscinet-getitem?mr=#1}{#2}
}
\providecommand{\href}[2]{#2}
\begin{thebibliography}{10}

\bibitem{ominimal}
Fabrizio Barroero and Martin Widmer, \emph{Counting lattice points and
  {O}-minimal structures}, Int. Math. Res. Not. IMRN (2014), no.~18,
  4932--4957. \MR{3264671}

\bibitem{hcl2}
Manjul Bhargava, \emph{Higher composition laws. {II}. {O}n cubic analogues of
  {G}auss composition}, Ann. of Math. (2) \textbf{159} (2004), no.~2, 865--886.

\bibitem{manjulcountquartic}
\bysame, \emph{The density of discriminants of quartic rings and fields}, Ann.
  of Math. (2) \textbf{162} (2005), no.~2, 1031--1063.

\bibitem{manjul-geosieve}
\bysame, \emph{The geometric squarefree sieve and unramified nonabelian
  extensions of quadratic fields}, 2014, \url{http://arxiv.org/abs/1402.0031}.

\bibitem{bhargavaharron}
Manjul Bhargava and Piper Harron, \emph{The equidistribution of lattice shapes
  of rings of integers in cubic, quartic, and quintic number fields}, Compos.
  Math. \textbf{152} (2016), no.~6, 1111--1120. \MR{3518306}

\bibitem{bs4sel}
Manjul Bhargava and Arul Shankar, \emph{The average number of elements in the
  4-selmer groups of elliptic curves is 7},
  \url{http://arxiv.org/abs/1312.7333}.

\bibitem{bs5sel}
\bysame, \emph{The average size of the 5-selmer group of elliptic curves is 6,
  and the average rank is less than 1}, \url{http://arxiv.org/abs/1312.7859}.

\bibitem{bs2sel}
Manjul Bhargava and Arul Shankar, \emph{Binary quartic forms having bounded
  invariants, and the boundedness of the average rank of elliptic curves}, Ann.
  of Math. (2) \textbf{181} (2015), no.~1, 191--242. \MR{3272925}

\bibitem{bstttz}
Manjul Bhargava, Arul Shankar, Takashi Taniguchi, Frank Thorne, Jacob
  Tsimerman, and Yongqiang Zhao, \emph{Bounds on 2-torsion in class groups of
  number fields and integral points on elliptic curves},  (2017, preprint),
  \url{http://arxiv.org/abs/1701.02458}.

\bibitem{bstcubic}
Manjul Bhargava, Arul Shankar, and Jacob Tsimerman, \emph{On the
  {D}avenport-{H}eilbronn theorems and second order terms}, Invent. Math.
  \textbf{193} (2013), no.~2, 439--499. \MR{3090184}

\bibitem{bswglobal2}
Manjul Bhargava, Arul Shankar, and Jerry Wang, \emph{Geometry-of-numbers
  methods over global fields {II}: Coregular vector spaces}.

\bibitem{BSWsf2}
\bysame, \emph{Squarefree values of polynomial discriminants {II}}.

\bibitem{bswglobal1}
\bysame, \emph{Geometry-of-numbers methods over global fields {I}:
  Prehomogeneous vector spaces}, 2015, \url{http://arxiv.org/abs/1512.03035}.

\bibitem{BSWsf1}
\bysame, \emph{Squarefree values of polynomial discriminants {I}}, 2016,
  \url{http://arxiv.org/abs/1611.09806}.

\bibitem{brumer}
Armand Brumer, \emph{The average rank of elliptic curves. {I}}, Invent. Math.
  \textbf{109} (1992), no.~3, 445--472.

\bibitem{brumerkramer}
Armand Brumer and Kenneth Kramer, \emph{The rank of elliptic curves}, Duke
  Math. J. \textbf{44} (1977), no.~4, 715--743. \MR{0457453}

\bibitem{brumermcguinness}
Armand Brumer and Ois{\'{\i}}n McGuinness, \emph{The behavior of the
  {M}ordell-{W}eil group of elliptic curves}, Bull. Amer. Math. Soc. (N.S.)
  \textbf{23} (1990), no.~2, 375--382.

\bibitem{Lapierrethesis}
Val Chiche-Lapierre, \emph{Length of elements in a minkowski basis for an order
  in a number field}, {PhD} dissertation, University of Toronto, 2019,
  forthcoming.

\bibitem{cohenadvancedtopics}
Henri Cohen, \emph{Advanced topics in computational number theory}, Graduate
  Texts in Mathematics, vol. 193, Springer-Verlag, New York, 2000. \MR{1728313}

\bibitem{davenport-volume1}
H.~Davenport, \emph{On a principle of {L}ipschitz}, J. London Math. Soc.
  \textbf{26} (1951), 179--183.

\bibitem{delonefaddeev}
B.~N. Delone and D.~K. Faddeev, \emph{Theory of {I}rrationalities of {T}hird
  {D}egree}, Acad. Sci. URSS. Trav. Inst. Math. Stekloff, \textbf{11} (1940),
  340. \MR{0004269}

\bibitem{DukeKowalski}
W.~Duke and E.~Kowalski, \emph{A problem of {L}innik for elliptic curves and
  mean-value estimates for automorphic representations}, Invent. Math.
  \textbf{139} (2000), no.~1, 1--39, With an appendix by Dinakar Ramakrishnan.
  \MR{1728875}

\bibitem{gangrosssavin}
Wee~Teck Gan, Benedict Gross, and Gordan Savin, \emph{Fourier coefficients of
  modular forms on {$G_2$}}, Duke Math. J. \textbf{115} (2002), no.~1,
  105--169. \MR{1932327}

\bibitem{goldfeld-quadtwistconj}
Dorian Goldfeld, \emph{Conjectures on elliptic curves over quadratic fields},
  Number theory, {C}arbondale 1979 ({P}roc. {S}outhern {I}llinois {C}onf.,
  {S}outhern {I}llinois {U}niv., {C}arbondale, {I}ll., 1979), Lecture Notes in
  Math., vol. 751, Springer, Berlin, 1979, pp.~108--118.

\bibitem{heathbrown-avgrank}
D.~R. Heath-Brown, \emph{The average analytic rank of elliptic curves}, Duke
  Math. J. \textbf{122} (2004), no.~3, 591--623.

\bibitem{HortschFaltH}
Ruthi Hortsch, \emph{Counting elliptic curves of bounded {F}altings height},
  Acta Arith. \textbf{173} (2016), no.~3, 239--253. \MR{3512854}

\bibitem{katzsarnak}
Nicholas~M. Katz and Peter Sarnak, \emph{Random matrices, {F}robenius
  eigenvalues, and monodromy}, American Mathematical Society Colloquium
  Publications, vol.~45, American Mathematical Society, Providence, RI, 1999.

\bibitem{levicubicparam}
F~Levi, \emph{Kubische zahlkorper und binare kubische formenklassen}, Akad.\
  Wiss.\ Leipz., Mat.-Nat.\ Kl., 26--37.

\bibitem{silvermanEC2}
Joseph~H. Silverman, \emph{Advanced topics in the arithmetic of elliptic
  curves}, Graduate Texts in Mathematics, vol. 151, Springer-Verlag, New York,
  1994. \MR{1312368}

\bibitem{TaniguchiThorneDistLev}
Takashi Taniguchi and Frank Thorne, \emph{Levels of distribution for sieve
  problems in prehomogeneous vector spaces}, 2014,
  \url{http://arxiv.org/abs/1707.01850}.

\bibitem{watkins-heuristics}
Mark Watkins, \emph{Some heuristics about elliptic curves}, Experiment. Math.
  \textbf{17} (2008), no.~1, 105--125.

\bibitem{young-avgrank}
Matthew~P. Young, \emph{Low-lying zeros of families of elliptic curves}, J.
  Amer. Math. Soc. \textbf{19} (2006), no.~1, 205--250.

\end{thebibliography}
\bibliographystyle{amsplain}

\end{document}